\documentclass[reqno,a4paper]{article}%
\usepackage{arxiv}

\usepackage[colorlinks=true,
linkcolor=blue,
citecolor=magenta,
linktoc=page]{hyperref}              %

\usepackage[style=alphabetic,
hyperref=true,
backend=biber,
useprefix=true,
giveninits=true]{biblatex}           %
\ExecuteBibliographyOptions{
  isbn=false,
  url=false,
  doi=false,
  maxalphanames=6,
  maxbibnames=99
}
\addbibresource{refs.bib}

\usepackage{url}                    %
\usepackage{tabu}                   %
\usepackage{bbm, amssymb}           %
\usepackage[mathscr]{eucal}         %
\usepackage{upgreek}                %
\usepackage{colonequals}            %
\usepackage{mathtools}              %

\usepackage{setspace}

\usepackage{amsthm}

\newtheorem{theorem}{Theorem}[section]
\newtheorem{lemma}[theorem]{Lemma}
\newtheorem{prop}[theorem]{Proposition}
\newtheorem{cor}[theorem]{Corollary}

\newtheorem{introthm}{Theorem}

\newtheorem{introcor}[introthm]{Corollary}

\theoremstyle{definition}
\newtheorem{defn}[theorem]{Definition}
\newtheorem{setup}[theorem]{Setup}

\theoremstyle{remark}
\newtheorem{rem}[theorem]{Remark}

\let\skippedproof\proof%
\def\proof{\skippedproof\unskip}

\usepackage{tikz}
\usetikzlibrary{arrows}
\usetikzlibrary{cd}
\tikzset{>=stealth}
\def\gr{1.6180339} %
\tikzstyle{uncontracted}=[circle, draw=black, inner sep=1.5]
\tikzstyle{contracted}=[circle, draw=black, fill=black, inner sep=1.5]

\usepackage[many,listings]{tcolorbox}
\tcbuselibrary{listings}
\tcbset{
  enhanced,
  colback={gray!1},
  colframe={gray!50},
  sharp corners,
  listing only,
  listing options={
    language=Python,
    upquote= true,
    basicstyle=\linespread{.8}\footnotesize\ttfamily,
    keywordstyle=\color{blue!60}\bfseries,
    commentstyle=\color{gray},
    aboveskip=0pt,
    belowskip=0pt,
    sensitive=true
  }
}

\newcommand{\cF}{\mathcal{F}}

\newcommand{\cH}{\mathcal{H}}
\newcommand{\cI}{\mathcal{I}}
\newcommand{\cJ}{\mathcal{J}}
\newcommand{\cK}{\mathcal{K}}
\newcommand{\cL}{\mathcal{L}}
\newcommand{\cM}{\mathcal{M}}
\newcommand{\cN}{\mathcal{N}}

\newcommand{\cP}{\mathcal{P}}
\newcommand{\cQ}{\mathcal{Q}}

\newcommand{\cS}{\mathcal{S}}
\newcommand{\cT}{\mathcal{T}}

\newcommand{\cW}{\mathcal{W}}

\newcommand{\cZ}{\mathcal{Z}}

\newcommand{\bR}{\mathbf{R}}

\newcommand{\bbD}{\mathbb{D}}

\newcommand{\bbR}{\mathbb{R}}

\newcommand{\fC}{\mathscr{C}}

\newcommand{\fJ}{\mathscr{J}}

\newcommand{\fM}{\mathscr{M}}
\newcommand{\fN}{\mathscr{N}}

\newcommand{\fP}{\mathscr{P}}

\newcommand{\A}{\mathbb{A}}
\newcommand{\C}{\mathbb{C}}

\newcommand{\Hom}{\mathrm{Hom}}
\newcommand{\HH}{\mathrm{HH}}

\renewcommand{\d}{\mathrm{d}}

\newcommand{\id}{\mathrm{id}}

\newcommand{\<}{\langle}
\renewcommand{\>}{\rangle}

\newcommand{\m}{\mathfrak{m}}

\DeclareMathOperator{\KK}{\mathsf{K}}

\DeclareMathOperator{\tr}{tr}

\renewcommand{\O}{\mathcal{O}}
\newcommand{\Ext}{\mathrm{Ext}}

\let\mod\undefined%
\DeclareMathOperator{\mod}{\mathsf{mod}}

\DeclareMathOperator{\coh}{\mathsf{coh}}

\DeclareMathOperator{\proj}{\mathsf{proj}}

\newcommand{\RHom}{\mathbf{R}\mathrm{Hom}}

\newcommand{\N}{\mathbb{N}}

\newcommand{\Z}{\mathbb{Z}}

\newcommand{\R}{\mathbb{R}}

\newcommand{\Aut}{\mathrm{Aut}}

\newcommand{\End}{\mathrm{End}}

\newcommand{\one}{\mathbbm{1}}

\newcommand{\Sym}{\mathrm{Sym}}

\newcommand{\G}{\mathbb{G}}
\newcommand{\GL}{\mathrm{GL}}

\renewcommand{\L}{\mathbb{L}}
\newcommand{\half}{{\frac12}}
\newcommand{\mhalf}{{-\frac12}}
\newcommand{\aff}{\mathrm{aff}}

\newcommand{\curve}{\mathrm{C}}

\newcommand{\imroot}{\mathrm{r}^{\mathrm{im}}}
\newcommand{\rt}{\mathrm{r}}

\DeclareMathOperator{\RR}{\mathsf{RR}}
\DeclareMathOperator{\Rts}{\mathsf{Rts}}
\DeclareMathOperator{\fdmod}{\mathsf{fdmod}}

\DeclareMathOperator{\Spec}{Spec}

\newcommand{\Level}{\mathsf{Level}}

\DeclareMathOperator{\D}{\mathsf{D}}

\DeclareMathOperator{\alltilt}{\mathsf{tilt}}
\DeclareMathOperator{\tilt}{\mathsf{2-tilt}}
\DeclareMathOperator{\pretilt}{\mathsf{2-pretilt}}
\DeclareMathOperator{\Pic}{Pic}

\newcommand{\hh}{\mathrm{H}}

\DeclareMathOperator{\cone}{\mathsf{cone}}

\newcommand{\Rep}{\mathrm{Rep}}

\newcommand{\sHom}{\cH\kern-2pt\operatorname{om}}

\DeclareMathOperator{\coker}{coker}

\newcommand{\maxx}{\mathrm{max}}

\DeclareMathOperator{\crit}{\mathsf{crit}}

\newcommand{\JH}{\mathrm{JH}}

\DeclareMathOperator{\add}{\mathsf{add}}

\DeclareMathOperator{\rad}{rad}

\newcommand{\jac}[1]{\C\<#1\>}
\newcommand{\jacc}[1]{\C\<\!\<#1\>\!\>}

\newcommand{\re}{\mathrm{re}}
\newcommand{\img}{\mathrm{im}}

\DeclareMathOperator{\Cham}{\mathsf{Cham}}
\DeclareMathOperator{\Alcove}{\mathsf{Alcove}}

\newcommand{\fatnull}{\mathbf{0}}

\newcommand{\mot}{\mathrm{mot}}
\newcommand{\num}{\mathrm{num}}
\newcommand{\Mot}{\mathrm{Mot}}
\newcommand{\muhat}{{\hat\upmu}}
\newcommand{\Var}{\mathrm{Var}}

\newcommand{\ddim}{\operatorname{\underline{\mathrm{dim}}}}
\newcommand{\semis}{\mathrm{-ss}}

\newcommand{\Sh}{\operatorname{\mathrm{Sh}}}

\newcommand{\glob}{\mathrm{glob}}

\newcommand{\cs}{\mathrm{cs}}

\newcommand{\vir}{\mathrm{vir}}

\usepackage{graphicx}
\newcommand\mapsfrom{\mathrel{\reflectbox{\ensuremath{\mapsto}}}}

\title[Vanishing and symmetries of BPS invariants for cDV singularities]{Vanishing and Symmetries of BPS Invariants for CDV Singularities}
\author[O. van Garderen]{Okke van Garderen}

\setcounter{tocdepth}{1}

\begin{document}

\maketitle

\begin{abstract}
  This paper shows that the motivic BPS invariants associated to a noncommutative crepant resolution of a compound Du-Val singularity are controlled by the labelled Dynkin combinatorics appearing in the work of Iyama--Wemyss~\cite{IWMemoir}.
  In particular, we show that the invariants vanish for dimension vectors which are not a multiple of a restricted root obtained from the affine root system under a natural quotient map.
  An immediate corollary is a description of the curve classes for which the Gopakumar--Vafa invariants of geometric crepant resolutions vanish, generalising a recent result of Nabijou--Wemyss~\cite{NW21} to the nonisolated setting.
  We furthermore formulate a method for finding symmetries among the non-vanishing invariants using derived equivalences, and show how this can be applied to line bundle twists and mutation functors in some settings.
  In particular, we find new wall-crossing relations among the Gopakumar-Vafa invariants of the different crepant resolutions of a cDV singularity.
\end{abstract}

\tableofcontents
\section{Introduction}\label{sec:intro}
Compound Du-Val (cDV) singularities are a class of  one-parameter deformations of the classical Du-Val singularities introduced by Reid~\cite{Reid83}.
Crepant resolutions of cDV singularities describe various contraction morphism in the minimal model program~\cite{KM98}, such as flopping contractions and divisor-to-curve contractions, and are therefore of fundamental importance to birational geometry.
Such a crepant resolution (if it exists) forms a Calabi--Yau neighbourhood of a collection of rational curves, which have associated \emph{Donaldson--Thomas} type invariants~\cite{JS12}.
This curve-counting theory is very rich, as it concerns both rigid and deformable curves, and because it is known to be governed by the combinatorics of an ADE root system in various settings~\cite{KM92,BG09}.

One can also consider \emph{noncommutative} crepant resolutions~\cite{VdBer04a} for cDV singularities, which are noncommutative algebras that exhibit similar homological properties to a crepant resolution.
In this setting there is an analogue of Donaldson--Thomas theory~\cite{Szendroei08,KS08}, where curve-counting is replaced by (virtual) counts of semistable representations for a quiver with potential presenting the NCCR.\@
As a result, one obtains \emph{noncommutative BPS invariants}: a sequence
\[
  \Omega(\updelta) \in \KK^\muhat(\Var/\C)[\L^{\pm\frac12}],
\]
of monodromic motivic classes ranging over the dimension vectors $\updelta$ of the quiver.
These motivic invariants form a refinement of an enumerative theory, and are known to be related to the classical curve-counting invariants via various reduction maps and wall-crossing relations~\cite{Toda13}.

Noncommutative BPS invariants of NCCRs have now been computed for various examples of cDV singularities, both in the isolated~\cite{MMNS12,DM17,vGar20} and non-isolated~\cite{Nag11,Moz11,MR21} setting, but their structural properties are not yet fully understood. In particular, it is not known:
\begin{enumerate}
  \item for which dimension vectors $\updelta$ the invariants $\Omega(\updelta)$ are (non-)trivial,
  \item what additional symmetries $\Omega(\updelta_1) = \Omega(\updelta_2)$ exist between the nontrivial invariants.
\end{enumerate}
In this paper we will apply our previous work on stability conditions and tilting theory~\cite{vGar21}, to show that the \emph{vanishing} and \emph{symmetry} of the invariants are related to the Dynkin combinatorics developed in the recent memoir of Iyama--Wemyss~\cite{IWMemoir}, which can be summarised as follows.

Given a cDV singularity, let $\Updelta$ for the ADE Dynkin diagram corresponding to the Du-Val surface singularity that it deforms, and write $\Updelta_\aff$ for the corresponding affine/extended Dynkin diagram.
For every NCCR, presented by a quiver with potential $(Q,W)$, there is a well-defined \emph{restriction map}
\[
  \uppi_\cJ \colon \Z \Updelta_\aff \to \Z Q_0,
\]
mapping roots $\upalpha_j \in \Z\Updelta_\aff$ in the root lattice to dimension vectors.
The kernel of this map is of the form $\Z\cJ = \bigoplus_{j\in\cJ} \upalpha_j$ for some subset $\cJ\subset\Updelta_\aff$, and we call this data $(\Updelta_\aff,\cJ)$ the \emph{Dynkin type} of the NCCR.\@
The Dynkin type determines two dual pieces of combinatorial data: the \emph{restricted roots}
\[
  \RR(\Updelta_\aff,\cJ) \colonequals \{ \uppi_\cJ(\rt) \mid {\rt} \in \Rts \Updelta_\aff,\ \uppi_\cJ(\rt) \neq 0\},
\]
consisting of nonzero images of roots in the $\Updelta_\aff$ root system, and dually the \emph{intersection arrangement}
\[
  \cH_{\Updelta_\aff,\cJ} \colonequals \bigcup_{\uppi_\cJ(\rt) \in \RR(\Updelta_\aff,\cJ)} H_{\uppi_\cJ(\rt)}.
\]
consisting of the orthogonal hyperplanes $H_{\uppi_\cJ(\rt)}$ to the restricted roots in the dual vector space.
The results of Iyama--Wemyss~\cite{IWMemoir} show that these structures obey certain wall-crossing relations which we capture in a \emph{wall-crossing groupoid}.
In \S\ref{sec:vanishing} and \S\ref{sec:mut} we translate these combinatorial structures into statements about semistable modules, and use these to derive structural properties (1) and (2) for the BPS invariants.
In particular the vanishing of BPS invariants can be expressed via $\RR(\Updelta_\aff,\cJ)$ while the symmetries are reflected numerically by wall-crossing in $\cH_{\Updelta_\aff,\cJ}$.

Our results can be used to explain and extend various observations about BPS invariants obtained through explicit computations in the literature, and allows us to compute BPS invariants of some new examples which we will defer to a follow-up paper~\cite{vGar}.

\subsection{Vanishing results}

In what follows, let $(Q,W)$ be a symmetric quiver with potential for which the completed Jacobi algebra $\Lambda = \jacc{Q,W}$ is an NCCR of Dynkin type $(\Updelta_\aff,\cJ)$.
Then the BPS invariants can be computed via the multiple-cover formula
\[
  \Sym\left(\sum_{\uptheta(\updelta) = 0} \frac{\Omega(\updelta)}{\L^{\frac12} - \L^{-\frac12}} \cdot t^\updelta\right) = \sum_{\uptheta(\updelta) = 0} \left[\fN^{\uptheta\semis}_\updelta\right]_\vir \cdot t^\updelta,
\]
for the generating function of virtual motives of the stacks $\fN_\updelta^{\uptheta\semis}$ of nilpotent $(Q,W)$-representations which are semistable for a King-stability parameter $\uptheta\in (\N Q_0)^* = \Hom_\Z(\N Q_0,\bbR)$.
The $\C$-points of the stacks $\fN_\updelta^{\uptheta\semis}$ correspond to objects of an abelian subcategory
\[
  \cS_\uptheta(\Lambda) \colonequals \{\ M \in \fdmod \Lambda \mid M \text{ is $\uptheta$-semistable}\ \}
\]
which we have characterised in~\cite{vGar21}, and by combining this characterisation with combinatorial results about the Dynkin type we develop in \S\ref{sec:dynkin} we derive the following vanishing result.

\begin{introthm}[{\ref{thm:BPSvanishing}}]\label{introthm:A}
  Let $\Lambda$ be an NCCR for a cDV singularity with Dynkin type $(\Updelta_\aff,\cJ)$ presented as a quiver with potential $(Q,W)$.
  Then for every dimension vector $\updelta\in\N Q_0$ of multiplicity $d = \gcd(\updelta)$
  \[
    \tfrac1d \updelta \not\in \RR(\Updelta_\aff,\cJ) \quad\implies\quad \Omega(\updelta) = 0.
  \]
\end{introthm}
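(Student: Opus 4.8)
The plan is to reduce the vanishing, via the multiple-cover formula and the stability-independence of the BPS invariants, to the statement that a suitably generic King stability admits no nonzero semistable module, and to deduce that statement from the characterisation of $\cS_\uptheta(\Lambda)$ in~\cite{vGar21} together with the restricted-root combinatorics of~\S\ref{sec:dynkin}. First I would use that $\Omega(\updelta)$ depends only on $\updelta$ and not on the choice of King stability $\uptheta$ with $\uptheta(\updelta)=0$ --- which is what lets the multiple-cover formula be stated with a free parameter $\uptheta$ --- so that it suffices to exhibit one such $\uptheta$ with $\Omega(\updelta)=0$. Suppose then that $\updelta$ has multiplicity $d$ and $\tfrac1d\updelta\notin\RR(\Updelta_\aff,\cJ)$. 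The combinatorial input I would take from~\S\ref{sec:dynkin} is that $\RR(\Updelta_\aff,\cJ)$ is saturated along rays: the primitive integral vector on the ray through any restricted root is again a restricted root. Hence $\updelta$ is proportional to no restricted root, so the hyperplane $H_\updelta=\{\uptheta:\uptheta(\updelta)=0\}$ in the space of stability parameters coincides with none of the hyperplanes $H_{\uppi_\cJ(\rt)}$ forming the intersection arrangement $\cH_{\Updelta_\aff,\cJ}$; since $\cH_{\Updelta_\aff,\cJ}$ is a union of countably many hyperplanes, the Baire category theorem yields $\uptheta_0\in H_\updelta\setminus\cH_{\Updelta_\aff,\cJ}$, i.e.\ with $\uptheta_0(\updelta)=0$ but $\uptheta_0(\rt')\neq0$ for every restricted root $\rt'$.

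Next I would invoke the characterisation of $\cS_\uptheta(\Lambda)$ from~\cite{vGar21}, in the form --- to be made precise using~\S\ref{sec:dynkin} --- that the dimension vector of every $\uptheta$-stable $\Lambda$-module, equivalently of every simple object of $\cS_\uptheta(\Lambda)$, is the image under $\uppi_\cJ$ of a root of $\Updelta_\aff$ and therefore lies in $\RR(\Updelta_\aff,\cJ)$. Applied to $\uptheta_0$, a simple object of $\cS_{\uptheta_0}(\Lambda)$ would have dimension vector some $\rt'\in\RR(\Updelta_\aff,\cJ)$ with $\uptheta_0(\rt')=0$, contradicting the choice of $\uptheta_0$; as $\cS_{\uptheta_0}(\Lambda)$ is a finite-length abelian category it must then be zero, so the stacks $\fN^{\uptheta_0\semis}_{\updelta'}$, whose $\C$-points are the objects of $\cS_{\uptheta_0}(\Lambda)$ of dimension $\updelta'$, are empty for all $\updelta'\neq0$. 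Feeding $\uptheta_0$ into the multiple-cover formula, the right-hand side collapses to the constant $1$, so the argument of $\Sym$ on the left vanishes; since $\Sym$ is a bijection on series without constant term, this forces $\Omega(\updelta')=0$ for every $\updelta'$ with $\uptheta_0(\updelta')=0$, in particular $\Omega(\updelta)=0$. This also explains the appearance of $\tfrac1d\updelta$ rather than $\updelta$ in the statement: the argument constrains $\updelta$ only through the ray it spans.

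The routine parts here are the Baire-type choice of $\uptheta_0$ and the inversion of $\Sym$. The step I expect to be the main obstacle is the bridge used above: deriving from the homological characterisation of $\cS_\uptheta(\Lambda)$ in~\cite{vGar21}, which describes it in terms of tilting and mutation data for the NCCR, the root-theoretic fact that its simple objects have dimension vectors in $\RR(\Updelta_\aff,\cJ)$, together with the ray-saturation of $\RR(\Updelta_\aff,\cJ)$. Both are statements about the Dynkin type $(\Updelta_\aff,\cJ)$ alone, which I would prove in~\S\ref{sec:dynkin}: for the first, using that $\ker\uppi_\cJ=\Z\cJ$ is spanned by a subset of the simple roots --- so that a root of $\Updelta_\aff$ not contained in $\Z\cJ$ has nonzero, hence restricted-root, image --- together with the Iyama--Wemyss wall-crossing relations to identify which dimension vectors actually arise; the second follows from the explicit description of the affine root system modulo $\Z\cJ$.
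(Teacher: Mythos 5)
Your proposal is correct and takes essentially the same route as the paper: reduce via stability-independence to exhibiting one generic $\uptheta$ on $H_\updelta$ with $\cS_\uptheta(\Lambda)=0$, obtain such a $\uptheta$ from ray-saturation of $\RR(\Updelta_\aff,\cJ)$ plus the tilting-theoretic characterisation of stables, and collapse the multiple-cover formula. The paper packages your ``bridge'' step into Proposition~\ref{prop:stabfromroots} and the ray-saturation into Proposition~\ref{prop:rrmult}/Corollary~\ref{cor:rrdiv}; you correctly identified both ingredients as the real content. One remark: you describe ray-saturation as ``following from the explicit description of the affine root system modulo $\Z\cJ$,'' but the paper proves it (Proposition~\ref{prop:rrmult}) by a type-by-type case analysis with the $E$-types verified by computer, and your statement ``the primitive integral vector on the ray through any restricted root is again a restricted root'' is only automatic for the imaginary ray because $\gcd(\overline\imroot)=1$ (i.e.\ $0\in\cJ^c$), which holds here but deserves to be said.
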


The invariants $\Omega(\updelta)$ are defined via virtual counts of nilpotent modules, and should be considered as \emph{local} contributions to ``global'' invariants $\Omega^\glob(\updelta)$ defined by an analogous multiple-cover formula
\[
  \Sym\left(\sum_{\uptheta(\updelta) = 0} \frac{\Omega^\glob(\updelta)}{\L^{\frac12} - \L^{-\frac12}}
    \cdot t^\updelta\right) = \sum_{\uptheta(\updelta) = 0} \left[\fM^{\uptheta\semis}_\updelta\right]_\vir\cdot t^\updelta,
\]
involving the full moduli stack $\fM_\updelta^{\uptheta\semis}$ of modules of the full Jacobi algebra $\C\<Q,W\>$.
Although the global theory is much more difficult to describe in general, we are able to extend Theorem~\ref{introthm:A} to the global setting if the potential $W$ is weighted-homogeneous for some choice of strictly positive weights.

\begin{introthm}[{\ref{thm:BPSvanishingtwo}}]
  In the setting of Theorem~\ref{introthm:A}, suppose there exists a grading $|\cdot| \colon Q_1 \to \N_{>0}$ on the arrows of $Q$ for which $W$ is homogeneous. Then for every $\updelta \in \N Q_0$ of multiplicity $d = \gcd(\updelta)$ also
  \[
    \tfrac1d \updelta \not\in \RR(\Updelta_\aff,\cJ) \quad\implies\quad \Omega^\glob(\updelta) = 0.
  \]
\end{introthm}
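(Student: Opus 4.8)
The plan is to deduce the global vanishing from the local Theorem~\ref{introthm:A} by localising the moduli problem over the centre $R \colonequals Z(A)$ of the Jacobi algebra $A\colonequals\C\<Q,W\>$; the grading on $Q$ is used precisely to make the global motivic invariants behave well under this localisation.

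\emph{The role of the grading.} A grading $|\cdot|\colon Q_1\to\N_{>0}$ for which $W$ is homogeneous of some degree $N>0$ induces a $\C^\times$-action on every representation space $\Rep(Q,\updelta)$, rescaling the linear map at an arrow $a$ by $t^{|a|}$. This action commutes with $\GL_\updelta$; for $t\neq 0$ it does not change the lattice of subrepresentations of a point, so it preserves the King-semistable locus for every $\uptheta$ with $\uptheta(\updelta)=0$; and the potential trace function $f_W$ on $\Rep(Q,\updelta)$ is homogeneous of positive weight $N$ for it. Two consequences matter. First, $R$ inherits a non-negative grading with $R_0=\C$, so $X\colonequals\Spec R$ is an affine cone with vertex the central point $0$, the support morphism $\fM_\updelta^{\uptheta\semis}\to X$ is $\C^\times$-equivariant, and the stratification of $X$ by the analytic-local isomorphism type of $(X,A)$ is finite. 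Second, the motivic vanishing cycle of a function that is homogeneous of positive weight is an honest (difference of) motive, trivialised transversally to the vertex; this is what makes $[\fM_\updelta^{\uptheta\semis}]_\vir$, hence $\Omega^\glob(\updelta)$, a well-defined motivic class, and --- crucially --- compatible with the support stratification of $X$. Without such a grading this global motivic construction is not available, which is why Theorem~\ref{introthm:A} does not globalise automatically.

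\emph{Localisation over the centre.} As $A$ is module-finite over $R$, every finite-dimensional $A$-module has finite support in $X$, and an indecomposable one is supported at a single closed point. Using that the vanishing-cycle construction respects this support stratification, the generating series $\sum_\updelta[\fM^{\uptheta\semis}_\updelta]_\vir\, t^\updelta$ factors over the strata of $X$; applying $\Sym^{-1}$ turns the product into a sum and yields
\[
  \Omega^\glob(\updelta)\;=\;\Omega(\updelta)\;+\;\Xi(\updelta),
\]
where $\Omega(\updelta)=\Omega^{(0)}(\updelta)$ is the contribution of modules supported at the vertex $0$ --- by definition the nilpotent representations, so this term vanishes under the hypothesis by Theorem~\ref{introthm:A} --- and $\Xi(\updelta)$ is an integral over $X\setminus\{0\}$ of local invariants $\Omega^{(p)}(\updelta)$, each computed from the completion $\widehat A_p$. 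It remains to kill $\Xi(\updelta)$ when $\tfrac1d\updelta\notin\RR(\Updelta_\aff,\cJ)$. For a closed point $p\neq 0$, Reid's theory~\cite{Reid83} together with the behaviour of NCCRs under localisation~\cite{IWMemoir} shows that $\widehat R_p$ is either regular or a cDV singularity of an ADE type $\Updelta^{(p)}$ obtained from $\Updelta$ by deleting nodes, and $\widehat A_p$ is an NCCR of it. If $\widehat R_p$ is regular, then $\widehat A_p$ is a matrix algebra over $\widehat R_p\cong\C[[x,y,z]]$, whose finite-dimensional modules have dimension vector a positive multiple of the ranks vector $\mathbf r=\uppi_\cJ(\imroot)$, the image of the imaginary root of $\Updelta_\aff$; the relevant local invariant is the degree-zero invariant of $\A^3$, which vanishes in dimension vector $d\mathbf r$ for $d\geq 2$. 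Since $\mathbf r$ is primitive (the NCCR has a rank-one summand) and $\imroot\notin\Z\cJ=\ker\uppi_\cJ$ (its coordinates are all positive while $\cJ\subsetneq\Updelta_\aff$), we get $\mathbf r\in\RR(\Updelta_\aff,\cJ)$, so the hypothesis forces $\updelta\notin\N\mathbf r$ and the regular strata contribute nothing. If instead $\widehat R_p$ is a cDV singularity, $\widehat A_p$ has a Dynkin type $(\Updelta^{(p)}_\aff,\cJ^{(p)})$, and Theorem~\ref{introthm:A} applied to it gives $\Omega^{(p)}(\updelta)=0$ as soon as $\tfrac1d\updelta\notin\RR(\Updelta^{(p)}_\aff,\cJ^{(p)})$; the combinatorial results of \S\ref{sec:dynkin} provide the inclusion $\RR(\Updelta^{(p)}_\aff,\cJ^{(p)})\subseteq\RR(\Updelta_\aff,\cJ)$, so the hypothesis kills these contributions too. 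Hence $\Xi(\updelta)=0$ and $\Omega^\glob(\updelta)=0$.

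\emph{Main obstacle.} I expect the real work to be the claim used in the second step: that the motivic invariant $[\fM^{\uptheta\semis}_\updelta]_\vir$ --- and the BPS invariant extracted from it --- genuinely decomposes along the support stratification of $X$ at the level of motivic (not merely numerical or cohomological) classes, compatibly with the semistability condition and the stack structure. This is exactly where the homogeneity of $W$ is indispensable, since the global $\C^\times$-action is what both defines the motivic vanishing cycles and trivialises them away from the vertex. A second, more combinatorial, point is the inclusion $\RR(\Updelta^{(p)}_\aff,\cJ^{(p)})\subseteq\RR(\Updelta_\aff,\cJ)$ for the localised Dynkin types; I would extract this from the wall-crossing/localisation structure of~\cite{IWMemoir} developed in \S\ref{sec:dynkin}, and it is perhaps cleanest stated via the intersection arrangements $\cH_{\Updelta_\aff,\cJ}$, with localisation corresponding to intersection with a coordinate subspace.
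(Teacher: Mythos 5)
The paper's proof is much more direct than yours and sidesteps the entire decomposition issue you identified as your ``main obstacle.'' Rather than stratifying the moduli problem over $\Spec Z(A)$ and establishing a motivic decomposition $\Omega^\glob(\updelta)=\Omega(\updelta)+\Xi(\updelta)$, the paper shows that under the hypothesis $\tfrac1d\updelta\notin\RR(\Updelta_\aff,\cJ)$ the \emph{entire} coarse moduli scheme $\cM^{\upzeta\semis}_d(Q,W)$ is empty for every $d$ with $\upzeta(d)=0$, not merely its fibre over the origin. The mechanism is simple: the positivity of the grading weights makes the $\C^\times$-action on $\Rep_\updelta(Q,W)$ degenerate every point to $\fatnull^\updelta$ as $\lambda\to 0$; the Jordan--H\"older map $\JH'\colon\cM^{\upzeta\semis}_\updelta\to\cM_\updelta$ is projective (King), so by the valuative criterion the resulting $\G_m$-family inside $\cM^{\upzeta\semis}_\updelta$ extends over $0$, and the limit necessarily lands in the nilpotent fibre $\cN^{\upzeta\semis}_\updelta$, which is already known to be empty by Theorem~\ref{introthm:A}. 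Once $\cM^{\upzeta\semis}_\updelta=\varnothing$, the stacks are pointless, the slope-$0$ partition function is $1$, and $\Omega^\glob$ vanishes with no motivic machinery required.

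Your route is conceptually natural --- it is the standard support-localisation philosophy in DT theory --- but the gap you flagged is a genuine one, not a technicality you can expect to dispatch quickly. The claim that $\sum_\updelta[\fM^{\uptheta\semis}_\updelta]_\vir\,t^\updelta$ factorises motivically over the strata of the support variety is not available in the literature at this level of generality: motivic vanishing cycles are defined on a fixed ambient smooth scheme with a fixed function, and there is no Thom--Sebastiani-type mechanism that lets you glue local contributions $\Omega^{(p)}$ along a support stratification without first establishing something like the Davison--Meinhardt cohomological integrality and support machinery in the motivic setting, which this paper explicitly declines to do. Your combinatorial input (that the localised Dynkin types $(\Updelta^{(p)}_\aff,\cJ^{(p)})$ give $\RR(\Updelta^{(p)}_\aff,\cJ^{(p)})\subseteq\RR(\Updelta_\aff,\cJ)$, and that regular strata contribute only multiples of $\uppi_\cJ(\imroot)$) is plausible and would indeed be needed for a numerical version of this argument, but it is not addressed in the paper precisely because the emptiness argument makes it unnecessary. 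If you want to keep your approach, you should at minimum downgrade it to a proof of the numerical (Euler-characteristic) statement, where support additivity of weighted Euler characteristics is actually available; for the motivic statement, the paper's degeneration argument is the right tool, and it is worth recognising the role of the positivity of the weights: it is what forces the limit point to be nilpotent.
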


To a \emph{geometric} crepant resolution $Y\to \Spec R$, one can associate a \emph{finite} Dynkin type $(\Updelta,J)$ via Reid's~\cite{Reid83} general elephant construction, which yields a surjective restriction map
\[
  \uppi_J \colon \Z \Updelta \to \hh_2(Y,\Z)
\]
mapping simple roots $\upalpha_i$ in the finite root lattice to classes $[\curve_i]$ of irreducible rational curves $\curve_i \subset Y$, with kernel given by a sublattice $\Z J$ generated by $J\subset \Updelta$.
As in the affine case, there are restricted roots
\[
  \RR(\Updelta,J) \subset \hh_2(Y,\Z).
\]
A construction of Van den Bergh~\cite{VdBer04b} shows that each such crepant resolution has an associated NCCR which is derived equivalent to $Y$ via a standard choice of tilting bundle.
Given a presentation $\Lambda_Y \cong \jacc{Q,W}$ of such a \emph{standard NCCR}, we verify that the tilting bundle induces an isomorphism
\[
  \hh_0(Y,\Z) \oplus \hh_2(Y,\Z) \xrightarrow{\ \sim\ } \Z Q_0,\quad (\upchi,\upbeta = \textstyle\sum_i \upbeta_i \uppi_J(\upalpha_i)) \mapsto \sum_{i}\upbeta_i \uppi_\cJ(\upalpha_i) + \upchi \cdot \uppi_\cJ(\imroot),
\]
where $\imroot \in \Z\Updelta_\aff$ denotes the imaginary root.
For every pair $(\upchi,\upbeta)$ mapping into $\N Q_0 \subset \Z Q_0$ along this isomorphism, we therefore obtain a well-defined noncommutative BPS invariant $\Omega_Y(\upchi,\upbeta)$, and these invariants satisfy the following geometric version of Theorem~\ref{introthm:A}.

\begin{introthm}[{\ref{thm:geomvanish}}]\label{introthm:C}
  Let $Y\to \Spec R$ be a crepant resolution with Dynkin type $(\Updelta,J)$, and with standard NCCR $\Lambda_Y$ presented by a quiver with potential.
  Then the invariants $\Omega_Y(\upchi,\upbeta)$ satisfy
  \[
    \upbeta \neq 0\ \text{ and }\ \tfrac1d \upbeta \not\in \RR(\Updelta,J) \quad\implies\quad \Omega_Y(\upchi,\upbeta) = 0,
  \]
  where $d = \gcd(\upchi,\upbeta)$ denotes the multiplicity.
\end{introthm}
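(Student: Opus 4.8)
The plan is to deduce this from the affine vanishing result, Theorem~\ref{thm:BPSvanishing}, applied to the standard NCCR $\Lambda_Y\cong\jacc{Q,W}$. Write $\upiota\colon\hh_0(Y,\Z)\oplus\hh_2(Y,\Z)\xrightarrow{\ \sim\ }\Z Q_0$ for the isomorphism recalled above, so that by definition $\Omega_Y(\upchi,\upbeta)=\Omega(\updelta)$ with $\updelta=\upiota(\upchi,\upbeta)$. Since $\upiota$ and $\upiota^{-1}$ are $\Z$-linear, $\updelta\in m\,\Z Q_0$ if and only if $(\upchi,\upbeta)\in m\bigl(\hh_0(Y,\Z)\oplus\hh_2(Y,\Z)\bigr)$ for every $m\in\N$; in particular $\gcd(\updelta)=\gcd(\upchi,\upbeta)=d$ and $\tfrac1d\updelta=\upiota\bigl(\tfrac\upchi d,\tfrac\upbeta d\bigr)$. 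By Theorem~\ref{thm:BPSvanishing} it therefore suffices to show that $\tfrac1d\updelta\in\RR(\Updelta_\aff,\cJ)$ together with $\upbeta\neq0$ implies $\tfrac1d\upbeta\in\RR(\Updelta,J)$; after rescaling by $d$, this is the purely combinatorial statement
\[
  \upiota(\upchi,\upbeta)\in\RR(\Updelta_\aff,\cJ)\ \text{ and }\ \upbeta\neq0\quad\implies\quad\upbeta\in\RR(\Updelta,J).\qquad(\star)
\]

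To prove $(\star)$, I would first pin down the relation between the affine Dynkin type $(\Updelta_\aff,\cJ)$ of $\Lambda_Y$ and the finite Dynkin type $(\Updelta,J)$ attached to $Y$ by Reid's general elephant construction. The vertex of $Q$ corresponding to the structure sheaf $\O_Y$ is the extra affine node of $\Updelta_\aff=\Updelta\cup\{\upalpha_0\}$, and comparing vertex counts, $\#Q_0=1+\#\{\text{exceptional curves of }Y\}=1+(\#\Updelta-\#J)$, forces $\upalpha_0\notin\cJ$ and $\cJ=J$ under the inclusion $\Updelta\hookrightarrow\Updelta_\aff$; consequently $\uppi_\cJ$ restricts to $\uppi_J$ on $\Z\Updelta$, with $\ker\uppi_\cJ=\Z J$, and the general elephant identifies $\hh_2(Y,\Z)$ with $\Z\Updelta/\Z J$. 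Next I would use the standard description of the affine root system: every $\rt\in\Rts\Updelta_\aff$ is of the form $\rt=\upalpha+n\,\imroot$ with $\upalpha\in\Rts\Updelta\cup\{0\}$ and $n\in\Z$ ($\upalpha=0$ forcing $n\neq0$), where $\imroot=\upalpha_0+\theta$ and $\theta\in\Z\Updelta$ is the highest root. Applying $\uppi_\cJ$ and comparing coefficients of $\upalpha_0$ (which are untouched by $\uppi_\cJ$ since $\upalpha_0\notin\cJ$) with the defining formula $\upiota(\upchi,\upbeta)=\uppi_\cJ\bigl(\textstyle\sum_i\upbeta_i\upalpha_i+\upchi\,\imroot\bigr)$, one finds that $\uppi_\cJ(\upalpha+n\,\imroot)=\upiota(\upchi,\upbeta)$ holds exactly when $n=\upchi$ and $\uppi_J(\upalpha)=\upbeta$. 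Hence $\upiota(\upchi,\upbeta)\in\RR(\Updelta_\aff,\cJ)$ forces $\upbeta=\uppi_J(\upalpha)$ for some $\upalpha\in\Rts\Updelta\cup\{0\}$, and if $\upbeta\neq0$ then $\uppi_J(\upalpha)\neq0$, so $\upalpha$ is a genuine finite root and $\upbeta\in\RR(\Updelta,J)$; this is $(\star)$. (The hypothesis $\upbeta\neq0$ is essential: for $\upchi\neq0$ the class $\upiota(\upchi,0)$ already lies in $\RR(\Updelta_\aff,\cJ)$ via the imaginary root $\upchi\,\imroot$.) Combining $(\star)$ with the divisibility identity of the first paragraph gives $\tfrac1d\updelta\notin\RR(\Updelta_\aff,\cJ)$ whenever $\upbeta\neq0$ and $\tfrac1d\upbeta\notin\RR(\Updelta,J)$, and Theorem~\ref{thm:BPSvanishing} then yields $\Omega_Y(\upchi,\upbeta)=\Omega(\updelta)=0$.

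The affine-root bookkeeping is routine; the step I expect to demand the most care is the input that $\cJ=J$ and, more precisely, that Van den Bergh's standard tilting bundle induces exactly the isomorphism $\upiota$, with the imaginary root $\imroot$ mapping to the class that completes $\hh_0(Y,\Z)$ inside $\Z Q_0$. Concretely, one must verify that the abstract Dynkin type of the standard NCCR $\Lambda_Y$ is the affinization of the general-elephant data $(\Updelta,J)$, and that this identification is compatible, through the tilting equivalence, with the general elephant's identification of $\hh_2(Y,\Z)$ with $\Z\Updelta/\Z J$. This is where I would draw on the structural results of~\cite{IWMemoir} and the construction of~\cite{VdBer04b}, and it is plausibly where most of the real work in the corresponding section of the paper resides.
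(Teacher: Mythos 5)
Your proposal is correct and follows essentially the same strategy as the paper: reduce to the affine vanishing result (Theorem~\ref{thm:BPSvanishing}) via the identification $\updelta = \upbeta + \upchi\overline\imroot$, observe that $\gcd(\updelta) = \gcd(\upchi,\upbeta)$ because the identification is a $\Z$-linear isomorphism, and then show that $\tfrac1d\upbeta\not\in\RR(\Updelta,J)$ together with $\upbeta\neq0$ forces $\tfrac1d\updelta\not\in\RR(\Updelta_\aff,\cJ)$. Your statement $(\star)$ is precisely what the paper's Lemma~\ref{lem:realRR} packages (in the case $0\not\in\cJ$): $\RR(\Updelta_\aff,J)$ decomposes as the imaginary restricted roots $\Z_{\neq 0}\overline\imroot$ together with $\RR(\Updelta,J) + \Z\overline\imroot$, and your coefficient comparison on $\upalpha_0$ reproduces its proof.

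One small caution on the step you correctly flag as the hardest: the ``vertex counting'' inference ($\#Q_0 = 1 + \#J^c$ forces $\upalpha_0\notin\cJ$ and $\cJ = J$) is not by itself a proof. Equal cardinality of $\cJ^c$ and $\{0\}\cup J^c$ tells you only $\#\cJ = \#J$, not which nodes are contracted nor that the induced labelling of simples matches. The paper's Lemma~\ref{prop:NCDynkin} does the actual work: it slices the standard tilting bundle $\cN$ along $t$, shows by Nakayama that the resulting bundle on the partial resolution $Z$ is again the Van den Bergh tilting bundle, and then identifies $\Lambda_Y/t\Lambda_Y \cong e_J\Uppi e_J$, which is how one obtains $\cJ = J$ with the correct matching of idempotents to curves (and of $\imroot$ to the point class). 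You are right to identify this as the load-bearing input; the rest of your argument is just the paper's proof written out in more explicit root-theoretic detail.
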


The motivic invariants have numerical realisations $\Omega_Y^\num(\upchi,\upbeta) \in \Z$ which are related to the curve-counting invariants of class $\upbeta$ and Euler characteristic $\upchi$ via a wall-crossing relation~\cite{Toda13}.
In particular, one can recover the \emph{Gopakumar--Vafa} numbers~\cite{MT18}
\[
  n_\upbeta = \Omega_Y^\num(1,\upbeta).
\]
As a special case of Theorem~\ref{introthm:C} we find the following vanishing result for these invariants.
\begin{introcor}[{\ref{cor:GVinvariants}}]
  If $Y$ has Dynkin type $(\Updelta,J)$, then for all effective curve classes $\upbeta \in \hh_2(Y,\Z)$
  \[
    \upbeta \not\in \RR(\Updelta,J) \quad\implies\quad n_\upbeta = 0,
  \]
\end{introcor}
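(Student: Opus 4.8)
The plan is to read off this statement as the enumerative ($\upchi = 1$) shadow of Theorem~\ref{introthm:C}. Recall that the Gopakumar--Vafa number is defined as the numerical realisation $n_\upbeta = \Omega_Y^\num(1,\upbeta)$, obtained by applying the numerical realisation ring homomorphism $\KK^\muhat(\Var/\C)[\L^{\pm\half}] \to \Z$ to the motivic class $\Omega_Y(1,\upbeta)$; in particular $(1,\upbeta)$ already lies in the domain $\N Q_0$ on which $\Omega_Y$ is defined, and $\Omega_Y(1,\upbeta) = 0$ forces $n_\upbeta = 0$ since ring homomorphisms send $0$ to $0$.

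So let $\upbeta \in \hh_2(Y,\Z)$ be an effective curve class with $\upbeta \notin \RR(\Updelta,J)$. We may assume $\upbeta \neq 0$, as the zero class is excluded from the Gopakumar--Vafa data (it carries no nonconstant curves). The multiplicity of $(1,\upbeta)$ is $d = \gcd(1,\upbeta) = 1$, so $\tfrac1d \upbeta = \upbeta \notin \RR(\Updelta,J)$, and therefore the hypotheses of Theorem~\ref{introthm:C} hold with $\upchi = 1$. That theorem gives $\Omega_Y(1,\upbeta) = 0$, hence $n_\upbeta = \Omega_Y^\num(1,\upbeta) = 0$.

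There is no genuine obstacle here: the entire content lies in Theorem~\ref{introthm:C}, and the corollary is a one-line specialisation to the enumerative case. The only bookkeeping is the identification of $n_\upbeta$ with $\Omega^\num_Y(1,\upbeta)$ under the isomorphism $\hh_0(Y,\Z) \oplus \hh_2(Y,\Z) \xrightarrow{\ \sim\ } \Z Q_0$, which has already been set up and under which effectivity of $\upbeta$ places $(1,\upbeta)$ in $\N Q_0$ (using positivity of the coordinates of $\uppi_\cJ(\imroot)$ and that $\upbeta$ is a nonnegative combination of the classes $[\curve_i] = \uppi_J(\upalpha_i)$).
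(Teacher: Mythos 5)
Your proof is correct and takes the same route the paper intends: the corollary is stated without a written proof, immediately following the discussion of Toda's relation $n_\upbeta = \chi^{\mot}_\num(\Omega_Y(1,\upbeta))$, and is meant to be the $\upchi = 1$, $d = \gcd(1,\upbeta) = 1$ specialisation of Theorem~\ref{thm:geomvanish}. Your bookkeeping — noting that effectivity of $\upbeta$ together with the positivity of the coordinates of $\overline\imroot$ places $(1,\upbeta)$ in $\N Q_0$, and that $\upbeta = 0$ can be excluded — correctly fills in the small details the paper leaves implicit.
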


The above vanishing result was also found for isolated cDV singularities in the recent work of Nabijou--Wemyss~\cite{NW21}, which our result extends to the non-isolated setting.

As an example, we work out the Dynkin combinatorics for the NCCR of the G-Hilbert scheme resolution of even dihedral quotient singularities in \S\ref{ssec:dihedral}, and show that this yields a new expression for the dimension vectors of the nontrivial BPS invariants recently found by Mozgovoy--Reineke~\cite{MR21}.

\subsection{Symmetry results}

To derive new symmetry results we use a characterisation of the semistable objects in terms of \emph{2-term tilting complexes} $T = T^{-1} \to T^0 \in \KK^b(\proj \Lambda)$.
Each 2-term tilting complex corresponds to a unique open ``chamber'' in the stability space
\[
  \cone^\circ T \subset (\N Q_0)^*,
\]
and the disjoint union of these chambers is exactly the complement of $\cH_{\Updelta_\aff,\cJ}$.
For a stability condition $\uptheta$ lying generically on a wall separating two such chambers $\cone^\circ T$ and $\cone^\circ T'$, there is a categorical characterisation of the $\uptheta$-semistable modules:
up to interchanging $T$ and $T'$
\[
  \cS_\uptheta(\Lambda) = \{\ E \in \D^b(\fdmod \Lambda) \mid \Hom_{\D(\Lambda)}(T,E[\neq 0]) = \Hom_{\D(\Lambda)}(T'',E) = 0\ \},
\]
where $T'' = T \cap T'$ is the maximal direct summand shared by $T$ and $T'$.
This characterisation is compatible with derived equivalences: if $F\colon \D^b(\mod \Lambda) \to \D^b(\mod \Lambda')$ is an equivalence between two (possibly the same) NCCRs $\Lambda$ and $\Lambda'$, then $F$ restricts to an equivalence
\begin{equation}\label{eq:stabequiv}
  \cS_\uptheta(\Lambda) \xrightarrow{\ \sim\ } \cS_{[F]\uptheta}(\Lambda')
\end{equation}
whenever the images $F(T),F(T') \in \KK^b(\proj \Lambda')$ are again 2-term complexes.
Because such an equivalence yields an isomorphism between the corresponding moduli stacks, and we show in \S\ref{sec:mut} that this yields a relation between the associated numerical BPS invariants.

Applying the above to the action of line bundles on the derived category of a geometric crepant resolution, we obtain the following modularity property for the numerical realisations of the noncommutative BPS invariants.

\begin{introthm}[{\ref{thm:stabilitytwist}}]\label{introthm:D}
  Let $Y\to \Spec R$ be a crepant resolution with standard NCCR presented by a quiver with potential.
  Then for any coprime pair $(\upchi,\upbeta)$ with $\upbeta\in \hh_2(Y,\Z)$ effective there are relations
  \[
    \begin{aligned}
      \Omega_Y^\num(\upchi,\upbeta) &= \Omega_Y^\num(\upchi + \cL\cdot \upbeta, \upbeta),\\
      \Omega_Y^\num(\upchi,\upbeta) &= \Omega_Y^\num(n d - \upchi, -\upbeta),
    \end{aligned}
  \]
  for all globally generated line bundles $\cL\in\Pic^+ Y$ and sufficiently large $n\gg 0$.
\end{introthm}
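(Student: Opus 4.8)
The plan is to deduce both identities from the general mechanism of \S\ref{sec:mut}: whenever a derived equivalence $F\colon \D^b(\mod \Lambda_Y)\to \D^b(\mod \Lambda')$ between standard NCCRs of $Y$ sends the $2$-term tilting complexes $T,T'$ bounding the wall through a given $\uptheta$ again to $2$-term complexes, \eqref{eq:stabequiv} produces an equivalence $\cS_\uptheta(\Lambda_Y)\xrightarrow{\ \sim\ }\cS_{[F]\uptheta}(\Lambda')$, hence an isomorphism of moduli stacks $\fN^{\uptheta\semis}_\updelta\cong \fN^{[F]\uptheta\semis}_{[F]_*\updelta}$, and therefore a relation $\Omega^{\num}(\updelta)=\Omega^{\num}([F]_*\updelta)$ between the numerical BPS invariants. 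Recall that $\cP$ identifies $\D^b(\coh Y)\simeq \D^b(\mod \Lambda_Y)$ and that, under the isomorphism $\hh_0(Y,\Z)\oplus \hh_2(Y,\Z)\xrightarrow{\ \sim\ }\Z Q_0$ of Theorem~\ref{introthm:C}, a class $(\upchi,\upbeta)$ is represented by a $\pi$-fibrewise-supported complex of holomorphic Euler characteristic $\upchi$ and support cycle $\upbeta$. So the work is: (a) exhibit the relevant equivalences, (b) compute the induced maps $[F]_*$ on $\Z Q_0$ in these coordinates, and (c) verify the $2$-term hypothesis.

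For the first identity I would take $F=-\Lotimes_{\O_Y}\cL$ for $\cL\in\Pic^+ Y$. Tensoring by $\cL$ is a canonical isomorphism $\End_Y(\cP)\xrightarrow{\ \sim\ }\End_Y(\cP\otimes \cL)$, identifying both with $\Lambda_Y$, so this is an autoequivalence of $\D^b(\mod \Lambda_Y)$, namely the standard $\Pic Y$-action. The projection formula and Hirzebruch--Riemann--Roch show that on a fibrewise-supported class the support cycle is unchanged while the Euler characteristic is shifted by $c_1(\cL)\cdot\upbeta$, so $[F]_*(\upchi,\upbeta)=(\upchi+\cL\cdot\upbeta,\upbeta)$, and since $\cL\cdot\upbeta\ge 0$ for effective $\upbeta$ the target still lies in $\N Q_0$. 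For (c) one needs that twisting by a globally generated line bundle carries the $\cone^\circ T$-chambers adjacent to the wall through $\uptheta$ again to chambers of $2$-term tilting complexes; here I would invoke the combinatorics of \S\ref{sec:dynkin} (equivalently, the structure of the Iyama--Wemyss wall-crossing groupoid), which expresses exactly the ``positivity'' of the $\Pic^+ Y$-action relative to the wall-and-chamber decomposition of the complement of $\cH_{\Updelta_\aff,\cJ}$. Feeding this into the mechanism above gives the first relation.

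For the second identity I would compose a high power of such a twist with the Grothendieck--Serre duality $\mathbb D=\RHom_{\O_Y}(-,\omega_Y[\dim Y])$ of the (local) Calabi--Yau threefold $Y$. Being contravariant, $\mathbb D$ is realised, after transport through $\cP$, as an equivalence from $\D^b(\mod \Lambda_Y)$ to $\D^b(\mod \Lambda_Y^{\op})$ with $\Lambda_Y^{\op}=\End_Y(\cP^\vee)$, which is again a standard NCCR of $Y$; post-composing with the canonical equivalence $\D^b(\mod \Lambda_Y^{\op})\to \D^b(\mod \Lambda_Y)$ and with $-\Lotimes_{\O_Y}\cL^{\otimes n}$ gives an autoequivalence $F$. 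Using $\omega_Y\cong \O_Y$, the fact that the dual sheaf of a pure curve-supported sheaf has Euler characteristic $-\chi$, and the comparison of the $\hh_0\oplus \hh_2$-coordinates attached to $\cP$ and to $\cP^\vee$, together with the twist computed above, I would check that $[F]_*(\upchi,\upbeta)=(nd-\upchi,-\upbeta)$. The power $n$ must be large: first so that $nd\cdot \uppi_\cJ(\imroot)-\updelta$ again lies in $\N Q_0$, which holds once $n\gg 0$ since $\uppi_\cJ(\imroot)$ is strictly positive; and second so that the twist restores the $2$-term property of the relevant tilting complexes after $\mathbb D$ has been applied. This yields the second relation for $n\gg 0$.

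The main obstacle in both parts is (c). For the $\Pic^+ Y$-action it is a positivity statement about the chamber decomposition that I expect to extract from \S\ref{sec:dynkin}; for the duality one needs in addition that $\mathbb D$ matches the chamber structures of $\Lambda_Y$ and $\Lambda_Y^{\op}$, which is exactly why a sufficiently ample twist must be applied before the hypothesis of \eqref{eq:stabequiv} can hold --- this is the source of the ``$n\gg 0$''. A secondary point is the reduction to numerical invariants: $F$ need not come from an algebra isomorphism and so need not preserve the potential $W$ or the motivic weights, but the isomorphism of moduli stacks it induces is compatible with the intrinsic $d$-critical structure of the Calabi--Yau $3$-category, so the Behrend-weighted counts --- and hence, via the multiple-cover formula, the numerical invariants $\Omega^{\num}$ --- are preserved.
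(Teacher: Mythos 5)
Your first identity is handled with the same functor $-\otimes\cL$ and the same overall mechanism (track $2$-term tilting complexes through the equivalence and invoke the Bongartz--style description of $\cS_\uptheta$), so that part is close to the paper. The key gap, however, is in what you label ``obstacle (c)''. You attribute the $2$-term preservation to ``the combinatorics of \S\ref{sec:dynkin}'' and a ``positivity of the $\Pic^+ Y$-action relative to the wall-and-chamber decomposition''. That positivity is not a formal consequence of the wall-crossing groupoid: it is a \emph{geometric} cohomology vanishing result. Concretely, Lemma~\ref{lem:linebundletilt} shows $\Lambda(\cL)\in\tilt^+\Lambda$ and $\Lambda(\cL^{-1})[1]\in\tilt^-\Lambda$ for a globally generated $\cL$, and the proof is an explicit vanishing of $\Ext^{>0}_Y(\cN,\cN\otimes\cL)$ using the $1$-dimensional fibres of $\uppi$ and a surjection $\O_Y^n\twoheadrightarrow\cL$. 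The paper then chooses $T,T'$ bounding a wall through $H_{\upbeta+\upchi\overline\imroot}$ such that $\Lambda\geq T\geq T'\geq\Lambda(n)$, so twisting by $\cL$ sandwiches $T(\cL),T'(\cL)$ between $\Lambda$ and $\Lambda[1]$ by Lemma~\ref{lem:alltiltpres}, hence they are $2$-term. Simply citing the combinatorial structure does not deliver this --- the $\Pic^+Y$-action has no a priori reason to preserve the $2$-term property without this geometric input.

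For the second identity you propose composing a high twist with Grothendieck--Serre duality $\mathbb D=\RHom_{\O_Y}(-,\omega_Y[3])$ and pass through $\D^b(\mod\Lambda_Y^{\op})$. This is a genuinely different route from the paper's, which uses the much simpler covariant functor $E\mapsto E(-n)[1]$ and never leaves $\D^b(\mod\Lambda_Y)$. Your route has problems the paper's does not: there is no ``canonical equivalence $\D^b(\mod\Lambda_Y^{\op})\to\D^b(\mod\Lambda_Y)$'' --- the algebra $\Lambda_Y^{\op}\cong\End_Y(\cN^\vee)$ corresponds to Van den Bergh's \emph{other} tilting bundle (the one the paper explicitly dualises to define $\cN$), and identifying it back with $\Lambda_Y$ requires a further derived equivalence whose action on K-theory, chambers, and $2$-term tilting complexes you would then also have to control. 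The contravariance of $\mathbb D$ also reverses the partial order on $\alltilt$, which must be accounted for in the $2$-term verification. The paper avoids all of this: $E\mapsto E(-n)[1]$ is covariant, preserves $\Lambda_Y$, and Lemma~\ref{lem:alltiltpres} together with $\Lambda(-n)[1]\in\tilt\Lambda$ (again from Lemma~\ref{lem:linebundletilt}) immediately certifies $F(T),F(T')\in\tilt\Lambda$. So your second argument is both unproven at the crucial step and strictly more work than needed.

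Finally, the reduction to numerical invariants: you invoke compatibility with the intrinsic $d$-critical structure. The paper proves the analogous statement (Proposition~\ref{prop:numBPSsym}) by observing that a \emph{standard} equivalence lifts to the DG enhancement and hence induces an $L_\infty$-isomorphism between the minimal models $\mathfrak{g}_M\cong\mathfrak{g}_{F(M)}$, giving equal Behrend function values. Your phrasing is morally the same, but note the hypothesis on $F$ being standard is doing real work there --- and this is yet another place where the contravariant $\mathbb D$ route would need extra justification.
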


The second type of functor we consider is the mutation of an NCCR $\Lambda$ at a vertex $i\in Q_0$: a construction of Iyama--Wemyss~\cite{IW14} defines a new NCCR $\upmu_i\Lambda$ which is related to $\Lambda$ via a tilting equivalence $F\colon \D^b(\mod\Lambda) \simeq \D^b(\mod \upmu_i\Lambda)$.
We show in \S\ref{sec:dynkin} that the action of this functor on the dimension vectors and stability conditions is captured by an element $\omega_i\colon \cJ \to \fJ$ of the wall-crossing groupoid, labelled by an element  $\omega_i$ of the Weyl group of $\Updelta_\aff$, which acts via bijections
\[
  \begin{tikzcd}[column sep=8em]
    \RR(\Updelta_\aff,\cJ) \ar[r,shift left,"\uppi_\cJ(\rt)\ \mapsto\ \uppi_\fJ(\omega_i\rt)"] & \RR(\Updelta_\aff,\fJ). \ar[l,shift left,"\uppi_\cJ(\omega_i\rt)\ \mapsfrom\ \uppi_\fJ(\rt)"]
  \end{tikzcd}
\]
The mutation functor then yields the following relation between the numerical BPS invariants.
\begin{introthm}[{\ref{thm:stabmut}}]\label{introthm:E}
  Let $\Lambda = \jacc{Q,W}$ be an NCCR of Dynkin type $(\Updelta_\aff,\cJ)$, and suppose the mutation $\upmu_i\Lambda$ at $i\in Q_0$ can be presented by a quiver with potential.
  Then for every indivisible $\updelta \in \RR(\Updelta_\aff,\cJ)$ which is not colinear to $\uppi_\cJ(\upalpha_i)$ or $\uppi_\cJ(\imroot)$ there is a relation
  \[
    \Omega_\Lambda^\num(\updelta) = \Omega_{\upmu_i\Lambda}^\num(\omega_i \cdot \updelta)
  \]
\end{introthm}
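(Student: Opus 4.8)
The plan is to deduce the relation from the compatibility of semistable subcategories with derived equivalences, equation~\eqref{eq:stabequiv}, applied to the mutation functor $F\colon \D^b(\mod\Lambda)\xrightarrow{\sim}\D^b(\mod\upmu_i\Lambda)$ of Iyama--Wemyss~\cite{IW14}, together with the combinatorial bookkeeping via the wall-crossing groupoid established in \S\ref{sec:dynkin}. First I would fix an indivisible restricted root $\updelta\in\RR(\Updelta_\aff,\cJ)$ not colinear to $\uppi_\cJ(\upalpha_i)$ or $\uppi_\cJ(\imroot)$, and pick a King-stability parameter $\uptheta$ that lies generically on the wall $H_\updelta$ in $\cH_{\Updelta_\aff,\cJ}$ — i.e. $\uptheta(\updelta)=0$ but $\uptheta$ avoids all other walls. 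By the chamber description recalled in the symmetry-results subsection, $\uptheta$ separates two 2-term-tilting chambers $\cone^\circ T$ and $\cone^\circ T'$, and $\cS_\uptheta(\Lambda)$ has the categorical characterisation in terms of $T$, $T'$ and $T''=T\cap T'$ stated there. The key input from \S\ref{sec:dynkin} is that the mutation groupoid element $\omega_i\colon\cJ\to\fJ$ sends the restricted root $\updelta$ to $\omega_i\cdot\updelta\in\RR(\Updelta_\aff,\fJ)$ and, dually, sends the wall $H_\updelta$ to the wall $H_{\omega_i\cdot\updelta}$ in $\cH_{\Updelta_\aff,\fJ}$, with the induced linear map on stability space being $[F]$; I would invoke this to identify $[F]\uptheta$ as a parameter lying generically on $H_{\omega_i\cdot\updelta}$.

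Next I would check the hypothesis needed to apply~\eqref{eq:stabequiv}, namely that $F(T)$ and $F(T')$ are again 2-term complexes in $\KK^b(\proj\upmu_i\Lambda)$. This is where the colinearity exclusions enter: the mutation functor $F$ is itself (up to shift) the 2-term tilting equivalence associated to the simple mutation at $i$, so $F$ preserves 2-term-ness of $T$ and $T'$ precisely when the wall $H_\updelta$ is not the mutation wall at $i$ and not adjacent to the ``imaginary'' region — equivalently, when $\updelta$ is not colinear to $\uppi_\cJ(\upalpha_i)$ or $\uppi_\cJ(\imroot)$. Granting this, \eqref{eq:stabequiv} gives an equivalence of abelian categories $\cS_\uptheta(\Lambda)\xrightarrow{\sim}\cS_{[F]\uptheta}(\upmu_i\Lambda)$. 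Because a derived equivalence inducing an equivalence of hearts also identifies the finite-dimensional simple-support stratification and respects dimension vectors up to the linear map on $K$-theory, $F$ restricts to an isomorphism of the moduli stacks $\fN^{\uptheta\semis}_{\Lambda,\updelta}\cong\fN^{[F]\uptheta\semis}_{\upmu_i\Lambda,\,\omega_i\cdot\updelta}$ — and more generally for every multiple $e\updelta$ — compatibly with the identification $[F]$ of dimension vectors (here I would cite the statement, proved in \S\ref{sec:mut}, that such stack isomorphisms yield relations between the numerical BPS invariants).

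Finally I would extract the invariant identity from these stack isomorphisms by comparing the two multiple-cover formulas. Summing the virtual-motive generating functions over the ray $\{e\updelta : e\geq 1\}$ on the $\Lambda$ side and over $\{e\,\omega_i\cdot\updelta\}$ on the $\upmu_i\Lambda$ side, the stack isomorphisms show the two one-variable generating series agree after passing to numerical realisations; since $\Sym$ is invertible on the relevant completed ring, the factors $\Omega_\Lambda^\num(e\updelta)$ and $\Omega_{\upmu_i\Lambda}^\num(e\,\omega_i\cdot\updelta)$ must agree for all $e$, and in particular for $e=1$, giving $\Omega_\Lambda^\num(\updelta)=\Omega_{\upmu_i\Lambda}^\num(\omega_i\cdot\updelta)$. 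The main obstacle I anticipate is the second step: verifying cleanly that the mutation functor sends the chambers $\cone^\circ T$, $\cone^\circ T'$ to 2-term-tilting chambers under exactly the stated colinearity hypotheses, and that the groupoid element $\omega_i$ indeed realises the $K$-theoretic action $[F]$ on the nose — this requires carefully tracking the difference between mutation in the sense of \cite{IW14} and the 2-term silting mutation, and handling the boundary cases where $H_\updelta$ meets the excluded rays.
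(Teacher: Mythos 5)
Your overall strategy is the same as the paper's: pass the wall $H_\updelta$ through the mutation equivalence $F=\RHom_\Lambda(T_{\mathrm{mut}},-)$, compare semistable hearts via the 2-term-tilting characterisation, and extract a relation between numerical BPS numbers. However, there is a genuine gap in your second step, and it is exactly the ``main obstacle'' you flagged but did not resolve.

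You assert that $F$ preserves 2-term-ness of the two adjacent tilting complexes bounding $H_\updelta$ ``precisely when the wall $H_\updelta$ is not the mutation wall at $i$ and not adjacent to the imaginary region,'' treating this as equivalent to the colinearity hypotheses. That claim is not justified, and as stated it is false: for an arbitrary 2-term tilting complex $U$ with $\Lambda\geq U\geq\Lambda[1]$, the image $F(U)$ is 2-term over $\upmu_i\Lambda$ if and only if $T_{\mathrm{mut}}\geq U\geq T_{\mathrm{mut}}[1]$, because $F$ sends $T_{\mathrm{mut}}\mapsto\upmu_i\Lambda$ and preserves the poset $\alltilt$ (Lemma~\ref{lem:alltiltpres}). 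The wall $H_\updelta$ is bounded by many pairs of adjacent chambers, and most of them do not lie in the required interval. What the paper does --- and what is missing from your sketch --- is to \emph{construct} a suitable pair: Lemma~\ref{lem:poswallcrossing} produces, under the two colinearity exclusions, a minimal wall-crossing sequence of the form $C_\cJ\xrightarrow{\overline\upalpha_i}\omega_iC_\fJ\to\cdots\to w^+C_{\fJ^+}\xrightarrow{\updelta}w^-C_{\fJ^-}$ that passes through the mutation chamber $\omega_iC_\fJ=\cone^\circ T_{\mathrm{mut}}$ \emph{first}. Proposition~\ref{prop:orderscoincide} then converts the geometric order along this path into the algebraic one, giving $T_{\mathrm{mut}}\geq U^+\geq U^-\geq T_{\mathrm{mut}}[1]$ for the chambers $U^\pm$ bounding the $H_\updelta$ crossing, which is exactly the condition you need. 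Without this, the argument does not close.

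A secondary point: your last step, extracting the numerical relation from ``stack isomorphisms'' and a generating-function comparison over the ray $\{e\updelta\}$, is not what the paper proves and is more delicate than you suggest. The equivalence $\cS_\uptheta(\Lambda)\xrightarrow{\sim}\cS_{[F]\uptheta}(\upmu_i\Lambda)$ is an equivalence of hearts, not an isomorphism of d-critical stacks, and virtual motives depend on the CY enhancement (this is precisely the issue that forces the rigidification machinery of \S\ref{ssec:rigid} in the motivic setting). For the numerical statement and indivisible $\updelta$ the paper proceeds differently (Proposition~\ref{prop:numBPSsym}): the nilpotent locus is a single point, and the Behrend function value there depends only on the formal neighbourhood, controlled by a minimal $L_\infty$-algebra $(\Ext^1\oplus\Ext^2,\upmu)$ which the standard-equivalence hypothesis transports along $F$. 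Your generating-function argument over all multiples $e\updelta$ would, if it worked, prove the divisible case too --- but the paper explicitly restricts to indivisible $\updelta$ because the divisible case requires Joyce--Song stack functions.
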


Iyama--Wemyss~\cite{IW14} show that the mutation $\upmu_i\Lambda$ can be equal to $\Lambda$, which occurs precisely if the \emph{deformation algebra} $\Lambda_i$ associated to the node $i$ is infinite dimensional.
In such cases Theorem \ref{introthm:E} yields a symmetry
\[
  \Omega^\num_\Lambda(\updelta) = \Omega^\num_\Lambda(\omega_i \cdot \updelta),
\]
where $\omega_i \cdot \updelta \in \RR(\Updelta_\aff,\fJ)$ is in general a restricted root with respect to a \emph{new} restriction map $\uppi_\fJ\colon \Z \Updelta_\aff \to \Z Q_0$.
In the geometric setup, Wemyss~\cite{HomMMP} has shown that the deformation algebra governs the flopping behaviour of the exceptional curve $\curve_i$ corresponding to $i$.
This fact allows us to deduce the following corollary for the GV invariants, which generalises~\cite[Theorem 5.4]{NW21}.
\begin{introcor}[{\ref{cor:numBPSmut}}]
  Let $Y\to \Spec R$ be crepant resolutions, let $\curve_i \subset Y$ be an exceptional curve.
  Then for every effective curve class $\upbeta \in \hh_2(Y,\Z)$ which is not colinear to $[\curve_i]$ there is a relation
  \[
    n_\upbeta =
    \begin{cases}
      n_{\omega_i\cdot \upbeta}^+ & \text{if } \curve_i \text{ flops }\\
      n_{\omega_i\cdot \upbeta} & \text{if } \curve_i \text{ does not flop}
    \end{cases}
  \]
  where $n_{\omega_i\cdot \upbeta}^+$ denotes a GV invariant of the flop $Y^+$ of $Y$ in $\curve_i$ if this exists.
\end{introcor}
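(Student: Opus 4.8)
The plan is to express the Gopakumar--Vafa numbers as numerical noncommutative BPS invariants and transport them along the mutation equivalence of Theorem~\ref{thm:stabmut}, reading off the geometric meaning of the target via Wemyss's Homological MMP. Concretely, fix the standard NCCR $\Lambda_Y\cong\jacc{Q,W}$ of Dynkin type $(\Updelta_\aff,\cJ)$ and use the lattice isomorphism $\hh_0(Y,\Z)\oplus\hh_2(Y,\Z)\xrightarrow{\ \sim\ }\Z Q_0$ to transfer the pair $(1,\upbeta)$ to the dimension vector
\[
  \updelta \;=\; \uppi_\cJ(\imroot) + \sum_i \upbeta_i\,\uppi_\cJ(\upalpha_i)\;\in\;\N Q_0,
\]
so that $n_\upbeta = \Omega^\num_Y(1,\upbeta) = \Omega^\num_{\Lambda_Y}(\updelta)$ by definition. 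Being the image of the primitive vector $(1,\upbeta)$ under a lattice isomorphism, $\updelta$ is indivisible; it is not colinear to $\uppi_\cJ(\upalpha_i)$ because the $\hh_0(Y,\Z)$-component of $(1,\upbeta)$ is $1\neq 0$ while that of $\uppi_\cJ(\upalpha_i)$ vanishes; and it is not colinear to $\uppi_\cJ(\imroot)$ because $\upbeta\neq0$, which in turn is forced by the hypothesis that $\upbeta$ is not colinear to $[\curve_i]=\uppi_J(\upalpha_i)$. Thus the colinearity hypotheses of Theorem~\ref{thm:stabmut} are met. Finally, if $\upbeta\notin\RR(\Updelta,J)$ then $\updelta\notin\RR(\Updelta_\aff,\cJ)$, so $n_\upbeta = \Omega^\num_{\Lambda_Y}(\updelta) = 0$ by Theorem~\ref{thm:geomvanish} (equivalently Corollary~\ref{cor:GVinvariants}); since the wall-crossing groupoid element $\omega_i$ carries $\RR(\Updelta_\aff,\cJ)$ bijectively onto $\RR(\Updelta_\aff,\fJ)$, hence the complements onto one another, the class $\omega_i\cdot\upbeta$ is again not a restricted root and the right-hand side of the asserted identity vanishes as well, so the identity reads $0=0$. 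It therefore remains to treat $\upbeta\in\RR(\Updelta,J)$, equivalently $\updelta\in\RR(\Updelta_\aff,\cJ)$.

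In this case Theorem~\ref{thm:stabmut} applies and gives $\Omega^\num_{\Lambda_Y}(\updelta) = \Omega^\num_{\upmu_i\Lambda_Y}(\omega_i\cdot\updelta)$. Since the affine Weyl group fixes the imaginary root, under the lattice isomorphism attached to $\upmu_i\Lambda_Y$ and its restriction map $\uppi_\fJ$ the vector $\omega_i\cdot\updelta$ is the image of a pair $(1,\omega_i\cdot\upbeta)$, where $\omega_i$ acts on $\hh_2(Y,\Z)$ through the induced reflection; the hypothesis that $\upbeta$ is not colinear to $[\curve_i]$ is exactly what keeps a positive root representative positive, so that $\omega_i\cdot\upbeta$ is still effective. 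It remains to identify $\upmu_i\Lambda_Y$ geometrically, and here I would invoke Iyama--Wemyss~\cite{IW14} and Wemyss~\cite{HomMMP}. If $\curve_i$ flops, the deformation algebra $\Lambda_i$ is finite dimensional, $\upmu_i\Lambda_Y\not\cong\Lambda_Y$, and by the Homological MMP it is the standard NCCR $\Lambda_{Y^+}$ of the flop $Y^+$ (when this exists), with $\uppi_\fJ$ its geometric restriction map; hence the right-hand side equals $\Omega^\num_{Y^+}(1,\omega_i\cdot\upbeta)=n^+_{\omega_i\cdot\upbeta}$. If $\curve_i$ does not flop, $\Lambda_i$ is infinite dimensional, so $\upmu_i\Lambda_Y=\Lambda_Y$, the mutation being realised by a nontrivial autoequivalence of $\D^b(\mod\Lambda_Y)$ that acts on $\hh_2(Y,\Z)$ by $\omega_i$, and the right-hand side equals $\Omega^\num_Y(1,\omega_i\cdot\upbeta)=n_{\omega_i\cdot\upbeta}$. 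Combining with $n_\upbeta = \Omega^\num_{\Lambda_Y}(\updelta)$ yields the two cases of the statement.

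The main obstacle I anticipate is this last geometric identification: one has to check that the tilting equivalence $\D^b(\mod\Lambda_Y)\simeq\D^b(\mod\upmu_i\Lambda_Y)$ matches $\upmu_i\Lambda_Y$ with the \emph{standard} NCCR of $Y^+$ (respectively with $\Lambda_Y$) compatibly with all the identifications in play --- the lattice isomorphism $\hh_0\oplus\hh_2\cong\Z Q_0$, the restriction maps $\uppi_\cJ$ and $\uppi_\fJ$, the effective cones, and the normalisation $n_\upbeta = \Omega^\num(1,\upbeta)$ --- so that $\Omega^\num_{\upmu_i\Lambda_Y}(\omega_i\cdot\updelta)$ is genuinely the Gopakumar--Vafa number of $\omega_i\cdot\upbeta$ on $Y^+$ (respectively on $Y$), and not some reindexed quantity; this rests on Wemyss's Homological MMP dictionary together with the $2$-term tilting description of \S\ref{sec:mut}. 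A secondary point, needed so that the reflected invariants are honest GV numbers rather than formal zeroes, is to confirm that $\omega_i$ restricts to a bijection from the effective curve classes of $Y$ not colinear to $[\curve_i]$ onto those of $Y^+$ (respectively of $Y$).
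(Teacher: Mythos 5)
Your proof takes essentially the same route as the paper's: express $n_\upbeta = \Omega^\num_{\Lambda_Y}(\upbeta + \uppi_\cJ(\imroot))$, verify the indivisibility and non-colinearity hypotheses, apply Theorem~\ref{thm:stabmut} (equivalently Corollary~\ref{cor:autoeq} in the non-flop case), note that $\omega_i$ fixes $\imroot$ so $\omega_i\cdot\updelta$ is again of the form $(1,\omega_i\cdot\upbeta)$, and read off the geometric meaning of $\upmu_i\Lambda_Y$ from Wemyss's Homological~MMP. The two technical points you flag at the end — that the tilting mutation $\upmu_i\Lambda_Y$ is indeed the \emph{standard} NCCR of the flop $Y^+$, and that $\omega_i\cdot\upbeta$ is effective precisely when $\upbeta$ is not colinear to $[\curve_i]$ — are exactly the ingredients the paper supplies by quoting~\cite{HomMMP} and~\cite[Corollary~5.3]{NW21} respectively, so your hesitation there is appropriate but resolved by the existing literature.
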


One would expect~\eqref{eq:stabequiv} to also yield an equality $[\fN_\updelta^{\uptheta\semis}]_\vir = [\fN_{[F]\updelta}^{[F]\uptheta\semis}]_\vir$ between the virtual motives, which would give an analogue of the relations and symmetries above for the motivic invariants.
However, a result like this does not follow immediately due to additional technical subtleties of the motivic setting.
In particular, the virtual motives depend on an additional ``Calabi--Yau enhancement'' which is hard to control.
In \S\ref{ssec:rigid} we show how this issue can be circumvented using a \emph{rigidification}: we assume we can
extend to some larger affine neighbourhood $\Spec S \supset \Spec R$ of the cDV singularity, and consider only \emph{auto}-equivalences which extend to this neighbourhood.

In the geometric setting, such a rigidification takes the form of a neighbourhood $X\to \Spec S$ of the crepant resolution $Y\to \Spec R$, such that line bundles extend from $Y$ to $X$ (see Setup~\ref{set:geomrig}).
Using such a geometric rigidification we find the following analogue of Theorem~\ref{introthm:D}.

\begin{introthm}[{\ref{thm:BPStwist},\ \ref{thm:BPSdual}}]
  Let $Y\to \Spec R$ be a crepant resolution, and suppose it admits a geometric rigidification $X\to \Spec S$.
  Then the motivic BPS invariants satisfy
  \[
    \begin{aligned}
      \Omega_Y(\upchi,\upbeta) &= \Omega_Y(\upchi + d, \upbeta), \\
      \Omega_Y(\upchi,\upbeta) &= \Omega_Y(nd - \upchi, -\upbeta),
    \end{aligned}
  \]
  where $d = \gcd(|\upbeta|)$ the multiplicity of $\upbeta$ and $n\gg 0$ is sufficiently large.
\end{introthm}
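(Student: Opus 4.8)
The plan is to upgrade the numerical identities of Theorem~\ref{introthm:D} to the motivic level. Each $\Omega_Y(\upchi,\upbeta)$ is obtained, after transporting dimension vectors along the isomorphism $\hh_0(Y,\Z)\oplus\hh_2(Y,\Z)\xrightarrow{\sim}\Z Q_0$, from the generating series $\sum_{\uptheta(\updelta)=0}[\fN_\updelta^{\uptheta\semis}]_\vir\,t^\updelta$ via the multiple-cover formula; inverting that formula, it is enough to produce, for $\uptheta$ chosen generically on a suitable wall, an isomorphism of stacks $\fN_\updelta^{\uptheta\semis}\xrightarrow{\sim}\fN_{\updelta'}^{\uptheta'\semis}$ that \emph{identifies the virtual motives}, together with the induced map $\updelta\mapsto\updelta'$ on $K$-theory. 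The stack isomorphisms, and the maps on $K$-theory, are already those underlying Theorem~\ref{introthm:D}; the new content is the preservation of virtual motives, which is exactly what the rigidification is there to supply.

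First I would recall the functors. For the twist identity one uses $F=-\otimes\cL$ with $\cL\in\Pic Y$: by the rigidification hypothesis (Setup~\ref{set:geomrig}) every such $\cL$ extends to a line bundle $\tilde\cL$ on $X$, so $F$ is the restriction to $\D^b(\coh Y)$ of the autoequivalence $-\otimes\tilde\cL$ of $\D^b(\coh X)$. For the dual identity one uses a duality functor on $\D^b(\coh Y)$, namely $\mathbf{R}\sHom_Y(-,\O_Y)$ up to cohomological shift, which again extends over $X$ since $X$ is Gorenstein. Transporting $F$ through the standard tilting bundle and applying the wall characterisation \eqref{eq:stabequiv} --- reducing to globally generated bundles and composing as in Theorem~\ref{introthm:D} when necessary, so that the images of the bounding $2$-term tilting complexes remain $2$-term --- one obtains equivalences $\cS_\uptheta(\Lambda_Y)\xrightarrow{\sim}\cS_{[F]\uptheta}(\Lambda_Y)$ and hence the desired isomorphisms of moduli stacks.

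The crux is to promote these stack isomorphisms to equalities of virtual motives. The class $[\fN_\updelta^{\uptheta\semis}]_\vir$ is the normalised motivic vanishing cycle of the potential $W$ on the smooth moduli of $Q$-representations, restricted to the semistable locus, and --- as flagged in \S\ref{ssec:rigid} --- it depends on a Calabi--Yau enhancement (the potential, equivalently the $(-1)$-shifted symplectic structure) that a bare autoequivalence of $\D^b(\coh Y)$ need not respect. Here, however, both functors are restrictions of autoequivalences of $\D^b(\coh X)$ acting on $X$ by genuinely geometric operations --- tensoring by $\tilde\cL$, and Serre-type duality --- which manifestly preserve the Calabi--Yau structure, and the moduli stacks underlying $\Omega_Y(\updelta)$, which parametrise objects supported on the exceptional locus, may be computed on $X$ just as well as on $Y$. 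Thus the induced isomorphisms respect the potentials, and hence the motivic vanishing cycles, so $[\fN_\updelta^{\uptheta\semis}]_\vir=[\fN_{[F]\updelta}^{[F]\uptheta\semis}]_\vir$; feeding this back through the multiple-cover formula gives $\Omega_Y(\updelta)=\Omega_Y([F]\updelta)$.

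It remains to unwind this on $K$-theory. Tensoring with $\cL\in\Pic Y$ fixes the curve class and shifts the Euler characteristic by $\cL\cdot\upbeta$ (Riemann--Roch); letting $\cL$ range over all of $\Pic Y$ --- each of which extends over $X$ --- these shifts exhaust $d\Z$ because $\tfrac1d\upbeta$ is primitive, which yields $\Omega_Y(\upchi,\upbeta)=\Omega_Y(\upchi+d,\upbeta)$. The duality functor reverses the curve class and sends $\upchi$ to $-\upchi$; composing with the twist identity for the class $-\upbeta$ turns this into $\Omega_Y(\upchi,\upbeta)=\Omega_Y(nd-\upchi,-\upbeta)$, the hypothesis $n\gg 0$ serving only to return the pair to the effective cone, the negativity of $-\upbeta$ being absorbed by moving far enough along the imaginary-root direction $\uppi_\cJ(\imroot)$. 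The single genuine obstacle in this programme is the compatibility with the Calabi--Yau structure used above: one must check that the critical-locus presentation underlying $[\fN_\updelta^{\uptheta\semis}]_\vir$ is genuinely inherited from $X$, and that this presentation --- the potential together with its orientation data --- is preserved both by line bundle twists and by duality on $X$. Everything else is formal manipulation of the multiple-cover formula and of Riemann--Roch.
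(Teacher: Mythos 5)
Your overall strategy is correct and matches the paper's: use the geometric rigidification $X\to\Spec S$ to produce auto-equivalences of the ambient $\D^b(\mod\Uplambda)$ that restrict to the relevant equivalences on $\cS_\uptheta(\Lambda)$, and argue that the rigidification forces the virtual motives to be preserved. However, there are two concrete problems.

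First, for the identity $\Omega_Y(\upchi,\upbeta)=\Omega_Y(nd-\upchi,-\upbeta)$ you propose to use the contravariant duality $\mathbf{R}\sHom_Y(-,\O_Y)$. The paper instead uses the \emph{covariant} functor $-\otimes\O_Y(-n)[1]$, a line-bundle twist composed with a cohomological shift. This matters: the entire tracking machinery (Theorem~\ref{thm:ortho}, Proposition~\ref{prop:trackstables}, Lemma~\ref{lem:alltiltpres}) is phrased for exact \emph{equivalences} $\D^b(\mod\Lambda)\to\D^b(\mod\Lambda')$, and an anti-equivalence does not drop in directly. One would have to identify $\Lambda\cong\Lambda^\op$, check that duality behaves as an anti-equivalence compatible with the $2$-term tilting poset (reversing the order, swapping submodules and quotients), and then also make sense of the condition $\HH_3(F)\in\C^\times$ for a contravariant functor — $\HH_\bullet$ is covariant for algebra maps. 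None of this is insurmountable for a symmetric quiver, but it is genuine extra work that the paper's choice of $-\otimes\O_Y(-n)[1]$ avoids; also, the shift appearing in your proposal ``up to cohomological shift'' would need to be pinned down (a $[2]$ versus $[1]$ discrepancy against Serre duality changes the sign of the class).

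Second, the Calabi--Yau preservation is where you hand-wave at precisely the point that is supposed to be the content. You write that the functors ``manifestly preserve the Calabi--Yau structure'' because they are geometric; but the operative criterion in the paper is $\HH_3(F)\in\C^\times$, which is \emph{not} automatic for a general auto-equivalence of $\D^b(\mod\Uplambda)$. What makes it automatic is the rigidification hypothesis: $\HH_3(\Uplambda)$ is an invertible $S$-module (Lemma~\ref{lem:baserigid}), so $\Aut_S(\HH_3(\Uplambda))\cong S^\times=\C^\times$. That is the real role of ``geometric rigidification'' and it should appear explicitly; as written your argument would equally apply without the hypothesis $S^\times=\C^\times$, so something must be missing. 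The Bézout step for the first identity is also not quite executed: Theorem~\ref{thm:stabilitytwist} applies only to globally generated line bundles, so one cannot simply let $\cL$ range over $\Pic Y$; the paper splits the Bézout coefficients into $\cL_+$ and $\cL_-$, applies the positive twist to $(\upchi,\upbeta)$ and the negative twist to $(\upchi+d,\upbeta)$, and matches the two sides — you allude to this but do not carry it out.
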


We also consider rigidifications of the auto-equivalences obtained from an NCCR mutation at a node $i$ with infinite dimensional deformation algebra $\Lambda_i$.
We show in \S\ref{sec:mut} that the mutation auto-equivalence extends to the rigidification if it is \emph{graded}, in which case we find the following symmetry result.

\begin{introthm}[{\ref{thm:mutation}}]
  Let $\Lambda = \jacc{Q,W}$ be an NCCR of Dynkin type $(\Updelta_\aff,\cJ)$, and suppose it admits a graded rigidification.
  Then for every $i\in Q_0$ such that $\dim_\C \Lambda_i = \infty$, every $\updelta\in\RR(\Updelta,\cJ)$ which is not colinear to $\uppi_\cJ(\upalpha_i)$ or $\uppi_\cJ(\imroot)$ there is an equality of motivic BPS invariants
  \[
    \Omega(\updelta) = \Omega(\omega_i\cdot \updelta).
  \]
\end{introthm}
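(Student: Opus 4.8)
The plan is to combine the motivic rigidification machinery of \S\ref{ssec:rigid} with the mutation equivalence of Iyama--Wemyss~\cite{IW14}, following the same strategy used to obtain the twist statement of Theorems~\ref{thm:BPStwist} and~\ref{thm:BPSdual}, but now with the auto-equivalence coming from mutation at $i$ rather than a line bundle twist. Since $\dim_\C\Lambda_i=\infty$, the result of Iyama--Wemyss gives $\upmu_i\Lambda\cong\Lambda$, so the tilting equivalence $F\colon\D^b(\mod\Lambda)\xrightarrow{\sim}\D^b(\mod\upmu_i\Lambda)=\D^b(\mod\Lambda)$ is an \emph{auto}-equivalence. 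The combinatorial bookkeeping from \S\ref{sec:dynkin} tells us that on dimension vectors and stability parameters this functor realises the groupoid element $\omega_i\colon\cJ\to\fJ$, acting on restricted roots by $\uppi_\cJ(\rt)\mapsto\uppi_\fJ(\omega_i\rt)$; in particular it sends a stability chamber $\cone^\circ T$ to $\cone^\circ F(T)$ and the numerical identity $\Omega^\num_\Lambda(\updelta)=\Omega^\num_{\upmu_i\Lambda}(\omega_i\cdot\updelta)$ is exactly Theorem~\ref{introthm:E}. The task is therefore to upgrade this numerical equality to an equality of virtual motives $[\fN^{\uptheta\semis}_\updelta]_\vir=[\fN^{[F]\uptheta\semis}_{\omega_i\cdot\updelta}]_\vir$, which by the multiple-cover formula yields $\Omega(\updelta)=\Omega(\omega_i\cdot\updelta)$.

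The key steps are as follows. First, fix a stability parameter $\uptheta$ generic on a wall of $\cH_{\Updelta_\aff,\cJ}$ separating the chamber of $\Lambda$ from the chamber reached after mutation at $i$, chosen so that $\updelta$ lies on this wall; by the hypothesis that $\updelta\in\RR(\Updelta,\cJ)$ is not colinear to $\uppi_\cJ(\upalpha_i)$ or $\uppi_\cJ(\imroot)$, such a wall exists and the categorical characterisation of $\cS_\uptheta(\Lambda)$ via the shared summand $T''=T\cap T'$ from \eqref{eq:stabequiv} applies, so $F$ restricts to an exact equivalence $\cS_\uptheta(\Lambda)\xrightarrow{\sim}\cS_{[F]\uptheta}(\Lambda)$ identifying the relevant dimension vectors via $\omega_i$. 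Second, invoke the graded rigidification $X\to\Spec S$: since the mutation functor is assumed graded, it extends to an auto-equivalence of $\D^b(\coh X)$ (equivalently of $\D^b(\mod\Lambda_X)$ for the rigidified algebra), and this is the step that supplies the ``Calabi--Yau enhancement'' compatibility that the purely local statement lacks — the extended auto-equivalence carries the CY structure, hence the $d$-critical or $(-1)$-shifted symplectic data underlying the virtual motive, along the equivalence. Third, use that an equivalence of the ambient categories inducing an isomorphism of the moduli stacks $\fN^{\uptheta\semis}_\updelta\cong\fN^{[F]\uptheta\semis}_{\omega_i\cdot\updelta}$ \emph{compatibly with the CY enhancements} gives equality of the virtual motives $[\,\cdot\,]_\vir$ in $\KK^\muhat(\Var/\C)[\L^{\pm\frac12}]$; this is the analogue, in the mutation case, of the argument already carried out for twists. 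Finally, feed this into the multiple-cover formula, noting $[F]\uptheta$ is again a parameter with the same vanishing locus $\uptheta(\updelta')=0$ for the classes involved, and read off $\Omega(\updelta)=\Omega(\omega_i\cdot\updelta)$ after the usual Möbius inversion; the coprimality/indivisibility reduction is handled exactly as in the numerical case, passing through multiplicities $d=\gcd$.

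I expect the main obstacle to be precisely the enhancement-compatibility in the second and third steps: one must check that the graded extension of the mutation functor not only induces an isomorphism of moduli stacks but does so as an equivalence of categories of \emph{CY-enhanced} (or $(-1)$-shifted symplectic) stacks, so that the Behrend-function weighting / vanishing-cycle sheaf defining $[\,\cdot\,]_\vir$ is transported correctly — this is the subtlety flagged in the introduction as the reason the motivic statement ``does not follow immediately.'' Concretely, one needs the rigidified algebra $\Lambda_X$ to again be (complete local, or suitably) Calabi--Yau of the right dimension and the extended functor to be a functor of CY categories; granting the graded rigidification hypothesis, this should reduce to checking that the tilting bimodule defining $F$ has a graded lift over $S$ and that the induced identification of potentials (up to the cyclic equivalence relevant for vanishing cycles) is graded-homogeneous. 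A secondary, more bookkeeping-type difficulty is matching the stability chambers on both sides of the wall after rigidification, i.e.\ verifying that $[F]\uptheta$ still lies generically on a wall of $\cH_{\Updelta_\aff,\fJ}$ with the analogous shared-summand description, so that the categorical characterisation of $\cS_{[F]\uptheta}$ is available; but this is controlled by the wall-crossing groupoid and should be routine given \S\ref{sec:dynkin}.
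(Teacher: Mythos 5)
Your proposal takes essentially the same route as the paper's proof, which combines Theorem~\ref{thm:stabmut} and Lemma~\ref{lem:tiltactfrfr} (giving the equivalence of semistable subcategories $\cS_\uptheta(\Lambda)\xrightarrow{\sim}\cS_{[F]\uptheta}(\Lambda)$ with $[F]\uptheta$ lying on the hyperplane $H_{\omega_i\cdot\updelta}$), Proposition~\ref{prop:gradedmut} together with Lemma~\ref{lem:mutrest} (extending the mutation auto-equivalence to the rigidification $\Uplambda$), and Proposition~\ref{prop:mainsymmetry} (promoting this to an equality of motivic BPS invariants). Two points where your sketch is less precise than the paper's argument, though not wrong in spirit: first, the mechanism controlling the CY enhancement is not a direct $(-1)$-shifted-symplectic comparison but the Hochschild condition $\HH_3(F)\in\C^\times$, which holds automatically once $F$ extends to $\Uplambda$ because a rigidification by definition has $\Aut_S(\HH_3(\Uplambda))=\C^\times$; second, the graded hypothesis alone does not immediately make the mutation extend to an auto-equivalence of $\D^b(\mod\Uplambda)$ — one must actually check $\upmu_i^-(\cM)\cong\cM$ for some exchange sequence, which is exactly what Proposition~\ref{prop:gradedmut} does by passing the minimal graded projective resolution of $\Uplambda_i$ through the completion functor and using $\dim_\C\Lambda_i=\infty$ to pin down its length. (Also, the rigidification here is the \emph{graded} one of Setup~\ref{set:quivpotglob}, not the \emph{geometric} one $X\to\Spec S$ of Setup~\ref{set:geomrig}, which is only used for the line-bundle twist theorems.)
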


\subsection*{Acknowledgements}

The author would like to thank Michael Wemyss for helpful discussion and comments, and would like to extend his gratitude to the Max-Planck Institute for Mathematics in Bonn for its hospitality and financial support.

\section{(Noncommutative) Crepant Resolutions of cDV Singularities}\label{sec:NCsetup}
Recall that Du-Val surface singularities are classified analytically by their ADE Dynkin diagram
\[
  \Updelta = \quad
  \begin{tikzpicture}[baseline=(current bounding box.center)]
    \begin{scope}[xshift=-150]
      \node at (0,-.5) {$A_n$};
      \node[circle,draw=black,inner sep=1.4pt] (A) at (-.8,0) {};
      \node[circle,draw=black,inner sep=1.4pt] (B) at (-.4,0) {};
      \node[circle,draw=black,inner sep=1.4pt] (C) at (.8,0) {};
      \draw (A) to (B);
      \draw[dashed] (B) to (C); 
    \end{scope}
    \begin{scope}[xshift=-90]
      \node at (0,-.5) {$D_n$};
      \node[circle,draw=black,inner sep=1.4pt] (A) at (.6,.6) {};
      \node[circle,draw=black,inner sep=1.4pt] (B) at (.4,.3) {};
      \node[circle,draw=black,inner sep=1.4pt] (C) at (.6,0) {};
      \node[circle,draw=black,inner sep=1.4pt] (D) at (-.4,.3) {};
      \node[circle,draw=black,inner sep=1.4pt] (E) at (-.6,0) {};
      \draw (A) to (B);
      \draw (C) to (B);
      \draw[dashed] (B) to (D);
      \draw (D) to (E);
    \end{scope}
    \begin{scope}[xshift=-30]
      \node at (0,-.5) {$E_6$};
      \node[circle,draw=black,inner sep=1.4pt] (A) at (-.8,0) {};
      \node[circle,draw=black,inner sep=1.4pt] (B) at (-.4,0) {};
      \node[circle,draw=black,inner sep=1.4pt] (C) at (0,0) {};
      \node[circle,draw=black,inner sep=1.4pt] (D) at (.4,0) {};
      \node[circle,draw=black,inner sep=1.4pt] (E) at (.8,0) {};
      \node[circle,draw=black,inner sep=1.4pt] (F) at (0,.4) {};
      \draw (A) to (B) to (C) to (D) to (E);
      \draw (F) to (C);
    \end{scope}
    \begin{scope}[xshift=40]
      \node at (0,-.5) {$E_7$};
      \node[circle,draw=black,inner sep=1.4pt] (A) at (-.8,0) {};
      \node[circle,draw=black,inner sep=1.4pt] (B) at (-.4,0) {};
      \node[circle,draw=black,inner sep=1.4pt] (C) at (0,0) {};
      \node[circle,draw=black,inner sep=1.4pt] (D) at (.4,0) {};
      \node[circle,draw=black,inner sep=1.4pt] (E) at (.8,0) {};
      \node[circle,draw=black,inner sep=1.4pt] (F) at (1.2,0) {};
      \node[circle,draw=black,inner sep=1.4pt] (G) at (0,.4) {};
      \draw (A) to (B) to (C) to (D) to (E) to (F);
      \draw (G) to (C);
    \end{scope}
    \begin{scope}[xshift=120]
      \node at (.5,-.5) {$E_8$};
      \node[circle,draw=black,inner sep=1.4pt] (A) at (-.8,0) {};
      \node[circle,draw=black,inner sep=1.4pt] (B) at (-.4,0) {};
      \node[circle,draw=black,inner sep=1.4pt] (C) at (0,0) {};
      \node[circle,draw=black,inner sep=1.4pt] (D) at (.4,0) {};
      \node[circle,draw=black,inner sep=1.4pt] (E) at (.8,0) {};
      \node[circle,draw=black,inner sep=1.4pt] (F) at (1.2,0) {};
      \node[circle,draw=black,inner sep=1.4pt] (G) at (1.6,0) {};
      \node[circle,draw=black,inner sep=1.4pt] (H) at (.8,.4) {};
      \draw (A) to (B) to (C) to (D) to (E) to (F) to (G);
      \draw (H) to (E);
    \end{scope}
  \end{tikzpicture}
\]
each of which has an associated equation $f = f_{\Updelta}(x,y,z) \in \C[x,y,z]$.
Reid~\cite{Reid83} defined a cDV singularity as the spectrum of a complete local ring $(R,\m)$ of the form
\[
  R = \frac{\C[\![x,y,z,t]\!]}{(f(x,y,z) + g(x,y,z,t) \cdot t)},
\]
where $g\in \C[x,y,z,t]$ is an arbitrary polynomial.
Such a singularity is a Gorenstein domain, meaning $\Spec R$ admits a canonical module $\omega_R$ which is free, and is a rational singularity, meaning that any resolution $Y \to \Spec R$ satisfies $\bR f_*\O_Y \cong \O_R$.
CDV singularities are not necessarily isolated, but the singular locus is at most 1-dimensional.
In what follows we consider cDV singularities which admit a \emph{crepant resolution}, by which we mean a resolution $Y\to \Spec R$ such that
\[
  \omega_Y \cong \uppi^*\omega_R = \uppi^*\O_R = \O_Y.
\]
The fibre $\curve \colonequals \uppi^{-1}(\m)$ of such a crepant resolution is characterised by Reid's general elephant construction (see Figure~\ref{fig:crepantresolution}), which is as follows.
The hyperplane $\Spec R/(t) \subset \Spec R$ is a Du-Val surface singularity of type $\Updelta$, and it follows by~\cite[Theorem 1.14]{Reid83} that the base change
\[
  \begin{tikzpicture}
    \node (Y) at (0,0) {$Y$};
    \node (Z) at (-3*\gr,0) {$Z$};
    \node (Zcon) at (-3*\gr,-\gr) {$\Spec \C[\![x,y,z]\!]/(f)$};
    \node (Ycon) at (0,-\gr) {$\Spec \C[\![x,y,z,t]\!]/(f+g\cdot t)$};
    \draw[->] (Y) to[edge label=$\scriptstyle\uppi$] (Ycon);
    \draw[->] (Z) to[edge label=$\scriptstyle\uppi|_Z$] (Zcon);
    \draw[->] (Z) to (Y);
    \draw[->] (Zcon) to (Ycon);
  \end{tikzpicture}
\]
yields a \emph{partial} crepant resolution $\uppi|_Z\colon Z\to \Spec R/(t)$ which factors the minimal resolution
\[
  \widetilde Z \to Z \xrightarrow{\ \uppi|_Z\ } \Spec R/(t).
\]
By the McKay correspondence, the exceptional fibre in $\widetilde Z$ consists of rational curves whose dual graph coincides with $\Updelta$.
In particular, each node $i\in \Updelta$ corresponds to an irreducible curve $\curve_i$, and one can define a subset of curves which are contracted when passing to the partial crepant resolution:
\[
  J \colonequals \{j \in \Updelta \mid \curve_j \text{ is contracted along } \widetilde Z \to Z\}.
\]
Writing $J^c \colonequals \Updelta \setminus J$ for the complement, the fibre $\uppi^{-1}(\m) \cong \uppi|_Z^{-1}(\m)$ is then the union $\curve = \bigcup_{i \in J^c} \curve_i$, where by slight abuse of notation we identify the curves $\curve_i$ with their images in $Z$ and $Y$.
\begin{figure}
  \centering
    \begin{tikzpicture}[scale=.8]
      \begin{scope}[xshift=-2.5cm]
        \begin{scope}
          \clip (-1.45*\gr,1) ellipse (.85*\gr cm and 0.55cm);
          \path[fill=blue!10] (-1.7*\gr-.5*\gr,1-.75) to[bend left] (-1.2*\gr-.5*\gr,1-.75)
          to (-1.2*\gr+.4*\gr,1+.6) to[bend right] (-1.7*\gr+.4*\gr,1+.6) -- cycle;
          \foreach \i in {-3,...,4} {
            \draw (-1.7*\gr-.1*\gr*\i,1-.15*\i) to[bend left] (-1.2*\gr-.1*\gr*\i,1-.15*\i);
          }
        \end{scope}
        \draw[thick] (-2.1*\gr,1) to[bend left] (-1.6*\gr,1);
        \draw[thick] (-1.7*\gr,1) to[bend left] (-1.2*\gr,1);
        \draw[thick] (-.8*\gr,1) to[bend right] (-1.3*\gr,1);
        \draw[draw=black!30] (-1.45*\gr,1) ellipse (.85*\gr cm and 0.55cm);

        \draw[left hook->] (.2,1) to (-.8,1);
        \node[fill,circle,inner sep=.8pt] at (-1.5*\gr,-.5) {};
        \node at (-1.4*\gr,-.55) {$\scriptstyle p$};
        \begin{scope}
          \clip (-1.45*\gr,-.5) ellipse (.85*\gr cm and 0.55cm);
          \draw (-1.5*\gr-.4*\gr,-1.1) to (-1.5*\gr+.4*\gr,.1);
        \end{scope}
        \draw[draw=black!30] (-1.45*\gr,-.5) ellipse (.85*\gr cm and 0.55cm);

        \draw[->] (-1.5*\gr,.35) to (-1.5*\gr,-.3);

        \draw[left hook->] (.2,1) to (-.8,1);
        \draw[left hook->] (.2,-.5) to (-.8,-.5);
      \end{scope}
      \begin{scope}[xshift=1.7cm]

        \draw[thick] (-2.1*\gr,2.5) to[bend left] (-1.6*\gr,2.5);
        \draw[thick] (-1.7*\gr,2.5) to[bend left] (-1.2*\gr,2.5);
        \draw[thick] (-.8*\gr,2.5) to[bend right] (-1.3*\gr,2.5);
        \draw[thick] (-1.0*\gr,2.5) to[bend left] (-.7*\gr,2.8);
        \draw[thick] (-.9*\gr,2.5) to[bend left] (-.6*\gr,2.7);

        \draw[thick] (-2.1*\gr,1) to[bend left] (-1.6*\gr,1);
        \draw[thick] (-1.7*\gr,1) to[bend left] (-1.2*\gr,1);
        \draw[thick] (-.8*\gr,1) to[bend right] (-1.3*\gr,1);

        \node[fill,circle,inner sep=.8pt] at (-1.25*\gr,1.05) {};
        \node[fill,circle,inner sep=.8pt] at (-1.5*\gr,-.5) {};

        \draw (-1.5*\gr-.9*\gr, 2.5-.5) -- (-1.5*\gr + .8*\gr, 2.5-.5) --
        (-1.5*\gr + 1.1*\gr, 2.5+.5) -- (-1.5*\gr - .6*\gr, 2.5+.5) -- cycle;
        \draw (-1.5*\gr-.9*\gr, 1-.5) -- (-1.5*\gr + .8*\gr, 1-.5) --
        (-1.5*\gr + 1.1*\gr, 1+.5) -- (-1.5*\gr - .6*\gr, 1+.5) -- cycle;
        \draw (-1.5*\gr-.9*\gr, -1) -- (-1.5*\gr + .7*\gr, -1) --
        (-1.5*\gr + 1*\gr, 0) -- (-1.5*\gr - .6*\gr, 0) -- cycle;

        \draw[->] (-1.5*\gr,.35) to (-1.5*\gr,-.3);
        \draw[->] (-1.5*\gr,1.5+.35) to (-1.5*\gr,1.5-.3);
      \end{scope}
      \begin{scope}[xshift=3.2cm,yshift=2.2cm,scale=1.2]
        \node at (-1.2,.3) {$\leadsto$};
        \node[circle,draw=black,inner sep=1.4pt] (A) at (.4,.6) {};
        \node[circle,draw=black,inner sep=1.4pt] (B) at (.2,.3) {};
        \node[circle,draw=black,inner sep=1.4pt] (C) at (.4,0) {};
        \node[circle,draw=black,inner sep=1.4pt] (D) at (-.15,.3) {};
        \node[circle,draw=black,inner sep=1.4pt] (E) at (-.5,.3) {};
        \draw (A) to (B);
        \draw (C) to (B);
        \draw (B) to (D);
        \draw (D) to (E);
      \end{scope}
      \begin{scope}[xshift=3.2cm,yshift=.8cm,scale=1.2]
        \node at (-1.2,.3) {$\leadsto$};
        \node[circle,draw=black,inner sep=1.4pt] (A) at (.4,.6) {};
        \node[circle,fill=black,inner sep=1.4pt] (B) at (.2,.3) {};
        \node[circle,fill=black,inner sep=1.4pt] (C) at (.4,0) {};
        \node[circle,draw=black,inner sep=1.4pt] (D) at (-.15,.3) {};
        \node[circle,draw=black,inner sep=1.4pt] (E) at (-.5,.3) {};
        \draw (A) to (B);
        \draw (C) to (B);
        \draw (B) to (D);
        \draw (D) to (E);
      \end{scope}
    \end{tikzpicture}
  \caption{A crepant resolution of Dynkin type $(D_5,J)$ where $J$ corresponds to the two shaded nodes pictured in the Dynkin diagram.}\label{fig:crepantresolution}
\end{figure}
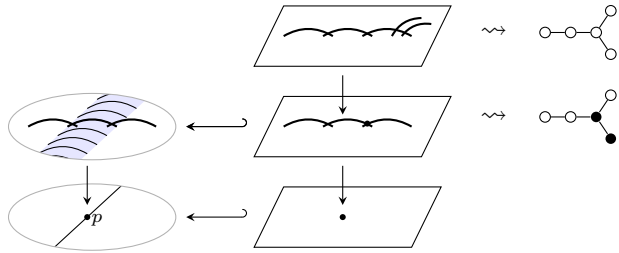

\begin{defn}
  We call the data $(\Updelta,J)$ the (finite) \emph{Dynkin type} of $Y\to \Spec R$.
\end{defn}

The Dynkin type $(\Updelta,J)$ determines a basis in the second homology of $Y$ via the isomorphism
\[
  \hh_2(Y,\Z) \cong \textstyle\bigoplus_{i\in J^c} \Z[\curve_i] \cong \Z J^c.
\]
By~\cite[Lemma 3.4.3]{VdBer04b} each vertex $i\in J^c$ moreover corresponds to a unique generator $\cL_i$ of the Picard group $\Pic Y$, which is the preimage of the corresponding basis vector in an isomorphism
\[
  \Pic Y \to \Z J^c,\quad \cL \mapsto {(\deg \cL|_{\curve_i})}_{i\in J^c}.
\]
In what follows we will fix such isomorphisms whenever we mention a Dynkin type of a crepant resolution.

\subsection{Noncommutative crepant resolutions}\label{subsec:NCCRs}

In~\cite{VdBer04a} Van den Bergh formulates a noncommutative analogue of a crepant resolution, which can be rephrased as follows.

\begin{defn}
  A \emph{noncommutative crepant resolution} (NCCR) of a Gorenstein domain $S$ is an $S$-algebra of the form $\Lambda = \End_S(M)$ for some finitely generated reflexive $S$-module $M$, such that $\Lambda$ has finite global dimension and is maximal Cohen-Macaulay as an $S$-module.
\end{defn}

For a Du-Val singularity $\Spec S$ of type $\Updelta$ there exists a unique NCCR, which can be constructed as the \emph{completed preprojective algebra} associated to the \emph{affine} Dynkin diagram $\Updelta_\aff$:
\[
  \begin{tikzpicture}[baseline=(current bounding box.center)]
    \begin{scope}[xshift=-150]
      \node at (0,-.5) {$\widetilde A_n$};
      \node[circle,draw=black,inner sep=1.4pt] (Ex) at (0,.4) {};
      \node[circle,draw=black,inner sep=1.4pt] (A) at (-.8,0) {};
      \node[circle,draw=black,inner sep=1.4pt] (B) at (-.4,0) {};
      \node[circle,draw=black,inner sep=1.4pt] (C) at (.8,0) {};
      \draw (A) to (B);
      \draw[dashed] (B) to (C);
      \draw (C) to (Ex);
      \draw (Ex) to (A);
    \end{scope}
    \begin{scope}[xshift=-90]
      \node at (0,-.5) {$\widetilde D_n$};
      \node[circle,draw=black,inner sep=1.4pt] (A) at (.6,.6) {};
      \node[circle,draw=black,inner sep=1.4pt] (B) at (.4,.3) {};
      \node[circle,draw=black,inner sep=1.4pt] (C) at (.6,0) {};
      \node[circle,draw=black,inner sep=1.4pt] (D) at (-.4,.3) {};
      \node[circle,draw=black,inner sep=1.4pt] (E) at (-.6,0) {};
      \node[circle,draw=black,inner sep=1.4pt] (Ex) at (-.6,.6) {};
      \draw (A) to (B);
      \draw (C) to (B);
      \draw[dashed] (B) to (D);
      \draw (D) to (E);
      \draw (D) to (Ex);
    \end{scope}
    \begin{scope}[xshift=-30]
      \node at (0,-.5) {$\widetilde E_6$};
      \node[circle,draw=black,inner sep=1.4pt] (A) at (-.8,0) {};
      \node[circle,draw=black,inner sep=1.4pt] (B) at (-.4,0) {};
      \node[circle,draw=black,inner sep=1.4pt] (C) at (0,0) {};
      \node[circle,draw=black,inner sep=1.4pt] (D) at (.4,0) {};
      \node[circle,draw=black,inner sep=1.4pt] (E) at (.8,0) {};
      \node[circle,draw=black,inner sep=1.4pt] (F) at (0,.4) {};
      \node[circle,draw=black,inner sep=1.4pt] (Ex) at (0,.8) {};
      \draw (A) to (B) to (C) to (D) to (E);
      \draw (Ex) to (F) to (C);
    \end{scope}
    \begin{scope}[xshift=40]
      \node at (0,-.5) {$\widetilde E_7$};
      \node[circle,draw=black,inner sep=1.4pt] (A) at (-.8,0) {};
      \node[circle,draw=black,inner sep=1.4pt] (B) at (-.4,0) {};
      \node[circle,draw=black,inner sep=1.4pt] (C) at (0,0) {};
      \node[circle,draw=black,inner sep=1.4pt] (D) at (.4,0) {};
      \node[circle,draw=black,inner sep=1.4pt] (E) at (.8,0) {};
      \node[circle,draw=black,inner sep=1.4pt] (F) at (1.2,0) {};
      \node[circle,draw=black,inner sep=1.4pt] (G) at (0,.4) {};
      \node[circle,draw=black,inner sep=1.4pt] (Ex) at (-1.2,0) {};
      \draw (Ex) to (A) to (B) to (C) to (D) to (E) to (F);
      \draw (G) to (C);
    \end{scope}
    \begin{scope}[xshift=120]
      \node at (.5,-.5) {$\widetilde E_8$};
      \node[circle,draw=black,inner sep=1.4pt] (A) at (-.8,0) {};
      \node[circle,draw=black,inner sep=1.4pt] (B) at (-.4,0) {};
      \node[circle,draw=black,inner sep=1.4pt] (C) at (0,0) {};
      \node[circle,draw=black,inner sep=1.4pt] (D) at (.4,0) {};
      \node[circle,draw=black,inner sep=1.4pt] (E) at (.8,0) {};
      \node[circle,draw=black,inner sep=1.4pt] (F) at (1.2,0) {};
      \node[circle,draw=black,inner sep=1.4pt] (G) at (1.6,0) {};
      \node[circle,draw=black,inner sep=1.4pt] (H) at (.8,.4) {};
      \node[circle,draw=black,inner sep=1.4pt] (Ex) at (-1.2,0) {};
      \draw (Ex) to (A) to (B) to (C) to (D) to (E) to (F) to (G);
      \draw (H) to (E);
    \end{scope}
  \end{tikzpicture}
\]
Let $\overline Q$ is the quiver with vertices corresponding to nodes in $\Updelta_\aff$ and a pair of opposing arrows $a,a^*$ for each edge, then the completed preprojective algebra is the quiver algebra
\[
  \Uppi \colonequals \frac{\jacc{\overline Q}}{(\!(\sum aa^*-a^*a)\!)},
\]
where $\jacc{\overline Q}$ denotes the completed path algebra and the completed ideal is generated by the sum of commutators.
One can also consider noncommutative \emph{partial} crepant resolution, being factorisations
\[
  S \to e\Uppi e \to \Uppi
\]
of the structure morphism $S\to \Uppi$ by the subalgebra determined by an idempotent $e\in \Uppi$.
In their recent memoir, Iyama--Wemyss~\cite{IWMemoir} show that any NCCR for a \emph{compound} Du-Val singularity slices out to such a partial crepant resolution, providing a noncommutative analogue of Reid's construction.

\begin{prop}[{\cite[Proposition 9.4]{IWMemoir}}]
  Let $\Lambda$ be an NCCR of a cDV singularity $R$ such that $R/(t)$ is Du-Val of type $\Updelta$.
  Then there exists a subset $\cJ \subset \Updelta_\aff$ and an $R/(t)$-algebra isomorphism
  \[
    \Lambda/t\Lambda \cong e_\cJ\Uppi e_\cJ \subset \Uppi,
  \]
  where $e_{\cJ} \colonequals 1 - \sum_{j\in \cJ} e_i$ is a sum of vertex idempotents $e_i \in \Uppi$.
\end{prop}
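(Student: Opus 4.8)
\textit{Proof proposal.} The plan is to reduce modulo $t$ in two stages: first match $\Lambda/t\Lambda$ with an endomorphism algebra over the Du-Val surface $S \colonequals R/(t)$, and then feed that into the classification of reflexive modules over $S$ supplied by the McKay correspondence. Write $\Lambda = \End_R(M)$ for a basic reflexive generator $M$; as is standard for NCCRs of cDV singularities we may take $M$ to be maximal Cohen--Macaulay (MCM).

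\textit{Step 1 (reduction to a surface order).} Since $R$ is a domain and $M$ is MCM, $M$ is torsion-free, so $t$ is a nonzerodivisor on $M$ and on $\Lambda$; hence $M/tM$ and $\Lambda/t\Lambda$ are MCM over $S$, and in particular reflexive since $S$ is a normal surface. I would then compare $\Lambda/t\Lambda$ with $\End_S(M/tM)$: applying $\Hom_R(M,-)$ to $0\to M\xrightarrow{t} M\to M/tM\to 0$ and identifying $\Hom_R(M,M/tM)=\End_S(M/tM)$ shows the natural $S$-algebra map $\Lambda/t\Lambda \to \End_S(M/tM)$ is injective with cokernel the $t$-torsion submodule of $\Ext^1_R(M,M)$. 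Because $R/(t)$ is Du-Val it has an isolated singularity, so $t$ cannot vanish identically on a positive-dimensional component of $\operatorname{Sing}R$; as $\Ext^1_R(M,M)$ is supported on $\operatorname{Sing}R$, its $t$-torsion is supported at $\m$ only, so the comparison map is an isomorphism over the punctured spectrum $U\colonequals \Spec S\setminus\{\m\}$. A reflexive module over a normal surface is recovered from its codimension-$\le 1$ localisations, so the inclusion of reflexive $S$-modules $\Lambda/t\Lambda \hookrightarrow \End_S(M/tM)$, being an isomorphism in codimension $\le 1$, is an isomorphism; thus $\Lambda/t\Lambda\cong \End_S(M/tM)$ as $S$-algebras.

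\textit{Step 2 (McKay classification over $S$).} On the smooth surface $U$ the module $M/tM$ is locally free, so $M/tM$ is the reflexive hull of a vector bundle. By the McKay correspondence for the Kleinian singularity $S$ recalled above, every reflexive $S$-module is a direct sum of the finitely many indecomposable MCM modules $\{M_i\}_{i\in\Updelta_\aff}$ --- with $M_0 = S$ the extended vertex --- and $\Uppi\cong \End_S\big(\bigoplus_{i\in\Updelta_\aff}M_i\big)$. Hence $M/tM\cong\bigoplus_i M_i^{\oplus a_i}$, and it remains to see each $a_i\in\{0,1\}$, i.e. that $\Lambda/t\Lambda$ is basic. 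This follows from $\Lambda$ being basic: each $e_i\Lambda e_i$ is local, so $e_i(\Lambda/t\Lambda)e_i$ is local and the $e_i$ stay primitive; and an isomorphism $e_i(\Lambda/t\Lambda)\cong e_j(\Lambda/t\Lambda)$ would lift (using $t\in\m$, so $t$ lies in the radical of each local corner ring) to an isomorphism $e_i\Lambda\cong e_j\Lambda$, forcing $i=j$, so the $e_i$ stay pairwise non-conjugate. Setting $\cJ\colonequals\{i\in\Updelta_\aff : a_i = 0\}$ and $e_\cJ\colonequals 1-\sum_{j\in\cJ}e_j$ then gives $\Lambda/t\Lambda\cong\End_S\big(\bigoplus_{i\notin\cJ}M_i\big)\cong e_\cJ\Uppi e_\cJ\subset\Uppi$, which is the claim.

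The main obstacle is Step 1, and specifically the failure of $\Hom$ to commute with the base change $R\to R/(t)$: a priori $\Lambda/t\Lambda$ only maps to $\End_S(M/tM)$ with an error term living in $\Ext^1_R(M,M)$, and the point of the argument is that the general-elephant hypothesis --- that $R/(t)$ is Du-Val, hence has an isolated singularity --- confines this error term to the closed point, whereupon reflexivity of both sides upgrades the comparison map to an isomorphism. The remaining inputs --- regularity of $t$ on $\Lambda$ and $M$, the classical McKay classification of reflexives over $S$ together with $\Uppi\cong\End_S(\bigoplus_i M_i)$, and the descent of basicness along the central reduction --- are either formal or standard.
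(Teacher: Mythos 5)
The paper does not prove this statement; it is quoted directly from \cite[Proposition~9.4]{IWMemoir}, so there is no in-paper argument to compare against and your proposal has to be judged on its own. Your proof is correct, and it follows what is essentially the expected route: cut by $t$, show $\Lambda/t\Lambda\cong\End_{R/(t)}(M/tM)$, then read off the conclusion from the Auslander--McKay classification of reflexive modules over the Kleinian surface $R/(t)$ together with $\Uppi\cong\End_{R/(t)}(\bigoplus_i N_i)$. The one genuinely delicate step is the base-change comparison, and you handle it correctly: the long exact sequence from $0\to M\xrightarrow{t}M\to M/tM\to 0$ identifies $\operatorname{coker}\bigl(\Lambda/t\Lambda\hookrightarrow\End_{R/(t)}(M/tM)\bigr)$ with the $t$-torsion of $\Ext^1_R(M,M)$; this is supported on the singular locus of $R$ intersected with $V(t)$, which the Du-Val hypothesis forces to be $\{\m\}$ (a prime $\mathfrak p\ni t$ with $R_{\mathfrak p}$ singular has $(R/(t))_{\mathfrak p}$ singular, since $t$ is a nonzerodivisor); reflexivity of both sides over the normal surface $R/(t)$ then promotes the codimension-one isomorphism to an isomorphism. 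The radical-lifting argument that basicness descends --- $t$ lies in the Jacobson radical of each corner $\End_R(M_i)$, so an isomorphism of projectives mod $t$ lifts --- is also sound, and combined with the fact that $\Hom_{R/(t)}(M/tM,-)$ is an equivalence $\add(M/tM)\simeq\proj(\Lambda/t\Lambda)$ it gives exactly that $M/tM$ is multiplicity-free in the McKay modules. Two small things are worth making explicit rather than leaving tacit: the statement requires $\Lambda$ basic (otherwise the conclusion is false, as $e_{\cJ}\Uppi e_{\cJ}$ is basic), which you correctly encode by taking $M$ basic; and the claim that $M$ may be taken maximal Cohen--Macaulay, while standard for NCCRs over a $3$-dimensional Gorenstein normal domain, is a nontrivial fact that deserves a citation rather than being asserted as routine.
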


\begin{defn}
  We call the data $(\Updelta_\aff,\cJ)$ the (affine) \emph{Dynkin type} of $\Lambda$.
\end{defn}

In what follows, an NCCR of Dynkin type $(\Updelta_\aff,\cJ)$ is understood to come with a fixed isomorphism $\Lambda/t\Lambda \cong e_\cJ\Uppi e_\cJ$.
This isomorphism yields a bijection between the set of vertices $i \in \cJ^c\colonequals \Updelta_\aff \setminus \cJ$ and a complete orthogonal set of idempotents $e_i\in \Lambda/t\Lambda$.
Because $t$ lies in the radical of $\Lambda$ these idempotents lift to elements $e_i\in\Lambda$, and define simples and projectives
\[
  \C e_i \in \fdmod\Lambda,\quad P_i \colonequals e_i\Lambda \in \proj\Lambda.
\]
In particular, this yields a basis for the Grothendieck group of the category $\fdmod \Lambda$ via
\[
  \KK_0(\fdmod \Lambda) \cong \textstyle\bigoplus_{i\in\cJ^c} [\C e_i] \cong \Z\cJ^c.
\]

\subsection{Relating commutative and noncommutative crepant resolutions}

For any crepant resolution $Y\to \Spec R$, a construction of Van den Bergh~\cite[Theorem B]{VdBer04b} associates an NCCR which is derived equivalent to $Y$ via a tilting bundle $\cM$.
A second tilting bundle can be constructed by taking the dual $\cN = \cM^\vee$, which we will use in what follows.

\begin{theorem}[{\cite[Theorem B]{VdBer04b}}]\label{thm:VdBtilt}
  Let $f \colon X \to \Spec S$ be a (partial) resolution of a complete local rational Gorenstein domain $S$, such that $f$ has fibres of dimension $\leq 1$ and $f^*\omega_S = \Omega_X$.
  Given a sequence $\cL_1,\ldots,\cL_n$ of globally generated line bundles which generate $\Pic X$, there exists a noncommutative (partial) crepant resolution
  \[
    \Lambda_X \colonequals \End_S(f_*\cN) \cong \End_X(\cN),
  \]
  where $\cN \colonequals \O_X \oplus \cN_1 \oplus \ldots \cN_n$ is a tilting bundle, with summands $\cN_i$ vector bundles of rank $\ell_i$ defined by the short-exact sequences
  \[
    0 \to \cL_i^\vee \to \cN_i \to \O_X^{\ell_i-1} \to 0,
  \]
  associated to a minimal set of $\ell_i-1$ generators for $\hh^1(X,\cL_i^{-1}) \cong \Ext^1(\O_X,\cL_i^{-1})$ as an $S$-module.
\end{theorem}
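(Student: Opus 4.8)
The statement is essentially \cite[Theorem B]{VdBer04b} applied to the dual of the bundle constructed there, so the plan is to invoke that result and then carry out the dualisation.

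First recall Van den Bergh's construction. For each $i$ one forms the universal extension
\[
  0 \to \O_X^{\ell_i-1} \to \cM_i \to \cL_i \to 0
\]
classified by a minimal set of generators of $\Ext^1_X(\cL_i,\O_X) \cong \hh^1(X,\cL_i^{-1})$ as an $S$-module, and sets $\cM \colonequals \O_X \oplus \cM_1 \oplus \cdots \oplus \cM_n$. The content of \cite[Theorem B]{VdBer04b} is that $\cM$ is a tilting bundle with $\End_X(\cM)$ an NCCR of $S$. The key inputs there are: (i) $f$ has fibres of dimension $\leq 1$, so $R^{>1}f_* = 0$ and the universal extension is precisely what is needed to kill $\Ext^{>0}_X(\cM,\cM)$; (ii) the $\cL_i$ are globally generated and generate $\Pic X$, which makes $\cM$ a classical generator of $\D^b(\coh X)$; (iii) $X$ is smooth, so $\D^b(\coh X)$ — hence $\End_X(\cM)$ — has finite global dimension; (iv) crepancy ($Rf_*\O_X \cong \O_S$ with $S$ rational Gorenstein) forces $\End_X(\cM) \cong \End_S(f_*\cM)$ to be maximal Cohen–Macaulay over $S$, with $f_*\cM$ reflexive.

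Now dualise. Put $\cN \colonequals \cM^\vee = \sHom_X(\cM,\O_X) = \O_X \oplus \cN_1 \oplus \cdots \oplus \cN_n$ with $\cN_i \colonequals \cM_i^\vee$. Applying $\sHom_X(-,\O_X)$ to the defining sequence of $\cM_i$ and using that $\cL_i$ is locally free (so $\mathcal{E}xt^j_X(\cL_i,\O_X) = 0$ for $j>0$) gives exactly
\[
  0 \to \cL_i^\vee \to \cN_i \to \O_X^{\ell_i-1} \to 0,
\]
so $\cN_i$ has rank $\ell_i$ and is the extension cut out by the chosen minimal generating set of $\hh^1(X,\cL_i^{-1}) \cong \Ext^1_X(\O_X,\cL_i^{-1})$, as stated. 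Since $(-)^\vee$ is a contravariant autoequivalence of $\proj X$ (and of $\D^{\mathrm{perf}}(X)$) fixing $\O_X$, the bundle $\cN$ is again tilting — Ext-vanishing and generation transport along it — and $\End_X(\cN) \cong \End_X(\cM)^{\op}$. The defining conditions of an NCCR (reflexive module, finite global dimension, maximal Cohen–Macaulay over $S$) are all stable under $(-)^{\op}$, so $\Lambda_X \colonequals \End_S(f_*\cN) \cong \End_X(\cN)$ is an NCCR; the identification $\End_S(f_*\cN) \cong \End_X(\cN)$ is again the projection-formula statement $f_*\sHom_X(\cN,\cN) \cong \Hom_S(f_*\cN,f_*\cN)$, valid because the relevant higher direct images vanish.

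The main obstacle — the verification that $\cM$ classically generates $\D^b(\coh X)$ and that $\End_X(\cM)$ is maximal Cohen–Macaulay over $S$ — is exactly the heart of \cite[Theorem B]{VdBer04b} and relies essentially on the one-dimensional-fibre and crepancy hypotheses; we do not reproduce it, so in practice the proof is: cite loc.\ cit.\ for $\cM$, then run the short dualisation above to pass to $\cN$.
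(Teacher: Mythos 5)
Your proposal matches the paper's approach exactly: the paper does not give a proof of Theorem~\ref{thm:VdBtilt} but simply cites Van den Bergh's Theorem B, noting in the preceding text that the tilting bundle $\cN$ in the statement is the dual $\cM^\vee$ of the bundle constructed there. Your dualisation of the defining extension sequences (using that $\cL_i$ is a line bundle so $\mathcal{E}xt^1_X(\cL_i,\O_X) = 0$) and the observation that NCCR properties transfer along $(-)^{\op}$ correctly fill in the routine details left implicit in the paper.
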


Given a crepant resolution $Y\to \Spec R$,  we will refer to $\Lambda_Y$ as the \emph{standard NCCR} of $Y$, and relabel the summands of the tilting bundle according to the Dynkin type $(\Updelta,J)$: for each $i\in J^c$ we write $\cN_i$ for the summand defined by the generator $\cL_i \in \Pic Y$.
We denote the induced derived equivalence by
\[
  \Uppsi\colon \D^b(\coh Y) \xrightarrow{\RHom_Y(\cN,-)} \D^b(\mod \Lambda_Y).
\]
It is shown in~\cite[Proposition 3.5.8]{VdBer04b} that $\Uppsi$ identifies the simple $\Lambda_Y$-modules with the objects
\[
  \Uppsi(\omega_\curve[1]),\quad \Uppsi(\O_{\curve_i}(-1))\quad \text{ for } i \in \Updelta,
\]
where the first simple corresponds to the idempotent associated to $\O_X\subset \cN$ and the remaining simples $\Uppsi(\O_{\curve_i}(-1))$ correspond the idempotents of $\cN_i\subset \cN$, and therefore to $i\in J^c$.
On the other hand, each idempotent corresponds to a vertex in $\cJ^c$ of the affine Dynkin type of $\Lambda_Y$, which yields a second labelling of the simples. The following lemma shows that the labellings are compatible.

\begin{lemma}\label{prop:NCDynkin}
  Let $\uppi\colon Y\to \Spec R$ be a crepant resolution of a cDV singularity with Dynkin type $(\Updelta,J)$. Then the standard NCCR has affine Dynkin type $(\Updelta_\aff,\cJ) = (\Updelta_\aff,J)$.
\end{lemma}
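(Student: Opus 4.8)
The plan is to match the two labellings of the simple $\Lambda_Y$-modules: the ``commutative'' one coming from the tilting bundle $\cN = \O_X \oplus \bigoplus_{i\in J^c}\cN_i$ via Theorem~\ref{thm:VdBtilt} and \cite[Proposition 3.5.8]{VdBer04b}, and the ``noncommutative'' one coming from the slicing $\Lambda_Y/t\Lambda_Y \cong e_\cJ\Uppi e_\cJ$ of Iyama--Wemyss \cite[Proposition 9.4]{IWMemoir}. Concretely, I want to identify $\cJ$ with $J$ as subsets of $\Updelta_\aff$ (noting $J\subset\Updelta\subset\Updelta_\aff$, where the extra affine node is always in $J^c$ because $\O_X$ contributes a summand, and also in $\cJ^c$), and to show that the extra idempotent $e_0$ corresponds to the summand $\O_X\subset\cN$ while each $e_i$ for $i\in J^c$ corresponds to $\cN_i$.

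\textbf{Key steps.} First I would reduce everything to the central fibre / Du-Val hyperplane. Base-changing along $R\to R/(t)$, the tilting bundle $\cN$ restricts to a tilting object on the partial resolution $Z\to\Spec R/(t)$, and $\Lambda_Y/t\Lambda_Y$ is the corresponding endomorphism algebra, which by construction in \cite[Proposition 9.4]{IWMemoir} is $e_\cJ\Uppi e_\cJ$. So the statement ``$\cJ = J$'' is really a statement purely about the Du-Val setting: the NCCR of $\Spec R/(t)$ obtained by pulling back $\widetilde\cN$ through $\widetilde Z\to Z$ is the full preprojective algebra $\Uppi$, and the summands of $\cN|_Z$ are exactly those $\cN_i$ with $i\in J^c$ (the curves \emph{not} contracted), matching the idempotent $e_\cJ$ which deletes the contracted vertices $\cJ$. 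Second, I would pin down the correspondence on simples: on the $Y$-side the simples are $\Uppsi(\omega_\curve[1])$ and $\Uppsi(\O_{\curve_i}(-1))$ for $i\in\Updelta$, but for $i\in J$ the curve $\curve_i$ is contracted in $Z$, so $\O_{\curve_i}(-1)$ does not appear among the simples of $\Lambda_Y$ — equivalently $i\notin\cJ^c$, i.e.\ $i\in\cJ$. This shows $\cJ\supseteq J$. For the reverse inclusion, a vertex in $\cJ^c$ gives a genuine simple with a genuine projective summand of $\cN$, and tracking which summand (via the ranks $\ell_i$, or via the $\Pic$-labelling $\cL\mapsto(\deg\cL|_{\curve_i})_{i\in J^c}$ from \cite[Lemma 3.4.3]{VdBer04b}) forces it to lie in $J^c\cup\{0\}$. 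Third, I would check the imaginary-root / affine-node compatibility: the class of $\omega_\curve[1]$ in $\KK_0(\fdmod\Lambda_Y)$ corresponds to the affine vertex $0\in\cJ^c$, and the decomposition $\hh_0(Y,\Z)\oplus\hh_2(Y,\Z)\xrightarrow{\sim}\Z Q_0$ stated in the introduction is then exactly the identification $\Z\{0\}\oplus\Z J^c\cong\Z\cJ^c$.

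\textbf{Main obstacle.} The genuinely delicate point is step two in the direction $\cJ\subseteq J$, i.e.\ ruling out that some \emph{uncontracted} curve's vertex ends up deleted, or that the affine node gets deleted. This requires knowing that Van den Bergh's tilting bundle $\cN$, which is built from \emph{globally generated} line bundles generating $\Pic Y$ (one generator $\cL_i$ per uncontracted curve $\curve_i$, $i\in J^c$), restricts on $Z$ to precisely the standard preprojective-algebra tilting object with its summands indexed by $J^c$ — in other words that the general-elephant construction of $J$ in \cite{Reid83} and the slicing subset $\cJ$ of \cite{IWMemoir} are computed from the \emph{same} combinatorial data. I expect this to follow by carefully comparing the explicit module-theoretic descriptions in \cite[\S3]{VdBer04b} and \cite[\S9]{IWMemoir}, perhaps most cleanly by observing that both subsets are characterised intrinsically by the condition ``$\curve_i$ survives in the partial resolution $Z$'', after which the identity $\cJ = J$ is a tautology; the work is in unwinding that both references really use that characterisation. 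A subtlety to watch is that Iyama--Wemyss a priori only produce \emph{some} subset $\cJ$ with $\Lambda_Y/t\Lambda_Y\cong e_\cJ\Uppi e_\cJ$, so one must also invoke uniqueness — e.g.\ that such a $\cJ$ is determined by the isomorphism type together with the fixed idempotents — to conclude the match is the expected one rather than merely an abstract isomorphism.
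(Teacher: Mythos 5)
Your high-level strategy — reduce to the Du-Val hyperplane slice $Z\to\Spec R/(t)$ and identify the tilting data there — is exactly what the paper does, and you correctly identify the crux as showing that the restriction $\cN|_Z$ gives the ``right'' NCCR of $R/(t)$. But your proposal does not actually close this gap; it only names it and says you ``expect'' it to follow from ``carefully comparing the explicit module-theoretic descriptions.'' That is where the real work is, and the paper's proof is concrete at precisely this step. First, it is not enough to observe that $\cN|_Z$ is \emph{a} tilting object on $Z$: you need it to be the Van den Bergh tilting bundle of Theorem~\ref{thm:VdBtilt} for $Z$, i.e.\ each $\cN_i|_Z$ must be the extension of $\O_Z^{\,r}$ by $\cL_i^\vee|_Z$ corresponding to a \emph{minimal} set of generators of $\hh^1(Z,\cL_i^\vee|_Z)$. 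The paper gets this by pulling back the defining extension (fine, since everything is locally free), then using the long exact cohomology sequence to show $\hh^1(Z,\cL_i^\vee|_Z)\cong \hh^1(Y,\cL_i^\vee)/t\,\hh^1(Y,\cL_i^\vee)$, and finally invoking Nakayama (since $t\in\rad R$) to conclude that a minimal generating set upstairs restricts to a minimal generating set downstairs. Your sketch omits the minimality argument entirely, and without it you cannot match $\cN|_Z$ against the canonical construction. Second, once $\cN|_Z$ is identified as the canonical tilting bundle on the partial resolution $Z$, the paper does not need to chase simples or appeal to a uniqueness principle for the slicing subset: it directly cites~\cite[Theorem 4.6]{KIW15} to obtain an explicit $R/(t)$-algebra isomorphism $\Lambda/t\Lambda\cong\End_Z(\cN|_Z)\cong e_J\Uppi e_J$, which by the paper's conventions (the Dynkin type \emph{is} the data of such a fixed isomorphism) settles $\cJ=J$ without any residual ambiguity. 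Your concern about uniqueness is thus a red herring in this setup; your step about $\Uppsi(\omega_\curve[1])$ and $\Uppsi(\O_{\curve_i}(-1))$ is likewise not needed for the lemma itself (it is the content of the subsequent corollary). In short: right reduction, correct identification of the obstacle, but no mechanism to overcome it — the Nakayama step and the appeal to~\cite{KIW15} are the missing ingredients.
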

\begin{proof}
  By construction, each summand $\cN_i$ is obtained via an extension
  \begin{equation}\label{eq:exttiltbund}
    0\to \cL^\vee_i \to \cN_i \to \O_Y^r \to 0
  \end{equation}
  corresponding to a minimal set of generators of the $R$-module $\hh^1(Y,\cL^\vee_i)$.
  Let $Z\to \Spec R/(t)$ denote the partial resolution obtained by base change over $R\to R/(t)$, then the extension~\eqref{eq:exttiltbund} pulls back to a short exact sequence
  \[
    0 \to \cL^\vee_i|_Z \to \cN_i|_Z \to \O_Z^r \to 0,
  \]
  because $\cL^\vee_i$ and $\cN_i$ are locally free; in particular $\cL^\vee_i|_Z$ is a line bundle on $Z$ with degree $-1$ on $\curve_i \cap Z$ and degree $0$ all other curves.
  The embedding $\coh Z \to \coh Y$ identifies $\cL^\vee_i|_Z$ with the quotient $\cL^\vee_i/t\cL^\vee_i$, so the associated cohomology sequence
  \[
    \ldots \to \hh^1(Y,\cL^\vee_i) \xrightarrow{t} \hh^1(Y,\cL^\vee_i) \to \hh^1(Y,\cL^\vee_i/t\cL^\vee_i) \cong \hh^1(Z,\cL^\vee_i|_Z) \to 0
  \]
  shows that $\hh^1(Z,\cL^\vee_i|_Z) \cong \hh^1(Y,\cL^\vee_i)/t\hh^1(Y,\cL^\vee_i)$.
  Because $t$ lies in the radical of $R$, Nakayama's lemma then implies that any minimal generating set of $\hh^1(Y,\cL^\vee_i)$ maps to a minimal generating set of $\hh^1(Z,\cL^\vee_i|_Z)$.
  In particular, $\cN_i|_Z$ is the extension defined by such a minimal generating set of $\hh^1(Z,\cL^\vee_i|_Z)$ as in Theorem~\ref{thm:VdBtilt}.
  Hence $\cN|_Z = \O_Z \oplus \cN_1|_Z \oplus \ldots \cN_n|_Z$ determines an noncommutative crepant partial resolution of $R/(t)$ and~\cite[Theorem 4.6]{KIW15} shows that
  \[
    \Lambda/t\Lambda \cong \End_Y(\cN/t\cN) \cong \End_Z(\cN|_Z) \cong e_J\Uppi e_J.\qedhere
  \]
\end{proof}

\begin{cor}
  The derived equivalence $\Uppsi\colon \D^b(\coh Y) \xrightarrow{\sim} \D^b(\mod \Lambda_Y)$ identifies
  \[
    \C e_0 = \Uppsi(\omega_C[1]),\quad \C e_i = \Uppsi(\O_{\curve_i}(-1))\quad \forall i\in J^c = \cJ^c \cap \Updelta,
  \]
  where $\Updelta$ is viewed as a subgraph of $\Updelta_\aff$ and $0\in \Updelta_\aff\setminus\Updelta$ denotes the extended vertex.
\end{cor}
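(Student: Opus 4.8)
The plan is to obtain the corollary by matching two labellings of the same finite set of simple $\Lambda_Y$-modules. The discussion preceding Lemma~\ref{prop:NCDynkin}, i.e.\ \cite[Proposition 3.5.8]{VdBer04b}, already tells us that the simple $\Lambda_Y$-modules are $\Uppsi(\omega_\curve[1])$ together with the objects $\Uppsi(\O_{\curve_i}(-1))$ for $i\in J^c$, that $\Uppsi(\omega_\curve[1])$ is the simple at the idempotent of the summand $\O_Y\subset\cN$, and that $\Uppsi(\O_{\curve_i}(-1))$ is the simple at the idempotent of the summand $\cN_i\subset\cN$. On the other hand the modules $\C e_j$ appearing in the statement are, by construction (\S\ref{subsec:NCCRs}), the simples indexed by the vertices $j\in\cJ^c$ of the affine Dynkin type via the fixed isomorphism $\Lambda_Y/t\Lambda_Y\cong e_\cJ\Uppi e_\cJ$. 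So everything reduces to checking that the idempotent of $\O_Y\subset\cN$ is $e_0$, and that the idempotent of $\cN_i\subset\cN$ is $e_i$ for each $i\in J^c$.

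First I would invoke Lemma~\ref{prop:NCDynkin} to get $\cJ = J$ as subsets of $\Updelta_\aff$, so that $\cJ^c = \{0\}\sqcup J^c$ with $0$ the extended vertex. Then I would track the idempotents through the chain $\Lambda_Y/t\Lambda_Y\cong\End_Y(\cN/t\cN)\cong\End_Z(\cN|_Z)\cong e_J\Uppi e_J$ from the proof of that lemma. The middle isomorphism is induced by the embedding $\coh Z\hookrightarrow\coh Y$ and is additive, hence compatible with the decomposition $\cN|_Z = \O_Z\oplus\bigoplus_{i\in J^c}\cN_i|_Z$; and the isomorphism $\End_Z(\cN|_Z)\cong e_J\Uppi e_J$ of \cite[Theorem 4.6]{KIW15} respects the labelling of summands by vertices, carrying the summand $\cN_i|_Z$ --- which, by the Chern-class computation in the proof of Lemma~\ref{prop:NCDynkin}, is the one of degree $-1$ on $\curve_i$ and degree $0$ on every other exceptional curve --- to the vertex idempotent $e_i$ at $i\in J^c\subset\Updelta\subset\Updelta_\aff$, and carrying the structure-sheaf summand $\O_Z$ to the vertex idempotent $e_0$ at the extended vertex. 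Since $t\in\rad\Lambda_Y$, these idempotents lift to $\Lambda_Y$ uniquely up to conjugation, and conjugate idempotents determine isomorphic simple modules; hence the idempotent of $\O_Y\subset\cN$ is $e_0$ and that of $\cN_i\subset\cN$ is $e_i$. Combining with the first paragraph yields $\C e_0 = \Uppsi(\omega_\curve[1])$ and $\C e_i = \Uppsi(\O_{\curve_i}(-1))$ for all $i\in J^c = \cJ^c\cap\Updelta$, as claimed.

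The one point that requires genuine input, rather than bookkeeping, is the identification of the structure-sheaf summand $\O_Z$ with the \emph{extended} vertex $0$ rather than with some interior vertex of $\Updelta_\aff$. Once $\cJ = J$ is known and the summands $\cN_i|_Z$ have been matched with the vertices $i\in J^c$, this identification is forced by elimination, since $\cJ^c\setminus J^c = \{0\}$; but making ``matched with'' precise rests on the explicit summand-to-vertex dictionary supplied by \cite[Theorem 4.6]{KIW15} over the Du-Val singularity $R/(t)$, which is what pins $\O_Z$ to the tautological bundle of the trivial representation --- equivalently, to the extended node of $\Updelta_\aff$.
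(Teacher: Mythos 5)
The paper presents this as an immediate corollary of Lemma~\ref{prop:NCDynkin} and the preceding discussion citing \cite[Proposition~3.5.8]{VdBer04b}, so no explicit proof is recorded there. Your proposal supplies the argument that the paper leaves implicit, and it is correct: you match the two labellings by (i) recalling that the preceding discussion already tells you which simple comes from the idempotent of $\O_Y$ (namely $\Uppsi(\omega_\curve[1])$) and which from the idempotent of $\cN_i$ (namely $\Uppsi(\O_{\curve_i}(-1))$), and (ii) tracking those idempotents through the chain $\Lambda_Y/t\Lambda_Y\cong\End_Y(\cN/t\cN)\cong\End_Z(\cN|_Z)\cong e_J\Uppi e_J$ from the proof of Lemma~\ref{prop:NCDynkin}, using the equality $\cJ=J$ and $\cJ^c=\{0\}\sqcup J^c$ established there. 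The reduction to the single nontrivial point --- that the labelling of summands of $\cN|_Z$ by vertices of $\Updelta_\aff$ implicit in the \cite[Theorem~4.6]{KIW15} isomorphism sends $\O_Z$ to the extended vertex and $\cN_i|_Z$ to $i\in J^c$ --- is exactly right, and your justification via the Chern class computation from the proof of Lemma~\ref{prop:NCDynkin} (degree $-1$ on $\curve_i$, $0$ elsewhere) together with the McKay correspondence is the correct input. Your remark that $t\in\rad\Lambda_Y$ ensures idempotents lift uniquely up to conjugation, and conjugate idempotents determine isomorphic simples, correctly closes the gap between the well-definedness of the $e_i$ chosen in \S\ref{subsec:NCCRs} and the claimed equalities. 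This is exactly the intended argument; there is nothing to fault.
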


The derived equivalence induces an isomorphism between the Grothendieck group of $\fdmod \Lambda$ and the category $\coh_\cs Y$ of compactly supported sheaves.
The dual of the Chern character yields an isomorphism $\KK_0(\coh_\cs Y) \cong \hh_0(Y,\Z) \oplus \hh_2(Y,\Z) \cong \Z \oplus \Z J^c$, which fits into a commutative diagram
\[
  \begin{tikzcd}
    \KK_0(\coh_\cs Y) \ar[d]\ar[r] & \hh_0(Y,\Z) \oplus \hh_2(Y,\Z) \ar[r,"\sim"]\ar[d] &  \Z \oplus \Z J^c \ar[d]\\
    \KK_0(\fdmod\Lambda) \ar[r] & \textstyle\bigoplus_{i\in\cJ^c} \Z [\C e_i] \ar[r,"\sim"] & \Z \cJ^c
  \end{tikzcd}
\]
where the middle map sends $[\curve_i]$ to $[\C e_i]$ and maps the class of a point $p\in \hh_0(Y,\Z)$ to the vector $[\Uppsi(\O_p)] = [\C e_0] \oplus \sum_i \ell_i[\C e_i]$.
By~\cite[Proposition 9.4]{IWMemoir} the latter is given by $\sum_{i\in \cJ^c}\imroot_i[\C e_i]$, where $\imroot$ is the imaginary root of the affine root system associated to $\Updelta_\aff$.
This is an artefact of the Dynkin combinatorics underlying the NCCR, which we explain in the following section.

\section{The Dynkin Combinatorics of NCCRs}\label{sec:dynkin}
In this section we recall some results from~\cite{vGar21} which relate the tilting theory and stability conditions for NCCRs of cDV singularities to the combinatorics of their Dynkin types described in the memoir of Iyama--Wemys~\cite{IWMemoir}.
The notation mostly follows~\cite{IWMemoir} and the classical work of Humphreys~\cite{Hum90}.

\subsection{Restricted roots}

Let $\Updelta$ be an ADE Dynkin diagram and consider the associated Weyl group $\cW_\Updelta$, generated by simple reflections $\upsigma_i \in \cW_\Updelta$ for $i\in \Updelta$, with its action on the root lattice $\Z\Updelta \colonequals\bigoplus_{i\in \Updelta} \Z\upalpha_i$ spanned by the simple roots $\upalpha_i$.
The group $\cW_\Updelta$ generates the finite root system
\[
  \Rts\Updelta = \{w  \upalpha_i \mid w \in \cW_\Updelta,\ i\in \Updelta\} \subset \Z\Updelta,
\]
which is the union $\Rts\Updelta = \Rts^+\kern-3pt\Updelta \sqcup \Rts^-\kern-3pt\Updelta$ of the positive and negative roots
\[
  \Rts^\pm\kern-3pt\Updelta \colonequals \{{\rt} = \textstyle\sum_{i\in\Updelta}{\rt}_i\upalpha_i \in \Rts\Updelta \mid \pm{\rt}_i \geq 0\ \forall i\},
\]
and contains a unique highest root which we denote by ${\rt}^\maxx = \sum_{i\in\Updelta} {\rt}^\maxx_i\upalpha_i$.
Let $\Updelta_\aff \supset \Updelta$ denote the corresponding \emph{affine} diagram containing the extended vertex $0\in\Updelta_\aff \setminus \Updelta$.
The associated Weyl group $\cW_{\Updelta_\aff}\supset \cW_\Updelta$ acts on the affine root lattice $\Z\Updelta_\aff \colonequals\bigoplus_{i\in \Updelta_\aff} \Z\upalpha_i = \Z \upalpha_0 \oplus \Z \Updelta$, and generates the set of real roots of the affine root system
\[
  \Rts^\re\kern-2pt\Updelta_\aff \colonequals \{ w  \upalpha_i \mid w \in \cW_{\Updelta_\aff},\ i\in \Updelta_\aff\}.
\]
The full root system $\Rts\Updelta_\aff$ additionally contains the set $\Rts^\img\kern-2pt\Updelta_\aff = \{k\imroot \mid k\neq 0\}$ of multiples of the imaginary root $\imroot = \upalpha_0 + {\rt}^\maxx \in \Z\Updelta_\aff$, which is fixed by $\cW_{\Updelta_\aff}$.
It is well known that the finite and infinite root systems are related by translation:
\[
  \Rts^\re\kern-2pt\Updelta_\aff = \Rts\Updelta + \Z\imroot = \{ {\rt} + k\imroot \mid {\rt} \in \Rts\Updelta,\ k\in\Z\},
\]
see for example the classical work of Kac~\cite{Kac83}.

Now let $(\Updelta_\aff,\cJ)$ be an affine Dynkin type defined by a proper subset $\cJ \subset \Updelta_\aff$, let $J \colonequals \cJ \cap \Updelta$, and write $\cJ^c \colonequals \Delta_\aff \setminus \cJ$ and $J^c \colonequals \Delta\setminus J$ for the complements of these subsets.
Then this defines a splitting
\[
  \Z\Updelta_\aff = \Z \cJ \oplus \Z \cJ^c,\quad \Z\Updelta = \Z J \oplus \Z J^c,
\]
along the simple roots defined by these subsets, and there are \emph{restrictions} $\uppi_\cJ \colon \Z\Updelta_\aff \to \Z \cJ^c$ and $\uppi_J\colon \Z\Updelta \to \Z J^c$.
To ease notation, we will denote an image $\uppi_\cJ(r)$ or $\uppi_J(r)$ by $\overline r$ when the subset $\cJ$ or $J$ is clear from context.
Restricting the root system yields the following structure.
\begin{defn}
  Given Dynkin types $(\Updelta_\aff,\cJ)$ and $(\Updelta,J)$ as above, the subsets
  \[
    \begin{aligned}
      \RR(\Updelta,J) &\colonequals \{\overline {\rt} \in \uppi_J({\Rts(\Updelta)}) \mid \overline{\rt} \neq 0\} \subset \Z J^c,\\
      \RR(\Updelta_\aff,\cJ) &\colonequals \{\overline {\rt} \in \uppi_\cJ({\Rts(\Updelta_\aff)}) \mid \overline{\rt} \neq 0\} \subset \Z \cJ^c,
    \end{aligned}
  \]
  are called the \emph{restricted roots} associated to the Dynkin type.
\end{defn}
In the finite case, we denote the images of the positive/negative roots $\Rts^\pm\kern-2pt\Delta$ mapping into $\RR(\Updelta,J)$ by $\RR^\pm(\Delta,J)$.
In the affine case we let $\RR^\img(\Updelta_\aff,\cJ) \colonequals \{k\overline\imroot \mid k\neq 0\}$ denote the \emph{imaginary restricted roots} and define the \emph{real restricted roots} $\RR^\re(\Updelta_\aff,\cJ) \colonequals \RR(\Updelta_\aff,\cJ) \setminus \RR^\img(\Updelta_\aff,\cJ)$.
The latter are again related to the finite restricted roots as follows.

\begin{lemma}\label{lem:realRR}
  Let $(\Updelta_\aff,\cJ)$ be an affine Dynkin type. If $0\not\in \cJ$, then
  \[
    \RR^\re(\Updelta_\aff,\cJ) = \RR(\Updelta,J) + \Z\overline\imroot = \{ \overline{\rt} + k\overline\imroot \mid \overline{\rt} \in \RR(\Updelta,J),\quad k\in \Z\},
  \]
  while the case where $0\in \cJ$ has restricted roots
  \[
    \RR^\re(\Updelta_\aff,\cJ)  = \{ \overline{\rt} + k\overline\imroot \mid \overline{\rt} \in \RR(\Updelta,J)\setminus\{\pm \overline{{\rt}^\maxx}\} ,\quad k\in \Z\}.
  \]
\end{lemma}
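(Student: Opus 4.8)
The plan is to push the two standard families of affine roots through the restriction map $\uppi_\cJ$ and then sort out which images are zero and which fall into $\RR^\img(\Updelta_\aff,\cJ)$. By the translation description recalled above, $\Rts^\re\Updelta_\aff = \{\rt + k\imroot \mid \rt\in\Rts\Updelta,\ k\in\Z\}$ and $\Rts^\img\Updelta_\aff = \{m\imroot\mid m\ne 0\}$; since $\uppi_\cJ$ is linear, a real root $\rt + k\imroot$ maps to $\overline\rt + k\overline\imroot$ and an imaginary root $m\imroot$ maps to $m\overline\imroot$. The latter always lies in $\RR^\img(\Updelta_\aff,\cJ)\cup\{0\}$, so it contributes nothing to $\RR^\re(\Updelta_\aff,\cJ)$ and can be discarded from the start. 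What remains is to decide, among the vectors $\overline\rt + k\overline\imroot$ with $\rt\in\Rts\Updelta$ and $k\in\Z$, which are nonzero and are not integer multiples of $\overline\imroot$; these are exactly the real restricted roots. The two cases in the statement are governed entirely by the shape of $\overline\imroot = \uppi_\cJ(\upalpha_0) + \overline{{\rt}^\maxx}$.

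First I would unwind the restriction map in each case. If $0\notin\cJ$ then $\cJ = J\subset\Updelta$ and $\cJ^c = J^c\sqcup\{0\}$, so $\uppi_\cJ$ restricted to $\Z\Updelta$ is $\uppi_J$ (with values in $\Z J^c\subset\Z\cJ^c$) and $\uppi_\cJ(\upalpha_0) = \upalpha_0$; hence $\overline\imroot = \upalpha_0 + \overline{{\rt}^\maxx}$ has $\upalpha_0$-coefficient $1$, in particular $\overline\imroot\ne 0$. If instead $0\in\cJ$ then $\cJ^c\subset\Updelta$ forces $\cJ^c = J^c$, so $\uppi_\cJ$ agrees with $\uppi_J$ on $\Z\Updelta$ and kills $\upalpha_0$; hence $\overline\imroot = \overline{{\rt}^\maxx}$, which is nonzero since the ADE highest root has strictly positive coefficient at every simple root and $J^c\ne\emptyset$ (as $\cJ$ is proper and contains $0$).

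In the case $0\notin\cJ$ the argument is bookkeeping on the $\upalpha_0$-coefficient. A vector $\overline\rt + k\overline\imroot$ has $\upalpha_0$-coefficient $k$ and $\Z J^c$-component $\overline\rt + k\,\overline{{\rt}^\maxx}$; comparing $\upalpha_0$-coefficients, it equals a multiple $m\overline\imroot$ precisely when $m = k$ and then $\overline\rt = 0$. So the nonzero, non-imaginary images of real roots are exactly the vectors $\overline\rt + k\overline\imroot$ with $\overline\rt\in\RR(\Updelta,J)$ and $k\in\Z$; conversely each such vector is nonzero (its $\upalpha_0$-coefficient is $k$, and for $k=0$ it equals $\overline\rt\ne 0$) and non-imaginary (same comparison), and it is the image of the real root $\rho + k\imroot$ for any $\rho\in\Rts\Updelta$ with $\uppi_J(\rho) = \overline\rt$. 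This yields $\RR^\re(\Updelta_\aff,\cJ) = \RR(\Updelta,J) + \Z\overline\imroot$.

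In the case $0\in\cJ$ we have $\overline\imroot = \overline{{\rt}^\maxx}$, and the statement reduces to the arithmetic fact that $\RR(\Updelta,J)\cap\Z\,\overline{{\rt}^\maxx} = \{\pm\overline{{\rt}^\maxx}\}$. Granting this: if $\overline\rt + k\,\overline{{\rt}^\maxx}\notin\Z\,\overline{{\rt}^\maxx}$ then $\overline\rt$ is neither $0$ nor $\pm\overline{{\rt}^\maxx}$, and conversely, for $\overline\rt\in\RR(\Updelta,J)\setminus\{\pm\overline{{\rt}^\maxx}\}$, the relation $\overline\rt + k\,\overline{{\rt}^\maxx}\in\Z\,\overline{{\rt}^\maxx}$ would force $\overline\rt\in\RR(\Updelta,J)\cap\Z\,\overline{{\rt}^\maxx} = \{\pm\overline{{\rt}^\maxx}\}$, a contradiction; hence the nonzero, non-imaginary images of real roots are exactly $\{\overline\rt + k\,\overline{{\rt}^\maxx}\mid \overline\rt\in\RR(\Updelta,J)\setminus\{\pm\overline{{\rt}^\maxx}\},\ k\in\Z\}$, as claimed. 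To prove the intersection identity, I would write an element of $\RR(\Updelta,J)$ as $\uppi_J(\rho)$ with $\rho\in\Rts\Updelta$, and by the sign symmetry assume $\rho\in\Rts^+\Updelta$; then, because the highest root dominates every positive root, $0\le\uppi_J(\rho)\le\uppi_J({\rt}^\maxx) = \overline{{\rt}^\maxx}$ componentwise in $\N J^c$, so if $\uppi_J(\rho) = c\,\overline{{\rt}^\maxx}$ with $c\in\Z$ then, reading off any strictly positive coordinate of $\overline{{\rt}^\maxx}$, we get $0\le c\le 1$ and thus $\uppi_J(\rho) = \overline{{\rt}^\maxx}$; the negative case is symmetric. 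The only genuinely non-formal point is this intersection identity: the subtlety to watch is that $\uppi_J$ merely discards the $J$-coordinates while leaving the (nonnegative) $J^c$-coordinates of a positive root untouched, so the domination inequality survives restriction, and that $\overline{{\rt}^\maxx}$ really has a strictly positive coordinate, which is guaranteed by $J^c\ne\emptyset$.
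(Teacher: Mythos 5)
Your proposal is correct and follows essentially the same route as the paper's proof: in both cases one uses the translation description $\Rts^\re\Updelta_\aff = \Rts\Updelta + \Z\imroot$, discards the imaginary part, and reduces the question to deciding when $\overline\rt + k\overline\imroot$ lies in $\Z\overline\imroot$, which splits according to whether $\overline\imroot$ has an $\upalpha_0$-component (case $0\notin\cJ$) or collapses to $\overline{{\rt}^\maxx}$ (case $0\in\cJ$). The only difference is cosmetic: you spell out the $\upalpha_0$-coefficient bookkeeping and the domination argument for $\RR(\Updelta,J)\cap\Z\,\overline{{\rt}^\maxx} = \{\pm\overline{{\rt}^\maxx}\}$ in more detail than the paper, which compresses these to one-line observations.
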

\begin{proof}
  Every element of $\RR^\re(\Updelta_\aff,\cJ)$ is the restriction of a real root in $\Rts\Updelta_\aff$, and can therefore be written as $\overline{\rt} + k\overline\imroot$ for some ${\rt}\in\Rts\Updelta$ and $k\in\Z$.
  Because $\overline{\rt} + k\overline\imroot$ is not contained in $\RR^\img(\Updelta,\cJ) \subset \Z\overline\imroot$ it must additionally satisfy $\overline{\rt} \neq 0$, and is therefore contained in $\RR(\Updelta,J) + \Z\overline\imroot$.
  Conversely, given an element $\overline{\rt} + k\overline\imroot \in \RR(\Updelta,J) + \Z\overline\imroot$, one has
  \[
    \overline{\rt} + k\overline\imroot \in \RR^\re(\Updelta_\aff,\cJ)
    \quad\Longleftrightarrow\quad
    \overline{\rt} + k\overline\imroot \not\in \Z\overline\imroot
    \quad\Longleftrightarrow\quad \overline{\rt} \not\in \Z\overline\imroot.
  \]
  If $0\not\in\cJ$ then $\Z\cJ^c \cong \Z\upalpha_0 \oplus \Z J^c$, so no multiple $\overline\imroot = \upalpha_0 + \overline{{\rt}^\maxx}$ coincides with any element of $\RR(\Updelta,J) \subset \Z J^c$.
  If $0\in \cJ$ however, one has $\Z\cJ^c = \Z J^c$ and $\overline{\rt} + k\overline\imroot \in \Z\overline\imroot = \Z\overline{{\rt}^\maxx}$ if and only if $\overline{\rt}$ is multiple of $\overline{{\rt}^\maxx}$.
  Because ${\rt}^\maxx$ is the highest root this is equivalent to $\overline{\rt}\in \{-\overline{{\rt}^\maxx},\overline{{\rt}^\maxx}\}$.
\end{proof}

In general, the restricted roots do not form a root system in $\Z\cJ^c$, and in fact they appear with nontrivial multiplicities: there may be restricted roots $\overline{\rt} \in \RR(\Updelta,J)$ for which some rational multiple $\tfrac mn \overline{\rt}$ is again a restricted root. The multiplicities satisfy the following important structural property.

\begin{prop}\label{prop:rrmult}
  For any finite Dynkin type $(\Updelta,J)$ and every restricted root $\overline{\rt} \in \RR(\Updelta,J)$, of multiplicity $d = \gcd(\overline {\rt}) \colonequals \gcd(|\overline{\rt}_i| \mid i\in J^c)$, the following are again in $\RR(\Updelta,J)$:
  \[
    \tfrac1d  \overline{\rt},\quad \tfrac2d  \overline{\rt},\quad \ldots,\quad \tfrac{d-1}d  \overline{\rt}.
  \]
\end{prop}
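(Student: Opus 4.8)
The plan is to recast the claim as a question about norm-$2$ vectors of the root lattice $\Z\Updelta$, equipped with the standard Weyl-invariant form $\langle-,-\rangle$ normalised so that $\langle\upalpha_i,\upalpha_i\rangle = 2$; recall that then $\Rts\Updelta = \{\gamma\in\Z\Updelta \mid \langle\gamma,\gamma\rangle = 2\}$, a vector being a positive root exactly when all its coordinates are non-negative. Since $\RR(\Updelta,J) = -\RR(\Updelta,J)$ we may assume $\overline{\rt} = \uppi_J(\rt)$ for a positive root $\rt\in\Rts^+\Updelta$, and because $d = \gcd(\rt_i \mid i\in J^c)$, each target $\tfrac kd\overline{\rt} = \sum_{i\in J^c}\tfrac kd\rt_i\,\upalpha_i$ is an integral vector of $\Z J^c$. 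It is therefore enough to show that for each $k\in\{1,\dots,d-1\}$ the coset
\[
  \tfrac kd\overline{\rt} + \Z J \ \subseteq\ \Z\Updelta
\]
contains a root. As $\Z\Updelta$ is an even lattice and $\tfrac kd\overline{\rt}\neq 0$, every element $\gamma$ of this coset is non-zero with $\langle\gamma,\gamma\rangle$ even, so it suffices to exhibit one $\gamma$ in it with $\langle\gamma,\gamma\rangle\leq 2$.

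That there is ``room'' for such a $\gamma$ can be seen from the induced quotient form: letting $q$ be the orthogonal projection of $\R\Updelta$ onto $\R J = \bigoplus_{j\in J}\R\upalpha_j$ and $\bar\beta$ the positive definite form on $\R J^c$ obtained by transporting $\langle-,-\rangle|_{(\R J)^\perp}$ along the isomorphism $(\R J)^\perp \xrightarrow{\ \sim\ } \R J^c$ induced by $\uppi_J$, one has $\bar\beta(\overline{\rt},\overline{\rt}) = \langle\rt - q(\rt),\rt - q(\rt)\rangle = 2 - \langle q(\rt),q(\rt)\rangle\leq 2$, hence $\bar\beta(\tfrac kd\overline{\rt},\tfrac kd\overline{\rt}) = \tfrac{k^2}{d^2}\bar\beta(\overline{\rt},\overline{\rt})\leq 2$ for all $k\leq d$. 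Equivalently, the minimum of $\langle\gamma,\gamma\rangle$ over all \emph{real} $\gamma$ with $\uppi_J(\gamma) = \tfrac kd\overline{\rt}$ is $\leq 2$; what remains is to realise this on the sublattice coset.

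The route I would take for that is an induction on $d$: show that any positive root $\rt$ of $J^c$-multiplicity $d\geq 2$ decomposes as $\rt = \rho + \rho'$ with $\rho,\rho'\in\Rts^+\Updelta$ and $\uppi_J(\rho) = \tfrac1d\overline{\rt}$, so that $\uppi_J(\rho')$ has multiplicity $d-1$; iterating yields positive roots $\rt^{(1)},\dots,\rt^{(d)} = \rt$ with $\uppi_J(\rt^{(k)}) = \tfrac kd\overline{\rt}$, as required. Using $\langle\rt - \rho,\rt - \rho\rangle = 4 - 2\langle\rho,\rt\rangle$ and the description of $\Rts^+\Updelta$ recalled above, such a splitting exists precisely when there is a positive root $\rho\leq\rt$ (coordinatewise) with $\rho_i = \tfrac1d\rt_i$ for $i\in J^c$ and $\langle\rho,\rt\rangle = 1$.

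Establishing the existence of such a $\rho$ is the crux, and the point where the ADE hypothesis is genuinely needed. I would first attempt a uniform argument, choosing $\rho$ by rounding the $J$-coordinates of $\tfrac1d\rt$ to a nearest lattice point of the coset $\tfrac1d\overline{\rt} + \Z J$ and bounding $\langle\rho,\rho\rangle$ using that the $J$-part is an ADE root lattice of small covering radius, together with the quotient-form estimate above (which controls the distance of $\tfrac1d\rt$ from $(\R J)^\perp$). If a clean uniform bound proves awkward, the statement becomes a finite check: up to diagram automorphism there are only finitely many pairs $(\Updelta,J)$ carrying a restricted root of multiplicity $d\geq 2$, and $d\leq 6$, so in each such case one can simply write down positive roots realising $\tfrac1d\overline{\rt},\tfrac2d\overline{\rt},\dots,\tfrac{d-1}d\overline{\rt}$.
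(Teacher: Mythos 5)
Your proposal recasts the problem in a genuinely different, lattice-theoretic language than the paper's: where the paper treats the three ADE families case-by-case (observing triviality for $A_n$, doing an explicit hand computation for $D_n$, and delegating $E_6,E_7,E_8$ to computer algebra), you work with the Weyl-invariant form, identify roots with norm-$2$ lattice vectors, and propose an induction on the multiplicity $d$ via a decomposition $\rt = \rho + \rho'$ with $\uppi_J(\rho) = \tfrac1d\overline{\rt}$. The reduction to finding a lattice point of the coset $\tfrac kd\overline{\rt} + \Z J$ of norm $\leq 2$ is correct, and the quotient-form estimate $\bar\beta\bigl(\tfrac kd\overline{\rt},\tfrac kd\overline{\rt}\bigr)\leq 2(\tfrac kd)^2$ is a nice observation, so the framing is sound. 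It is also a more structural account than the paper's of \emph{why} one should expect the proposition to hold.

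However, there is a genuine gap: the crux — existence of the positive root $\rho\leq\rt$ with $\uppi_J(\rho) = \tfrac1d\overline{\rt}$ and $\langle\rho,\rt\rangle = 1$ — is asserted but never established, and you say so yourself. The proposed uniform ``rounding'' argument is unlikely to close it cleanly: the covering radius of the sublattice $\Z J$ is not uniformly bounded by $\sqrt2$ (for instance, when $J$ contains a long $A_k$ or $D_k$ chain the covering radius of $\Z J$ grows with $k$), so the distance estimate from $(\R J)^\perp$ together with a nearest-point rounding need not land on a norm-$2$ vector; one would need to use the specific coset structure more carefully. Your fallback is then the same kind of finite verification the paper carries out, but your statement that there are ``only finitely many pairs $(\Updelta,J)$ carrying a restricted root of multiplicity $d\geq 2$ up to diagram automorphism'' is not quite right: $D_n$ supplies such pairs for every $n\geq 4$, so the $D$ family still requires a uniform (if short) argument as the paper gives. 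In short, the conceptual outline is interesting and internally consistent up to the decomposition step, but as written the proposal does not complete the proof, whereas the paper's more pedestrian route does.
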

\begin{proof}
  By symmetry, it suffices to consider only the positive restricted roots $\RR^+(\Updelta,J)$.

  For $\Updelta$ of type $A_n$ all positive roots ${\rt}\in\Rts^+\kern-3pt\Updelta$ have coefficients ${\rt}_i \leq 1$, so the corresponding restricted root $\overline{\rt}\in \RR^+(\Updelta,J)$ also has coefficients $\overline{\rt}_i = {\rt}_i \leq 1$ for any choice of $J$.
  Hence $\gcd(\overline{\rt}) = 1$ for all restricted roots and there is nothing to prove.

  For $\Updelta$ of type $D_n$, we note that ${\rt} \in \Rts^+\kern-3pt \Updelta$ has a coefficient ${\rt}_i >1$ if and only if it is of the form
  \[
    \begin{tikzpicture}[baseline={([yshift=-.5ex]current bounding box.center)}]
      \begin{scope}[scale=.6,outer sep=.1pt,inner sep=.7pt,circle]
        \node (Am3) at (-2,0) {$\scriptstyle0$};
        \node (Am2) at (-1.5,0) {$\scriptstyle\ldots$};
        \node (Am1) at (-1,0) {$\scriptstyle0$};
        \node (A0) at (0,0) {$\scriptstyle1$};
        \node (A1) at (1,0) {$\scriptstyle2$};
        \node (A2) at (1.5,0) {$\scriptstyle\ldots$};
        \node (A3) at (2,0) {$\scriptstyle2$};
        \node (A4) at (2+.7,.7) {$\scriptstyle1$};
        \node (A5) at (2+.7,-.7) {$\scriptstyle1$};
        \draw (Am2) to (Am1);
        \draw (Am1) to (A0);
        \draw (A0) to (A1);        
        \draw (A1) to (A2);
        \draw (A3) to (A4);
        \draw (A3) to (A5);
      \end{scope}
    \end{tikzpicture}
    \in \Rts \Updelta.
  \]
  Any element of $\RR^+(\Updelta,J)$ with nontrivial multiplicity is therefore the image $\overline{\rt}$ of such a root, and has multiplicity $d = \gcd(\overline {\rt}) = 2$.
  By inspection, $\overline {\rt}$ can only have multiplicity $2$ if the simple roots $\upalpha_i$ with coefficient ${\rt}_i = 1$, are mapped to $0$ in the quotient $\Z\Updelta \to \Z J^c$.
  But then 
  \[
    \begin{tikzpicture}[baseline={([yshift=-.5ex]current bounding box.center)}]
      \begin{scope}[scale=.6,outer sep=.1pt,inner sep=.7pt,circle]
        \node (Am3) at (-2,0) {$\scriptstyle0$};
        \node (Am2) at (-1.5,0) {$\scriptstyle\ldots$};
        \node (Am1) at (-1,0) {$\scriptstyle0$};
        \node (A0) at (0,0) {$\scriptstyle0$};
        \node (A1) at (1,0) {$\scriptstyle1$};
        \node (A2) at (1.5,0) {$\scriptstyle\ldots$};
        \node (A3) at (2,0) {$\scriptstyle1$};
        \node (A4) at (2+.7,.7) {$\scriptstyle0$};
        \node (A5) at (2+.7,-.7) {$\scriptstyle0$};
        \draw (Am2) to (Am1);
        \draw (Am1) to (A0);
        \draw (A0) to (A1);        
        \draw (A1) to (A2);
        \draw (A3) to (A4);
        \draw (A3) to (A5);
      \end{scope}
    \end{tikzpicture}
    \in \Rts \Updelta,
  \]
  maps to the restricted root $\tfrac12 \overline{\rt} \in \RR^+(\Updelta,J)$, which shows the claim for type $D$.

  For the remaining cases $\Updelta = E_6,E_7,E_8$ there are only finitely many choices $J\subset \Updelta$, so that the statement can be explicitly checked by hand or by the computer algebra code in Appendix~\ref{app:compalg}.
\end{proof}

\begin{cor}\label{cor:rrdiv}
  For any affine Dynkin type $(\Updelta_\aff,\cJ)$ and any real restricted root $\overline{\rt} \in \RR^\re(\Updelta_\aff,\cJ)$ of multiplicity $d = \gcd(\overline{\rt})$, the following are again in $\RR^\re(\Updelta_\aff,\cJ)$:
  \[
    \tfrac1d  \overline{\rt},\quad
    \tfrac2d  \overline{\rt},\quad\ldots,\quad
    \tfrac{d-1}d \overline{\rt},\quad
  \]

\end{cor}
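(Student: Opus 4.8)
The plan is to bootstrap the statement from its finite-type analogue, Proposition~\ref{prop:rrmult}, using the translation description of real restricted roots in Lemma~\ref{lem:realRR}. Fix $\overline{\rt}\in\RR^\re(\Updelta_\aff,\cJ)$ with $d=\gcd(\overline{\rt})$ and fix $j\in\{1,\dots,d-1\}$. Since $\overline{\rt}$ is not imaginary, the identity $\Rts^\re\Updelta_\aff=\Rts\Updelta+\Z\imroot$ gives a root $\rho\in\Rts\Updelta$ and an integer $k$ with $\overline{\rt}=\overline\rho+k\overline\imroot$, where $\overline\rho:=\uppi_\cJ(\rho)$ must be nonzero (else $\overline{\rt}\in\Z\overline\imroot$), i.e. $\overline\rho\in\RR(\Updelta,J)$. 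The crux is the divisibility claim $d\mid k$. Granting it, $\overline\rho=\overline{\rt}-k\overline\imroot$ has all coordinates divisible by $d$, so $d\mid e:=\gcd(\overline\rho)$, and Proposition~\ref{prop:rrmult} applied to $\overline\rho$ yields
\[
  \tfrac jd\,\overline\rho=\tfrac{j(e/d)}{e}\,\overline\rho\in\RR(\Updelta,J),
\]
which is legitimate because $d\mid e$ and $j<d$ force the index $j(e/d)$ to lie in $\{1,\dots,e-1\}$. Moreover $\tfrac jd\overline\rho$, being a proper fraction of a root image, has every coordinate strictly dominated in absolute value by that of $\overline{{\rt}^\maxx}$, so it is not $\pm\overline{{\rt}^\maxx}$. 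Since $\tfrac jd\overline{\rt}=\tfrac jd\overline\rho+\tfrac{jk}{d}\overline\imroot$ with $\tfrac{jk}{d}\in\Z$, Lemma~\ref{lem:realRR} then places $\tfrac jd\overline{\rt}$ in $\RR(\Updelta,J)+\Z\overline\imroot\subseteq\RR^\re(\Updelta_\aff,\cJ)$, which is what we want.

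It remains to prove $d\mid k$, and this is where the choice of extended vertex enters. Suppose $\cJ^c$ contains a node $0'$ of mark $1$ — this includes the case $0\notin\cJ$, since the extended vertex always has mark $1$. Because deleting a mark-$1$ node from an affine ADE diagram leaves a finite ADE diagram, the identity $\Rts^\re\Updelta_\aff=\Rts(\Updelta_\aff\setminus\{0'\})+\Z\imroot$ is equally valid with $0'$ as extended vertex, and $\cJ$ is then a subset of the finite diagram $\Updelta':=\Updelta_\aff\setminus\{0'\}$. Running the construction above for this choice, the root $\rho$ lies in $\Rts\Updelta'$, so $\overline\rho$ has vanishing $0'$-coordinate, while $\overline\imroot$ has $0'$-coordinate equal to the mark of $0'$, namely $1$. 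Comparing $0'$-coordinates in $\overline{\rt}=\overline\rho+k\overline\imroot$ gives $\overline{\rt}_{0'}=k$, and since $d=\gcd(\overline{\rt})$ divides $\overline{\rt}_{0'}$ we conclude $d\mid k$. This settles the statement whenever $\cJ^c$ has a mark-$1$ node.

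The genuine obstacle is the complementary case, where every node of $\cJ^c$ has mark $\geq 2$; then no rechoice of extended vertex can expel the extended node from $\cJ$, so one is stuck with $0\in\cJ$, $\overline\imroot=\overline{{\rt}^\maxx}\in\Z J^c$, a non-unique decomposition $\overline{\rt}=\overline\rho+k\overline\imroot$, and no coordinate from which to deduce $d\mid k$ (which may in fact fail for the allowed $k$). This configuration occurs only for a short explicit list of pairs $(\Updelta_\aff,\cJ)$, necessarily with $\Updelta$ of type $D$ or $E$, so I expect to finish it by an argument in the spirit of Proposition~\ref{prop:rrmult}: lift $\overline{\rt}$ to a real affine root $\rho+k\imroot$, use the highest-root bound $|\rho_i|\leq{\rt}^\maxx_i$ to pin down which multiples of $\overline{{\rt}^\maxx}$ can appear along the ray through $\overline{\rt}$, and feed this back into Proposition~\ref{prop:rrmult} to exhibit the sub-multiples $\tfrac jd\overline{\rt}$ as restricted roots. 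This is the part carrying all the difficulty, precisely because in the non-mark-$1$ directions the restricted roots do not form a root system and the imaginary line can carry extra multiplicity of its own.
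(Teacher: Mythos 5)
Your argument in the mark-$1$ case is correct, and slightly strengthens what the paper tacitly does: the proof in the text is a one-liner deferring to Lemma~\ref{lem:realRR}, and the intended argument is presumably the $0\notin\cJ$ half of it (isolate the $\upalpha_0$-coordinate to deduce $d\mid k$, then feed $\overline\rho = \overline\rt - k\overline\imroot$ into Proposition~\ref{prop:rrmult}). Your observation that any mark-$1$ node of $\cJ^c$ can play the role of the extended vertex is a genuine generalisation of that argument.

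The reason you got stuck in the remaining case is that the corollary, as stated, is false there. Take $\Updelta_\aff = \widetilde D_5$ with $\cJ = \{0,1,4,5\}$, so that $\cJ^c = \{2,3\}$ consists of the two mark-$2$ nodes and $\overline\imroot = \overline{\rt^\maxx} = 2\upalpha_2+2\upalpha_3$. Then $\overline\rt = \upalpha_2+\upalpha_3+\overline\imroot = 3\upalpha_2+3\upalpha_3$ lies in $\RR^\re(\Updelta_\aff,\cJ)$ by Lemma~\ref{lem:realRR} (with $\overline\rho = \upalpha_2+\upalpha_3$ and $k=1$) and has multiplicity $d=3$, but $\tfrac{2}{3}\overline\rt = 2\upalpha_2+2\upalpha_3 = \overline\imroot$ lies on the imaginary ray and is therefore not in $\RR^\re(\Updelta_\aff,\cJ)$; the same thing already occurs in $\widetilde D_4$ with $\cJ = \{0,1,3,4\}$ and $\overline\rt = 3\upalpha_2$. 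The intermediate fractions can land in $\RR^\img(\Updelta_\aff,\cJ)$, and no lift-and-bound argument will place them back in $\RR^\re(\Updelta_\aff,\cJ)$. The damage is limited: the only place the corollary is invoked (the converse direction of Proposition~\ref{prop:stabfromroots}) is precisely the $j=1$ case, and $\tfrac{1}{d}\overline\rt$, being indivisible, cannot lie in $\Z\overline\imroot$ without forcing $\overline\rt$ itself to be a multiple of $\overline\imroot$, contradicting $\overline\rt\in\RR^\re(\Updelta_\aff,\cJ)$. So the obstruction you ran into is a misstatement rather than a missing argument, and your instinct that this case is genuinely different was right.
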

\begin{proof}
  Follows directly from Lemma~\ref{lem:realRR}.
\end{proof}

\subsection{Intersection arrangement}\label{subsec:intarrange}

The $\R$-linear dual $\R^{\Updelta_\aff} \colonequals \Hom_\Z(\Z\Updelta_\aff,\R)$ of the root lattice is equipped with the induced action of $\cW_{\Updelta_\aff}$, where $w\in\cW_{\Updelta_\aff}$ sends a linear map $\uptheta\colon \Z\Updelta_\aff \to \R$ to the map $ w\cdot \uptheta \colon {\rt} \mapsto \uptheta(w^{-1}{\rt})$.
Each root ${\rt}\in \Rts\Updelta_\aff$ defines a hyperplane $H_{\rt} \colonequals \{\uptheta \in \R^{\Updelta_\aff} \mid \uptheta({\rt}) = 0\}$, and the union of these hyperplanes forms a $\cW_{\Updelta_\aff}$-invariant hyperplane arrangement
\[
  \cH_{\Updelta_\aff} \colonequals \bigcup_{{\rt} \in \Rts\Updelta_\aff} H_{\rt}.
\]
The complement of $\cH_{\Updelta_\aff}$ is a union over images $\Cham \Updelta_\aff = \{wC \mid w\in W_{\Updelta_\aff}\}$, of the positive cone $C \colonequals \{\uptheta \in \R^{\Updelta_\aff} \mid \uptheta(\upalpha_i) > 0\ \forall i \in \Updelta_\aff\}$ and their negatives under the action of the Weyl group, see~\cite[Theorem 5.13]{Hum90}.
These chambers are related by wall-crossings in the hyperplanes $H_\rt \subset \cH_{\Updelta_\aff}$ for $\rt \in \Rts^\re\Updelta_\aff$.
These wall-crossings are controlled by the relations in $\cW_{\Updelta_\aff}$, as the following are equivalent (see~\cite[Theorem 5.13]{Hum90})
\begin{itemize}
  \item $w\in W_{\Updelta_\aff}$ can be minimally presented as $w = \upsigma_{i_1}\cdots\upsigma_{i_k}$,
  \item the chamber $wC$ can be reached from $C$ via a minimal number of wall-crossings
    \[
    \begin{tikzpicture}
      \node (A) at (0,0) {$C$};
      \node (B) at (3,0) {$\upsigma_{i_1} C$};
      \node (C) at (6,0) {$\quad\cdots\quad$};
      \node (D) at (9,0) {$w\upsigma_{i_k}^{-1}C$};
      \node (E) at (12,0) {$wC$};
      \draw[->]  (A) to[edge label=$\scriptstyle \upalpha_{i_1}$] (B);
      \draw[->]  (B) to[edge label=$\scriptstyle \upsigma_{i_1}\upalpha_{i_2}$] (C);
      \draw[->]  (C) to (D);
      \draw[->]  (D) to[edge label=$\scriptstyle w \upsigma_{i_k}^{-1}\upalpha_{i_k}$] (E);
    \end{tikzpicture}
    \]
    where an arrow ${\rt}\colon wC \rightarrow w'C$ signifies a wall-crossing in $H_\rt$: the closures of $wC$ and $w'C$ intersect inside a polyhedral subcone of full rank inside the hyperplane $H_{\rt}$.
\end{itemize}
The set of chambers $\Cham \Updelta_\aff$ admits a partial ordering, where $wC \geq w'C$ if and only if there is a minimal wall-crossing path $C \dashrightarrow wC$ which passes through $w'C$.
It follows from~\cite[Lemma 5.13]{Hum90} that this is a reflection of the weak order on $W_{\Updelta_\aff}$, defined in~\cite[\S 5.9]{Hum90} as
\[
  w \leq w' \quad\Longleftrightarrow\quad \ell(w') = \ell(w) + \ell(w^{-1}w'),
\]
where $\ell(w)$ denotes the minimal length of a presentation $w = \upsigma_{i_1}\cdots\upsigma_{i_k}$ for an element $w\in W_{\Updelta_\aff}$.

For an affine Dynkin type $(\Updelta_\aff,\cJ)$, precomposition with the restriction $\Z\Updelta_\aff \to \Z \cJ^c$ yields an embedding of the dual vector space $\R^{\cJ^c} \colonequals \Hom_\Z(\Z\cJ^c,\R)$ into $\R^{\Updelta_\aff}$.
Under this identification, the intersection of a root hyperplane $H_{\rt} \subset \cH_{\Updelta_\aff}$ with $\R^{\cJ^c}$ yields
\[
    \R^{\cJ^c} \cap H_{\rt} =
    \left\{\uptheta \in \R^{\Updelta_\aff} \;\middle|\; \begin{gathered} \uptheta(\upalpha_j) = 0\ \forall j\in \cJ\\\ \uptheta({\rt}) = 0 \end{gathered}\right\} = \{\uptheta \in \R^{\cJ^c} \mid \uptheta(\overline{\rt}) = 0\},
\]
which is the orthogonal $H_{\overline{\rt}} \subset \R^{\cJ^c}$ to the restriction $\overline{\rt}$ of the root.
By inspection, $H_{\overline\rt}$ is again a hyperplane in $\R^{\cJ^c}$ if and only if $\overline {\rt} \neq 0$, and the orthogonals to the restricted roots therefore form a hyperplane arrangement.
Iyama-Wemyss~\cite{IWMemoir} call this the \emph{intersection arrangement}.

\begin{defn}
  The intersection arrangement of $(\Updelta_\aff,\cJ)$ is the hyperplane arrangement
  \[
    \cH_{\Updelta_\aff,\cJ} \colonequals \bigcup_{\overline{\rt}\in \RR(\Updelta_\aff,\cJ)} H_{\overline{\rt}}.
  \]
\end{defn}

Iyama--Wemysss~\cite{IWMemoir} show that the complement of the intersection arrangement is again a union of open chambers, which can be classified as follows.
As in~\cite[\S5.13]{Hum90} every subset $\fJ \subset \Updelta_\aff$ defines a parabolic subgroup $\cW_\fJ = \<\upsigma_j \mid j\in \cJ\> \subset \cW_{\Updelta_\aff}$, which is the pointwise stabiliser of the strict cone
\[
  C_\fJ \colonequals \left\{\uptheta \in \R^{\Updelta_\aff} \;\middle|\; 
    \begin{aligned}
      \uptheta(\upalpha_j) &= 0\quad \forall j \in \fJ\\\ \uptheta(\upalpha_i) &> 0\quad \forall i\in\fJ^c
    \end{aligned}
  \right\},
\]
see~\cite[Theorem 5.13]{Hum90}.
Then following Iyama--Wemyss~\cite[Definition 1.6]{IWMemoir} we define the positive and negative $\cJ$-chambers
\[
  \Cham^\pm(\Updelta_\aff,\cJ) \colonequals \left\{ \pm wC_\fJ \;\middle|\; \begin{aligned}
      \fJ \subset \Updelta_\aff,\quad w \in \cW_{\Updelta_\aff}\\
      |\fJ| = |\cJ|,\quad \pm w  C_\fJ \subset \R^{\cJ^c}
    \end{aligned}\right\},
\]
where $|\fJ|$ denotes the number of nodes in the subset.
It follows by~\cite[Theorem 1.12]{IWMemoir} that the complement\footnote{Only the positive chambers are considered in~\cite{IWMemoir} but the full statement easily follows by a symmetry argument.} $\R^{\cJ^c}\setminus\cH_{\Updelta_\aff,\cJ}$ is the union over all chambers in $\Cham^+(\Updelta_\aff,\cJ)$ and $\Cham^-(\Updelta_\aff,\cJ)$.
Each chamber $wC_\fJ$ is again connected to $C_\cJ$ by a finite number of wall-crossings
\[
  C_\cJ \xrightarrow{\ \overline{\rt}_1\ }  w_1C_{\fJ_1} \xrightarrow{\ \overline{\rt}_2\ } \ldots \xrightarrow{\ \overline{\rt}_{k-1}\ } w_{k-1} C_{\fJ_{k-1}} \xrightarrow{\ \overline{\rt}_k\ } w C_{\fJ},
\]
at $H_{\overline {\rt}_i}$ separating cones $w_{i-1}{C}_{\fJ_{i-1}}$ and $ w_i C_{\fJ_i}$ in a codimension 1 ``wall'' inside $\R^{\cJ^c}$, and similarly for the negative chambers.
Such a sequence is again said to be \emph{minimal} if the number of wall-crossings is as small as possible, and these minimal sequences describe the following partial order on the chambers.

\begin{defn}
  For $w C_\fJ, w' C_\fJ \in \Cham^+(\Updelta_\aff,\cJ)$, write $wC_\fJ \geq w' C_{\fJ'}$ if there exists a minimal wall-crossing sequence $C_\cJ \dashrightarrow w C_{\fJ}$ which passes through $w'C_{\fJ'}$.
  Dually, write $-wC_\fJ \leq -w'C_{\fJ'}$ if there exists a minimal wall-crossing sequence $-w C_{\fJ} \dashrightarrow -C_\cJ$ which passes through $-w'C_{\fJ'}$.
\end{defn}

The following lemma will be very useful for finding minimal wall-crossing paths passing through a given hyperplane.

\begin{lemma}\label{lem:poswallcrossing}
  Let $(\Updelta_\aff,\cJ)$ be a Dynkin type and $i\in \cJ^c$.
  Then for any positive root $\overline\rt\in \RR(\Updelta_\aff,\cJ)$ which is not colinear to $\overline\upalpha_i$ there exists a minimal wall-crossing sequence of the form
  \[
    C_\cJ \xrightarrow{\overline\upalpha_i} w C_\fJ \to \ldots \to w^+ C_{\fJ^+} \xrightarrow{\overline\rt} w^- C_{\fJ^-}
  \]
\end{lemma}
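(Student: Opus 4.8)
The plan is to exploit the dictionary between the combinatorics of restricted roots/chambers in $\R^{\cJ^c}$ and the weak order on the affine Weyl group $\cW_{\Updelta_\aff}$, reducing the assertion to a statement about reduced expressions. The key observation is that crossing the wall $H_{\overline\upalpha_i}$ out of the fundamental chamber $C_\cJ$ corresponds, on the level of chambers in $\R^{\Updelta_\aff}$, to applying the simple reflection $\upsigma_i$; so one needs to show that any positive restricted root $\overline\rt$ which is not colinear to $\overline\upalpha_i$ is the restriction of some positive real affine root $\rt' \in \Rts^\re\Updelta_\aff$ with $\overline{\rt'}=\overline\rt$ satisfying $\upsigma_i \rt' \prec \rt'$ in the sense that $\upsigma_i \rt'$ is still positive — equivalently $\langle \rt', \upalpha_i^\vee\rangle > 0$ — which then places $H_{\overline\rt}$ ``on the far side'' of $H_{\overline\upalpha_i}$ from $C_\cJ$.

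First I would set up the correspondence precisely: a minimal wall-crossing sequence $C_\cJ \dashrightarrow wC_\fJ$ in $\R^{\cJ^c}$ lifts to a minimal gallery in $\R^{\Updelta_\aff}$ from $C_\cJ$ (viewed via the embedding $\R^{\cJ^c}\hookrightarrow\R^{\Updelta_\aff}$) and hence, by the cited results of Humphreys~\cite[Theorem 5.13]{Hum90} transported to the $\cJ$-chamber setting of Iyama--Wemyss~\cite[Theorem 1.12]{IWMemoir}, to a reduced word in $\cW_{\Updelta_\aff}$ whose first letter is $\upsigma_i$. So it suffices to produce a chamber $w^- C_{\fJ^-} \in \Cham^+(\Updelta_\aff,\cJ)$ with $H_{\overline\rt}$ a wall of it, such that some reduced path from $C_\cJ$ to $w^-C_{\fJ^-}$ begins by crossing $H_{\overline\upalpha_i}$. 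Equivalently, picking any chamber $D^-$ having $H_{\overline\rt}$ as a wall and lying on the side of $H_{\overline\upalpha_i}$ \emph{not} containing $C_\cJ$, any minimal gallery $C_\cJ \dashrightarrow D^-$ must cross $H_{\overline\upalpha_i}$, and by the exchange condition we may arrange it to cross it first.

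Next comes the geometric core: I must exhibit a restricted root lifting $\overline\rt$ that is ``separated'' from $C_\cJ$ by $H_{\overline\upalpha_i}$, i.e. I need $\rt' \in \Rts\Updelta_\aff$ with $\uppi_\cJ(\rt') = \overline\rt$, $\rt'$ positive, and $\upsigma_i\rt'$ positive (so that $\uptheta(\rt')>0$ and $\uptheta(\upsigma_i\rt')=\uptheta(\rt') - \langle\rt',\upalpha_i^\vee\rangle\uptheta(\upalpha_i) $ changes sign across $H_{\overline\upalpha_i}$, forcing $\langle\rt',\upalpha_i^\vee\rangle>0$). The pair $\{\rt',\upsigma_i\rt'\}$ restricts to $\{\overline\rt, \overline{\upsigma_i\rt'}\}$, and since $\overline\rt\neq 0$ and $\overline\rt$ is not colinear to $\overline\upalpha_i$, these two restricted roots are distinct and span a rank-$2$ subspace, producing the required wall-crossing $w^+ C_{\fJ^+}\xrightarrow{\overline\rt} w^- C_{\fJ^-}$ on the far side. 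Existence of such a lift: start from \emph{any} root $\rt_0$ with $\uppi_\cJ(\rt_0)=\overline\rt$; using the translation structure $\Rts^\re\Updelta_\aff = \Rts\Updelta + \Z\imroot$ (Kac~\cite{Kac83}) together with $\uppi_\cJ(\imroot) = \overline\imroot$, one can adjust $\rt_0$ inside its restriction-fibre to arrange positivity; the condition $\langle\rt',\upalpha_i^\vee\rangle>0$ is then achieved because $\overline\rt$ is not colinear to $\overline\upalpha_i$ forces $\rt'$ not colinear to $\upalpha_i$ modulo $\ker\uppi_\cJ = \Z\cJ$, so $\rt'$ is not $\cW_\cJ$-fixed-up-to-sign in the $i$-direction and one of $\pm\rt' + (\text{elements of }\Z\cJ)$ in the fibre pairs positively with $\upalpha_i^\vee$. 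I would organize this as a short case analysis: if $\langle\rt_0,\upalpha_i^\vee\rangle$ already has the right sign, done; otherwise replace $\rt_0$ by $\upsigma_i\rt_0$ (which doesn't change the restriction since $\upsigma_i$ fixes $\R^{\cJ^c}$ pointwise when $i\in\cJ$ — but here $i\in\cJ^c$, so instead) — more carefully, use that $\upsigma_i$ permutes the restriction-fibre of $\overline\rt$ up to adding multiples of $\overline\upalpha_i$, and iterate/reflect within $\cW_{\Updelta_\aff}$ to land on a fibre element with positive $\upalpha_i^\vee$-pairing.

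The main obstacle I anticipate is the last point: carefully controlling the restriction-fibres and pairings to guarantee that a suitable lift $\rt'$ exists with \emph{both} $\rt'\succ 0$ and $\upsigma_i\rt'\succ 0$, especially handling the affine/imaginary-root bookkeeping and the two cases $0\in\cJ$ versus $0\notin\cJ$ uniformly (here Lemma~\ref{lem:realRR} and Corollary~\ref{cor:rrdiv} will be the relevant inputs). A secondary subtlety is ensuring the resulting wall-crossing at $H_{\overline\rt}$ genuinely occurs along a \emph{minimal} path from $C_\cJ$ — this follows from the exchange/deletion property of the weak order (Humphreys~\cite[\S5.9, Theorem 5.13]{Hum90}) once we know $H_{\overline\upalpha_i}$ and $H_{\overline\rt}$ separate $C_\cJ$ from the target chamber, but it must be phrased in the $\cJ$-chamber language of Iyama--Wemyss, where I would invoke~\cite[Theorem 1.12]{IWMemoir} and the partial-order description preceding this lemma.
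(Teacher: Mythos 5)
Your proposal takes a genuinely different route from the paper, and it has real gaps. The paper's proof is a short, direct argument carried out entirely in $\R^{\cJ^c}$: since $\overline\rt$ and $\overline\upalpha_i$ are not colinear, the sets $H^-_{\overline\upalpha_i}\cap H^+_{\overline\rt}$ and $H^-_{\overline\upalpha_i}\cap H^-_{\overline\rt}$ are both nonempty and open, so one may choose two adjacent $\cJ$-chambers $w^\pm C_{\fJ^\pm}$ separated by $H_{\overline\rt}$ and both lying in $H^-_{\overline\upalpha_i}$; since $\overline\rt$ is a \emph{positive} restricted root, $C_\cJ\subset H^+_{\overline\rt}$, so some wall-crossing sequence $C_\cJ\to wC_\fJ\to\cdots\to w^+C_{\fJ^+}\to w^-C_{\fJ^-}$ exists; deleting subloops gives one with all labels distinct, and then~\cite[Lemma~5.29]{IWMemoir} says this is automatically minimal. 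No lifting to $\R^{\Updelta_\aff}$ and no affine-root bookkeeping is needed.

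Your approach instead tries to lift $\overline\rt$ to a positive real affine root $\rt'\in\Rts^\re\Updelta_\aff$ with $\langle\rt',\upalpha_i^\vee\rangle>0$, and here several things go wrong. First, the wall-crossing $C_\cJ\xrightarrow{\overline\upalpha_i}wC_\fJ$ does \emph{not} correspond to the single simple reflection $\upsigma_i$ in $\cW_{\Updelta_\aff}$; by~\cite[Theorem~1.20]{IWMemoir} (see Proposition~\ref{prop:mutpres}) it corresponds to the groupoid generator $\omega_{\cJ,i}\in\cW_{\cJ+i}$, which in general is a nontrivial product of simple reflections, so ``reduced word whose first letter is $\upsigma_i$'' is the wrong invariant in the $\cJ$-chamber setting, and Humphreys's exchange condition for $\cW_{\Updelta_\aff}$ does not transport naively. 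Second, the sign of $\langle\rt',\upalpha_i^\vee\rangle$ for a chosen lift $\rt'$ is not what controls whether chambers adjacent across $H_{\overline\rt}$ exist inside the half-space $H^-_{\overline\upalpha_i}\subset\R^{\cJ^c}$: that existence is a statement about an open-set intersection in the dual space $\R^{\cJ^c}$, and it is equivalent precisely to the non-colinearity of $\overline\rt$ and $\overline\upalpha_i$. Your pairing condition neither implies nor is implied by it, so even if the lifting could be carried out, the argument wouldn't close. Third, the lifting step itself is left vague (``iterate/reflect \ldots to land on a fibre element with positive $\upalpha_i^\vee$-pairing''), and you flag this yourself as the main obstacle; the translation formula $\Rts^\re\Updelta_\aff=\Rts\Updelta+\Z\imroot$ lets you adjust by $\imroot$, but $\imroot$ is in the radical so it cannot change $\langle\cdot,\upalpha_i^\vee\rangle$, and adjusting by $\Z\cJ$ moves you out of the root system. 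So this part of the plan does not go through as written. The short version: the paper reduces everything to a two-line open-set argument plus~\cite[Lemma~5.29]{IWMemoir}, which is much more robust than trying to reconstruct the gallery through affine-root positivity.
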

\begin{proof}
  Every root $\overline\rt \in \RR(\Updelta_\aff,\cJ)$ defines a decomposition $\R^{\cJ^c} = H^-_{\overline\rt} \sqcup H_{\overline\rt} \sqcup H_{\overline\rt}^+$ into half-spaces
  \[
    H^\pm_{\overline\rt} \colonequals \{\uptheta \in \R^{\cJ^c} \mid \pm \uptheta(\overline\rt) > 0\}.
  \]
  We note that the chamber $C_\cJ$ is precisely $\bigcap_{i\in\cJ^c} H^+_{\overline\upalpha_i}$, and the wall-crossing $C_\cJ \xrightarrow{\overline\upalpha_i} w C_{\fJ}$ at the hyperplane $H_{\overline\upalpha_i}$ therefore moves from the half space $H^+_{\overline\upalpha_i}$ into the half-space $H^-_{\overline\upalpha_i}$.
  For any positive root $\overline\rt$ which is not colinear to $\overline\upalpha_i$ the intersection
  \[
    H^-_{\overline\upalpha_i} \cap H^\pm_{\overline\rt} = \{\uptheta\in \R^{\cJ} \mid \uptheta(\overline\upalpha_i) < 0,\ \pm \uptheta(\overline\rt) > 0\}
  \]
  is again a nonempty open subspace of $\R^{\cJ}$.
  If $\overline\rt$ is also not colinear to $\imroot$, we can therefore pick two adjacent chambers $w^\pm C_{\fJ^\pm} \in \Cham^+(\Updelta_\aff,\cJ)$ such that $w^\pm C_{\fJ^\pm} \in H^-_{\overline\upalpha_i} \cap H^\pm_{\overline\rt}$.
  Because $\overline\rt$ is moreover assumed to be positive, it follows that $C_\cJ$ lies in $H^+_{\overline\rt}$ so there exists a wall-crossing sequence
  \[
    C_\cJ \xrightarrow{\overline\upalpha_i} w C_\fJ \to \cdots \to w_i C_{\fJ_i} \xrightarrow{\overline\rt_i} w_{i+1} C_{\fJ_{i+1}} \to \cdots \to w^+C_{\fJ^+} \xrightarrow{\overline\rt} w^- C_{\fJ^-}
  \]
  where we may assume without loss of generality (deleting some subsequences if necessary) that the roots $\overline\rt_i$ in the intermediate steps are all distinct and not equal to $\overline\rt$ or $\overline\upalpha_i$.
  It is shown in~\cite[Lemma 5.29]{IWMemoir} that any wall-crossing sequence of this form is minimal, finishing the proof.
\end{proof}

In the special case where $\cJ$ does not contain the extended vertex $0\in\Updelta_\aff\setminus\Updelta$, i.e. $\cJ = J$, the structure of the hyperplane arrangement $\cH_{\Updelta_\aff,\cJ}$ is captured by its intersection with the levels
\[
  \Level^\pm \colonequals \{\uptheta\in \R^{\cJ^c} \mid \uptheta(\overline\imroot) = \pm 1\},
\]
which we identify with the image of $\R^{J^c} \colonequals \Hom_\Z(\Z J^c,\R)$ under the respective embeddings
\begin{equation}\label{eq:emblevel}
  I_{\pm} \colon \uptheta \mapsto (\pm 1 - \uptheta({\rt}^\maxx))\upalpha_0^* + \uptheta,
\end{equation}
where $\upalpha_0^*$ is the dual of the root $\upalpha_0$ and $\uptheta\colon \Z J^c\to \R$ extends to $\Z \cJ^c$ by setting $\uptheta(\upalpha_0) = 0$.
For any $\overline{\rt} \in \Z J^c$ and $k\in\Z$ the intersection $H_{\overline{\rt}+k\overline\imroot} \cap \Level^\pm$ is the image under $I_\pm$ of the affine hyperplane $H_{\overline{\rt}, \mp k} = \{\uptheta\in\Z J^c\mid \uptheta(\overline{\rt}) = \mp k\}$, and Lemma~\ref{lem:realRR} therefore implies the following.

\begin{lemma}
  If $0\not\in\cJ$ then $\cH_{\Updelta_\aff,\cJ} \cap \Level^\pm$ is the image of the affine hyperplane arrangement
  \[
    \cH_{\Updelta,J}^\aff \colonequals \bigcup_{\substack{\overline{\rt} \in \RR(\Updelta,J)\\k\in\Z}} H_{\overline{\rt},k},
  \]
  under the respective embedding $I_\pm\colon \R^{J^c} \to \R^{\cJ^c}$.
\end{lemma}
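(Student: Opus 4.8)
The plan is to reduce the statement to a direct comparison of the two root‐hyperplane arrangements by intersecting $\cH_{\Updelta_\aff,\cJ}$ with a fixed level set. First I would recall that, since $0\notin\cJ$, one has $\Z\cJ^c \cong \Z\upalpha_0 \oplus \Z J^c$, so the embedding $I_\pm\colon \R^{J^c}\to\R^{\cJ^c}$ of \eqref{eq:emblevel} is well-defined and its image is exactly $\Level^\pm$: indeed $I_\pm(\uptheta)(\overline\imroot) = I_\pm(\uptheta)(\upalpha_0 + \overline{{\rt}^\maxx}) = (\pm 1 - \uptheta({\rt}^\maxx)) + \uptheta({\rt}^\maxx) = \pm 1$, and conversely any $\upvarphi\in\Level^\pm$ equals $I_\pm(\upvarphi|_{\Z J^c})$ because the $\upalpha_0^*$-coefficient of $\upvarphi$ is forced by the level condition. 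Thus $I_\pm$ is a bijection $\R^{J^c}\xrightarrow{\sim}\Level^\pm$, and it suffices to identify the preimage under $I_\pm$ of each hyperplane $H_{\overline{\rt}}\cap\Level^\pm$ appearing in $\cH_{\Updelta_\aff,\cJ}$.

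Next I would carry out this identification hyperplane by hyperplane. By Lemma~\ref{lem:realRR} (using $0\notin\cJ$), every real restricted root is of the form $\overline{\rt} + k\overline\imroot$ with $\overline{\rt}\in\RR(\Updelta,J)$ and $k\in\Z$, and every imaginary restricted root is a nonzero multiple $k\overline\imroot$. For a real one, a point $\upvarphi = I_\pm(\uptheta)$ lies in $H_{\overline{\rt}+k\overline\imroot}$ iff $\upvarphi(\overline{\rt}) + k\upvarphi(\overline\imroot) = 0$, i.e. iff $\uptheta(\overline{\rt}) = \mp k$ (extending $\uptheta$ to $\Z\cJ^c$ by $\uptheta(\upalpha_0)=0$, so $\upvarphi(\overline{\rt}) = \uptheta(\overline{\rt})$), which is precisely $\uptheta\in H_{\overline{\rt},\mp k}$. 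As $k$ ranges over $\Z$, so does $\mp k$, so the family of all such slices is exactly $\{H_{\overline{\rt},k} : \overline{\rt}\in\RR(\Updelta,J),\ k\in\Z\}$, which is $\cH_{\Updelta,J}^\aff$. For the imaginary roots $k\overline\imroot$ with $k\neq 0$, the slice $H_{k\overline\imroot}\cap\Level^\pm = \{\upvarphi\in\Level^\pm : k\upvarphi(\overline\imroot) = 0\}$ is empty since $\upvarphi(\overline\imroot) = \pm 1 \neq 0$, so these contribute nothing. Taking the union over all restricted roots and pulling back along $I_\pm$ then gives $I_\pm^{-1}(\cH_{\Updelta_\aff,\cJ}\cap\Level^\pm) = \cH_{\Updelta,J}^\aff$, which is the claim.

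I do not expect a serious obstacle here: the argument is essentially bookkeeping with the translation identity $\Rts^\re\Updelta_\aff = \Rts\Updelta + \Z\imroot$ already packaged in Lemma~\ref{lem:realRR}. The one point requiring a little care is making sure the extension convention ``$\uptheta(\upalpha_0) = 0$'' in \eqref{eq:emblevel} is used consistently, so that $I_\pm(\uptheta)(\overline{\rt})$ genuinely equals $\uptheta(\overline{\rt})$ for $\overline{\rt}\in\Z J^c$ and the indexing $k \leftrightarrow \mp k$ of the affine hyperplanes comes out right; once that is pinned down, the remaining verifications are the routine linear computations sketched above.
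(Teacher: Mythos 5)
Your proof is correct and follows essentially the same route as the paper, which states the identity $H_{\overline{\rt}+k\overline\imroot}\cap\Level^\pm = I_\pm(H_{\overline{\rt},\mp k})$ as the one-sentence justification immediately preceding the lemma and then invokes Lemma~\ref{lem:realRR}. You simply unpack the same linear-algebra calculation (including the correct observation that $H_{k\overline\imroot}\cap\Level^\pm=\varnothing$) that the paper leaves implicit.
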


The complement $\R^{J^c}\setminus \cH^\aff_{\Updelta,J}$ is a set of open alcoves, which are precisely the intersection of chambers $\pm wC\in \Cham^\pm(\Updelta_\aff,\cJ)$ with the respective levels $\Level^\pm$.
We therefore define two poset structures $\Alcove^\pm(\Updelta,J)$ on the set of alcoves, for which the following maps are equivalences of posets:
\[
  \Cham^\pm(\Updelta_\aff,\cJ) \xrightarrow{\ \sim\ } \Alcove^\pm(\Updelta,J),\quad \pm wC \mapsto I_\pm^{-1}((\pm wC) \cap \Level^\pm).
\]
As before, this partial order is given in terms of minimal wall-crossing sequences between alcoves.

\subsection{Alternative labelling and mutation}\label{subsec:mutgroupoid}

The labelling $wC_\fJ$ of the chambers in $\Cham^+(\Updelta_\aff,\cJ)$ is not unique, as $wC_\fJ = w'C_\fJ$ whenever $w$ and $w'$ define the same coset $w \cW_\fJ = w' \cW_\fJ $.
To remedy this Iyama--Wemyss~\cite[Definition 1.8]{IWMemoir} define a new labelling by pairs $(w,\fJ)$ satisfying
\begin{equation}\label{eq:minlabel}
  \cW_\cJ w = w \cW_\fJ, \quad \ell(w) = \min\{ \ell(w') \mid w'\in w \cW_\fJ\},
\end{equation}
which is consistent.
Moreover, they show that the labels of adjacent chambers are related by a mutation:
for each label $(w,\fJ)$ with $|\fJ|\geq 2$ and each $i\in \fJ^c$ they define~\cite[Definition 1.16]{IWMemoir} an operation
\begin{equation}\label{eq:mutation}
  (w,\fJ) \mapsto (w',\fJ') = (w\omega_{\fJ,i}, \fJ + i - \iota(i)),
\end{equation}
where $\fJ + i \subset \Updelta_\aff$ is obtained by adding the vertex $i$, $\fJ+i-\iota(i)$ is obtained by removing its image under a canonical automorphism $\iota$ of $\cJ + i$, and $\omega_{\cJ,i}$ is a specific element of the parabolic subgroup $W_{\fJ+i}$.
We encode these mutations into the following groupoid.

\begin{defn}
  The \emph{wall-crossing groupoid} $\Upgamma(\Updelta_\aff)$ is the groupoid presented by:
  \begin{itemize}
    \item objects the subsets $\fJ\subset \Updelta_\aff$ with $|\fJ^c| \geq 2$,
    \item generators $\fJ \xrightarrow{\ \omega_{\fJ,i}\ } \fJ' = \fJ+i-\iota(i)$ for each mutation of $\fJ\subset\Updelta_\aff$ at $i\in\fJ^c$,
    \item relations $\fJ_0 \xrightarrow{\ \omega_{\fJ_0,i_0}\ } \cdots \xrightarrow{\ \omega_{\fJ_k,i_k}\ } \fJ_0 = \id_{\fJ_0}$ whenever $\omega_{\fJ_0,i_0}\cdots \omega_{\fJ_k,i_k} = 1 \in \cW_{\Updelta_\aff}$.
  \end{itemize}
  We let $\Upgamma(\Updelta_\aff,\cJ)$ denote the connected component of $\Upgamma(\Updelta_\aff)$ containing a fixed subset $\cJ$.
\end{defn}

It was shown by Iyama--Wemyss~\cite[Theorem 1.20]{IWMemoir} that the mutations completely describe the
wall-crossing in $\cH_{\Updelta_\aff,\cJ}$, and the groupoid $\Upgamma(\Updelta_\aff,\cJ)$ therefore plays the same role for $\cH_{\Updelta_\aff,\cJ}$ as $\cW_\Updelta$ does for $\cH_{\Updelta_\aff}$.
Translating their theorem to the groupoid yields the following.

\begin{prop}\label{prop:mutpres}
  Every (minimal) presentation $w = \omega_{\fJ_0,i_0}\cdots \omega_{\fJ_m,i_m}$ of an element $\fJ_0 \xrightarrow{\ w\ } \fJ$ in $\Upgamma(\Updelta_\aff,\cJ)$ corresponds uniquely to a (minimal) wall-crossing sequence
  \[
    C_{\fJ_0}
    \xrightarrow{\ \overline{\upalpha_{i_0}}\ }
    \omega_{\fJ_0,i_0} C_{\fJ_1}
    \xrightarrow{\ \overline{\omega_{\fJ_0,i_0}\upalpha_{i_1}}\ }
    \ldots
    \xrightarrow{\ \quad\ }
    (w \omega_{\fJ_m,i_m}^{-1}) C_{\fJ_m}
    \xrightarrow{\ \overline{w \omega_{\fJ_m,i_m}^{-1}\upalpha_{i_m}}\ }
    w C_{\fJ}
  \]
  and every wall-crossing sequence is of this form.
\end{prop}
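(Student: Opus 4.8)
The plan is to obtain the statement as a translation of the Iyama--Wemyss wall-crossing theorem \cite[Theorem 1.20]{IWMemoir} into groupoid language, by tracking chamber labels along a word of mutations. Since the groupoid component $\Upgamma(\Updelta_\aff,\cJ)$ is unchanged under re-rooting, I would work in the intersection arrangement $\cH_{\Updelta_\aff,\fJ_0}$ attached to the source object, using the consistent labelling $(v,\fK)\leftrightarrow vC_\fK$ of the chambers in $\Cham^\pm(\Updelta_\aff,\fJ_0)$ from \cite[Definition 1.8]{IWMemoir} normalised as in \eqref{eq:minlabel}; in particular $C_{\fJ_0}$ is the base chamber, labelled $(1,\fJ_0)$. (In the applications in this paper $\fJ_0 = \cJ$, so no re-rooting is needed.)

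The crux is a single-step dictionary. Fix a label $(v,\fK)$ and a vertex $j\in\fK^c$; I claim the mutation $(v,\fK)\mapsto(v\,\omega_{\fK,j},\,\fK+j-\iota(j))$ of \eqref{eq:mutation} is exactly the passage from $vC_\fK$ to the neighbouring chamber across the wall lying in $H_{\overline{v\upalpha_j}}$. Indeed, the codimension-one face $\{\uptheta\in\overline{C_\fK}\mid\uptheta(\upalpha_j)=0\}$ of $C_\fK$ spans $H_{\upalpha_j}$, so $v$ carries it to a face of $vC_\fK$ spanning $H_{v\upalpha_j}\cap\R^{\fJ_0^c}=H_{\overline{v\upalpha_j}}$, a genuine hyperplane because $vC_\fK\subset\R^{\fJ_0^c}$; and $\omega_{\fK,j}\in\cW_{\fK+j}$ fixes this face pointwise while moving $C_{\fK+j-\iota(j)}$ to its opposite side. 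This is the geometric content of \cite[Definition 1.16]{IWMemoir} and \cite[Theorem 1.20]{IWMemoir}. Conversely, \cite[Theorem 1.12]{IWMemoir} and \cite[Theorem 1.20]{IWMemoir} show that every chamber of $\Cham^+(\Updelta_\aff,\fJ_0)$ adjacent to $vC_\fK$ arises from $(v,\fK)$ by a unique such mutation. Getting this dictionary exactly right --- pinning the crossed wall down as $H_{\overline{v\upalpha_j}}$, and the bijectivity of mutations with adjacencies --- is the main obstacle; everything else is formal bookkeeping.

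Granting the dictionary, the correspondence follows by a routine induction. Given a presentation $w=\omega_{\fJ_0,i_0}\cdots\omega_{\fJ_m,i_m}$ of $\fJ_0\xrightarrow{\,w\,}\fJ$, write $w_k\colonequals\omega_{\fJ_0,i_0}\cdots\omega_{\fJ_{k-1},i_{k-1}}$ (so $w_0=1$, $w_{m+1}=w$) and let $\fJ_{k+1}$ be the target of the $k$-th mutation. Applying the dictionary to $(w_k,\fJ_k)\mapsto(w_{k+1},\fJ_{k+1})$ for $k=0,\dots,m$ produces a wall-crossing sequence $C_{\fJ_0}\to w_1C_{\fJ_1}\to\cdots\to wC_\fJ$ whose $k$-th step crosses a wall in $H_{\overline{w_k\upalpha_{i_k}}}$; since $w_k=w\,\omega_{\fJ_m,i_m}^{-1}\cdots\omega_{\fJ_k,i_k}^{-1}$, these are precisely the chambers and wall-roots displayed in the statement (e.g. $w_1=\omega_{\fJ_0,i_0}$ and $w_m=w\omega_{\fJ_m,i_m}^{-1}$). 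Conversely, a wall-crossing sequence out of $C_{\fJ_0}$ determines, step by step via the converse half of the dictionary, a unique mutation of the current label, and composing these recovers a presentation of a morphism $\fJ_0\to\fJ'$ whose terminal chamber is $wC_{\fJ'}$; matching terminal chambers forces $(w,\fJ')=(w,\fJ)$. These two assignments are mutually inverse, which gives the asserted bijection. Finally, a presentation and its wall-crossing sequence have the same length, and by \cite[Theorem 1.20]{IWMemoir} --- together with the characterisation of minimal sequences from \cite[Lemma 5.29]{IWMemoir} underlying the partial orders of \S\ref{subsec:intarrange} --- the minimal word length of $\fJ_0\xrightarrow{\,w\,}\fJ$ equals the minimal number of walls separating $C_{\fJ_0}$ from $wC_\fJ$; hence minimal presentations correspond exactly to minimal wall-crossing sequences.
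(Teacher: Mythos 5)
Your proposal is correct and takes essentially the same route as the paper: both translate Iyama--Wemyss~\cite[Theorem 1.20]{IWMemoir} (together with~\cite[Theorem 1.12]{IWMemoir} and~\cite[Definition 1.16]{IWMemoir}) into the groupoid language by establishing the one-step mutation-to-wall-crossing dictionary and then inducting on word length, with minimality handled by the equality of lengths. The only difference is presentational --- the paper directly cites Theorem~1.20(1)(a) for the normalisation of the partial-product labels and Theorem~1.20(1)(d) for the wall-crossing step, whereas you sketch the underlying face geometry --- but the substance is identical.
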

\begin{proof}
  Given a presentation $w = \omega_{\fJ_0,i_0}\cdots \omega_{\fJ_m,i_m}$ it follows by~\cite[Theorem 1.20(1)(a)]{IWMemoir} that the successive sequence of pairs
  \[
    (1,\fJ_0) \mapsto (\omega_{\fJ_0,i_0}, \fJ_1) \mapsto \ldots \mapsto (\omega_{\fJ_0,i_0}\cdots\omega_{\fJ_m,i_m},\fJ)
  \]
  satisfy~\eqref{eq:minlabel} and~\cite[Theorem 1.12]{IWMemoir} then shows that that $\omega_{\fJ_0,i_0}\cdots \omega_{\fJ_k,i_k} C_{\fJ_{k+1}}$ is a sequence of chambers in $\Cham^+(\Updelta,\cJ)$.
  By~\cite[Theorem 1.20(1)(d)]{IWMemoir} two successive chambers are related by a wall-crossing
  \[
    \omega_{\fJ_0,i_0}\cdots \omega_{\fJ_{k-1},i_{k-1}} C_{\fJ_k} \xrightarrow{\ \overline{\omega_{\fJ_0}\cdots \omega_{\fJ_{k-1}} \upalpha_{i_k}}\ } \omega_{\fJ_0,i_0}\cdots \omega_{\fJ_k,i_k} C_{\fJ_{k+1}},
  \]
  which shows that the presentation gives rise to the given wall-crossing sequence.
  Because there is a mutation for each of the $|\cJ^c|$ walls bounding chambers in $\Cham^+(\Updelta_\aff,\cJ)$ it follows that each wall-crossing can be described by such a sequence of mutation.
  In particular, minimal presentations correspond to minimal sequences.
  By~\cite[Theorem 1.20(b)]{IWMemoir} any chambers in $\Cham^+(\Updelta_\aff,\cJ)$ is related to $C_\cJ$ by a wall-crossing sequence, and therefore by a mutation sequence $\cJ = \fJ_0 \xrightarrow{\ w\ } \fJ$ in the groupoid, and is therefore of the form $w C_\fJ$.
\end{proof}

In particular, this proposition shows that every chamber in $\Cham^\pm(\Updelta_\aff,\cJ)$ can be written as $\pm w C_\fJ$ for some unique $\cJ \xrightarrow{w} \fJ$ in $\Upgamma(\Updelta_\aff,\cJ)$.
Moreover, the geometric order on the chamber can again be described via the weak order.

\begin{lemma}\label{lem:mutorders}
  For elements $\cJ \xrightarrow{w} \fJ, \cJ \xrightarrow{w'} \fJ' \in \Upgamma(\Updelta_\aff,\cJ)$ the following are equivalent:
  \[
    w \leq w' \quad\Longleftrightarrow\quad w C_\fJ \geq w' C_{\fJ'}.
  \]
\end{lemma}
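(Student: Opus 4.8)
The plan is to use Proposition~\ref{prop:mutpres} to recast both sides in terms of minimal mutation presentations, and then to identify the resulting relation with the weak order via the minimal-labelling conventions~\eqref{eq:minlabel}. Concretely, $wC_\fJ$ and $w'C_{\fJ'}$ are comparable in the wall-crossing order exactly when there is a minimal wall-crossing sequence from $C_\cJ$ to one of them passing through the other; by Proposition~\ref{prop:mutpres} this is the same as a minimal presentation $w = \omega_{\fJ_0,i_0}\cdots\omega_{\fJ_m,i_m}$ (with $\fJ_0 = \cJ$, $\fJ_{m+1} = \fJ$) in which $w'$ appears as a partial composite $\omega_{\fJ_0,i_0}\cdots\omega_{\fJ_{k-1},i_{k-1}}$ with $\fJ' = \fJ_k$ --- equivalently, a factorisation $w = w'v$ through morphisms $w'\colon\cJ\to\fJ'$ and $v\colon\fJ'\to\fJ$ for which concatenating minimal presentations of $w'$ and $v$ remains minimal. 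Such a $v$ is automatically a morphism of $\Upgamma(\Updelta_\aff,\cJ)$, since the labelling conditions~\eqref{eq:minlabel} for $w$ and $w'$ give $\cW_{\fJ'}(w'^{-1}w) = (w')^{-1}\cW_\cJ w = (w'^{-1}w)\cW_\fJ$.

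The core of the argument is then to match this additivity of the number of mutations with the weak-order relation $\ell(w) = \ell(w') + \ell(w'^{-1}w)$ on the underlying elements of $\cW_{\Updelta_\aff}$. For this I would use the structural analysis of Iyama--Wemyss~\cite[Theorem 1.20]{IWMemoir} underlying Proposition~\ref{prop:mutpres}: along a minimal mutation presentation each partial composite $w_k = \omega_{\fJ_0,i_0}\cdots\omega_{\fJ_{k-1},i_{k-1}}$ again satisfies~\eqref{eq:minlabel}, so $w_k$ is the minimal-length representative of $\cW_\cJ w_k = w_k\cW_{\fJ_k}$, and the $k$-th mutation at $i_k\in\fJ_k^c$ is a wall-crossing at $H_{\overline{w_k\upalpha_{i_k}}}$ with $\overline{w_k\upalpha_{i_k}}$ positive on $C_\cJ$. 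Using that $w_k$ is then also the minimal-length representative for the coset of the larger parabolic $\cW_{\fJ_k+i_k}$, the mutation element $\omega_{\fJ_k,i_k}\in\cW_{\fJ_k+i_k}$ is length-additive against $w_k$, i.e. $\ell(w_{k+1}) = \ell(w_k) + \ell(\omega_{\fJ_k,i_k})$. Hence, refining a minimal mutation presentation of $w$ by reduced expressions for the individual $\omega_{\fJ_k,i_k}$ produces a reduced expression for $w$ in simple reflections that begins with a reduced expression for each partial composite $w_k$; equivalently, the partial composites of minimal presentations of $w$ are precisely the minimal coset representatives that are weak-order comparable with $w$. Combined with the fact that every chamber is of the form $wC_\fJ$ (the last clause of Proposition~\ref{prop:mutpres}) and the gallery description of the weak order in~\cite[Lemma 5.13]{Hum90}, this yields both implications; the statement for the negative chambers follows via the symmetry $wC_\fJ\mapsto -wC_\fJ$.

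I expect the length-additivity step $\ell(w_k\omega_{\fJ_k,i_k}) = \ell(w_k) + \ell(\omega_{\fJ_k,i_k})$ to be the only genuine obstacle: a mutation element lies in a rank-$(|\cJ|+1)$ parabolic subgroup rather than being a single reflection, so one must upgrade the minimal-labelling property of $w_k$ --- which a priori only controls $w_k$ against the rank-$|\cJ|$ parabolic $\cW_{\fJ_k}$ --- to minimality against $\cW_{\fJ_k+i_k}$. This should follow from positivity of the crossed root $\overline{w_k\upalpha_{i_k}}$ on $C_\cJ$, which forces $w_k\upalpha_{i_k}$ to be a positive root of $\Updelta_\aff$, together with the standard characterisation that $u$ is the minimal-length representative of $u\cW_\fK$ if and only if $u\upalpha_j$ is a positive root for all $j\in\fK$. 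For the converse implication one also checks, by counting the restricted hyperplanes separating $C_\cJ$ from $wC_\fJ$, that these split as the disjoint union of those separating $C_\cJ$ from $w'C_{\fJ'}$ and those separating $w'C_{\fJ'}$ from $wC_\fJ$, so that the concatenated sequence realises the minimal possible number of crossings. The remainder is bookkeeping with Proposition~\ref{prop:mutpres} and the minimal labelling~\eqref{eq:minlabel}.
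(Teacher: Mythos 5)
Your approach is genuinely different from the paper's: the paper disposes of this lemma with a one-line citation of~\cite[Lemma~5.29]{IWMemoir} (``follows by repeated application''), whereas you attempt a re-derivation from Proposition~\ref{prop:mutpres}, the labelling conventions~\eqref{eq:minlabel}, and standard Coxeter facts. The skeleton is right, but the step you yourself flag as the ``only genuine obstacle'' --- the length-additivity $\ell(w_{k+1}) = \ell(w_k) + \ell(\omega_{\fJ_k,i_k})$ --- is not actually discharged. You reduce it to the claim that the crossed restricted root $\overline{w_k\upalpha_{i_k}}$ is positive on $C_\cJ$ at each step of a minimal wall-crossing sequence, which would indeed upgrade minimality of $w_k$ in its $\cW_{\fJ_k}$-coset (from~\eqref{eq:minlabel}) to minimality in the rank-$(|\cJ|+1)$ parabolic $\cW_{\fJ_k+i_k}$, and your observation that a real root positive on $C_\cJ$ is automatically a positive root of $\Updelta_\aff$ is correct. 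However, the positivity of the crossed roots along a minimal sequence is not stated in Proposition~\ref{prop:mutpres} and does not follow from~\eqref{eq:minlabel} alone; it is precisely the minimality criterion packaged in~\cite[Lemma~5.29]{IWMemoir}, i.e.\ the very result the paper cites (and the same fact invoked at the end of the proof of Lemma~\ref{lem:poswallcrossing}). So, as written, your argument either has a real gap at the length-additivity step or silently re-imports the lemma it sets out to avoid. The converse direction has the same issue in milder form: you must know that the restricted separating hyperplanes are additive along the concatenated path, which is again a minimality criterion wanting a citation or proof.

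A second, separate point: be careful with the direction of the order. You set up $w'$ as a partial composite of $w$, which yields $\ell(w) = \ell(w') + \ell(w'^{-1}w)$, i.e.\ $w'\leq w$, while the lemma asserts $w\leq w'$. The mismatch traces to the displayed definition of the geometric order in \S\ref{subsec:intarrange}, which as written makes $C_\cJ$ the minimum of $\Cham^+(\Updelta_\aff,\cJ)$, whereas the actual usage --- in Proposition~\ref{prop:orderscoincide}, where $\Lambda\mapsto C_\cJ$ must correspond to the maximum, and in the proof of Theorem~\ref{thm:stabmut}, where chambers \emph{decrease} along a path emanating from $C_\cJ$ --- requires $C_\cJ$ to be the maximum. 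Whichever convention you adopt, you need to state it and track it through the argument; at the moment your derivation, read literally against the lemma's statement, establishes the converse of what is claimed.
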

\begin{proof}
  Follows by repeated application of~\cite[Lemma 5.29]{IWMemoir}.
\end{proof}

Finally we consider the action of the mutation on the root lattice and its dual.

\begin{lemma}\label{lem:mutaction}
  For each $\cJ \xrightarrow{\ w\ } \fJ \in \Upgamma(\Updelta_\aff)$ there is a linear isomorphism $\R^{\fJ^c} \xrightarrow{\ w\cdot\ } \R^{\cJ^c}$ which restricts to a bijection on chambers
  \[
    \Cham^\pm(\Updelta_\aff,\fJ) \to \Cham^\pm(\Updelta_\aff,\cJ),
  \]
  and dually there is a linear isomorphism $\Z\fJ^c\to \Z\cJ^c$ restricting to a map on restricted roots:
  \[
    \RR(\Updelta_\aff,\fJ) \to \RR(\Updelta_\aff,\cJ),\quad \uppi_\fJ(\rt) \mapsto \uppi_\cJ(w\cdot \rt).
  \]
\end{lemma}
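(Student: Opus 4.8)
The plan is to realise both maps as restrictions of the linear $\cW_{\Updelta_\aff}$-action --- on $\Z\Updelta_\aff$ for the ``dual'' isomorphism $\Z\fJ^c\to\Z\cJ^c$, and on $\R^{\Updelta_\aff}=\Hom_\Z(\Z\Updelta_\aff,\R)$ for the map $w\cdot\colon\R^{\fJ^c}\to\R^{\cJ^c}$. The only input is the identity $w\cW_\fJ w^{-1}=\cW_\cJ$ (equivalently $\cW_\cJ w=w\cW_\fJ$), which the underlying Weyl group element of a morphism $\cJ\xrightarrow{\ w\ }\fJ$ satisfies by~\eqref{eq:minlabel} (cf.\ the proof of Proposition~\ref{prop:mutpres}).

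First I would record the dictionary between a subset $K\subset\Updelta_\aff$ and its parabolic $\cW_K=\<\upsigma_k\mid k\in K\>$. Since $(\upsigma_k-\id)\rt\in\Z\upalpha_k$ for all $\rt$ while $(\upsigma_k-\id)\upalpha_k=-2\upalpha_k$, one has $\upsigma_k\cdot\uptheta=\uptheta\iff\uptheta(\upalpha_k)=0$, so the fixed subspace $(\R^{\Updelta_\aff})^{\cW_K}=\{\uptheta\mid\uptheta(\upalpha_k)=0\ \forall k\in K\}$ is exactly the image of $\R^{K^c}$ under precomposition with $\uppi_K$. On the lattice side $\ker\uppi_K=\Z K=\bigoplus_{k\in K}\Z\upalpha_k$ is the $\Z$-span of the roots of the parabolic sub-root system $\Rts\Updelta_\aff\cap\Z K$. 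Using that conjugation sends $\upsigma_j\mapsto\upsigma_{w\upalpha_j}$, the group $\cW_\cJ=w\cW_\fJ w^{-1}$ is generated by reflections in the roots $w\upalpha_j$ $(j\in\fJ)$; these are reflections of the parabolic $\cW_\cJ$, so $w\upalpha_j\in\Rts\Updelta_\aff\cap\Z\cJ\subset\Z\cJ$, whence $w(\Z\fJ)\subseteq\Z\cJ$, and the symmetric argument applied to $w^{-1}$ gives $w(\Z\fJ)=\Z\cJ$ (in particular $|\fJ|=|\cJ|$).

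Consequently $w$ descends to a linear isomorphism $\bar w\colon\Z\fJ^c\cong\Z\Updelta_\aff/\Z\fJ\xrightarrow{\ \sim\ }\Z\Updelta_\aff/\Z\cJ\cong\Z\cJ^c$ with $\bar w\circ\uppi_\fJ=\uppi_\cJ\circ w$, and on $\R^{\Updelta_\aff}$ it carries $(\R^{\Updelta_\aff})^{\cW_\fJ}$ onto $(\R^{\Updelta_\aff})^{\cW_\cJ}$, hence restricts to a linear isomorphism $w\cdot\colon\R^{\fJ^c}\xrightarrow{\ \sim\ }\R^{\cJ^c}$ (the transpose of $\bar w^{-1}$). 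Then I would check the two restriction claims. For chambers: any $\pm w'C_{\fJ'}\in\Cham^\pm(\Updelta_\aff,\fJ)$ has $|\fJ'|=|\fJ|=|\cJ|$ and $\pm w'C_{\fJ'}\subset\R^{\fJ^c}$, so $w\cdot$ sends it to $\pm(ww')C_{\fJ'}\subset\R^{\cJ^c}$, which is a chamber in $\Cham^\pm(\Updelta_\aff,\cJ)$; since $w^{-1}\cdot$ is the inverse assignment, $w\cdot$ restricts to the asserted bijection. For restricted roots: $w$ permutes $\Rts\Updelta_\aff$ and $\bar w$ is injective with $\bar w(\uppi_\fJ(\rt))=\uppi_\cJ(w\rt)$, so an element $\uppi_\fJ(\rt)\in\RR(\Updelta_\aff,\fJ)$ (i.e.\ $\rt\in\Rts\Updelta_\aff$, $\uppi_\fJ(\rt)\neq0$) is sent to $\uppi_\cJ(w\rt)=\bar w(\uppi_\fJ(\rt))\neq0$ with $w\rt\in\Rts\Updelta_\aff$, hence into $\RR(\Updelta_\aff,\cJ)$; this is well defined (it is $\bar w$ restricted) with inverse $\uppi_\cJ(\rho)\mapsto\uppi_\fJ(w^{-1}\rho)$, and it is compatible with the chamber bijection through the orthogonality $\R^{\cJ^c}\cap H_{\rt}=H_{\overline{\rt}}$ of \S\ref{subsec:intarrange}.

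The only step beyond bookkeeping is the identity $w(\Z\fJ)=\Z\cJ$, which uses the standard structure theory of parabolic subgroups of (affine) Weyl groups: a parabolic is the reflection subgroup of its sub-root system, and conjugating $\upsigma_\rt$ by $w$ gives $\upsigma_{w\rt}$. Alternatively one can argue over $\Q$, where $\Z\fJ\otimes\Q=\operatorname{span}_\Q\bigcup_{w'\in\cW_\fJ}\operatorname{im}(w'-\id)$ is manifestly intrinsic to $\cW_\fJ$, and then invoke that the coordinate sublattices $\Z\fJ,\Z\cJ\subset\Z\Updelta_\aff$ are saturated.
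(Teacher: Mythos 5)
Your proof is correct and takes a genuinely different, more structural route than the paper's. The paper's argument leans on Proposition~\ref{prop:mutpres}: it realises $wC_\fJ$ as a chamber of full rank inside $\R^{\cJ^c}$, deduces the restriction $\R^{\fJ^c}\to\R^{\cJ^c}$ from a dimension count, obtains the chamber bijection by again invoking Proposition~\ref{prop:mutpres} to write any chamber of $\Cham^+(\Updelta_\aff,\fJ)$ in groupoid form and composing, and finally handles the lattice map by a direct computation expressing $\uppi_\cJ(w\rt)$ in terms of the transpose matrix of $w^{-1}\colon\R^{\cJ^c}\to\R^{\fJ^c}$. You instead extract from~\eqref{eq:minlabel} only the conjugacy $\cW_\cJ=w\cW_\fJ w^{-1}$, characterise $\R^{K^c}\subset\R^{\Updelta_\aff}$ as the $\cW_K$-fixed subspace (so the restriction of $w$ is automatic), and prove $w(\Z\fJ)=\Z\cJ$ from the parabolic root-subsystem structure, which makes the quotient map $\bar w\colon\Z\fJ^c\to\Z\cJ^c$ well defined without computation. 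The chamber bijection then follows by directly verifying the defining conditions of $\Cham^\pm$ for the image $\pm(ww')C_{\fJ'}$ and using $w^{-1}$ for the inverse. What your approach buys is conceptual transparency and independence from the heavier chamber-combinatorics of~\cite{IWMemoir}; what the paper's buys is that both halves of the lemma fall out of a single dimension count and a single matrix identity, at the price of invoking more of Proposition~\ref{prop:mutpres} and obscuring the structural reason for the lattice isomorphism. Both arguments are complete; yours is arguably the cleaner one.
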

\begin{proof}
  By Proposition~\ref{prop:mutpres} any path $\cJ\xrightarrow{\ w\ }\fJ$ corresponds to a chamber $w C_\fJ \in \Cham^+(\Updelta_\aff,\cJ)$, and because this has full rank it follows that the isomorphism $w\colon \R^{\Updelta_\aff} \to \R^{\Updelta_\aff}$ restricts to an isomorphism $\R^{\fJ^c} \to \R^{\cJ^c}$.
  Moreover, any chamber in $\Cham^+(\Updelta_\aff,\fJ)$ can be written as $w' C_{\fJ'}$ for some $\fJ \xrightarrow{\ w'\ } \fJ'$, and the composition $\cJ \xrightarrow{\ ww'\ } \fJ'$ defines a chamber $ww' C_{\fJ'}$ in $\Cham^\pm(\Updelta_\aff,\cJ)$, which shows the first claim for the positive chambers. The negative case is analogous.

  Dually, it suffices to show that the assignment $\uppi_\fJ(\rt) \mapsto \uppi_\cJ(w\rt)$ is well-defined.
  For $\rt\in\Z\Updelta_\aff$ the restriction of $w\rt$ to $\Z\cJ^c$ is determined by the projection along the dual vectors $\upalpha_i^*$:
  \[
    \textstyle
    \uppi_\cJ(w\rt) =
    \sum_{i\in\cJ^c} \upalpha_i^*(w \cdot \rt) \upalpha_i =
    \sum_{i\in\cJ^c} (w^{-1}\cdot\upalpha_i^*)(\rt) \upalpha_i =
    \sum_{i\in \cJ^c} \sum_{j\in\fJ^c} \uplambda_{ij} \upalpha_j^*(\rt) \upalpha_i
  \]
  where $\uplambda_{ij}$ are the coefficients of the matrix $\R^{\fJ^c} \xrightarrow{\ w^{-1}\ } \R^{\cJ^c}$ with respect to this basis.
  Hence, $\uppi_\cJ(w\rt)$ is uniquely determined by the coefficients of $\uppi_\fJ(\rt) = \sum_{j\in\cJ^c} \upalpha_j^*(\rt)\upalpha_j$ in $\Z\cJ^c$.
  Because the map is moreover invertible, one sees that $\uppi_\cJ(w\rt) \neq 0$ if $\uppi_\fJ(\rt) \neq 0$, which finishes the proof.
\end{proof}

\subsection{Tilting and the intersection arrangement}

Now we return to the setting in \S\ref{sec:NCsetup} of an NCCR $\Lambda = \End_R(M) = \End_R(\bigoplus_{i\in\cJ^c} M_i)$ of a cDV singularity $\Spec R$ with Dynkin type $(\Updelta_\aff,\cJ)$.
Every projective $\Lambda$ module can written uniquely as a sum of indecomposable projectives $P_i \colonequals e_i\Lambda$ where $e_i\in\Lambda$ is the idempotent corresponding to summand $M_i \subset M$ as in \S\ref{subsec:NCCRs}, so there is a well-defined linear isomorphism
\[
  \R^{\cJ^c} \xrightarrow{\ \sim\ } {\KK_0(\proj \Lambda)}_\R,\quad \uptheta \mapsto \sum_i \uptheta(\upalpha_i) \cdot [P_i],
\]
where $\KK_0(\proj\Lambda)_\R$ denotes the K-theory of $\Lambda$ with real coefficients.
In~\cite{vGar21} we showed that this isomorphism identifies relates the hyperplane arrangement $\cH_{\Updelta_\aff,\cJ}$ to the \emph{2-term tilting theory} of $\Lambda$.

\begin{defn}
  Recall that a perfect complex $T \in \KK^b(\proj\Lambda)$ is called
  \begin{itemize}
  \item pretilting, if $\Hom_{\KK^b(\Lambda)}(T,T[\neq 0]) = 0$,
  \item tilting, if it is pretilting and $T$ generates $\KK^b(\proj \Lambda)$ as a triangulated category.
  \end{itemize}
  The complex $T$ is \emph{basic} if all of its indecomposable summands are distinct, in which case we denote the number of summands by $|T|$.
  We let $\alltilt\Lambda$ denote the set of basic tilting complexes up to isomorphism.
\end{defn}

By~\cite{AI12} the set $\alltilt\Lambda$ of isomorphism classes of basic tilting complexes is partially ordered via
\[
  T \geq U \quad\Longleftrightarrow\quad \Hom_{\KK^b(\Lambda)}(T,U[>0]) = 0.
\]
This set always contains the tilting complexes $\Lambda$ and $\Lambda[1]$, and the intermediate tilting complexes form the subposet of \emph{2-term} tilting complexes:
\[
    \tilt \Lambda \colonequals \{T \in \alltilt\Lambda \mid \Lambda \geq T \geq \Lambda[1]\} = \{T = T^\bullet\in \tilt\Lambda \mid T^i = 0 \text{ for } i\neq 0,-1 \},
\]
We write $\pretilt\Lambda$ for the set of pretilting complexes which are direct summands of some $T\in \tilt\Lambda$.
For an object $T\in \pretilt\Lambda$ with decomposition $T = T_1\oplus \ldots\oplus T_k$ into indecomposables, we let
\[
  \begin{aligned}
    \cone T &\colonequals \{ \textstyle\sum_{i=1}^k \uplambda_i [T_i] \mid \uplambda_i > 0\}\\
    \cone^\circ T &\colonequals \{ \textstyle\sum_{i=1}^k \uplambda_i [T_i] \mid \uplambda_i \geq 0\},
  \end{aligned}
\]
denote the cone and strict cone of dimension $k = |T|$ in $\KK_0(\proj\Lambda)_\R$.
In particular, for every 2-term tilting complex $T\in\tilt\Lambda$ there is an open cone $\cone^\circ T$, and as we showed in~\cite{vGar21} these cones intersection arrangement of $(\Updelta_\aff,\cJ)$.

\begin{theorem}[{\cite[Proposition E]{vGar21}}]\label{thm:tiltcham}
  Let $\Lambda$ be an NCCR of a cDV singularity with Dynkin type $(\Updelta_\aff,\cJ)$.
  Then there is a well-defined bijection
  \[
    \tilt\Lambda \xrightarrow{\ \sim\ } \Cham^+(\Updelta_\aff,\cJ) \sqcup \Cham^-(\Updelta_\aff,\cJ),\quad T \mapsto \cone^\circ T
  \]
  such that for any pair $T,T'\in \tilt\Lambda$, the closures $\cone T$ and $\cone T'$ intersect in
  \[
    \cone T \cap \cone T' = \cone T'',
  \]
  where $T'' \in \pretilt\Lambda$ is the greatest summand of $T$ which is also a summand of $T'$.
  In particular, the hyperplane arrangement $\cH_{\Updelta_\aff,\cJ}$ decomposes as a disjoint union:
  \[
    \cH_{\Updelta_\aff,\cJ} = H_{\overline\imroot} \sqcup \bigsqcup_{\substack{T\in \pretilt \Lambda\\ 0 < |T| < |\cJ^c|}}\cone^\circ T.
  \]
\end{theorem}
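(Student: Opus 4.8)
The plan is to realise both sides of the asserted bijection as indexing one and the same decomposition of $\R^{\cJ^c}$ into cones. On one side the \emph{$g$-vector fan} of $\Lambda$ carries the combinatorics of iterated $2$-term tilting mutation on $\tilt\Lambda$; on the other, by Iyama--Wemyss, the chambers of $\cH_{\Updelta_\aff,\cJ}$ carry the combinatorics of the wall-crossing groupoid $\Upgamma(\Updelta_\aff,\cJ)$; the task is to match the two. To set up the $g$-vector fan: since $\Lambda$ is an NCCR it has finite global dimension, so every $2$-term silting complex is tilting and $\tilt\Lambda$ is exactly the set of $2$-term silting complexes; as $\Lambda$ is module-finite over the complete local ring $R$, the standard $2$-term silting theory applies in this generality. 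For a basic $T=T_1\oplus\dots\oplus T_k\in\pretilt\Lambda$ the classes $[T_1],\dots,[T_k]$ are linearly independent and sign-coherent in $\KK_0(\proj\Lambda)_\R\cong\R^{\cJ^c}$, so $\cone^\circ T$ is a simplicial cone of dimension $k$; the map $T\mapsto\cone^\circ T$ is injective because a pretilting complex is recovered from its $g$-vectors; the cones $\cone^\circ T$, $T\in\pretilt\Lambda$, fit together into a fan whose codimension-one cones index the irreducible $2$-term tilting mutations, compatibly with the order of~\cite{AI12}, and whose maximal cones are the $\cone^\circ T$, $T\in\tilt\Lambda$, \emph{once the fan is known to be complete}; and since the intersection of two maximal cones of a fan is a common face, with the faces of $\cone^\circ T$ being exactly the $\cone^\circ U$ for $U$ a summand of $T$, one obtains $\cone T\cap\cone T'=\cone T''$ with $T''$ the largest common summand. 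This proves the second assertion and reduces the rest to identifying the maximal cones with $\Cham^+(\Updelta_\aff,\cJ)\sqcup\Cham^-(\Updelta_\aff,\cJ)$ and to establishing completeness.

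For the identification I would transport structure along mutation, using the reduction modulo $t$. Because $t$ is a central nonzerodivisor in $\rad\Lambda$ with $\Lambda/t\Lambda\cong e_\cJ\Uppi e_\cJ$, base change along $\Lambda\to\Lambda/t\Lambda$ induces a bijection between $2$-term silting complexes compatible with the identifications $\KK_0(\proj\Lambda)\cong\Z\cJ^c\cong\KK_0(\proj\,e_\cJ\Uppi e_\cJ)$, with mutation, and with the silting order — this is the tilting-theoretic shadow of Lemma~\ref{prop:NCDynkin} and of the reductions used throughout~\cite{IWMemoir} — so the $g$-vector fan of $\Lambda$ coincides with that of the contracted preprojective algebra $e_\cJ\Uppi e_\cJ$. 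The tilting complexes $\Lambda$ and $\Lambda[1]$ have $g$-cones $C_\cJ$ and $-C_\cJ$, the fundamental chambers of $\Cham^\pm(\Updelta_\aff,\cJ)$. An irreducible mutation of a $2$-term tilting complex of $e_\cJ\Uppi e_\cJ$ crosses one of the $|\cJ^c|$ facets of its $g$-cone, and the crux is to check that the facet crossed and the new cone are given, respectively, by a groupoid generator $\omega_{\fJ,i}$ and the rule~\eqref{eq:mutation}. Granting this, Proposition~\ref{prop:mutpres} says every chamber of $\cH_{\Updelta_\aff,\cJ}$ is $\pm wC_\fJ$ for a unique path $\cJ\xrightarrow{\,w\,}\fJ$ in $\Upgamma(\Updelta_\aff,\cJ)$, and since every $T\in\tilt\Lambda$ is reached from $\Lambda$ or $\Lambda[1]$ by finitely many irreducible mutations, matching the two $\Upgamma(\Updelta_\aff,\cJ)$-actions inductively from the base case $\Lambda\leftrightarrow C_\cJ$, $\Lambda[1]\leftrightarrow -C_\cJ$ yields the order-preserving bijection $\tilt\Lambda\xrightarrow{\sim}\Cham^+(\Updelta_\aff,\cJ)\sqcup\Cham^-(\Updelta_\aff,\cJ)$; in particular the $g$-vector fan is complete, closing the gap above.

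For the last assertion, the complement of $\cH_{\Updelta_\aff,\cJ}$ is now the union of the maximal cones $\cone^\circ T$, $T\in\tilt\Lambda$, so $\cH_{\Updelta_\aff,\cJ}$ is the union of the proper faces of the fan, namely the $\cone^\circ T''$ with $0<|T''|<|\cJ^c|$, together possibly with part of $H_{\overline\imroot}$. But $\cW_{\Updelta_\aff}$ fixes $\imroot$, so each open half-space cut out by $H_{\overline\imroot}$ is a union of chambers; hence no facet — and so no proper face — of a chamber lies on $H_{\overline\imroot}$, while $H_{\overline\imroot}$ is the only hyperplane contributed to $\cH_{\Updelta_\aff,\cJ}$ by the imaginary restricted roots. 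Combining this with the classification of restricted roots in Lemma~\ref{lem:realRR} gives the disjoint decomposition
\[
  \cH_{\Updelta_\aff,\cJ}=H_{\overline\imroot}\sqcup\bigsqcup_{\,0<|T''|<|\cJ^c|\,}\cone^\circ T''.
\]

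The main obstacle is the mutation-matching step in the second paragraph: one must verify that $2$-term silting mutation of $e_\cJ\Uppi e_\cJ$ agrees on the nose with the groupoid mutation~\eqref{eq:mutation} — both which facet of the $g$-cone is crossed and the identity of the new $g$-cone $w\omega_{\fJ,i}C_{\fJ+i-\iota(i)}$ — and, for the transport of structure to be globally well defined rather than merely locally consistent, that the relations among iterated $2$-term mutations are precisely those defining $\Upgamma(\Updelta_\aff,\cJ)$. A further point needing care is the non-isolated case, where $e_\cJ\Uppi e_\cJ$ is infinite-dimensional, with infinitely many tilting complexes and chambers accumulating towards $H_{\overline\imroot}$: there one must still establish completeness of the fan on each open half-space and check that no cone $\cone^\circ T''$ meets $H_{\overline\imroot}$.
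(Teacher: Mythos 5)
The paper does not prove this theorem: it is stated as an imported result, cited to \cite[Proposition E]{vGar21}, with no argument given here — so there is no in-paper proof to compare your proposal against. What the paper does provide, in the results immediately following (Proposition~\ref{prop:orderscoincide} and Lemma~\ref{lem:tiltactfrfr}), is exactly the infrastructure your sketch calls on: the order-preserving slice $\tilt\Lambda \xrightarrow{\sim} \tilt e_\cJ\Uppi e_\cJ$ from \cite[Theorem 4.3(b)]{Kimura20}, the identification $T/tT \cong e_\fJ I_w e_\cJ$ from \cite[Theorem 7.24(4)]{IWMemoir}, and the match between chamber wall-crossing and the groupoid $\Upgamma(\Updelta_\aff,\cJ)$ (Proposition~\ref{prop:mutpres}). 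So your strategy — transport the $g$-vector fan of $\Lambda$ along the slice to $e_\cJ\Uppi e_\cJ$ and match two-term mutation with the Iyama--Wemyss groupoid generators — is visibly the one the surrounding machinery is designed to support, and presumably close to what is carried out in \cite{vGar21}.

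As a proof, however, the proposal has the genuine gaps you yourself flag, and they are not small. The mutation-matching step — that two-term silting mutation of $e_\cJ\Uppi e_\cJ$ crosses the facet $H_{\overline{w\upalpha_i}}$ and lands in $w\omega_{\fJ,i}C_{\fJ+i-\iota(i)}$, and that the relations among iterated mutations are exactly the groupoid relations — is asserted, not derived; it is precisely the content of the bijection, not a preliminary to it. Completeness of the $g$-vector fan on each open half-space of $H_{\overline\imroot}$ is also not formal: in the non-isolated case $\tilt\Lambda$ is infinite and the cones accumulate on $H_{\overline\imroot}$, so something must rule out an unswept region. A smaller point: ``each open half-space is a union of chambers'' does not by itself exclude a proper face of a chamber lying on $H_{\overline\imroot}$; what does is the observation that for $\uptheta\in\overline{wC_\fJ}$ one has
\[
\uptheta(\overline\imroot)\;=\;\textstyle\sum_{i\in\fJ^c}\imroot_i\,(w^{-1}\uptheta)(\upalpha_i)\;\geq\;0,
\]
with equality only at $\uptheta=0$ since all $\imroot_i>0$, so $\overline{wC_\fJ}\cap H_{\overline\imroot}=\{0\}$. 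Finally, even the claim that the cones $\cone^\circ T$, $T\in\pretilt\Lambda$, have pairwise disjoint relative interiors — the fan property that your intersection formula relies on — is not a triviality in this module-finite, possibly infinite $\tilt\Lambda$ setting and needs an explicit appeal to the appropriate $g$-vector theory rather than the gesture towards \cite{AI12}, which gives the silting order but not the fan.
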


We say that $T,T'\in \tilt\Lambda$ are \emph{adjacent} if their cones intersect in $\cone T''$ for some $T''\in\pretilt\Lambda$ with $|T''| = |T| -1 = |T'|-1 = |\cJ^c| - 1$, and refer to $\cone^\circ T'' \subset \cone T''$ as the \emph{wall} separating $T$ and $T'$.
Each nonzero $T''\in \pretilt\Lambda$ is a summand of finitely many $T\in \tilt\Lambda$ which form a finite subposet of $\tilt\Lambda$. The largest of these $T$ is called the \emph{Bongartz completion} of $T''$.

The bijection in Theorem~\ref{thm:tiltcham} restricts to bijections $\tilt^\pm\kern-2pt\Lambda\xrightarrow{\sim}\Cham^\pm(\Updelta_\aff,\cJ)$, where
\[
  \tilt^\pm\kern-2pt\Lambda \colonequals \{T \in \tilt\Lambda \mid \cone^\circ T \in \Cham^\pm(\Updelta_\aff,\cJ)\}.
\]
Note that $\tilt^\pm\kern-2pt\Lambda$ inherits a partial order from $\tilt\Lambda$, while $\Cham^\pm(\Updelta_\aff,\cJ)$ is endowed with the geometric partial order.
It turns out that the bijection preserves these orders.

\begin{prop}\label{prop:orderscoincide}
  The bijections $\tilt^\pm\Lambda \to \Cham^\pm(\Updelta_\aff,\cJ)$ are equivalences of posets.
\end{prop}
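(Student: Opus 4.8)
The plan is to prove the two order-preservation statements ($\leq$ and $\geq$) directly, exploiting the fact that both posets on either side are already characterised in terms of \emph{minimal wall-crossing sequences}: on the chamber side this is the definition of the geometric order, and on the tilting side it is the combinatorics of mutation of 2-term tilting complexes, which is what makes $\cone T\cap\cone T' = \cone T''$ with $|T''| = |\cJ^c|-1$ correspond to a single wall-crossing. Since the bijection $T\mapsto\cone^\circ T$ of Theorem~\ref{thm:tiltcham} already sends adjacent tilting complexes to adjacent chambers (the wall being the common facet $\cone^\circ T''$), it suffices to check that it respects covering relations: I would first show that $T\gtrdot T'$ in $\tilt^+\kern-2pt\Lambda$ (a covering pair, so that $T'$ is the left mutation of $T$ at a single indecomposable summand) if and only if the corresponding chambers $\cone^\circ T$ and $\cone^\circ T'$ are adjacent with $\cone^\circ T$ ``above'' $\cone^\circ T'$ in the sense that the minimal wall-crossing path from $C_\cJ$ to $\cone^\circ T'$ passes through $\cone^\circ T$. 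Because both $\tilt^+\kern-2pt\Lambda$ and $\Cham^+(\Updelta_\aff,\cJ)$ are posets whose order is generated by their covering relations (the former by the mutation theory of~\cite{AI12}, the latter because it is a reflection of the weak order on the wall-crossing groupoid, Lemma~\ref{lem:mutorders}), matching covering relations gives the full equivalence of posets.

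The bridge to the covering relations is the wall-crossing groupoid $\Upgamma(\Updelta_\aff,\cJ)$. Concretely, I would identify $\tilt^+\kern-2pt\Lambda$ with the set of paths $\cJ\xrightarrow{w}\fJ$ in $\Upgamma(\Updelta_\aff,\cJ)$ via the composite of the bijection of Theorem~\ref{thm:tiltcham} with the labelling of chambers by such paths coming from Proposition~\ref{prop:mutpres}, so that $\Lambda$ itself (the maximal element) corresponds to the trivial path $\cJ\xrightarrow{1}\cJ$, i.e.\ to $C_\cJ$. Under this identification, a left mutation of tilting complexes at a summand corresponds on the chamber side to a mutation $(w,\fJ)\mapsto(w\omega_{\fJ,i},\fJ+i-\iota(i))$, and one has to verify that the direction of the mutation (which of $T,T'$ is the bigger one in the $\tilt$-order) matches the direction of the wall-crossing (which chamber lies closer to $C_\cJ$). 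This should follow from tracking the single arrow $\overline{w\omega_{\fJ,i}^{-1}\upalpha_i}$ of the wall-crossing sequence in Proposition~\ref{prop:mutpres} against the sign of $\Hom_{\KK^b(\Lambda)}(T,T'[>0])$: a left mutation decreases in the $\tilt$-order, and lengthening a minimal wall-crossing path moves away from $C_\cJ$, i.e.\ decreases in the geometric order, so the two are compatible. The negative case $\tilt^-\kern-2pt\Lambda\to\Cham^-(\Updelta_\aff,\cJ)$ is handled by the symmetry $T\mapsto T[1]$, $\cone^\circ T\mapsto -\cone^\circ T$, which reverses both orders simultaneously.

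The main obstacle I anticipate is making the compatibility of \emph{directions} precise without circularity: the order on $\tilt^+\kern-2pt\Lambda$ is defined homologically via vanishing of $\Hom(T,U[>0])$, while the geometric order is purely about wall-crossing paths, and Theorem~\ref{thm:tiltcham} as quoted only records the combinatorial bijection on chambers and the intersection-of-closures formula, not an a priori statement about orders. So the crux is to establish, for a single covering (one-wall) step, that ``$T$ mutates to $T'$ by a \emph{left} mutation'' is equivalent to ``the minimal path $C_\cJ\dashrightarrow\cone^\circ T'$ factors through $\cone^\circ T$.'' I would do this by an induction on $\ell(w)$: starting from $\Lambda\leftrightarrow C_\cJ$, where all mutations are left mutations and all wall-crossings move away from $C_\cJ$, and propagating along minimal mutation sequences using that in $\Upgamma(\Updelta_\aff,\cJ)$ a presentation $w = \omega_{\fJ_0,i_0}\cdots\omega_{\fJ_m,i_m}$ is minimal iff the associated wall-crossing sequence is minimal (Proposition~\ref{prop:mutpres}) and that this matches minimality of the corresponding sequence of mutations of tilting complexes. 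Once the covering relations are matched in this controlled way, transitivity of both orders finishes the proof with no further computation. If a cleaner route exists, it would be to quote directly the portion of~\cite{vGar21} underlying Theorem~\ref{thm:tiltcham} that already phrases the bijection as an isomorphism of posets onto $\Cham^+(\Updelta_\aff,\cJ)\sqcup\Cham^-(\Updelta_\aff,\cJ)$ with its natural order and merely observe that this order restricts on each sign component to the geometric one of~\cite{IWMemoir} — but absent that, the covering-relation argument above is the safe path.
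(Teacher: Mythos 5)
Your proposal takes a genuinely different route from the paper, and it has a gap at precisely the step you yourself flag as the crux.

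The paper's proof short-circuits the whole question by slicing: Kimura's theorem gives a poset isomorphism $\tilt\Lambda \to \tilt(e_\cJ\Uppi e_\cJ)$ along $T \mapsto T/tT$, compatible with cones by~\cite[Prop.~B]{vGar21}. On the preprojective-algebra side, Iyama--Wemyss identify the tilting complex with cone $wC_\fJ$ as $e_\fJ I_w e_\cJ$ and prove directly that $e_\fJ I_w e_\cJ \geq e_{\fJ'} I_{w'} e_\cJ \Leftrightarrow w \leq w'$ in the weak order. Combined with Lemma~\ref{lem:mutorders}, the comparison of orders is then a formal chaining of citations. Your route attempts to avoid all of that and instead match covering relations directly, identifying both sides with the wall-crossing groupoid and arguing by induction on $\ell(w)$.

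The gap is in the direction-compatibility step. You write that matching ``left mutation decreases the $\tilt$-order'' with ``lengthening a minimal path moves away from $C_\cJ$'' ``should follow from tracking the single arrow \ldots against the sign of $\Hom_{\KK^b(\Lambda)}(T,T'[>0])$,'' but no mechanism is given by which this tracking actually detects the sign. The induction you sketch establishes at best one inclusion: if a neighbour of $T = wC_\fJ$ lies at distance $k-1$ from $C_\cJ$, the inductive hypothesis applied to that neighbour shows it is a \emph{right} mutation of $T$. This gives ``toward-walls $\subseteq$ right mutations'' but not the converse: there could, a priori, be a right mutation of $T$ landing at distance $k+1$, which the induction (having not yet reached distance $k+1$) says nothing about. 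Nor does complementary counting close the loop, because the number of toward-walls (descents) and the number of right mutations are not a priori matched. In short, you need a second idea to rule out right mutations that increase the wall-crossing distance, and that idea is exactly what~\cite[Theorem 5.2(3)]{IWMemoir} provides: it pins down the tilting order on the slice against the weak order without any covering-relation induction. Without something equivalent — e.g.\ proving independently that the 2-term tilting order equals reachability from $\Lambda$ in the mutation Hasse quiver and that the Hasse quiver is the wall-crossing graph with all arrows pointing away from $C_\cJ$ — the argument is not complete.
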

\begin{proof}
  Let $e_\cJ\Uppi e_\cJ \cong \Lambda/t\Lambda$ be the contracted preprojective algebra defining the Dynkin type of $\Lambda$. Then it is shown in~\cite[Theorem 4.3(b)]{Kimura20} that the slice induces an isomorphism of posets
  \[
    \tilt\Lambda \xrightarrow{\ \sim\ } \tilt\Lambda/t\Lambda = \tilt e_\cJ\Uppi e_\cJ,\quad T\mapsto T/tT,
  \]
  and the induced map $\KK_0(\proj \Lambda) \to \KK_0(\proj e_\cJ\Uppi e_\cJ)$ identifies $\cone^\circ T$ with $\cone^\circ T/tT$ by~\cite[Proposition B]{vGar21}.
  Writing $\tilt^+\kern-2pt e_\cJ\Uppi e_\cJ$ for the image of $\tilt^+\kern-2pt\Lambda$, it therefore follows that
  \[
    \cone^\circ T = \cone^\circ T/t T = w C_\fJ
  \]
  for some $\cJ \xrightarrow{w} \fJ \in \Upgamma(\Updelta_\aff,\cJ)$.
  By Iyama--Wemyss~\cite[Theorem 7.24(4)]{IWMemoir} there is a tilting complex in $\tilt^+\kern-2pt\Lambda$ with cone $w C_\fJ$ given by a module\footnote{In~\cite{IWMemoir} the authors use left modules, so the definition of $I_w$ and the order of $e_\cJ$ and $e_\fJ$ are reversed.} $T/tT \cong e_\fJ I_w e_\cJ$, and if $T'/tT' \cong e_{\fJ'} I_{w'} e_{\cJ}$ is a second such tilting complex then~\cite[Theorem 5.2(3)]{IWMemoir} implies that
  \[
    T/tT \geq T'/tT' \quad\Longleftrightarrow\quad e_\fJ I_w e_\cJ \geq e_{\fJ'} I_{w'} e_{\cJ} \quad\Longleftrightarrow\quad w \leq w'.
  \]
  Because the algebraic and geometric orders are opposite by Lemma~\ref{lem:mutorders}, it then follows that
  \[
    T \geq T'
    \quad\Longleftrightarrow\quad T/tT \geq T'/tT'
    \quad\Longleftrightarrow\quad w \leq w'
    \quad\Longleftrightarrow\quad w C_\fJ \geq w' C_{\fJ'},
  \]
  which shows the result for the positive chambers. The negative case is analogous.
\end{proof}

Every 2-term tilting complex $T\in \tilt\Lambda$ defines a derived equivalence $\RHom_\Lambda(T,-)$ between $\Lambda$ and a new NCCR $\End_{\D(\Lambda)}(T)$, which can be endowed with a new Dynkin type.
The action $[\RHom_\Lambda(T,-)]$ on the Grothendieck groups can then be described via the wall-crossing groupoid.

\begin{lemma}\label{lem:tiltactfrfr}
  Let $\Lambda$ be an NCCR of Dynkin type $(\Updelta_\aff,\cJ)$ and let $T\in \tilt^+\kern-2pt \Lambda$ have $\cone^\circ T = w C_\fJ$ for some $\cJ \xrightarrow{\ w\ } \fJ$ in $\Upgamma(\Updelta_\aff,\cJ)$.
  Then $\End_\Lambda(T)$ is an NCCR of Dynkin type $(\Updelta_\aff,\fJ)$ and the maps
  \[
    \begin{aligned}
      \R^{\cJ^c} \cong \KK_0(\proj\Lambda)_\R &\xrightarrow{\ [\RHom_\Lambda(T,-)]} \KK_0(\proj \End_\Lambda(T))_\R \cong \R^{\fJ^c}\\
      \Z\cJ^c \cong \KK_0(\fdmod \Lambda) &\xrightarrow{\ [\RHom_\Lambda(T,-)]} \KK_0(\fdmod \End_\Lambda(T)) \cong \Z\fJ^c
    \end{aligned}
  \]
  are the inverses of the isomorphisms induced by $\cJ \xrightarrow{\ w\ } \fJ$ in Lemma~\ref{lem:mutaction}.
\end{lemma}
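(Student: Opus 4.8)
The plan is to reduce all three assertions to the mutation combinatorics of the contracted preprojective algebra $e_\cJ\Uppi e_\cJ\cong\Lambda/t\Lambda$ worked out by Iyama--Wemyss~\cite{IWMemoir}, by passing along the slice $-\otimes_\Lambda\Lambda/t\Lambda$ and transporting back via the compatibilities recorded in~\cite{vGar21}. Write $\Lambda' = \End_\Lambda(T)$. I would first pin down the Dynkin type. Since $\Lambda$ is maximal Cohen--Macaulay over $R$ the element $t$ is regular on $\Lambda$, hence on $T$ and on $\Lambda'$, so reduction modulo $t$ commutes with $\End$ and $\Lambda'/t\Lambda'\cong\End_{\Lambda/t\Lambda}(T/tT)$. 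By the poset isomorphism $\tilt\Lambda\xrightarrow{\sim}\tilt(\Lambda/t\Lambda)$ of Kimura~\cite{Kimura20} already used in the proof of Proposition~\ref{prop:orderscoincide}, $T/tT$ is a two-term tilting complex of $e_\cJ\Uppi e_\cJ$ with $\cone^\circ(T/tT) = \cone^\circ T = wC_\fJ$, so by~\cite[Theorem 7.24]{IWMemoir} it is the tilting module $e_\fJ I_w e_\cJ$, whose endomorphism ring is the contracted preprojective algebra $e_\fJ\Uppi e_\fJ$. Granting (as recalled above) that $\Lambda'$ is an NCCR, the isomorphism $\Lambda'/t\Lambda'\cong e_\fJ\Uppi e_\fJ$ will read off its Dynkin type as $(\Updelta_\aff,\fJ)$. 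This same step fixes a compatible labelling: writing $T = \bigoplus_{j\in\fJ^c}T_j$ for the decomposition inducing $e_\fJ I_w e_\cJ = \bigoplus_{j\in\fJ^c}e_j I_w e_\cJ$, the idempotent of $\Lambda'$ attached to the vertex $j\in\fJ^c$ is the projection $T\twoheadrightarrow T_j\hookrightarrow T$.

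Next I would handle the map on $\KK_0(\proj)$. The equivalence $\RHom_\Lambda(T,-)$ sends the summand $T_j$ to the indecomposable projective $P'_j := \Hom_\Lambda(T,T_j)$ of $\Lambda'$ attached to $j$, so on $\KK_0$ it carries the $\Z$-basis $\{[T_j]\}_{j\in\fJ^c}$ to the $\Z$-basis $\{[P'_j]\}_{j\in\fJ^c}$. Under $\KK_0(\proj\Lambda)_\R\cong\R^{\cJ^c}$, Theorem~\ref{thm:tiltcham} together with the proof of Proposition~\ref{prop:orderscoincide} identifies $\cone^\circ T$ with $wC_\fJ = w\cdot C_\fJ$, the image under the lattice isomorphism $w\cdot\colon\R^{\fJ^c}\to\R^{\cJ^c}$ of Lemma~\ref{lem:mutaction} of the positive cone $C_\fJ$, which corresponds to the cone spanned by the $[P'_j]$. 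The point to check is that the $\fJ^c$-labelling of the extreme rays of this common cone agrees whether read off from the summands $T_j$ or transported along $w\cdot$ from the rays $\R_{\geq 0}[P'_j]$ of $C_\fJ$; after reducing modulo $t$ this is precisely the compatibility of silting mutation with wall-crossing in~\cite[Theorem 1.20]{IWMemoir} (recorded for NCCRs in~\cite{vGar21}). Granting this, $\R_{\geq 0}[T_j] = w\cdot\R_{\geq 0}[P'_j]$ for each $j$, and since $[T_j]$ and $w\cdot[P'_j]$ are both primitive — each a member of a lattice basis of $\KK_0(\proj\Lambda)$ — and span the same ray, they must be equal. Hence $[\RHom_\Lambda(T,-)]^{-1} = w\cdot$, which is the first $\KK_0$ statement.

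The remaining $\KK_0(\fdmod)$ statement I would deduce by duality. The Euler form puts $\KK_0(\proj\Lambda)$ and $\KK_0(\fdmod\Lambda)$ in perfect duality with $\{[P_i]\}$ and $\{[\C e_i]\}$ as dual bases, since $\Hom_\Lambda(e_i\Lambda,\C e_j)=\delta_{ij}\C$ with no higher $\Ext$; the same holds for $\Lambda'$, and the derived equivalence $\RHom_\Lambda(T,-)$ preserves Euler characteristics and hence this pairing. So the induced map on $\KK_0(\fdmod)$ is the inverse transpose of the one on $\KK_0(\proj)$. Since the proof of Lemma~\ref{lem:mutaction} shows that its two isomorphisms — $w\cdot\colon\R^{\fJ^c}\to\R^{\cJ^c}$ and the map $\Z\fJ^c\to\Z\cJ^c$, $\uppi_\fJ(\rt)\mapsto\uppi_\cJ(w\rt)$ — are themselves mutually inverse transpose, the $\KK_0(\fdmod)$ claim then follows formally from the previous paragraph.

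The main obstacle is not any single estimate but the bookkeeping: one must carry a single consistent labelling of vertices and summands through the Van den Bergh tilting bundle, the slice modulo $t$, and the Iyama--Wemyss mutation, and in particular verify that the combinatorially defined assignment $\rt\mapsto w\rt$ of Lemma~\ref{lem:mutaction} really is the map algebraically induced by $\RHom_\Lambda(T,-)$. Reducing modulo $t$ is what makes this tractable, turning that verification into~\cite[Theorem 1.20]{IWMemoir}.
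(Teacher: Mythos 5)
Your proposal is correct and follows essentially the same strategy as the paper's proof: reduce modulo $t$ to the contracted preprojective algebra $e_\cJ\Uppi e_\cJ$, identify $T/tT \cong e_\fJ I_w e_\cJ$ via the Kimura slice and Iyama--Wemyss, read off the Dynkin type, and then deduce the K-theory statement and its dual. The only minor difference is that where the paper cites~\cite[Theorem 5.26]{IWMemoir} directly to identify the K-theory map on the preprojective side as $w^{-1}$, you argue via the extreme rays of $\cone^\circ T$ together with primitivity of lattice basis vectors, appealing to~\cite[Theorem 1.20]{IWMemoir} for the needed labelling compatibility of the rays; this is a slightly more hands-on version of the same reduction, and your Euler-pairing duality argument for the $\KK_0(\fdmod)$ claim matches the paper's ``follows dually.''
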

\begin{proof}
  As in Proposition~\ref{prop:orderscoincide} we can take the slice along $R\to R/(t)$, so that~\cite[Theorem 7.24(4)]{IWMemoir} yields an isomorphism $T/tT \cong e_\fJ I_w e_\cJ$.
  Then by~\cite[Corollary 5.16]{IWMemoir}
  \[
    \End_{\D(\Lambda)}(T) \otimes_R R/(t) \cong \End_{\D(\Lambda/t\Lambda)}(T/tT) \cong \End_{e_\cJ\Uppi e_\cJ}(e_\fJ I_w e_\cJ) \cong e_\fJ \Uppi e_\fJ,
  \]
  which shows that $\End_{\D(\Lambda)}(T)$ has Dynkin type $(\Updelta_\aff,\fJ)$.
  At the level of Grothendieck groups we obtain the commutative diagram
  \[
    \begin{tikzcd}
      \R^{\cJ^c} \arrow[d, equal]  \arrow[r,swap,"\sim"']
      & \KK_0(\proj \Lambda)_\R \arrow[d] \arrow[r,"{[\RHom(T,-)]}"]
      &[2.5cm] \KK_0(\fdmod \End_\Lambda(T))_\R \arrow[d] \arrow[r,"\sim"]
      & \R^{\fJ^c} \arrow[d,equal] \\
      \R^{\cJ^c}
      \arrow[r,swap,"\sim"']
      &  \KK_0(\proj e_\cJ \Uppi e_\cJ)_\R \arrow[r,"{[\RHom(e_\fJ I_w e_\cJ,-)]}"]
      &[2.5cm] \KK_0(\proj e_\fJ \Uppi e_\fJ)_\R \arrow[r,"\sim"]
      &\R^{\fJ^c}
    \end{tikzcd}
  \]
  and it follows by~\cite[Theorem 5.26]{IWMemoir} that the bottom map is given by $w^{-1}$, which implies the claim about the first map.
  The claim on the second map follows dually.
\end{proof}

\subsection{Semistable modules versus restricted roots}

Each idempotent $e_i\in \Lambda$ defines a simple module $\C e_i \in \fdmod \Lambda$ with class $[\C e_i] \in \KK_0(\fdmod\Lambda)$ dual to $[P_i] \in \KK_0(\proj\Lambda)_\R$ via the Euler pairing
\[
  ([P],[M]) \mapsto \textstyle\sum_{n\in\Z} (-1)^n \dim_\C \Hom_{\D(\Lambda)}(P,M[n]).
\]
In particular, there is an isomorphism $\KK_0{(\proj\Lambda)}_\R \cong \Hom_\Z(\KK_0(\fdmod\Lambda),\R)$ identifying K-theory vectors with linear forms $\uptheta\colon\KK_0(\fdmod\Lambda) \to \R$.
Such a linear map defines a stability condition of King~\cite{King94} on $\fdmod\Lambda$, and we denote the full subcategory of \emph{$\uptheta$-semistable modules} by
\[
  \cS_\uptheta(\Lambda) \colonequals \{M \in \fdmod \Lambda \mid \uptheta([M]) = 0,\ \uptheta([N]) \leq 0 \text{ for every submodule } N\subset M\}.
\]
These are abelian subcategories of finite length, and the relative simple objects are the $\uptheta$-\emph{stable} modules, which satisfy the strict inequality $\uptheta([N]) < 0$ for all proper nonzero submodules $N \subset M$.
The following result from~\cite{vGar21} identifies these subcategories $\cS_\uptheta$.

\begin{theorem}[{\cite[Theorem C]{vGar21}}]\label{thm:ortho}
  Suppose $T \in \tilt \Lambda$ is the Bongartz completion of $T'' \in \pretilt\Lambda$, then for all $\uptheta \in \cone(T'')$ the $\uptheta$-semistable modules form the subcategory
  \[
    \begin{aligned}
      \cS_\uptheta(\Lambda)
      &= \{M \in \fdmod\Lambda \mid \Hom_{\D(\Lambda)}(T,M[1]) = \Hom_{\D(\Lambda)}(T'',M) = 0 \} \\
      &= \{M \in \D^b(\fdmod\Lambda) \mid \Hom_{\D(\Lambda)}(T,M[\neq 0]) = \Hom_{\D(\Lambda)}(T'',M) = 0 \}
    \end{aligned}
  \]
  In particular, $\cS_\uptheta(\Lambda) = 0$ if $T'' = T$ and otherwise $\cS_\uptheta(\Lambda)$ contains $|T|-|T''|$ modules which are $\uptheta$-stable.
\end{theorem}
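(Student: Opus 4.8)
The plan is to present $\cS_\uptheta(\Lambda)$ as an intersection of two (co)torsion classes obtained by Harder--Narasimhan truncation at slope $0$, to identify these with the torsion classes attached to the Bongartz completion $T$ of $T''$ and to its co-Bongartz completion $\underline T$, to repackage the outcome $\Hom$-theoretically, and finally to count the stable objects via a $\tau$-tilting reduction. I would fix a presentation $\Lambda = \jacc{Q,W}$, so that $\Lambda$ is complete Noetherian with finitely many simples and the theory of $2$-term silting complexes is available (one may instead descend to the finite-dimensional contracted preprojective algebra $\Lambda/t\Lambda$ along the slice used in Proposition~\ref{prop:orderscoincide}). For $U\in\tilt\Lambda$ one has the functorially finite torsion pair $(\cT_U,\cF_U)$ in $\fdmod\Lambda$ with $\cT_U = \{M\mid \Hom_{\D(\Lambda)}(U,M[1])=0\}$ and $\cF_U = \{M\mid\Hom_{\D(\Lambda)}(U,M)=0\}$, and $U\mapsto\cT_U$ an order isomorphism onto the functorially finite torsion classes, by~\cite{AI12}. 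Since each indecomposable summand $T''_i$ of $T''$ is a $2$-term complex of projectives, $\Hom_{\D(\Lambda)}(T''_i,M[n])$ vanishes for $n\notin\{0,1\}$ and equals $\Hom_\Lambda(H^0(T''_i),M)$ for $n=0$; I will use this throughout.

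For a King parameter $\uptheta$, Harder--Narasimhan theory (King~\cite{King94}, Rudakov) shows that $\cS_\uptheta(\Lambda)$ is a wide subcategory of $\fdmod\Lambda$ whose simple objects are the $\uptheta$-stable modules, and that $\cS_\uptheta(\Lambda) = \cT^{\geq}_\uptheta\cap\cF^{\leq}_\uptheta$, where $\cT^{\geq}_\uptheta = \{M\mid\uptheta([M/N])\geq 0\text{ for all }N\subseteq M\}$ is a torsion class and $\cF^{\leq}_\uptheta = \{M\mid\uptheta([N])\leq 0\text{ for all }N\subseteq M\}$ a torsion-free class (a module is $\uptheta$-semistable exactly when it lies in both). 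Writing $\uptheta = \sum_i\uplambda_i[T''_i]$ with all $\uplambda_i>0$: if $N\in\cT_T$ then every quotient $N'$ of $N$ lies in $\cT_T$, so $\Hom_{\D(\Lambda)}(T''_i,N'[1])=0$ and $\uptheta([N']) = \sum_i\uplambda_i\dim_\C\Hom_\Lambda(H^0(T''_i),N')\geq 0$, whence $\cT_T\subseteq\cT^{\geq}_\uptheta$. For the reverse inclusion I would invoke that $\cone(T'')$ is, by Theorem~\ref{thm:tiltcham}, a face of the silting cone of the Bongartz completion $T$ of $T''$, together with the wall-and-chamber ($g$-vector fan) property that $\cT^{\geq}_\uptheta$ is the join of the torsion classes $\cT_U$ over the $2$-term silting complexes $U$ having $T''$ as a summand --- an interval with maximum $T$ --- so that $\cT^{\geq}_\uptheta = \cT_T$. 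Dually $\cF^{\leq}_\uptheta = \cF_{\underline T}$, and since $\underline T$ also has $T''$ as a summand, $\cF_{\underline T}\subseteq\cF_{T''} := \{M\mid\Hom_{\D(\Lambda)}(T'',M)=0\}$. Hence $\cS_\uptheta(\Lambda) = \cT_T\cap\cF_{\underline T}\subseteq\cT_T\cap\cF_{T''}$.

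The opposite inclusion is a direct homological computation. If $M\in\cT_T\cap\cF_{T''}$, then for any $N\subseteq M$ the long exact sequence of $\Hom_{\D(\Lambda)}(T''_i,-)$ attached to $0\to N\to M\to M/N\to 0$, using $\Hom_{\D(\Lambda)}(T''_i,M)=0$, $\Hom_{\D(\Lambda)}(T''_i,M[1])=0$ (as $T''_i$ is a summand of $T$) and the vanishing $\Hom_{\D(\Lambda)}(T''_i,-[-1])=0$ from $2$-termness, forces $\Hom_{\D(\Lambda)}(T''_i,N)=0$ and $\Hom_{\D(\Lambda)}(T''_i,N[1])\cong\Hom_{\D(\Lambda)}(T''_i,M/N)$; therefore $\uptheta([N]) = -\sum_i\uplambda_i\dim_\C\Hom_{\D(\Lambda)}(T''_i,M/N)\leq 0$, and taking $N=M$ gives $\uptheta([M])=0$, so $M\in\cS_\uptheta(\Lambda)$. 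Thus the three subcategories coincide. The description with $\D^b(\fdmod\Lambda)$ in place of $\fdmod\Lambda$ then follows because $\{E\in\D^b(\fdmod\Lambda)\mid\Hom_{\D(\Lambda)}(T,E[\neq 0])=0\}$ is the heart of the tilted $t$-structure associated to the $2$-term silting complex $T$, on which the extra condition $\Hom_{\D(\Lambda)}(T'',E)=0$ forces $E$ to lie in $\fdmod\Lambda$. Finally, $\cT_T\cap\cF_{T''}$ is exactly the $\tau$-tilting reduction of $\Lambda$ at the presilting object $T''$ (Jasso), hence a wide subcategory equivalent to $\fdmod\Gamma$ for a finite-dimensional algebra $\Gamma$ with precisely $|\cJ^c|-|T''| = |T|-|T''|$ simple modules, which is $0$ exactly when $T''=T$; transporting simples across the equivalence, $\cS_\uptheta(\Lambda)$ contains $|T|-|T''|$ modules that are $\uptheta$-stable, and $\cS_\uptheta(\Lambda)=0$ iff $T''=T$.

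The step I expect to be the main obstacle is the identification $\cT^{\geq}_\uptheta = \cT_T$ (and its dual $\cF^{\leq}_\uptheta = \cF_{\underline T}$): this is where the stability truncation has to be matched precisely to the face $\cone(T'')$ of the silting fan and to the Bongartz completion, via the wall-and-chamber dictionary for the $g$-vector fan. Extra care will be needed because $\Lambda$ is a priori not finite-dimensional, so one must either appeal to the forms of $\tau$-tilting theory valid for complete Noetherian algebras or descend to $\Lambda/t\Lambda$ and verify compatibility with $\fdmod\Lambda$ and with the stability data, as in Proposition~\ref{prop:orderscoincide}.
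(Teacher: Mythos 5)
The paper does not prove this statement: it is quoted verbatim from \cite[Theorem C]{vGar21}, so there is no internal proof to compare your argument against. Evaluated on its own merits, your strategy (HN truncation at value zero, identification of the two truncation classes with $\cT_T$ and $\cF_{\underline T}$ via the wall-and-chamber/$g$-vector fan, a direct homological computation for the converse inclusion, and $\tau$-tilting reduction for the count of stables) is the natural one and the steps you spell out are correct. In particular the easy inclusion $\cT_T\subseteq\cT^{\geq}_\uptheta$, and the long-exact-sequence argument showing $\cT_T\cap\cF_{T''}\subseteq\cS_\uptheta(\Lambda)$, are sound; you correctly use that a $2$-term complex of projectives has $\Hom$ against a module concentrated in degrees $0,1$, and that $\Hom_{\D(\Lambda)}(T''_i,N)\cong\Hom_\Lambda(H^0(T''_i),N)$.

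Two points deserve to be tightened. First, the passage from the $\fdmod$-description to the $\D^b$-description is asserted (``the extra condition forces $E$ to lie in $\fdmod\Lambda$'') but not argued. The argument one needs is: $E$ lies in the heart $\cH_T$, hence $H^{-1}(E)\in\cF_T$ and $H^0(E)\in\cT_T$; the truncation triangle and the condition $\Hom(T'',E)=0$ give $\Hom(T'',H^{-1}(E)[1])=0$; and because $T$ is the Bongartz completion of $T''$ one has the characterisation $\cT_T=\{X\mid\Hom(T'',X[1])=0\}$, so $H^{-1}(E)\in\cT_T\cap\cF_T=0$. Without invoking this specific Bongartz characterisation the step is not automatic, since $\Hom(T'',-[1])$-vanishing does not in general force a torsion-free object to vanish. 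Second, the equality $\cT^{\geq}_\uptheta=\cT_T$ (and dually $\cF^{\leq}_\uptheta=\cF_{\underline T}$) is where the whole weight of the theorem sits, and you defer it to the $g$-vector fan literature; that is a legitimate move for finite-dimensional algebras, but $\Lambda$ here is only complete local Noetherian, so one must either invoke the appropriate version of the silting/$\tau$-tilting dictionary for such algebras or reduce along $\Lambda\to\Lambda/t\Lambda$ and track the $\KK_0$-identification of stability parameters as in Proposition~\ref{prop:orderscoincide}. You flag this yourself, correctly identifying it as the main obstacle; until it is carried out explicitly the proof is a detailed plan rather than a complete argument.

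Minor remark on notation: the paper's Definition of $\cone$ and $\cone^\circ$ has the strict and non-strict inequalities in an unexpected place relative to later usage (``open cone $\cone^\circ T$'' versus ``closures $\cone T$''); you read $\cone(T'')$ as the relative interior, which is what the statement must mean for the argument to go through, so no harm done, but be aware the source text is internally inconsistent on this point.
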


Identifying $\R^{\cJ^c} \cong \KK_0(\proj\Lambda)_\R$ as before, Theorem~\ref{thm:tiltcham} and Theorem~\ref{thm:ortho} now yield the following.

\begin{prop}\label{prop:stabfromroots}
  Let $\Lambda$ be an NCCR of a cDV singularity with affine Dynkin type $(\Updelta_\aff,\cJ)$.
  Then for any $\overline{\rt} = \uppi_\cJ(\rt) \in \Z \cJ^c$ which is not in $\Z\overline\imroot$, the following are equivalent:
  \begin{enumerate}
  \item $\tfrac1d \overline{\rt} \in \RR^\re(\Updelta_\aff,\cJ)$.
  \item $\cS_\uptheta(\Lambda) \neq 0$ for all $\uptheta \in H_{\overline{\rt}}$,
  \end{enumerate}
  where $d = \gcd(\overline{\rt})$ denotes the multiplicity of $\overline{\rt}$.
  Moreover, for generic $\uptheta\in H_{\overline{\rt}}$ the subcategory $\cS_\uptheta(\Lambda)$ contains precisely one $\uptheta$-stable module, which has dimension vector $\tfrac 1d\overline{\rt}$.
\end{prop}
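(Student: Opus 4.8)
The plan is to work entirely in the stability space, via the fixed identification $\R^{\cJ^c}\cong\KK_0(\proj\Lambda)_\R$, and to feed Theorem~\ref{thm:tiltcham} and Theorem~\ref{thm:ortho} into one another. Recall from Theorem~\ref{thm:ortho} that for $\uptheta\in\cone^\circ T''$ with Bongartz completion $T$ one has $\cS_\uptheta(\Lambda)=0$ precisely when $T''=T$, i.e.\ when $\uptheta$ lies in an open tilting chamber, and that otherwise $\cS_\uptheta(\Lambda)$ contains exactly $|T|-|T''|$ stable objects; and from Theorem~\ref{thm:tiltcham} that the open tilting chambers are exactly the connected components of $\R^{\cJ^c}\setminus\cH_{\Updelta_\aff,\cJ}$.

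\emph{The implication $\neg(1)\Rightarrow\neg(2)$.} Since $\overline\rt\notin\Z\overline\imroot$, and $\overline\imroot$ is primitive when $0\notin\cJ$ (using Lemma~\ref{lem:realRR} otherwise), no nonzero rational multiple of $\overline\rt$ is imaginary. If moreover $\tfrac1d\overline\rt\notin\RR^\re(\Updelta_\aff,\cJ)$, then Corollary~\ref{cor:rrdiv}, applied to an arbitrary integer multiple $m\tfrac1d\overline\rt$ (of multiplicity $|m|$), rules out that \emph{any} multiple of $\overline\rt$ lies in $\RR^\re(\Updelta_\aff,\cJ)$ either; this downward divisibility of restricted roots is the crucial input here. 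Hence $H_{\overline\rt}$ equals no hyperplane of $\cH_{\Updelta_\aff,\cJ}$, so $H_{\overline\rt}\not\subseteq\cH_{\Updelta_\aff,\cJ}$, and any $\uptheta\in H_{\overline\rt}\setminus\cH_{\Updelta_\aff,\cJ}$ lies in an open tilting chamber and has $\cS_\uptheta(\Lambda)=0$.

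\emph{The implication $(1)\Rightarrow(2)$ and the genericity claim.} Assume $\overline v:=\tfrac1d\overline\rt\in\RR^\re(\Updelta_\aff,\cJ)$, so $H_{\overline\rt}=H_{\overline v}\subseteq\cH_{\Updelta_\aff,\cJ}$, and, after applying the symmetry $\uptheta\mapsto-\uptheta$, $T\mapsto T[1]$, take $\overline v=\uppi_\cJ(\rt')$ for a positive real root $\rt'$. Choosing $i\in\cJ^c$ with $\overline\upalpha_i$ not colinear to $\overline v$, Lemma~\ref{lem:poswallcrossing} yields a minimal wall-crossing sequence in $\Cham^+(\Updelta_\aff,\cJ)$ whose last step crosses $H_{\overline v}$ with label $\overline v$; by Theorem~\ref{thm:tiltcham} and Proposition~\ref{prop:orderscoincide} this corresponds to adjacent $T^+,T^-\in\tilt^+\Lambda$ with a common codimension-one summand $T''$ and $\cone^\circ T''\subseteq H_{\overline v}$. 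For $\uptheta$ generic in $\cone^\circ T''$, Theorem~\ref{thm:ortho} gives a unique $\uptheta$-stable object $M\in\cS_\uptheta(\Lambda)$. To compute $[M]$ I would apply the tilting equivalence $\RHom_\Lambda(T,-)$ for $T$ the Bongartz completion of $T''$: by Lemma~\ref{lem:tiltactfrfr} this carries $\cone^\circ T$ to the base chamber $C_\fJ$ of the NCCR $\End_\Lambda(T)$ and $\cone^\circ T''$ to the facet of $C_\fJ$ orthogonal to some $\overline\upalpha_k$, along which the unique stable module is the simple $\C e_k$ of class $\overline\upalpha_k$; transporting back along the K-theory isomorphism of Lemma~\ref{lem:mutaction}, and matching wall-crossing labels through Proposition~\ref{prop:mutpres}, identifies $[M]$ with $\overline v=\tfrac1d\overline\rt$ (the sign being forced by effectivity). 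This proves the genericity statement and shows $\cS_\uptheta(\Lambda)\neq0$ on the top-dimensional cells of $H_{\overline\rt}$.

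\emph{From generic $\uptheta$ to all of $H_{\overline\rt}$.} Each such $M=M_{T''}$ is $\uptheta_0$-stable for $\uptheta_0$ generic in $\cone^\circ T''$, and $\uptheta$-semistability of the fixed module $M_{T''}$ is cut out by finitely many closed linear inequalities in $\uptheta$; hence $M_{T''}$ stays semistable on all of $\overline{\cone^\circ T''}=\cone(T'')$, which covers $H_{\overline\rt}$ away from $H_{\overline\imroot}$. At $\uptheta=0$ one trivially has $\cS_0(\Lambda)=\fdmod\Lambda\neq0$, and for the remaining $\uptheta\in H_{\overline\rt}\cap H_{\overline\imroot}$ I would pick generic $\uptheta_n\to\uptheta$ in $H_{\overline\rt}$ with unique stable modules $B_n$ of class $\overline v$; each $B_n$ has an open orbit in the variety of $\overline v$-dimensional representations, so only finitely many isomorphism classes occur, and a constant subsequence $B_n\cong B$ gives, on passing to the limit in the (finitely many, closed) semistability inequalities, that $B$ is $\uptheta$-semistable. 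I expect this propagation step — and in particular pinning the stable dimension vector to the \emph{primitive} restricted root $\tfrac1d\overline\rt$ rather than a larger multiple, together with the treatment of the degenerate locus $H_{\overline\rt}\cap H_{\overline\imroot}$ — to be the main technical obstacle; the rest is a direct dictionary between Theorems~\ref{thm:tiltcham} and~\ref{thm:ortho} and the combinatorics of \S\ref{sec:dynkin}.
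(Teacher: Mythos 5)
Your proof takes a genuinely different route from the paper's. The paper's own argument is short because it cites Proposition~F of \cite{vGar21} as a black box: that proposition presumably says exactly that $\cS_\uptheta(\Lambda)\neq 0$ iff $\uptheta$ lies in $\cH_{\Updelta_\aff,\cJ}$, including the locus $H_{\overline\imroot}$, and the paper's $(1)\Rightarrow(2)$ and $(2)\Rightarrow(1)$ are both single applications of that criterion followed by Corollary~\ref{cor:rrdiv}. You instead re-derive the criterion from the internal Theorems~\ref{thm:tiltcham} and~\ref{thm:ortho}, which is a perfectly legitimate, more self-contained strategy and works cleanly for $\neg(1)\Rightarrow\neg(2)$ and on $H_{\overline\rt}\setminus H_{\overline\imroot}$ (since Theorem~\ref{thm:tiltcham} places every such $\uptheta$ in $\cone^\circ T''$ for some proper summand $T''\neq T$, and Theorem~\ref{thm:ortho} then gives $\cS_\uptheta\neq 0$). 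For the computation of $[M]$, your route through the mutation dictionary is heavier than needed: the paper gets $[M]=\tfrac1d\overline\rt$ immediately from the fact that $\uptheta([M])=0$ for all generic $\uptheta\in H_{\overline\rt}$ forces colinearity with $\overline\rt$, and the K-theory isomorphism of the tilting equivalence sends the indivisible class of a simple to an indivisible class, pinning down the primitive multiple — this also dissolves the worry you raise about ``a larger multiple.''

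The genuine gap in your argument is exactly where you flagged it: the degenerate locus $H_{\overline\rt}\cap H_{\overline\imroot}$. That locus is excluded from the cone decomposition of Theorem~\ref{thm:tiltcham}, so Theorem~\ref{thm:ortho} cannot be applied there, and your proposed limit argument does not go through as written. You assert that each stable module $B_n$ of class $\overline v$ has an open orbit in the representation variety, so that only finitely many isomorphism classes occur along the sequence and a constant subsequence survives the limit. But stability does not imply rigidity for a $3$-Calabi--Yau algebra: stable $\Lambda$-modules can have $\Ext^1\neq 0$ and move in positive-dimensional families (indeed the generic stable moduli recover the crepant resolution itself), so the open-orbit claim fails and the finiteness of isomorphism classes is unjustified. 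Moreover, semistability is a condition on \emph{all} submodules, and if $B_n$ is not eventually constant there is no reason the set of ``bad'' submodules is uniformly controlled, so closedness-in-$\uptheta$ cannot be combined with a naive compactness argument. To close the gap one either needs the external Proposition~F (as the paper does), or a direct argument that for $\uptheta\in H_{\overline\rt}\cap H_{\overline\imroot}$ there still exists a nonzero $\uptheta$-semistable module (for instance by exhibiting one semistable for an entire closed face whose relative interior meets both $H_{\overline\rt}\setminus H_{\overline\imroot}$ and the degenerate locus).
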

\begin{proof}
  Assuming (1), the hyperplane $H_{\overline{\rt}} = H_{\frac1d\overline{\rt}}$ is a root hyperplane in $\cH_{\Updelta_\aff,\cJ}$ which does not coincide with $H_{\overline\imroot}$ by assumption.
  It therefore follows directly from~\cite[Proposition F]{vGar21} that $\cS_\uptheta(\Lambda)\neq 0$.
  Moreover, for generic $\uptheta$ there is exactly one $\uptheta$-stable module $M$, which is the image of a simple $\End_\Lambda(T)$-module under the derived equivalence $\D^b(\mod \End_\Lambda(T)) \simeq \D^b(\Lambda)$.
  Because this derived equivalence induces an isomorphism on K-theory and the class of the simple is indivisible, it follows that the class of $M$ is again indivisible, and therefore has to be equal to $\tfrac1d \overline \rt$.

  Conversely, if $\cS_\uptheta(\Lambda) \neq 0$ for all $\uptheta \in H_{\overline{\rt}}$, it follows from~\cite[Proposition F]{vGar21} that $H_{\overline{\rt}}$ does not intersect any of the open chambers and must therefore be contained in $\cH_{\Updelta_\aff,\cJ}$.
  It follows that $H_{\overline{\rt}} = H_{\overline{\rt'}}$ for some $\rt' \in \RR^\re(\Updelta_\aff,\cJ)$, satisfying $\overline{\rt'} = \tfrac kd \overline{\rt}$ for some $k\in \N$.
  But then $k$ divides the multiplicity of $\overline{\rt'}$ and it follows from Corollary~\ref{cor:rrdiv} that $\tfrac1d\overline{\rt} = \tfrac1k \overline{\rt'} \in \RR^\re(\Updelta_\aff,\cJ)$.
\end{proof}

\section{Vanishing of BPS Invariants}\label{sec:vanishing}
In this section leverage the stability results of the previous section for noncommutative Donaldson--Thomas theory of an NCCR for a cDV singularity.
We first recall a minimal setup for the noncommutative DT theory, which uses a presentation of the NCCR as a \emph{quiver with potential}.

Given a finite quiver $Q = (Q_0, Q_1)$ recall that a \emph{potential} is a finite linear combination $W = \sum_c W_c \cdot c$ of cycles $c$ in the path algebra $\C\<Q\>$ of $Q$.
Such a pair $(Q,W)$ defines a Jacobi algebra
\[
  \jac{Q,W} = \frac{\C\<Q\>}{(\partial_a W \mid a\in Q_1)},
\]
where $\partial_a$ denote the cyclic derivatives, which are linear maps which are defined on cycles $c = a_1\ldots a_n$ as $\partial_a c = \sum_{a_i = a} a_{i+1}\cdots a_n a_1 \cdots a_{i-1}$.
Completing with respect to path length yields the \emph{complete Jacobi algebra $\jacc{Q,W}$}, which will be used as a presentation for the NCCRs.

\begin{setup}\label{set:quivpot}
  Let $\Lambda$ be an NCCR of a cDV singularity with a fixed Dynkin type, and assume that $\Lambda$ admits a presentation by a quiver with potential $(Q,W)$, by which we mean a fixed isomorphism
  \[
    \Lambda \cong \jacc{Q,W}.
  \]
  We will assume that all cycles in $W$ have length $\geq 2$ and that $Q$ is a \emph{symmetric} quiver: for $i,j\in Q_0$ the number of arrows $i\to j$ and arrows $j\to i$ are the same.
\end{setup}

Each module $M$ over $\jac{Q,W}$ or $\jacc{Q,W}$ has a \emph{dimension vector}, which can be defined as
\[
  \ddim M = \left(\dim_\C M e_i\right)_{i\in Q_0} \in \N Q_0,
\]
where $e_i$ are the idempotents associated to vertices $i\in Q_0$.
For the completed Jacobi algebra, all simple modules are 1-dimensional vertex simples $\C e_i$, so the dimension vector $\ddim M \in \N Q_0$ of any $M\in\fdmod\jacc{Q,W}$ can be identified with its class $[M] \in \KK_0(\fdmod \jacc{Q,W})$ via a natural isomorphism.
In particular, for an NCCR $\Lambda$ of Dynkin type $(\Updelta_\aff,\cJ)$ as in Setup~\ref{set:quivpot}
\[
  \Z Q_0 \cong \KK_0(\fdmod\jacc{Q,W}) \cong \KK_0(\fdmod\Lambda) \cong \Z\cJ^c.
\]
When working in Setup~\ref{set:quivpot} we will therefore freely identify elements of these abelian groups via the above isomorphisms when this does not lead to confusion.

\begin{rem}
  Because  $\Lambda$ is a smooth 3-Calabi--Yau algebra, a quiver presentation of $\Lambda$ is guaranteed to exist if one allows for \emph{formal} potentials~\cite{VdBer15}.
  Although it is possible to work with formal potentials in DT theory, we assume that $W$ is finite to avoid technical hurdles.
\end{rem}

\subsection{Quiver moduli spaces}

Given a symmetric quiver $Q=(Q_0,Q_1)$ and a dimension vector $\updelta\in \N Q_0$, quiver representations of dimension $\updelta$ form an affine space
\[
  \Rep_\updelta(Q) \colonequals \prod_{a:i\to j \in Q_1} \Hom_\C(\C^{\updelta_i},\C^{\updelta_j}),
\]
which comes equipped with an action of the algebraic group $\GL_\updelta \colonequals \prod_{i\in Q_0} \GL_{\updelta_i}(\C)$ by base-change.
Any non-constant cycle $c = a_1\cdots a_n$ in $Q$ determines a $\GL_\updelta$-equivariant function $\tr(c)$ with value $\tr(M_{a_1}\cdots M_{a_n})$ on ${(M_a)}_{a\in Q_1} \in \Rep_\updelta(Q)$.
Extending linearly, one obtains a $\GL_\updelta$-equivariant function $\tr(W)$ on $\Rep_\updelta(Q)$ for any potential $W$, which has a $\GL_\updelta$-invariant critical locus
\[
  \Rep_\updelta(Q,W) \colonequals \crit(\tr(W)) = \{\mathbf{d} \tr(W) = 0\} \subset \Rep_\updelta(Q).
\]
Each representation $(M_a)_{a\in Q_1} \in \Rep_\updelta(Q,W)$ defines a module over $\jac{Q,W}$ of dimension vector $\updelta$, and the $\GL_\updelta$-orbits are in 1-1 correspondence with the isomorphism classes of $\jac{Q,W}$-modules with this dimension vector.
As a result, there is a good moduli space given by the stack quotient
\[
  \fM_\updelta(Q,W) \colonequals \Rep_\updelta(Q,W)/\GL_\updelta.
\]
The stack has a coarse moduli scheme $\cM_\updelta(Q,W) \colonequals \Spec {\C[\Rep_\updelta(Q,W)]}^{\GL_\updelta}$ which parametrising moduli of \emph{semisimple} $\jac{Q,W}$-modules of dimension $\updelta$, and the associated map
\[
  \JH\colon \fM_\updelta(Q,W)\to \cM_\updelta(Q,W)
\]
identifies a $\jac{Q,W}$-module with the sum of simples appearing in its Jordan--Holder composition series.
The representation $\fatnull^\updelta = {(M_a =0)}_{a\in Q_1} \in \Rep_\updelta(Q,W)$ determines a point in $\cM_\updelta(Q,W)$ corresponding to the sum of vertex simples, the fibre over which is the nilpotent locus
\[
  \fN_\updelta(Q,W) \colonequals \JH^{-1}(\fatnull^\updelta) \subset \fM_\updelta(Q,W).
\]
An isomorphism class of a module $M$ defines a $\C$-point of $\fN_\updelta(Q,W)$ if and only if $M$ is complete, i.e.\ if it lies in the image of the natural embedding $\fdmod \jacc{Q,W} \subset \fdmod\jac{Q,W}$.
We will therefore identify isomorphism classes of $\updelta$-dimensional $\jacc{Q,W}$-modules with points in $\fN_\updelta(Q,W)$.

\subsection{Motivic BPS invariants}

Following~\cite{KS08}, we associate virtual motives to the critical loci $\fM_\updelta(Q,W)$ and $\fN_\updelta(Q,W)$ in a certain commutative ring of monodromic motives
\[
  \Mot^\muhat \colonequals \KK_0^\muhat(\Var/\C)[\L^\mhalf,{[\GL_n]}^{-1} \mid n \geq 1],
\]
obtained from a certain equivariant version of the Grothendieck ring of varieties, in which the motives of $\GL_n$ and a formal square root of the affine line $\L = [\A^1]$ have been inverted.
To construct these virtual motives, one considers first the \emph{motivic vanishing cycle} $\upphi_{\tr(W)}$ (see e.g.~\cite{DM15,DM17}) which is a certain $\Mot^\muhat$-valued measure on $\fM_\updelta(Q)$ associated to the function $\tr(W)\colon \fM_\updelta(Q) \to \A^1$.
For any substack $\fC \subset \fM_\updelta(Q)$ the integral of $\upphi_{\tr(W)}$ over $\fC$ defines a virtual motive
\[
  [\fC]_\vir \colonequals \int_{\fC} \upphi_{\tr(W)} \in \Mot^\muhat.
\]
In particular, there are virtual motives for the stacks $\fM_\updelta(Q,W)$ or $\fN_\updelta(Q,W)$, which can be collected into the Donaldson--Thomas partition functions
\[
  \Upphi^\glob(t) \colonequals \sum_{\updelta\in\N Q_0} [\fM_\updelta(Q,W)]_\vir \cdot t^\updelta,\quad
  \Upphi(t) \colonequals \sum_{\updelta\in\N Q_0} [\fN_\updelta(Q,W)]_\vir \cdot t^\updelta,
\]
which are elements of the motivic powerseries ring $\Mot^\muhat[\![Q_0]\!] = \Mot^\muhat[\![t^i \mid i \in Q_0]\!]$, that provide a virtual counts of the finite dimensional $\jac{Q,W}$-modules and $\jacc{Q,W}$-modules respectively.

The partition functions of $(Q,W)$ can be expressed more conveniently using a multiple-cover formulas derived via the \emph{plethystic exponential} $\Sym \colon \Mot^\muhat[\![Q_0]\!] \to \Mot^\muhat[\![Q_0]\!]$, which is defined via certain natural pre-$\lambda$-ring operations (see~\cite{DM15}) on the ring $\Mot^\muhat$, and satisfies some of the usual exponential identities:
\[
  \begin{aligned}
    \Sym(a t^\updelta) &= 1 + a t^\updelta + \ldots(\text{higher order terms in $t^{n\updelta}$})\ldots,\\
    \Sym(f(t) + g(t)) &= \Sym(f(t)) \cdot \Sym(g(t)).
  \end{aligned}
\]
Any motivic powerseries with constant term $1$ can be presented uniquely as $\Sym(f(t))$ for some powerseries $f(t)$ with zero constant term, and in particular one can use an Ansatz
\[
  \Sym\left(\sum_{\updelta \in \N Q_0} \frac{\Omega^\glob(\updelta)}{\L^\half - \L^\mhalf} t^\updelta\right) = \Upphi^\glob(t),\quad
  \Sym\left(\sum_{\updelta \in \N Q_0} \frac{\Omega(\updelta)}{\L^\half - \L^\mhalf} t^\updelta\right) = \Upphi(t),
\]
to obtain new sequences of invariants $\Omega^\glob(\updelta)$ and $\Omega(\updelta)$.
These are the \emph{motivic BPS invariants} of $\jac{Q,W}$ and $\jacc{Q,W}$ respectively, as defined by Kontsevich--Soibelman~\cite{KS08}.
By the proof of the \emph{integrality conjecture}~\cite{KS08,Le15}, the motivic BPS invariants are ``integral'' meaning they lie in
\[
  \KK_0^\muhat(\Var/\C)[\L^{-\frac12}] \subset \Mot^\muhat.
\]
Taking the Euler characteristic defines a realisation map $\chi^\mot_\num\colon   \KK_0^\muhat(\Var/\C)[\L^{-\frac12}] \to \Z$ into the integers, and the BPS invariants are therefore motivic refinements of enumerative invariants.

\subsection{Slope stability}

Given a symmetric quiver with potential $(Q,W)$ as before, every linear map $\uptheta\in \R^{Q_0} = \Hom_\Z(\Z Q_0,\R)$ induces a \emph{slope stability condition} $\upzeta_\uptheta(\updelta) = \uptheta(\updelta)/\one(\updelta)$, where $\one \in \R^{Q_0}$ is the function with value $1$ on all vertices $i\in Q_0$.
We recall the following definition of slope stability.

\begin{defn}
  A representation or module $M$ is called $\upzeta_\uptheta$-\emph{semistable} if either $M=0$ or if $M\neq 0$ and every nonzero submodule $N\subset M$ satisfies the inequality
  \[
    \upzeta_\uptheta(\ddim N) \leq \upzeta_\uptheta(\ddim M).
  \]
  A semistable module is called $\upzeta_\uptheta$-\emph{stable} if $M\neq 0$ and the inequality is strict for all $N\neq M$, and is called $\upzeta_\uptheta$-\emph{polystable} if it is a direct sum of $\upzeta_\uptheta$-stable modules.
\end{defn}

For each $\updelta \in\N Q_0$ and slope stability condition $\upzeta = \upzeta_\uptheta$, the $\upzeta$-semistable modules form an open $\GL_\updelta$-invariant subscheme $\Rep^{\upzeta\semis}_\updelta(Q, W) \subset \Rep_\updelta(Q,W)$, of which the stack-theoretic quotient
\[
  \fM^{\upzeta\semis}_\updelta(Q,W) \colonequals \Rep^{\upzeta\semis}_\updelta(Q, W)/\GL_\updelta,
\]
is a moduli space of semistable $\jac{Q,W}$-modules with dimension $\updelta$. The corresponding scheme-theoretic quotient $\cM^{\upzeta\semis}_\updelta(Q,W)$ is the coarse moduli scheme parametrising $\upzeta$-polystable modules, and the coarse moduli map fits into a commutative square
\[
  \begin{tikzpicture}
    \node (Y) at (0,0) {$\fM_\updelta(Q,W)$};
    \node (Z) at (-3*\gr,0) {$\fM_\updelta^{\upzeta\semis}(Q,W)$};
    \node (Zcon) at (-3*\gr,-\gr) {$\cM_\updelta^{\upzeta\semis}(Q,W)$};
    \node (Ycon) at (0,-\gr) {$\cM_\updelta(Q,W)$};
    \draw[->] (Y) to[edge label=$\scriptstyle\JH$] (Ycon);
    \draw[->] (Z) to (Zcon);
    \draw[->] (Z) to (Y);
    \draw[->] (Zcon) to[edge label=$\scriptstyle\JH'$] (Ycon);
  \end{tikzpicture}
\]
where the arrow on the left maps a module to the polystable module in its S-equivalence class, the top map is simply an inclusion of an open substack, and $\JH'$ is a proper map sending a polystable module through the Jordan--Holder construction.
We let by $\cN_\updelta^{\upzeta\semis}(Q,W)$ denote the fibre of $\JH'$ over $\fatnull^\updelta \in \cM_\updelta(Q,W)$, and remark that it is the coarse moduli space for the locus
\[
  \fN_\updelta^{\upzeta\semis}(Q,W) \colonequals \fM_\updelta^{\upzeta\semis}(Q,W) \cap \fN_\updelta(Q,W),
\]
of semistable $\jacc{Q,W}$-modules.
As before we define partition functions in terms of the virtual motives of the above substacks: given a slope stability $\upzeta$ and slope $\upmu \in (-\infty,\infty)$ one sets
\[
  \begin{aligned}
    \Upphi^{\upzeta,\upmu}(t) &\colonequals 1 + \sum_{\substack{\updelta\in\N Q_0\setminus\{0\}\\ \upzeta(\updelta) = \upmu}} [\fN_\updelta^{\upzeta\semis}(Q,W)] \cdot t^\updelta,\quad
    \Upphi^{\upzeta,\upmu,\glob}(t) &\colonequals 1 + \sum_{\substack{\updelta\in\N Q_0 \setminus \{0\}\\\upzeta(\updelta) = \upmu}} [\fM_\updelta^{\upzeta\semis}(Q,W)] \cdot t^\updelta.
  \end{aligned}  
\]
It was shown by Reineke~\cite{Reineke03} that one can stratify the moduli spaces $\fM_\updelta$ along the Harder-Narasimhan filtrations of a slope stability condition, which yields the following wall-crossing relation.

\begin{theorem}[{\cite{Reineke03,KS08}}]\label{thm:decomp}
  Let $(Q,W)$ be a symmetric quiver with potential, then any slope stability condition $\upzeta\colon \N Q_0 \setminus \{0\} \to (-\infty,\infty)$ induces decompositions
  \[
    \Upphi(t) = \prod_{\mu\in(-\infty,\infty)} \Upphi^{\upzeta,\upmu}(t),\quad
    \Upphi^\glob(t) = \prod_{\mu\in(-\infty,\infty)} \Upphi^{\upzeta,\upmu,\glob}(t).
  \]
\end{theorem}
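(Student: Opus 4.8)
The plan is to prove both identities by stratifying the moduli stacks along Harder--Narasimhan filtrations and controlling how the motivic vanishing cycle $\upphi_{\tr(W)}$ behaves under this stratification; the arguments for $\Upphi(t)$ and $\Upphi^\glob(t)$ are word-for-word the same, the difference being only whether one works with the nilpotent loci $\fN_\updelta(Q,W)$ or with the full stacks $\fM_\updelta(Q,W)$, so I describe the former.

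\emph{Step 1 (Harder--Narasimhan stratification and reduction).} Fix $\upzeta = \upzeta_\uptheta$. By the standard theory (King, Reineke) every nonzero module $M\in\fdmod\jac{Q,W}$ admits a unique filtration $0 = M_0 \subsetneq M_1 \subsetneq \cdots \subsetneq M_k = M$ with $\upzeta$-semistable subquotients of strictly decreasing slopes $\upmu_1 > \cdots > \upmu_k$; since submodules and quotients of a nilpotent module are again nilpotent, if $M$ lies in $\fdmod\jacc{Q,W}$ so does each $M_i/M_{i-1}$. For a dimension vector $\updelta$, call a tuple $\tau = (\updelta_1,\ldots,\updelta_k)$ of nonzero dimension vectors with $\sum_i\updelta_i = \updelta$ and $\upzeta(\updelta_1) > \cdots > \upzeta(\updelta_k)$ an \emph{HN type}, and let $\fN_\updelta^\tau(Q,W) \subset \fN_\updelta(Q,W)$ be the locally closed substack of modules with HN type $\tau$. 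These form a finite locally closed stratification of $\fN_\updelta(Q,W)$, so by additivity of the vanishing-cycle integral over locally closed decompositions $[\fN_\updelta(Q,W)]_\vir = \sum_\tau [\fN_\updelta^\tau(Q,W)]_\vir$. I would then reduce the theorem to the stratum identity
\[
  [\fN_\updelta^\tau(Q,W)]_\vir \;=\; \prod_{i=1}^{k} [\fN_{\updelta_i}^{\upzeta\semis}(Q,W)]_\vir \qquad\text{for every HN type }\tau.
\]
Granting this, summing over all $\updelta$ and all $\tau$ and observing that a strictly decreasing tuple of dimension vectors is the same datum as a choice, for each slope $\upmu$, of at most one nonzero dimension vector of that slope, reorganises $\sum_\updelta[\fN_\updelta(Q,W)]_\vir t^\updelta$ precisely as the expansion of $\prod_\upmu\Upphi^{\upzeta,\upmu}(t)$; this product lies in $\Mot^\muhat[\![Q_0]\!]$ because only finitely many slopes contribute to the coefficient of any given $t^\updelta$.

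\emph{Step 2 (geometry of a stratum).} The stratum $\fN_\updelta^\tau(Q,W)$ is identified, via ``remember the HN filtration'', with the stack of filtered nilpotent representations of type $\tau$, which maps to $\prod_i \fN_{\updelta_i}^{\upzeta\semis}(Q,W)$ by passing to the associated graded. On the level of representation spaces this map is an iterated affine-space fibration, Zariski-locally trivial with fibre $\A^N$, where $N$ and the analogous $\GL$-bookkeeping depend only on the Euler form $\<-,-\>_Q$ of $Q$; this is Reineke's classical computation~\cite{Reineke03}, which restricts from $\fM_\updelta(Q)$ to the critical loci. The input one must verify is that, in the extension chart, $\tr(W)$ restricts to $\bigoplus_i \tr(W)$ on the block-diagonal directions plus a function of the off-diagonal (``unipotent'') coordinates, so that the next step applies.

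\emph{Step 3 (motivic vanishing cycle and the role of symmetry).} Here I would invoke the two properties of the integral $\int(-)\,\upphi_{\tr(W)}$ established in~\cite{KS08} (see also~\cite{DM15,Le15}): multiplicativity by a power of $\L^{\half}$ along Zariski-locally-trivial affine-space fibrations, and the Thom--Sebastiani / ``integral identity'' allowing one to integrate out the off-diagonal coordinates of the extension chart at the cost of a further power of $\L^{\half}$. Combining these with Step 2 yields $[\fN_\updelta^\tau(Q,W)]_\vir = \L^{c(\tau)/2}\prod_i[\fN_{\updelta_i}^{\upzeta\semis}(Q,W)]_\vir$ for an explicit exponent $c(\tau)$ assembled from the numbers $\<\updelta_i,\updelta_j\>_Q$. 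Precisely because $Q$ is \emph{symmetric}, $\<-,-\>_Q$ is a symmetric bilinear form, and with the standard $\L$-normalisation built into $[-]_\vir$ the exponent $c(\tau)$ vanishes; equivalently, the a priori ordered, quantum-torus-twisted wall-crossing product of~\cite{KS08} collapses to an honest commutative product of power series in the $t^i$. This establishes the stratum identity of Step 1, and with it the theorem; the global statement follows by the identical argument with $\fM$ in place of $\fN$.

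\emph{Main obstacle.} The genuine work is concentrated in Step 3: the compatibility of the motivic vanishing cycle with the Harder--Narasimhan stratification, i.e. the motivic integral identity together with the exact accounting of the powers of $\L^{\half}$. This is the content of~\cite{KS08}, made rigorous in~\cite{Le15}; Step 2 is the Reineke-type stratification~\cite{Reineke03} transported to the critical locus, and the reorganisation of the sum in Step 1 is purely formal.
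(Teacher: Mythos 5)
The paper cites this theorem to \cite{Reineke03,KS08} without giving a proof, so there is nothing in-text to compare against; your sketch fills in the standard argument those references provide, and the overall structure (Harder--Narasimhan stratification, Reineke's affine-fibration geometry of the strata, vanishing-cycle bookkeeping, with symmetry of $Q$ killing the $\L^{1/2}$-twist so that the quantum torus is commutative) is correct and is what is intended. One simplification you could make explicit: on the HN stratum, where the matrices $M_a$ are block upper-triangular in a basis adapted to the filtration, the trace of any cycle depends \emph{only} on the diagonal blocks (the diagonal block of a product of block upper-triangular matrices is the product of their diagonal blocks), so $\tr(W)$ factors exactly through the associated-graded map and is genuinely independent of the off-diagonal ``unipotent'' coordinates. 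Consequently you do not need the full Thom--Sebastiani/integral identity of \cite{KS08} here --- only multiplicativity of $\int(-)\,\upphi_{\tr W}$ along Zariski-locally-trivial affine-space bundles on which the function is constant along the fibres --- and the ``input one must verify'' in your Step 2 is automatic. This is a minor tightening; the argument as you have written it is sound.
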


Comparing the partition functions with the BPS Ansatz, one finds via a quick calculation
\[
  \Upphi^{\upzeta,\upmu,\glob}(t) = \Sym\left(\sum_{\substack{\updelta\in\N Q_0 \setminus\{0\}\\\upzeta(\updelta) = \upmu}} \frac{\Omega^\glob(\updelta)}{\L^\half-\L^\mhalf} t^{\updelta}\right),\quad
  \Upphi^{\upzeta,\upmu}(t) = \Sym\left(\sum_{\substack{\updelta\in\N Q_0 \setminus\{0\}\\\upzeta(\updelta) = \upmu}} \frac{\Omega(\updelta)}{\L^\half-\L^\mhalf} t^{\updelta}\right).
\]
One can therefore compute the BPS invariants $\Omega(\updelta)$ and $\Omega^\glob(\updelta)$ for \emph{any} choice of slope stability $\upzeta$ and slope $\upmu = \upzeta(\updelta)$.
In what follows we will pick stability conditions $\upzeta = \upzeta_\uptheta$ to compute BPS invariants $\Omega(\updelta)$ and $\Omega^\glob(\updelta)$ for $\upzeta(\updelta) = \uptheta(\updelta) = 0$, and to ease notation therefore write
\[
  \Upphi^\uptheta(t) \colonequals \Upphi^{\upzeta_\uptheta,0}(t),\quad
  \Upphi^{\uptheta,\glob}(t) \colonequals \Upphi^{\upzeta_\uptheta,0,\glob}(t).
\]

\subsection{Vanishing of BPS invariants in the complete local setting}

We now return to the setting of Setup~\ref{set:quivpot} where $\jacc{Q,W} \cong \Lambda$ is an NCCR with affine Dynkin type $(\Updelta_\aff,\cJ)$, and identify elements of the root lattice and its dual with elements of the Grothendieck groups via the isomorphisms
\[
  \Z Q_0 \cong \KK_0(\fdmod \Lambda) \cong \Z \cJ^c,\quad
  \R^{Q_0} \cong \KK_0(\proj \Lambda)_\R \cong \R^{\cJ^c},
\]
discussed before.
Each $\uptheta \in \R^{Q_0} \cong \R^{\cJ^c}$ corresponds to two kinds of stability conditions: a King-stability condition, and a slope stability condition, which can be related as follows.
\begin{lemma}\label{lem:modulibij}
  Fix a $\uptheta\in \R^{\cJ^c} \cong \R^{Q_0}$ and let $\upzeta = \upzeta_\uptheta$ be the associated slope stability condition.
  Then the identification $\fdmod \Lambda \simeq \fdmod \jacc{Q,W}$ induces a bijection
  \[
    |\cS_\uptheta(\Lambda)| \cong \bigsqcup_{\substack{\updelta\in\N Q_0\\\updelta = 0 \ \vee\ \upzeta(\updelta) = 0}} \left|\fN^{\upzeta\semis}_\updelta(Q,W)(\C)\right|
  \]
  where $|-|$ denotes the set of isomorphism classes in each category.
\end{lemma}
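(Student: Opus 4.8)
The plan is to reduce the statement to the elementary comparison between King's stability and the associated slope stability, once the categorical dictionary is in place. First I would record that, since $\Lambda \cong \jacc{Q,W}$, the category $\fdmod\Lambda$ \emph{is} $\fdmod\jacc{Q,W}$, that a finite-dimensional $\jacc{Q,W}$-module of dimension vector $\updelta$ is the same datum as a $\C$-point of $\fN_\updelta(Q,W)$, and that isomorphism classes of such modules are in bijection with $\fN_\updelta(Q,W)(\C)$ --- all of this is recalled in the setup above. Under $\Z Q_0 \cong \KK_0(\fdmod\Lambda)$ the dimension vector $\ddim M$ agrees with the class $[M]$, so the value $\uptheta([M])$ appearing in the definition of $\cS_\uptheta(\Lambda)$ is $\uptheta(\ddim M)$, which is exactly the numerator of the slope $\upzeta_\uptheta(\ddim M) = \uptheta(\ddim M)/\one(\ddim M)$; and a $\Lambda$-submodule of $M$ is the same as a sub-$\jacc{Q,W}$-module, i.e.\ a subrepresentation, so ``submodule'' means the same thing on both sides.

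With this dictionary fixed, I would take $M \in \fdmod\Lambda$ of dimension vector $\updelta$ and argue the equivalence of the two semistability notions directly. If $M \in \cS_\uptheta(\Lambda)$ then $\uptheta(\updelta) = 0$ by definition; since $\one(\updelta) > 0$ for $\updelta \neq 0$ (and $\updelta = 0$ is allowed too) this says $\upzeta_\uptheta(\updelta) = 0$, and for every nonzero submodule $N$ one has $\one(\ddim N) > 0$, so the defining inequality $\uptheta([N]) \leq 0$ is equivalent to $\upzeta_\uptheta(\ddim N) \leq 0 = \upzeta_\uptheta(\updelta)$ --- exactly $\upzeta_\uptheta$-semistability. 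Conversely, if $M$ has $\upzeta_\uptheta(\updelta) = 0$ (or $\updelta = 0$) and is $\upzeta_\uptheta$-semistable, then $\uptheta(\updelta) = \one(\updelta)\,\upzeta_\uptheta(\updelta) = 0$ and $\uptheta([N]) = \one(\ddim N)\,\upzeta_\uptheta(\ddim N) \leq 0$ for every submodule $N$, so $M \in \cS_\uptheta(\Lambda)$. In particular the dimension vectors of objects of $\cS_\uptheta(\Lambda)$ are precisely those in the index set of the disjoint union.

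Assembling, the identification $\fdmod\Lambda \simeq \fdmod\jacc{Q,W}$ sends an object of $\cS_\uptheta(\Lambda)$ of dimension vector $\updelta$ to a complete $\jacc{Q,W}$-module which is $\upzeta_\uptheta$-semistable with $\updelta$ in the index set, i.e.\ to a $\C$-point of $\fN^{\upzeta\semis}_\updelta(Q,W)$, and conversely; this is a bijection on isomorphism classes because the underlying identification of categories is, and it respects the decomposition over $\updelta$ because the dimension vector is preserved. I do not expect a genuine obstacle: the only points requiring care are the bookkeeping identifications (dimension vector versus $\KK$-theory class, $\Lambda$-submodules versus subrepresentations, finite-dimensional/complete modules versus $\C$-points of $\fN_\updelta$), all recorded above, together with the trivial fact that $\one$ is strictly positive on nonzero dimension vectors --- which is precisely what makes King and slope stability agree on the locus $\upzeta_\uptheta(\updelta) = 0$.
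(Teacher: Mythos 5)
Your proposal is correct and follows essentially the same route as the paper's own proof: handle $M=0$ separately, then for $M\neq 0$ use the strict positivity of $\one$ on nonzero dimension vectors to translate King's conditions $\uptheta([M])=0$ and $\uptheta([N])\leq 0$ into $\upzeta_\uptheta(\ddim M)=0$ and $\upzeta_\uptheta(\ddim N)\leq \upzeta_\uptheta(\ddim M)$, and then pass to isomorphism classes. The bookkeeping identifications you list at the outset are exactly those the paper relies on implicitly.
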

\begin{proof}
  Let $M \in \fdmod \Lambda \simeq \fdmod \jacc{Q,W}$.
  If $M = 0$ then it is identified with the zero representation of $\jacc{Q,W}$, which corresponds to the unique closed point of $\fN_0^{\upzeta\semis}(Q,W) = \fN_0(Q,W) \cong \Spec \C$.
  Hence we may assume that $M\neq 0$.  
  For $M\neq 0$ the function $\one$ is positive, and the following are equivalent:
  \[
    \uptheta([M]) = 0 \quad\Longleftrightarrow\quad \upzeta(\ddim M) = \frac{\uptheta(\ddim M)}{\one(\ddim M)} = 0.
  \]
  If either of these equivalent conditions holds and $N\subset M$ is a nonzero submodule, then again $\one(\ddim N) > 0$ and one has the equivalent conditions:
  \[
    \uptheta([N]) \leq 0 \quad\Longleftrightarrow\quad
    \upzeta(\ddim N) = \frac{\uptheta(\ddim N)}{\one(\ddim N)} \leq 0 = \upzeta(\ddim M).
  \]
  By inspection, $M \in \cS_\uptheta(\Lambda)$ if and only if its isomorphism class determines a $\C$-point of $\fN_\updelta^{\upzeta\semis}(Q,W)$ for $\updelta = \ddim M$ satisfying $\upzeta(\updelta) = 0$.
  The result then follows after passing to the isomorphism classes.
\end{proof}

The lemma implies that the partition function $\Upphi^\uptheta(t)$ is a generating function for virtual counts of modules in $\cS_\uptheta(\Lambda)$.
Hence, applying Proposition~\ref{prop:stabfromroots} yields the following.

\begin{theorem}\label{thm:BPSvanishing}
  Let $\Lambda \cong \jacc{Q,W}$ be an NCCR of Dynkin type $(\Updelta_\aff,\cJ)$ as in Setup~\ref{set:quivpot}.
  Then for every dimension vector $\updelta\in\N Q_0 \cong \N \cJ^c$ with multiplicity $d = \gcd(\updelta)$
  \[
    \tfrac1d \updelta \not\in \RR(\Updelta_\aff,\cJ) \quad\implies\quad \Omega(\updelta) = 0.
  \]
\end{theorem}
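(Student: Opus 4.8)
The plan is to exploit the freedom, recorded above, to compute $\Omega(\updelta)$ from the partition function $\Upphi^\uptheta(t)$ for \emph{any} slope stability with $\uptheta(\updelta)=0$. Concretely, I will produce a parameter $\uptheta^\star\in\R^{Q_0}\cong\R^{\cJ^c}$ with $\uptheta^\star(\updelta)=0$ for which the category $\cS_{\uptheta^\star}(\Lambda)$ of semistable modules is trivial. Granting this, and writing $\upzeta=\upzeta_{\uptheta^\star}$ for the associated slope stability, Lemma~\ref{lem:modulibij} identifies the one-element set $|\cS_{\uptheta^\star}(\Lambda)|$ with $\bigsqcup_\gamma|\fN^{\upzeta\semis}_\gamma(Q,W)(\C)|$, whose $\gamma=0$ summand is already a point; hence $\fN^{\upzeta\semis}_\gamma(Q,W)(\C)=\emptyset$ for all $\gamma\neq0$ with $\uptheta^\star(\gamma)=0$, and since this stack is of finite type over $\C$ it is then empty, so its virtual motive vanishes and $\Upphi^{\uptheta^\star}(t)=1$. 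Comparing with the Ansatz $\Upphi^{\uptheta^\star}(t)=\Sym\bigl(\sum_\gamma\tfrac{\Omega(\gamma)}{\L^\half-\L^\mhalf}t^\gamma\bigr)$, the sum being over nonzero $\gamma$ with $\uptheta^\star(\gamma)=0$, and using that $\Sym$ is a bijection onto power series with constant term $1$, its argument must vanish, so $\Omega(\gamma)=0$ for all such $\gamma$, and in particular $\Omega(\updelta)=0$.

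To build $\uptheta^\star$ I apply Proposition~\ref{prop:stabfromroots} with $\overline\rt=\updelta$, which requires $\updelta\notin\Z\overline\imroot$; so the first task is to deduce this from the hypothesis $\tfrac1d\updelta\notin\RR(\Updelta_\aff,\cJ)$. Suppose instead $\updelta=m\overline\imroot$ with $m\geq1$, so $\tfrac1d\updelta=\tfrac1{\gcd(\overline\imroot)}\overline\imroot$: if $0\notin\cJ$ then the imaginary root $\imroot$ has an extended-vertex coefficient equal to $1$, hence $\gcd(\overline\imroot)=1$ and $\tfrac1d\updelta=\overline\imroot\in\RR^\img(\Updelta_\aff,\cJ)$, while if $0\in\cJ$ then $\overline\imroot=\overline{\rt^\maxx}$ and Proposition~\ref{prop:rrmult} together with Lemma~\ref{lem:realRR} places $\tfrac1{\gcd(\overline{\rt^\maxx})}\overline{\rt^\maxx}$ in $\RR(\Updelta_\aff,\cJ)$; either way the hypothesis fails. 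Hence $\updelta\notin\Z\overline\imroot$, and since $\tfrac1d\updelta\notin\RR(\Updelta_\aff,\cJ)\supseteq\RR^\re(\Updelta_\aff,\cJ)$, the equivalence $(1)\Leftrightarrow(2)$ of Proposition~\ref{prop:stabfromroots} yields some $\uptheta^\star\in H_\updelta=\{\uptheta\mid\uptheta(\updelta)=0\}$ with $\cS_{\uptheta^\star}(\Lambda)=0$, as required. (Equivalently: by Corollary~\ref{cor:rrdiv} and the symmetry $\RR(\Updelta_\aff,\cJ)=-\RR(\Updelta_\aff,\cJ)$, $\updelta$ is colinear to no restricted root, so $H_\updelta\not\subseteq\cH_{\Updelta_\aff,\cJ}$, a generic $\uptheta^\star\in H_\updelta$ lies in an open chamber $\cone^\circ T$ with $T\in\tilt\Lambda$, and Theorem~\ref{thm:ortho} with $T''=T$ gives $\cS_{\uptheta^\star}(\Lambda)=0$.)

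Thus the argument is short once the machinery of Proposition~\ref{prop:stabfromroots} and Lemma~\ref{lem:modulibij} is invoked. The main obstacle, such as it is, lies in the first step of the second paragraph: ruling out $\updelta\in\Z\overline\imroot$ when the restricted imaginary root is not primitive --- which occurs precisely when $0\in\cJ$, and is where Proposition~\ref{prop:rrmult} enters essentially --- together with the routine but necessary passage from triviality of the abelian category $\cS_{\uptheta^\star}(\Lambda)$ to the vanishing of the virtual motives $[\fN^{\upzeta\semis}_\gamma(Q,W)]_\vir$, which relies on a finite-type stack over $\C$ with no closed points being empty. Everything past that is formal manipulation of the plethystic exponential.
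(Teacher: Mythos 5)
Your proof is correct and follows essentially the same path as the paper's: apply Proposition~\ref{prop:stabfromroots} to find $\uptheta^\star\in H_\updelta$ with $\cS_{\uptheta^\star}(\Lambda)=0$, use Lemma~\ref{lem:modulibij} to see the relevant moduli stacks are pointless (hence empty), conclude $\Upphi^{\uptheta^\star}(t)=1$, and extract $\Omega(\updelta)=0$ from the plethystic Ansatz. The one thing you do beyond the paper is to verify explicitly that $\tfrac1d\updelta\notin\RR(\Updelta_\aff,\cJ)$ forces $\updelta\notin\Z\overline\imroot$ --- a hypothesis needed to invoke Proposition~\ref{prop:stabfromroots} that the paper leaves implicit --- and your case split on $0\in\cJ$, using $\gcd(\overline\imroot)=1$ when $0\notin\cJ$ and Proposition~\ref{prop:rrmult} together with Lemma~\ref{lem:realRR} when $0\in\cJ$, fills that gap correctly.
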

\vskip0em
\begin{proof}
  Let $\updelta\in\N Q_0 \cong \N \cJ^c \subset \Z\cJ^c$ be a vector such that $\tfrac1d \updelta \not \in \RR(\Updelta_\aff,\cJ)$.
  Then Proposition~\ref{prop:stabfromroots} implies that $\cS_\uptheta(\Lambda) = 0$ for some generic (King) stability condition $\uptheta \in \R^{\cJ^c}$ with $\uptheta(\updelta) = 0$.
  Lemma~\ref{lem:modulibij} therefore implies that for any nonzero $d \in \N Q_0$ with $\upzeta(d) = 0$
  \[
    |\fN^{\upzeta\semis}_d(Q,W)(\C)| = \varnothing.
  \]
  In particular, the non-constant coefficients of the partition function $\Upphi^\uptheta(t)$ are given by a sequence of integrals over empty substacks of $\fM_d(Q)$, which implies that this powerseries is constant:
  \[
    \Upphi^\uptheta(t) \colonequals \sum_{\substack{\updelta\in\N Q_0\\ \uptheta(\updelta) = 0}} [\fN_\updelta^{\upzeta\semis}(Q,W)]_\vir \cdot t^\updelta
    = 1.
  \]
  Comparing with the BPS Ansatz, it follows by the multiplicativity of the plethystic exponential that
  \[
    \Sym\left(\sum_{\substack{\updelta\in\N Q_0 \setminus\{0\}\\\uptheta(\updelta) = \upmu}} \frac{\Omega(\updelta)}{\L^\half-\L^\mhalf} t^{\updelta}\right) = 1 =
    \Sym\left(\sum_{\substack{\updelta\in\N Q_0 \setminus\{0\}\\\uptheta(\updelta) = \upmu}} \frac{0}{\L^\half-\L^\mhalf} t^{\updelta}\right),
  \]
  from which it follows that $\Omega(d) = 0$ whenever $\uptheta(d) = 0$, so in particular $\Omega(\updelta) = 0$ as claimed.
\end{proof}

\subsection{Global vanishing in the weighted-homogeneous case}

In this subsection we will assume that the pair $(Q,W)$ is weighted-homogeneous: there exists a grading $|\cdot| \colon Q_1 \mapsto \N_{>0}$ on the set of arrows for which $W$ is homogeneous of degree $n > 0$.
This grading induces an action $\C^\times \curvearrowright \Rep_\updelta(Q)$ via
\[
  \uplambda \curvearrowright {(M_a)}_{a\in Q_1} = {(\uplambda^{|a|}\cdot M_a)}_{a\in Q_1},
\]
which commutes with the action of $\GL_\updelta$.
The function $\tr(W)$ is again homogeneous, and the critical locus $\Rep_\updelta(Q,W) = \crit(\tr(W))$ is therefore $\C^\times$-invariant in addition to being $\GL_\updelta$-invariant.
We claim that the action also respects the subspace of semistable representations

\begin{lemma}\label{lem:actionrestricts}
  The $\C^\times$-action restricts $\Rep_\updelta^{\upzeta\semis}(Q,W)$ for every $\updelta \in \N Q_0$ and slope stability $\upzeta = \upzeta_\uptheta$.
\end{lemma}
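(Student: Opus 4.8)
The plan is to reduce the statement to the elementary fact that rescaling the arrow-maps of a quiver representation by nonzero scalars leaves its lattice of subrepresentations unchanged, so that slope (semi)stability is visibly preserved.

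First I would note that $\Rep_\updelta(Q,W)=\crit(\tr(W))$ is already $\C^\times$-invariant: the function $\tr(W)$ is homogeneous of degree $n$ for the induced grading, so its differential vanishes on a $\C^\times$-stable locus. It therefore remains only to check that whenever $M=(M_a)_{a\in Q_1}\in\Rep_\updelta(Q,W)$ is $\upzeta_\uptheta$-semistable and $\uplambda\in\C^\times$, the representation $\uplambda\cdot M=(\uplambda^{|a|}M_a)_{a\in Q_1}$ is again $\upzeta_\uptheta$-semistable.

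For this, the key observation is that a tuple of subspaces $V=(V_i\subseteq\C^{\updelta_i})_{i\in Q_0}$ is a subrepresentation of $M$ exactly when $M_a(V_i)\subseteq V_j$ for every arrow $a\colon i\to j$, and since $\uplambda^{|a|}\neq 0$ this is equivalent to $\uplambda^{|a|}M_a(V_i)\subseteq V_j$, i.e.\ to $V$ being a subrepresentation of $\uplambda\cdot M$. Thus $M$ and $\uplambda\cdot M$ have literally the same subrepresentations, and the dimension vector $\ddim V=(\dim_\C V_i)_{i\in Q_0}$ attached to such a $V$ does not depend on $\uplambda$. Consequently the defining inequality $\upzeta_\uptheta(\ddim V)\leq\upzeta_\uptheta(\updelta)$ holds for every nonzero proper subrepresentation of $M$ if and only if it holds for every nonzero proper subrepresentation of $\uplambda\cdot M$, which gives the asserted $\C^\times$-invariance of $\Rep_\updelta^{\upzeta\semis}(Q,W)$.

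I do not expect a genuine obstacle here; the one point that needs a moment's care is that the $\C^\times$-action coming from the grading need not factor through $\GL_\updelta$ — it does so only when $|\cdot|$ is the difference of a grading on the vertices $Q_0$ — so one cannot simply invoke the $\GL_\updelta$-invariance of the semistable locus. The subrepresentation argument above is precisely what sidesteps this, and everything else is immediate.
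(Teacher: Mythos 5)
Your proof is correct and takes essentially the same approach as the paper's: both verify that the $\C^\times$-action preserves the subrepresentation data (you phrase this as subspaces $V_i$ satisfying $M_a(V_i)\subseteq V_j$, the paper phrases it via injective maps $f_i$ intertwining with the arrow maps; the two formulations are equivalent), so that the slope inequalities defining semistability transfer unchanged. Your closing remark about why the $\GL_\updelta$-invariance of the semistable locus cannot simply be invoked is a nice clarification the paper leaves implicit.
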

\begin{proof}
  Let $M \in \Rep_\updelta^{\upzeta\semis}(Q,W)$ be a representation.
  Then a subrepresentation of $M$ is given by an $N \in \Rep_\uptau(Q)$ and a sequence $f = {(f_i)}_{i\in Q_0}$ of injective maps $f_i\colon \C^{\uptau_i} \hookrightarrow \C^{\updelta_i}$ such that
  \[
    M_a \circ f_i = f_j \circ N_a
  \]
  for all arrows $a\colon i\to j$. Clearly, being a subrepresentation is invariant under the $\C^\times$-action, as 
  \[
    M_a \circ f_i = f_j \circ N_a \quad\Longleftrightarrow\quad (\uplambda^{|a|} \cdot M_a) \circ f_i = f_j \circ (\uplambda^{|a|} \cdot N_a),
  \]
  shows that $N$ is a subrepresentation of $M$ if and only if $\uplambda \curvearrowright N$ is a subrepresentation of $\uplambda\curvearrowright M$.
  Moreover, $\ddim M = \ddim (\uplambda \curvearrowright M)$ and $\ddim N = \ddim (\uplambda \curvearrowright N)$, shows that 
  \[
    \upzeta(\ddim N) \leq \upzeta(\ddim M) \quad\Longleftrightarrow\quad    \upzeta(\ddim (\uplambda\curvearrowright N)) \leq \upzeta(\ddim (\uplambda\curvearrowright M))
  \]
  Hence, $M$ is $\upzeta$-semistable if and only if $\uplambda\curvearrowright M$ is $\upzeta$-semistable.
  It follows that the $\C^\times$-action preserves the subspace $\Rep_\updelta^{\upzeta\semis}(Q,W) \subset \Rep_\updelta(Q,W)$ as claimed.
\end{proof}

Because the $\C^\times$-action commutes with the $\GL_\updelta$-action, it is easily seen to descend to an action on the various stack-/scheme-theoretic quotients. 
In particular, the map
\[
  \JH'\colon \cM^{\upzeta\semis}_\updelta(Q,W) \to \cM_\updelta(Q,W),
\]
is $\C^\times$-equivariant and the fibre $\cN^{\upzeta\semis}_\updelta(Q,W)$ over the fixed point $\fatnull^\updelta \in \cM_\updelta(Q,W)$ is an invariant subscheme.

\begin{lemma}\label{lem:globvanishing}
  In the weighted-homogeneous setup above, suppose that $\cN^{\upzeta\semis}_\updelta(Q,W) = \varnothing$ for some slope stability $\upzeta = \upzeta_\uptheta$ and $\updelta \in \N Q_0$.
  Then also $\cM^{\upzeta\semis}_\updelta(Q,W) = \varnothing$.
\end{lemma}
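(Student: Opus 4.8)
The plan is to run a contraction argument using the $\C^\times$-action of Lemma~\ref{lem:actionrestricts} together with the properness of $\JH'$. The structural point I would establish first is that $\cM_\updelta(Q,W)$ is an \emph{affine cone} with vertex $\fatnull^\updelta$. Indeed, since every arrow has weight $|a|>0$, the coordinate ring $\C[\Rep_\updelta(Q)]$ is $\N$-graded with degree-zero part $\C$; passing to the critical locus and then to $\GL_\updelta$-invariants keeps the grading non-negative and keeps the degree-zero part equal to $\C$. Hence, writing $A\colonequals\C[\Rep_\updelta(Q,W)]^{\GL_\updelta}$, the irrelevant ideal $A_+=\bigoplus_{d>0}A_d$ is the maximal ideal cut out by $\fatnull^\updelta$, and for every $y\in\cM_\updelta(Q,W)$ the orbit map $\C^\times\to\cM_\updelta(Q,W)$, $\uplambda\mapsto\uplambda\cdot y$, extends to a morphism $\A^1\to\cM_\updelta(Q,W)$ with $0\mapsto\fatnull^\updelta$: evaluating a homogeneous $f\in A_d$ on $\uplambda\cdot y$ gives $\uplambda^{d}f(y)$, which is regular at $\uplambda=0$ and vanishes there when $d>0$.

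Granting this, suppose for contradiction that $\cM^{\upzeta\semis}_\updelta(Q,W)\neq\varnothing$ and fix a closed point $x$ in it. Composing the orbit map $\uplambda\mapsto\uplambda\cdot x$ with the $\C^\times$-equivariant morphism $\JH'$ produces the orbit map of $\JH'(x)$, which by the previous paragraph extends over $\A^1$ with value $\fatnull^\updelta$ at $\uplambda=0$. Applying the valuative criterion of properness to $\JH'$ over the discrete valuation ring $\O_{\A^1,0}$ — with the generic point mapping to $\cM^{\upzeta\semis}_\updelta(Q,W)$ via the orbit map of $x$ and the closed point mapping to $\fatnull^\updelta$ downstairs — yields a morphism $\Spec\O_{\A^1,0}\to\cM^{\upzeta\semis}_\updelta(Q,W)$ whose special value $x_0$ satisfies $\JH'(x_0)=\fatnull^\updelta$. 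Thus $x_0\in(\JH')^{-1}(\fatnull^\updelta)=\cN^{\upzeta\semis}_\updelta(Q,W)$, contradicting the hypothesis; hence $\cM^{\upzeta\semis}_\updelta(Q,W)=\varnothing$.

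The parts that need care — and what I would regard as the (modest) crux — are the bookkeeping that the $\C^\times$-action descends compatibly through every quotient so that $\JH'$ is genuinely $\C^\times$-equivariant (this follows from Lemma~\ref{lem:actionrestricts} and the remarks preceding it, since the action commutes with $\GL_\updelta$), and the verification that the invariant ring $A$ really has degree-zero part $\C$, so that $\cM_\updelta(Q,W)$ contracts to the single point $\fatnull^\updelta$ rather than onto a positive-dimensional fixed locus. Once these are in place the rest is a formal application of the valuative criterion; in fact the same argument shows that every nonempty $\C^\times$-invariant closed subscheme of $\cM^{\upzeta\semis}_\updelta(Q,W)$ meets $\cN^{\upzeta\semis}_\updelta(Q,W)$.
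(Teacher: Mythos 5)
Your proof is correct and takes essentially the same approach as the paper: positivity of the weights makes $\fatnull^\updelta$ the unique limit of the $\C^\times$-orbit, and the valuative criterion for the projective map $\JH'$ lifts this limit into $\cM^{\upzeta\semis}_\updelta(Q,W)$, forcing a point of $\cN^{\upzeta\semis}_\updelta(Q,W)$. The only cosmetic difference is that you verify directly that $\cM_\updelta(Q,W)$ is an affine cone (via the graded invariant ring) and take the $\A^1$-extension downstairs, whereas the paper takes the $\A^1$-extension of the orbit upstairs in $\Rep_\updelta(Q,W)$ — where it is immediate since $\C^\times$ acts linearly with positive weights on affine space — and then descends through the quotient.
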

\begin{proof}
  Suppose $\cM_{\updelta}^{\upzeta\semis}(Q,W)$ contains a $\C$-point, represented by the $\GL_\updelta$-orbit of some $\upzeta$-polystable representation $M = {(M_a)}_{a\in Q_1} \in \Rep_\updelta^{\upzeta\semis}(Q,W)$.
  Then Lemma~\ref{lem:actionrestricts} implies that the $\C^\times$-action defines a $\G_m$-family
  inside $\Rep_\updelta^{\upzeta\semis}(Q,W)$:
  \[
    f\colon \G_m \to \Rep_\updelta^{\upzeta\semis}(Q,W),\quad \uplambda \mapsto (\uplambda\curvearrowright M).
  \]
  Because $\C^\times$ acts with positive weights on the affine space $\Rep_\updelta(Q,W)$, this family has the well-defined limit point $\fatnull^\updelta \in \Rep_\updelta(Q,W)$, yielding an $\A^1$-family on the larger space $\Rep_\updelta(Q,W)$:
  \[
    g \colon \A^1 \to \Rep_\updelta(Q,W),\quad g(t) = f(t) \text{ for } t\neq 0,\quad g(0) = \fatnull^\updelta.
  \]
  Composing with the scheme-theoretic quotient by $\GL_\updelta$ yields the commuting square
  \[
    \begin{tikzpicture}
      \node (A) at (0,0) {$\G_m$};
      \node (B) at (0,-2) {$\A^1$};
      \node (C) at (2*\gr,0) {$\cM_\updelta^{\upzeta\semis}(Q,W)$};
      \node (D) at (2*\gr,-2) {$\cM_\updelta(Q,W)$};
      \draw[->] (A) to (B);
      \draw[->] (A) to[edge label=$\scriptstyle f$] (C);
      \draw[->] (B) to[edge label=$\scriptstyle g$] (D);
      \draw[->] (C) to[edge label=$\scriptstyle\JH'$] (D);
      \draw[->,dashed] (B) to[edge label=$\scriptstyle \widetilde f$] (C);
    \end{tikzpicture}
  \]  
  By~\cite{King94} the Jordan-Holder map $\cM^{\upzeta\semis}_\updelta(Q) \to \cM_\updelta(Q)$ is projective, and hence restricts to a projective map $\cM^{\upzeta\semis}_\updelta(Q,W) \to \cM_\updelta(Q,W)$.
  By the valuative criterion of properness, it then follows that the family $g$ admits the lift $\widetilde f\colon \A^1 \to \cM_\updelta^{\upzeta_\uptheta-ss}(Q,W)$ denoted by the dashed arrow above.
  However, by the commutativity of the diagram the additional point $\widetilde f(0)$ satisfies
  \[
    \JH'(\widetilde f(0)) = (\JH' \circ \widetilde f)(0) = g(0) = \fatnull^\updelta,
  \]
  and hence lies in the fibre $\cN_\updelta^{\upzeta\semis}(Q,W) = \varnothing$, which gives a contradiction.
  It follows that the moduli scheme $\cM_\updelta^{\upzeta\semis}(Q,W)$ of $\upzeta$-polystable representations is empty as claimed.
\end{proof}

Using the above lemma, the following strengthening of Theorem~\ref{thm:BPSvanishing} now follows immediately.

\begin{theorem}\label{thm:BPSvanishingtwo}
  Let $\Lambda = \jacc{Q,W}$ be an NCCR of Dynkin type $(\Updelta_\aff,\cJ)$ as in Setup~\ref{set:quivpot}, and assume in addition that $(Q,W)$ is weighted-homogenous.
  Then for every dimension vector $\updelta\in\N Q_0 \cong \N \cJ^c$ with multiplicity $d = \gcd(\updelta)$
  \[
    \tfrac1d \updelta \not\in \RR(\Updelta_\aff,\cJ) \quad\implies\quad \Omega^\glob(\updelta) = 0.
  \]
\end{theorem}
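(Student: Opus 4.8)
The plan is to run the argument of Theorem~\ref{thm:BPSvanishing} verbatim up to the point where it produces empty nilpotent moduli, and then to use the weighted-homogeneous hypothesis --- via Lemma~\ref{lem:globvanishing} --- to transfer this emptiness from the nilpotent loci $\fN_\updelta^{\upzeta\semis}(Q,W)$ to the full moduli stacks $\fM_\updelta^{\upzeta\semis}(Q,W)$.

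Concretely, fix $\updelta\in\N Q_0\cong\N\cJ^c$ with $\tfrac1d\updelta\not\in\RR(\Updelta_\aff,\cJ)$, where $d=\gcd(\updelta)$. Exactly as in the proof of Theorem~\ref{thm:BPSvanishing}, Proposition~\ref{prop:stabfromroots} furnishes a generic King stability condition $\uptheta\in\R^{\cJ^c}\cong\R^{Q_0}$ with $\uptheta(\updelta)=0$ and $\cS_\uptheta(\Lambda)=0$, and Lemma~\ref{lem:modulibij} then shows that, for the associated slope stability $\upzeta=\upzeta_\uptheta$, one has $\fN_{\updelta'}^{\upzeta\semis}(Q,W)(\C)=\varnothing$ for every nonzero $\updelta'\in\N Q_0$ with $\upzeta(\updelta')=0$. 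Passing to coarse moduli schemes, each such $\cN_{\updelta'}^{\upzeta\semis}(Q,W)$ is then an empty finite-type $\C$-scheme. At this point I would invoke Lemma~\ref{lem:globvanishing}: since $(Q,W)$ is weighted-homogeneous, emptiness of $\cN_{\updelta'}^{\upzeta\semis}(Q,W)$ forces emptiness of $\cM_{\updelta'}^{\upzeta\semis}(Q,W)$, hence of the stack $\fM_{\updelta'}^{\upzeta\semis}(Q,W)$, and therefore $[\fM_{\updelta'}^{\upzeta\semis}(Q,W)]_\vir=0$ for all such $\updelta'$.

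It follows that the only nonvanishing coefficient of the global partition function is the constant term:
\[
  \Upphi^{\uptheta,\glob}(t)=1+\sum_{\substack{\updelta'\in\N Q_0\setminus\{0\}\\ \uptheta(\updelta')=0}}[\fM_{\updelta'}^{\upzeta\semis}(Q,W)]_\vir\cdot t^{\updelta'}=1.
\]
Comparing with the BPS Ansatz for $\Upphi^{\uptheta,\glob}$ and using the multiplicativity of the plethystic exponential $\Sym$ exactly as in the proof of Theorem~\ref{thm:BPSvanishing}, I conclude that $\Omega^\glob(\updelta')=0$ whenever $\uptheta(\updelta')=0$, and in particular $\Omega^\glob(\updelta)=0$. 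The substantive input is entirely Lemma~\ref{lem:globvanishing} --- which is where weighted-homogeneity enters, through the induced $\C^\times$-action and the valuative criterion of properness applied to the proper Jordan--Holder map $\JH'$ --- while everything else is formal bookkeeping; the only point that deserves a moment's care is that emptiness of $\cM_{\updelta'}^{\upzeta\semis}(Q,W)$ implies the vanishing of $[\fM_{\updelta'}^{\upzeta\semis}(Q,W)]_\vir$, which holds because a finite-type $\C$-stack with no $\C$-points is the empty stack, over which the motivic vanishing-cycle integral defining $[-]_\vir$ vanishes.
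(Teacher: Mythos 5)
Your proposal is correct and follows essentially the same route as the paper's proof: reuse the argument of Theorem~\ref{thm:BPSvanishing} (via Proposition~\ref{prop:stabfromroots} and Lemma~\ref{lem:modulibij}) to get emptiness of the nilpotent moduli, feed this into Lemma~\ref{lem:globvanishing} to conclude emptiness of the full semistable moduli, and compare partition functions via the multiplicativity of $\Sym$. The extra sentence you append, explaining why emptiness of $\cM_{\updelta'}^{\upzeta\semis}$ forces $[\fM_{\updelta'}^{\upzeta\semis}]_\vir = 0$, is a harmless and correct elaboration of a step the paper treats as immediate.
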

\vskip0pt
\begin{proof}
  Let $\updelta \in \N Q_0 \cong \N \cJ^c$ be a vector such that $\tfrac1d \not\in\RR(\Updelta_\aff,\cJ)$.
  Then as in the proof of Theorem~\ref{thm:BPSvanishing} we may pick a generic $\uptheta\in\R^{\cJ^c}$ with $\uptheta(\updelta) = 0$, and the associated slope stability $\upzeta = \upzeta_\uptheta$ satisfies
  \[
    |\fN_d^{\upzeta\semis}(Q,W)(\C)| = \varnothing
  \]
  for all $d \in \N Q_0$ with $\upzeta(d) = 0$.
  In particular, the coarse moduli schemes $\cN_d^{\upzeta\semis}(Q,W)$ are empty and hence Lemma~\ref{lem:globvanishing} implies that $\cM_d^{\upzeta\semis}(Q,W) = \varnothing$ for all $d$ with $\upzeta(d) = 0$.
  But then the moduli stacks $\fM_d^{\upzeta\semis}(Q,W)$ are also pointless whenever $\upzeta(d) = 0$, which implies
  \[
    \Upphi^{\uptheta,\glob}(t) = \sum_{\substack{d\in\N Q_0\\\uptheta(d) = 0}} [\fM_d^{\upzeta\semis}]_\vir \cdot t^d = 1.
  \]
  As before, this implies that $\Omega^\glob(d) = 0$ whenever $\upzeta(d) = 0$, so in particular $\Omega^\glob(\updelta) = 0$ as claimed.
\end{proof}

\subsection{Curve-counting invariants}

In the remainder of this section we assume that $\Lambda = \Lambda_Y$ is the standard NCCR associated to a crepant resolution $Y\to \Spec R$ of Dynkin type $(\Updelta,J = \Updelta\cap \cJ)$.
Given a presentation of the NCCR, we associate \emph{noncommutative BPS invariants}
\[
  \Omega_Y(\upchi,\upbeta) \colonequals \Omega(\upbeta + \upchi\overline\imroot),
\]
for every class $(\upchi,\upbeta) \in \hh_0(Y,\Z) \oplus \hh_2(Y,\Z)$ which maps to a positive element $\upbeta + \upchi\overline\imroot \in \N \upalpha_0 \oplus \N J^c$ in the restricted root lattice $\Z\upalpha_0\oplus \N J^c = \Z \cJ^c$.
In these coordinates we have the following.

\begin{theorem}\label{thm:geomvanish}
  Let $Y\to\Spec R$ be a crepant resolution of a cDV singularity with Dynkin type $(\Updelta,J)$, and suppose its standard NCCR can be presented as a quiver with potential as in Setup~\ref{set:quivpot}.
  Then for every curve class $\upbeta\in \hh_2(\Updelta,J) \cong \Z J^c$ and $\upchi\in\Z$ such that $\upbeta + \upchi\overline\imroot$ is positive
  \[
    \upbeta \neq 0 \text{ and } \tfrac1d \upbeta \not\in \RR(\Updelta,J) \quad\implies\quad \Omega_Y(\upchi,\upbeta) = 0,
  \]
  where $d= \gcd(\upchi,\upbeta)$ is the multiplicity of the pair $(\upchi,\upbeta)$.
\end{theorem}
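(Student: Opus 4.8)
The plan is to deduce this from Theorem~\ref{thm:BPSvanishing} by translating between the geometric coordinates $(\upchi,\upbeta)$ and the affine root coordinates. By definition $\Omega_Y(\upchi,\upbeta) = \Omega(\updelta)$ for the dimension vector $\updelta = \upbeta + \upchi\overline\imroot \in \N\cJ^c \cong \N Q_0$, so it suffices to show that $\tfrac1{d'}\updelta \notin \RR(\Updelta_\aff,\cJ)$, where $d' = \gcd(\updelta)$ is the multiplicity of $\updelta$, and then invoke Theorem~\ref{thm:BPSvanishing}.

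First I would recall from Lemma~\ref{prop:NCDynkin} that the standard NCCR has affine Dynkin type $(\Updelta_\aff,\cJ) = (\Updelta_\aff,J)$; in particular the extended vertex $0$ does not lie in $\cJ$, so $\cJ^c = \{0\}\sqcup J^c$ and $\Z\cJ^c = \Z\upalpha_0 \oplus \Z J^c$ with $\overline\imroot = \upalpha_0 + \overline{{\rt}^\maxx}$. Writing $\updelta$ in this basis, its $\upalpha_0$-coordinate is $\upchi$ and its $J^c$-coordinates are those of $\upbeta + \upchi\overline{{\rt}^\maxx}$. Since $\gcd(\upchi,\,b + \upchi m) = \gcd(\upchi,b)$ for any integers $b,m$, this gives $d' = \gcd(\updelta) = \gcd(\upchi,\upbeta) = d$, so the claim to be proved is that $\tfrac1d\updelta = \tfrac1d\upbeta + \tfrac{\upchi}{d}\overline\imroot \notin \RR(\Updelta_\aff,\cJ)$.

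Since $0\notin\cJ$ the element $\overline\imroot$ is nonzero, so we have a disjoint decomposition $\RR(\Updelta_\aff,\cJ) = \RR^\re(\Updelta_\aff,\cJ) \sqcup \RR^\img(\Updelta_\aff,\cJ)$, and I would rule out membership in each piece by comparing $\upalpha_0$-coordinates. If $\tfrac1d\updelta = k\overline\imroot \in \RR^\img(\Updelta_\aff,\cJ)$, then the $\upalpha_0$-coordinates give $k = \upchi/d$, and the $J^c$-part then forces $\tfrac1d\upbeta = 0$, contradicting $\upbeta\neq 0$. If instead $\tfrac1d\updelta \in \RR^\re(\Updelta_\aff,\cJ)$, then by Lemma~\ref{lem:realRR} (the case $0\notin\cJ$) we may write $\tfrac1d\updelta = \overline{{\rt}} + k\overline\imroot$ with $\overline{{\rt}}\in\RR(\Updelta,J)$ and $k\in\Z$; comparing $\upalpha_0$-coordinates again gives $k = \upchi/d$, and the $J^c$-part then yields $\tfrac1d\upbeta = \overline{{\rt}} \in \RR(\Updelta,J)$, contradicting the hypothesis $\tfrac1d\upbeta\notin\RR(\Updelta,J)$. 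Hence $\tfrac1d\updelta \notin \RR(\Updelta_\aff,\cJ)$, and Theorem~\ref{thm:BPSvanishing} gives $\Omega_Y(\upchi,\upbeta) = \Omega(\updelta) = 0$.

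Since everything reduces to Theorem~\ref{thm:BPSvanishing} together with the combinatorial identification of the real restricted roots in Lemma~\ref{lem:realRR}, there is no serious obstacle here; the only points requiring care are the multiplicity identity $\gcd(\updelta) = \gcd(\upchi,\upbeta)$ and the separate treatment of the imaginary-root case, which is precisely where the hypothesis $\upbeta\neq 0$ is needed (without it the class $\upchi\overline\imroot$ is always a restricted root and the invariant need not vanish).
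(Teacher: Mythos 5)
Your proof is correct and follows the same route as the paper: identify the affine Dynkin type via Lemma~\ref{prop:NCDynkin}, translate $\upbeta\neq 0$ and $\tfrac1d\upbeta\notin\RR(\Updelta,J)$ into non-membership in $\RR(\Updelta_\aff,J)$ using Lemma~\ref{lem:realRR}, and conclude with Theorem~\ref{thm:BPSvanishing}. In fact you are a bit more careful than the paper's own proof on one point: Theorem~\ref{thm:BPSvanishing} needs $\tfrac1{d'}\updelta\notin\RR(\Updelta_\aff,\cJ)$ for $d'=\gcd(\updelta)$, and you explicitly verify $d'=\gcd(\upchi,\upbeta)$ (via $\gcd(\upchi,\,b+\upchi m)=\gcd(\upchi,b)$ in the $\Z\upalpha_0\oplus\Z J^c$ coordinates) and then apply the $\RR^\re\sqcup\RR^\img$ decomposition to $\tfrac1d\updelta$ itself; the paper only asserts $\updelta\notin\RR(\Updelta_\aff,J)$ and leaves the multiplicity bookkeeping implicit. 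Your version closes that small gap without changing the essential argument.
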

\begin{proof}
  Because $Y$ has Dynkin type $(\Updelta,J)$, Proposition~\ref{prop:NCDynkin} implies that $\Lambda_Y$ has (affine) Dynkin type $(\Updelta_\aff,\cJ = J)$.
  It then follows from Lemma~\ref{lem:realRR} that
  \[
    \RR(\Updelta_\aff,J) = \{k \overline\imroot \mid k\in \Z_{\neq 0}\} \sqcup \{\overline \updelta + k\overline\imroot \mid \overline \updelta \in \RR(\Updelta,J)\}.
  \]
  Hence, if $(\upchi,\upbeta)$ is a class mapping to a positive element $\updelta = \upbeta + \upchi\overline\imroot \in \N \upalpha_0 \oplus \N J^c$ with $\upbeta \neq 0$ and $\tfrac1d\upbeta\not\in\RR(\Updelta,J)$, it follows immediately that $\updelta \not\in\RR(\Updelta_\aff,J)$.
  Hence Theorem~\ref{thm:BPSvanishing} implies
  \[
    \Omega_Y(\upchi,\upbeta) = \Omega(\updelta) = 0. \qedhere
  \]
\end{proof}

Let us now elucidate the relation of these invariants to curve-counting invariants, namely the genus $0$ Gopakumar--Vafa invariants $n_\upbeta$, defined (see Maulik--Toda~\cite{MT18}) as the weighted Euler characteristic
\[
  n_\upbeta \colonequals \int_{\Sh_\upbeta(Y)} \upnu = \sum_{n\in \Z} n \upchi(\upnu^{-1}(n)),
\]
where $\upnu$ is Behrend's~\cite{Behrend09} function on the moduli scheme $\Sh_\upbeta(Y)$ of stable sheaves $\cF$ of dimension $1$ on $Y$ with Euler characteristic $\chi(Y) = 1$ with support $\upbeta \in \hh_2(Y,\Z)$.
A result of Toda~\cite[Theorem 5.7,\S 7.2]{Toda13} shows that these invariants coincide with a numerical realisation
\[
  n_\upbeta = \chi^{\mathrm{mot}}_{\mathrm{num}}(\Omega_Y(1,\upbeta)).
\]
Several vanishing results are known for these invariants when $\Spec R$ is isolated, notably the recent work of Nabijou--Wemyss~\cite{NW21}, which are based on a deformation argument of~\cite{BKL01}.
Using Theorem~\ref{thm:geomvanish} we deduce the following corollary, which covers both the isolated and non-isolated case.

\begin{cor}\label{cor:GVinvariants}
  Let $\upbeta \in \hh_2(Y,\Z)$ be an effective class which is not in $\RR^+(\Updelta,J)$, then $n_\upbeta = 0$.
\end{cor}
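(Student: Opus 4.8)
The plan is to read the statement off from Theorem~\ref{thm:geomvanish} together with Toda's identification $n_\upbeta = \chi^\mot_\num(\Omega_Y(1,\upbeta))$ recalled above, so that the only genuine work is matching hypotheses and checking that $(1,\upbeta)$ is an admissible argument of $\Omega_Y$. First I would dispose of the case $\upbeta = 0$: a stable sheaf of dimension $1$ is pure, hence has support of pure dimension $1$ and nonzero fundamental class, so $\Sh_0(Y) = \varnothing$ and $n_0 = 0$. We may therefore assume $\upbeta \in \N J^c$ is a nonzero effective class.

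Next I would observe that for such a $\upbeta$ the hypothesis ``$\upbeta \notin \RR^+(\Updelta,J)$'' is equivalent to ``$\upbeta \notin \RR(\Updelta,J)$''. Indeed, by definition $\RR(\Updelta,J) = \RR^+(\Updelta,J) \cup \RR^-(\Updelta,J)$, where $\RR^-(\Updelta,J) = -\RR^+(\Updelta,J)$ is the set of nonzero images of negative roots and hence consists of vectors all of whose coordinates are $\leq 0$. Since $\upbeta$ is effective and nonzero it has a strictly positive coordinate, so $\upbeta \notin \RR^-(\Updelta,J)$; thus if $\upbeta$ were a restricted root it would have to lie in $\RR^+(\Updelta,J)$, contrary to the hypothesis. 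Hence $\upbeta \notin \RR(\Updelta,J)$.

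Finally I would apply Theorem~\ref{thm:geomvanish} with $\upchi = 1$. Because $\upbeta$ is effective and $\upchi = 1 \geq 0$, the class $\upbeta + \overline\imroot = \upalpha_0 + (\upbeta + \overline{\rt^\maxx})$ lies in $\N\upalpha_0 \oplus \N J^c$, so $\Omega_Y(1,\upbeta) = \Omega(\upbeta + \overline\imroot)$ is indeed defined; its multiplicity is $d = \gcd(1,\upbeta) = 1$, so $\tfrac1d\upbeta = \upbeta$ is nonzero and, by the previous paragraph, not in $\RR(\Updelta,J)$. Theorem~\ref{thm:geomvanish} then gives $\Omega_Y(1,\upbeta) = 0$, and applying the realisation map $\chi^\mot_\num$ together with Toda's theorem yields $n_\upbeta = \chi^\mot_\num(\Omega_Y(1,\upbeta)) = 0$. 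I do not expect a serious obstacle here: all the substantive work was carried out in the preceding subsections, and the only point requiring care is the $\RR = \RR^+ \sqcup \RR^-$ bookkeeping of the second paragraph, which is precisely what bridges the hypothesis of the corollary (phrased via $\RR^+$) and that of Theorem~\ref{thm:geomvanish} (phrased via $\tfrac1d\upbeta$ and $\RR$).
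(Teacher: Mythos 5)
Your argument is correct and follows exactly the route the paper intends: the paper itself states the corollary with no explicit proof immediately after recalling Toda's identity $n_\upbeta = \chi^\mot_\num(\Omega_Y(1,\upbeta))$, and the words ``Using Theorem~\ref{thm:geomvanish} we deduce'' signal precisely the specialisation to $\upchi = 1$ (giving $d = 1$) that you carry out. The only bookkeeping the paper leaves silent is the passage from $\RR^+$ to $\RR$, which you handle correctly by noting that an effective nonzero $\upbeta$ cannot lie in $\RR^-(\Updelta,J) = -\RR^+(\Updelta,J)$; your separate treatment of $\upbeta = 0$ is a harmless extra precaution.
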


\subsection{Example: even dihedral quotient singularities}\label{ssec:dihedral}

To finish this section we will study the vanishing result for even dihedral quotient singularities $\A^3/\bbD_{2n}$, where the dihedral group acts as a rotational symmetry group $\bbD_{2n} \subset \mathrm{SO}(3)$ of a 2n-gon embedded in $\R^3$.
The origin $\Spec R \subset \A^3/\bbD_{2n}$ is a cDV singularity, which admits a crepant resolution $Y\to \Spec R$ obtained by restricting the G-Hilbert scheme resolution
\[
  \bbD_{2n}\operatorname{-Hilb}\A^3 \to \A^3/\bbD_{2n}.
\]
Boissi\`ere--Sarti~\cite{BS07} show that $Y$ has Dynkin type $(D_{2n},J = \{2,4,\ldots,2n-2\})$ in the standard labelling of the nodes in $D_{2n}$ by $0,\ldots,2n$.
The subsets $J$ and $J^c$ are represented by the figure
\[
  \begin{tikzpicture}[scale=1.8]
    \node[uncontracted] (A1) at (.2-.14142,-.14142) {};
    \node[  contracted] (A2) at (.2,0) {};
    \node[uncontracted] (A3) at (.4,0) {};
    \node[  contracted] (A4) at (.6,0) {};
    \node[uncontracted] (A5) at (.8,0) {};
    \node[uncontracted] (An) at (1.2,0) {};
    \node[  contracted] (An1) at (1.4,0) {};
    \node[uncontracted] (An2) at (1.94142-.4,.14142) {};
    \node[uncontracted] (An3) at (1.94142-.4,-.14142) {};
    \draw (A1) to (A2);
    \draw (A2) to (A3);
    \draw (A3) to (A4);
    \draw (A4) to (A5);
    \draw[dotted] (A5) to (An);
    \draw (An) to (An1);
    \draw (An1) to (An2);
    \draw (An1) to (An3);
    \node at (3,0) {$J=\{\bullet\}$};
    \node at (4,0) {$J^c=\{\circ\}.$};
  \end{tikzpicture}
\]
The positive roots $\Rts^+\kern-2pt D_{2n}$ inside $\Z D_{2n} = \bigoplus_{i=1}^{2n} \upalpha_i$ map to three types of restricted roots under the restriction $\Z D_{2n} \to \Z J^c = \bigoplus_{i=1}^n \Z\upalpha_{2i-1} \oplus \Z\upalpha_{2n}$, which can be represented as:
\[
  \begin{tikzpicture}[scale=1.8]
    \begin{scope}[xshift=-2cm]
      \node[circle,inner sep=.1pt] (A1) at (.2-.14142,-.14142) {$\scriptstyle 0$};
      \node[  contracted] (A2) at (.2,0) {};
      \node[circle,inner sep=.1pt] (A3) at (.4,0) {$\scriptstyle 1$};
      \node[  contracted] (A4) at (.6,0) {};
      \node[circle,inner sep=.1pt] (A5) at (.8,0) {$\scriptstyle 1$};
      \node[circle,inner sep=.1pt] (An) at (1.2,0) {$\scriptstyle 1$};
      \node[  contracted] (An1) at (1.4,0) {};
      \node[circle,inner sep=.1pt] (An2) at (1.94142-.4,.14142) {$\scriptstyle 0$};
      \node[circle,inner sep=.1pt] (An3) at (1.94142-.4,-.14142) {$\scriptstyle 0$};
      \draw (A1) to (A2);
      \draw (A2) to (A3);
      \draw (A3) to (A4);
      \draw (A4) to (A5);
      \draw[dotted] (A5) to (An);
      \draw (An) to (An1);
      \draw (An1) to (An2);
      \draw (An1) to (An3);
    \end{scope}
    \begin{scope}[xshift=0cm]
      \node[circle,inner sep=.1pt] (A1) at (.2-.14142,-.14142) {$\scriptstyle 0$};
      \node[  contracted] (A2) at (.2,0) {};
      \node[circle,inner sep=.1pt] (A3) at (.4,0) {$\scriptstyle 1$};
      \node[  contracted] (A4) at (.6,0) {};
      \node[circle,inner sep=.1pt] (A5) at (.8,0) {$\scriptstyle 2$};
      \node[circle,inner sep=.1pt] (An) at (1.2,0) {$\scriptstyle 2$};
      \node[  contracted] (An1) at (1.4,0) {};
      \node[circle,inner sep=.1pt] (An2) at (1.94142-.4,.14142) {$\scriptstyle 1$};
      \node[circle,inner sep=.1pt] (An3) at (1.94142-.4,-.14142) {$\scriptstyle 1$};
      \draw (A1) to (A2);
      \draw (A2) to (A3);
      \draw (A3) to (A4);
      \draw (A4) to (A5);
      \draw[dotted] (A5) to (An);
      \draw (An) to (An1);
      \draw (An1) to (An2);
      \draw (An1) to (An3);
    \end{scope}
    \begin{scope}[xshift=2cm]
      \node[circle,inner sep=.1pt] (A1) at (.2-.14142,-.14142) {$\scriptstyle 0$};
      \node[  contracted] (A2) at (.2,0) {};
      \node[circle,inner sep=.1pt] (A3) at (.4,0) {$\scriptstyle 2$};
      \node[  contracted] (A4) at (.6,0) {};
      \node[circle,inner sep=.1pt] (A5) at (.8,0) {$\scriptstyle 2$};
      \node[circle,inner sep=.1pt] (An) at (1.2,0) {$\scriptstyle 2$};
      \node[  contracted] (An1) at (1.4,0) {};
      \node[circle,inner sep=.1pt] (An2) at (1.94142-.4,.14142) {$\scriptstyle 1$};
      \node[circle,inner sep=.1pt] (An3) at (1.94142-.4,-.14142) {$\scriptstyle 1$};
      \draw (A1) to (A2);
      \draw (A2) to (A3);
      \draw (A3) to (A4);
      \draw (A4) to (A5);
      \draw[dotted] (A5) to (An);
      \draw (An) to (An1);
      \draw (An1) to (An2);
      \draw (An1) to (An3);
    \end{scope}
  \end{tikzpicture}
\]
There is an obvious isomorphism $\Z J^c \cong \Z D_{n+1}$ to a lower dimensional root lattice, mapping $\upalpha_{2i-1} \mapsto \upalpha_i$ and $\upalpha_{2n} \mapsto \upalpha_{n+1}$, under which the first two types of restricted roots are identified with the positive roots $\Rts^+\kern-2pt D_{n+1}$ in the root system, while the third type maps to a sum of roots in $\Rts^+\kern-2pt D_{n+1}$:
\begin{equation}\label{eq:sumofroots}
  \begin{tikzpicture}[scale=2,baseline=(current bounding box.center)]
    \begin{scope}[xshift=-2cm]
      \node[circle,inner sep=.1pt] (A1) at (.2-.14142,-.14142) {$\scriptstyle 0$};
      \node[circle,inner sep=.1pt] (A3) at (.2,0) {$\scriptstyle 2$};
      \node[circle,inner sep=.1pt] (A5) at (.4,0) {$\scriptstyle 2$};
      \node[circle,inner sep=.1pt] (An) at (.8,0) {$\scriptstyle 2$};
      \node[circle,inner sep=.1pt] (An2) at (.8+.14142,.14142) {$\scriptstyle 1$};
      \node[circle,inner sep=.1pt] (An3) at (.8+.14142,-.14142) {$\scriptstyle 1$};
      \draw (A1) to (A3);
      \draw (A3) to (A5);
      \draw[dotted] (A5) to (An);
      \draw (An) to (An2);
      \draw (An) to (An3);
    \end{scope}
    \node at (-.55,0) {$=$};
    \begin{scope}[xshift=-.2cm]
      \node[circle,inner sep=.1pt] (A1) at (.2-.14142,-.14142) {$\scriptstyle 0$};
      \node[circle,inner sep=.1pt] (A3) at (.2,0) {$\scriptstyle 1$};
      \node[circle,inner sep=.1pt] (A5) at (.4,0) {$\scriptstyle 1$};
      \node[circle,inner sep=.1pt] (An) at (.8,0) {$\scriptstyle 1$};
      \node[circle,inner sep=.1pt] (An2) at (.8+.14142,.14142) {$\scriptstyle 1$};
      \node[circle,inner sep=.1pt] (An3) at (.8+.14142,-.14142) {$\scriptstyle 0$};
      \draw (A1) to (A3);
      \draw (A3) to (A5);
      \draw[dotted] (A5) to (An);
      \draw (An) to (An2);
      \draw (An) to (An3);
    \end{scope}
    \node at (1.15,0) {$+$};
    \begin{scope}[xshift=1.5cm]
      \node[circle,inner sep=.1pt] (A1) at (.2-.14142,-.14142) {$\scriptstyle 0$};
      \node[circle,inner sep=.1pt] (A3) at (.2,0) {$\scriptstyle 1$};
      \node[circle,inner sep=.1pt] (A5) at (.4,0) {$\scriptstyle 1$};
      \node[circle,inner sep=.1pt] (An) at (.8,0) {$\scriptstyle 1$};
      \node[circle,inner sep=.1pt] (An2) at (.8+.14142,.14142) {$\scriptstyle 0$};
      \node[circle,inner sep=.1pt] (An3) at (.8+.14142,-.14142) {$\scriptstyle 1$};
      \draw (A1) to (A3);
      \draw (A3) to (A5);
      \draw[dotted] (A5) to (An);
      \draw (An) to (An2);
      \draw (An) to (An3);
    \end{scope}
  \end{tikzpicture}
\end{equation}
It was shown by de-Celis--Sekiya~\cite{dCS11} that the equivariant Hilbert scheme is derived equivalent to a Jacobi algebra $\jac{Q,W}$ for a homogeneous potential $W$ of degree $3$.
In particular, the standard NCCR of $Y \subset \bbD_{2n}\operatorname{-Hilb}\A^3$ can be presented as $\Lambda_Y \cong \jacc{Q,W}$.
Identifying dimension vectors with elements of the extended $D_{n+1}$ root lattices along $\Z Q_0 \cong \Z \oplus \Z J^c \cong \Z \widetilde D_{n+1}$, we find the following vanishing result.

\begin{prop}\label{prop:evendihedral}
  Let $(Q,W)$ be the quiver with potential presenting the standard NCCR of the crepant resolution of $Y \to \Spec R \subset \A^3/\bbD_{2n}$, and let $\updelta \in \N Q_0 \subset \Z \widetilde D_{n+1}$ be a dimension vector.
  Then
  \[
    \Omega^\glob(\updelta) = \Omega(\updelta) = 0
  \]
  unless $\updelta = d \cdot (\rt + k \imroot)$ for $\imroot$ the imaginary root $d,k\in\Z$, and for $\rt \in \Z D_{n+1}$ either
  \[
    \rt = 0,\quad\text{ or }\quad
    \rt \in \Rts D_{n+1}\quad\text{ or }\quad
    \rt = \pm (\upalpha_i +  \cdots + \upalpha_{n-1} + \upalpha_n) \pm (\upalpha_i + \cdots + \upalpha_{n-1} + \upalpha_{n+1}).
  \]
\end{prop}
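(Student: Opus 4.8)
The plan is to derive the vanishing from Theorem~\ref{thm:BPSvanishing} and Theorem~\ref{thm:BPSvanishingtwo}, which means the work is to describe the restricted roots $\RR(\widetilde D_{2n},J)$ explicitly.

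First I would record that $(Q,W)$ satisfies the standing hypotheses. By de-Celis--Sekiya~\cite{dCS11} the standard NCCR $\Lambda_Y$ is presented as $\jacc{Q,W}$ with $W$ a homogeneous potential of degree $3$; since $\bbD_{2n}\subset\mathrm{SO}(3)$ the defining three-dimensional representation is self-dual, so the McKay quiver $Q$ is symmetric, and all cycles of $W$ have length $3\ge2$, so $(Q,W)$ is as in Setup~\ref{set:quivpot}. Assigning every arrow degree $1$ also exhibits $(Q,W)$ as weighted-homogeneous, so Theorem~\ref{thm:BPSvanishing} and Theorem~\ref{thm:BPSvanishingtwo} both apply. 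By Proposition~\ref{prop:NCDynkin} and Boissi\`ere--Sarti~\cite{BS07} the NCCR $\Lambda_Y$ has affine Dynkin type $(\widetilde D_{2n},\cJ)$ with $\cJ=J=\{2,4,\ldots,2n-2\}$, and in particular $0\notin\cJ$. It therefore suffices to show: if $\updelta\in\N Q_0$ has $\tfrac1d\updelta\in\RR(\widetilde D_{2n},J)$ with $d=\gcd(\updelta)$, then under $\Z Q_0\cong\Z\cJ^c\cong\Z\widetilde D_{n+1}$ the vector $\tfrac1d\updelta$ has the form $\rt+k\imroot$ for some $k\in\Z$ and some $\rt$ which is either $0$, a root of $D_{n+1}$, or of the shape $\pm(\upalpha_i+\cdots+\upalpha_{n-1}+\upalpha_n)\pm(\upalpha_i+\cdots+\upalpha_{n-1}+\upalpha_{n+1})$; then $\updelta=d\cdot(\tfrac1d\updelta)$ is as asserted, and for all remaining $\updelta$ Theorems~\ref{thm:BPSvanishing} and~\ref{thm:BPSvanishingtwo} give $\Omega(\updelta)=\Omega^\glob(\updelta)=0$.

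Next I would reduce to the finite root system. Since $0\notin\cJ$, Lemma~\ref{lem:realRR} gives $\RR^\re(\widetilde D_{2n},J)=\RR(D_{2n},J)+\Z\overline\imroot$ and $\RR^\img(\widetilde D_{2n},J)=\{k\overline\imroot\mid k\neq0\}$, so every element of $\RR(\widetilde D_{2n},J)$ is of the form $\overline\rt+k\overline\imroot$ with $\overline\rt\in\RR(D_{2n},J)\cup\{0\}$. A direct check shows that the relabeling $\Z J^c\cong\Z D_{n+1}$ carries $\overline{\rt^\maxx}$ to the highest root of $D_{n+1}$, hence $\overline\imroot=\overline{\upalpha_0}+\overline{\rt^\maxx}$ to the imaginary root $\imroot$ of $\widetilde D_{n+1}$. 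So the claim reduces to showing that under $\Z J^c\cong\Z D_{n+1}$ the set $\RR(D_{2n},J)$ lies inside $\Rts D_{n+1}$ together with the vectors $\pm(\upalpha_i+\cdots+\upalpha_{n-1}+\upalpha_n)\pm(\upalpha_i+\cdots+\upalpha_{n-1}+\upalpha_{n+1})$.

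The heart of the matter, and the step I expect to be the main obstacle, is this explicit determination of $\RR(D_{2n},J)$. By symmetry it is enough to restrict the positive roots of $D_{2n}$; because $J$ consists of the \emph{even} chain nodes, restriction contracts alternate nodes and halves the chain. Running through the shapes of positive roots of type $D_{2n}$ --- those with all simple-root coefficients $\le1$, and those with an interior block of coefficients equal to $2$, each further subdivided by the parities of the nodes bounding the blocks and by how the two fork simple roots enter --- one finds that each nonzero restriction is either a positive root of $D_{n+1}$, or, precisely for the roots whose only coefficient-$1$ block is a single even chain node, the non-root vector $2(\upalpha_i+\cdots+\upalpha_{n-1})+\upalpha_n+\upalpha_{n+1}=(\upalpha_i+\cdots+\upalpha_{n-1}+\upalpha_n)+(\upalpha_i+\cdots+\upalpha_{n-1}+\upalpha_{n+1})$; moreover all of these occur. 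This is exactly the trichotomy depicted in the figures above and in~\eqref{eq:sumofroots}; the bookkeeping is elementary but must be done carefully over the parities, and can alternatively be confirmed by the computer algebra of Appendix~\ref{app:compalg}. Combining this with the two reductions above finishes the proof.
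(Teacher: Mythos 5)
Your proposal is correct and follows essentially the same route as the paper: invoke the de-Celis--Sekiya presentation and Boissi\`ere--Sarti Dynkin type to place $\Lambda_Y$ in the setting of Theorem~\ref{thm:BPSvanishing} and Theorem~\ref{thm:BPSvanishingtwo}, reduce via Lemma~\ref{lem:realRR} and the relabelling $\Z J^c\cong\Z D_{n+1}$ (under which $\overline{\rt^\maxx}$ goes to the highest root of $D_{n+1}$, hence $\overline\imroot$ to the imaginary root of $\widetilde D_{n+1}$), and then enumerate $\RR(D_{2n},J)$ by parity bookkeeping, which is exactly the content of the paper's three figures and identity~\eqref{eq:sumofroots}. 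Your sketch correctly isolates the one non-root restriction type (the even node whose only interior coefficient-$1$ block is a singleton) and matches the trichotomy in the statement, so no gaps here.
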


Proposition~\ref{prop:evendihedral} gives an alternative explanation for a result of Mozgovoy--Reineke~\cite{MR21}.
In~\cite[Theorem 5.1]{MR21} they give a complete calculation of the invariants $\Omega^\glob(\updelta)$, and observe that the invariants are nontrivial if and only if $\updelta \in \N Q_0 \cong \N \widetilde D_{n+1}$ is either a root in $\Rts\widetilde D_{n+1}$ \'or is of the form $\updelta = \rt + \Sigma \rt$ for a root $\rt = \sum_i \rt_i \upalpha_i \in \Rts \widetilde D_{n+1}$ satisfying
\[
  \rt_0 + \rt_1 + \rt_n + \rt_{n+1} \equiv 1 \quad\mathrm{mod}\ 2,
\]
where $\Sigma\colon \Z \widetilde D_{n+1} \to \Z \widetilde D_{n+1}$ is an involution swapping $\upalpha_0 \leftrightarrow \upalpha_1$ and $\upalpha_n \leftrightarrow \upalpha_{n+1}$.
By inspection, the two roots appearing in the sum~\eqref{eq:sumofroots} satisfy this condition and are swapped by the involution:
\[
  \begin{tikzpicture}[scale=2]
  \begin{scope}[xshift=-.2cm]
    \node[circle,inner sep=.1pt] (A1) at (.2-.14142,-.14142) {$\scriptstyle 0$};
    \node[circle,inner sep=.1pt] (A3) at (.2,0) {$\scriptstyle 1$};
    \node[circle,inner sep=.1pt] (A5) at (.4,0) {$\scriptstyle 1$};
    \node[circle,inner sep=.1pt] (An) at (.8,0) {$\scriptstyle 1$};
    \node[circle,inner sep=.1pt] (An2) at (.8+.14142,.14142) {$\scriptstyle 1$};
    \node[circle,inner sep=.1pt] (An3) at (.8+.14142,-.14142) {$\scriptstyle 0$};
    \draw (A1) to (A3);
    \draw (A3) to (A5);
    \draw[dotted] (A5) to (An);
    \draw (An) to (An2);
    \draw (An) to (An3);
  \end{scope}
  \node at (1.15,0) {$\overset\Sigma\longleftrightarrow$};
  \begin{scope}[xshift=1.5cm]
    \node[circle,inner sep=.1pt] (A1) at (.2-.14142,-.14142) {$\scriptstyle 0$};
    \node[circle,inner sep=.1pt] (A3) at (.2,0) {$\scriptstyle 1$};
    \node[circle,inner sep=.1pt] (A5) at (.4,0) {$\scriptstyle 1$};
    \node[circle,inner sep=.1pt] (An) at (.8,0) {$\scriptstyle 1$};
    \node[circle,inner sep=.1pt] (An2) at (.8+.14142,.14142) {$\scriptstyle 0$};
    \node[circle,inner sep=.1pt] (An3) at (.8+.14142,-.14142) {$\scriptstyle 1$};
    \draw (A1) to (A3);
    \draw (A3) to (A5);
    \draw[dotted] (A5) to (An);
    \draw (An) to (An2);
    \draw (An) to (An3);
  \end{scope}
\end{tikzpicture}
\]
One can check explicitly that for vector $\updelta = \rt + \Sigma \rt$ with $\rt$ satisfying $\rt_0 + \rt_1 + \rt_n + \rt_{n+1} \equiv 1 \quad\mathrm{mod}\ 2$, one can find $k\in \Z$ such that $\updelta + k\imroot$ or $-\updelta + k\imroot$ can be presented as~\eqref{eq:sumofroots}.
Hence, these compound roots are precisely the images of restricted roots under the isomorphism $\Z J^c \cong \Z D_{n+1}$.

\section{Auto-equivalences and symmetries}\label{sec:mut}
In this section we find symmetries of the BPS invariants induced by derived equivalences of NCCRs.\@
The main technical tool is the following proposition, which allows us to track semistable modules through these equivalences by tracking tilting complexes.

\begin{prop}\label{prop:trackstables}
  Let $\Lambda$ be an NCCR of Dynkin type $(\Updelta_\aff,\cJ)$ as in Setup~\ref{set:quivpot} and let $T,T'\in\tilt\Lambda$ be 2-term tilting complexes related by a wall crossing in the root hyperplane
  \[
    H_\updelta \subset \cH_{\Updelta_\aff,\cJ}.
  \]
  Suppose $\Lambda'$ is another NCCR as in Setup~\ref{set:quivpot}, with a derived equivalence $F\colon \D^b(\mod \Lambda) \to \D^b(\mod\Lambda')$ satisfying $F(T),F(T') \in \tilt\Lambda'$, then there exists a generic $\uptheta \in H_\updelta$ for which the functor $F$ restricts to an equivalence of abelian categories
  \[
    F\colon \cS_\uptheta(\Lambda) \xrightarrow{\ \sim\ } \cS_{[F]\uptheta}(\Lambda'),
  \]
  where $[F]\colon \KK_0(\proj \Lambda)_\R \to \KK_0(\proj\Lambda')_\R$ denotes the action on stability parameters.
\end{prop}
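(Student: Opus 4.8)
The plan is to describe both $\cS_\uptheta(\Lambda)$ and $\cS_{[F]\uptheta}(\Lambda')$ by $\Hom$-vanishing against suitable tilting complexes via Theorem~\ref{thm:ortho}, and then transport these conditions through the equivalence $F$. First I would read off the combinatorics of the wall: by Theorem~\ref{thm:tiltcham} the hypothesis that $T,T'$ are related by a wall crossing means that $T''\colonequals T\cap T'$ is their maximal common direct summand, that $|T''|=|\cJ^c|-1$, that $\cone T\cap\cone T'=\cone T''$, and that this intersection is the wall separating the two chambers; by assumption it lies inside $H_\updelta$. After interchanging $T$ and $T'$ if necessary I may assume $T$ is the Bongartz completion of $T''$. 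Choosing $\uptheta$ to be a generic point of $\cone T''=\cone T\cap\cone T'\subseteq H_\updelta$, Theorem~\ref{thm:ortho} gives
\[
  \cS_\uptheta(\Lambda)=\{\,M\in\D^b(\fdmod\Lambda)\ \mid\ \Hom_{\D(\Lambda)}(T,M[\neq 0])=\Hom_{\D(\Lambda)}(T'',M)=0\,\}.
\]

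Next I would push this description forward along $F$. Since $F$ is an additive triangulated equivalence it permutes indecomposable summands bijectively, so $F(T)$ and $F(T')$ still have $F(T'')$ as their maximal common summand, and $|F(T'')|=|\cJ^c|-1$; as $F(T),F(T')\in\tilt\Lambda'$ by hypothesis, Theorem~\ref{thm:tiltcham} for $\Lambda'$ shows that $F(T),F(T')$ are adjacent with wall $\cone F(T'')$. Because the order on $\tilt$ is defined by a $\Hom$-vanishing condition preserved by $F$, the larger of $F(T),F(T')$ is $F(T)$, so $F(T)$ is the Bongartz completion of $F(T'')$. The induced $\R$-linear map $[F]\colon\KK_0(\proj\Lambda)_\R\to\KK_0(\proj\Lambda')_\R$ sends the class of each direct summand of $T$ to that of the corresponding summand of $F(T)$, hence maps $\cone T''$ onto $\cone F(T'')$ and carries $\uptheta$ to a correspondingly generic point $[F]\uptheta$ of that wall; Theorem~\ref{thm:ortho} for $\Lambda'$ therefore describes $\cS_{[F]\uptheta}(\Lambda')$ by the same $\Hom$-vanishing against $F(T)$ and $F(T'')$. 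Since $F$ induces isomorphisms $\Hom_{\D(\Lambda)}(T,M[n])\cong\Hom_{\D(\Lambda')}(F(T),F(M)[n])$ and $\Hom_{\D(\Lambda)}(T'',M)\cong\Hom_{\D(\Lambda')}(F(T''),F(M))$, and---being $R$-linear---preserves the subcategories of objects with finite-length cohomology, it matches the two descriptions, so $F(\cS_\uptheta(\Lambda))=\cS_{[F]\uptheta}(\Lambda')$. Finally, in both subcategories short exact sequences coincide with triangles of the ambient derived categories, so $F$ and its quasi-inverse are exact there and $F$ restricts to the asserted equivalence of abelian categories.

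The step I expect to be the main obstacle is transporting the wall and Bongartz data correctly: one must check that $F(T),F(T')$ are genuinely the two completions of $F(T'')$, that the $\tilt$-order (hence the Bongartz completion) is preserved, and that $[F]\uptheta$ lands in the relative interior of $\cone F(T'')$ rather than on its boundary. Equally necessary, though more routine, is the fact that $F$ preserves finite-dimensionality---the $\Hom$-vanishing characterisation of Theorem~\ref{thm:ortho} does not by itself force an object into $\D^b(\fdmod-)$, since a single indecomposable projective of the mutated endomorphism algebra carries an infinite-dimensional endomorphism ring, so one really does need $F$ to be compatible with the $R$-structure here. Once these points are secured, the comparison of the $\Hom$-vanishing conditions themselves is purely formal.
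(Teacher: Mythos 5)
Your proposal matches the paper's proof essentially step for step: set up $T''=T\cap T'$ via Theorem~\ref{thm:tiltcham}, assume after relabelling that $T$ is the Bongartz completion, transport the maximal common summand and the $\tilt$-order through $F$ using Lemma~\ref{lem:alltiltpres} so that $F(T)$ is the Bongartz completion of $F(T'')$, match cones to get $\uptheta'=[F]\uptheta$, and then compare the two $\Hom$-vanishing descriptions from Theorem~\ref{thm:ortho} using full faithfulness. The one place you go beyond the paper is your explicit flag that $F$ must preserve $\D^b(\fdmod-)$ for the $\Hom$-vanishing characterisation to apply to $F^{-1}(E)$; this is a real tacit hypothesis that the paper's proof also leans on silently (and which holds in the intended applications because the equivalences used there are $R$-linear, $R$ being the common centre), so noting it is a useful refinement rather than a divergence.
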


Before proving this result we require the following auxiliary lemma.

\begin{lemma}\label{lem:alltiltpres}
  Let $F\colon \D^b(\mod \Lambda) \to \D^b(\mod \Lambda')$ be an exact equivalence, then $F$ induces an equivalence of posets $\alltilt\Lambda \to \alltilt\Lambda'$.
\end{lemma}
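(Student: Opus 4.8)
The plan is to observe that everything in sight — being a (pre)tilting complex, being basic, and the partial order $T\geq U$ — is formulated purely inside the triangulated category $\KK^b(\proj\Lambda)$ in terms of $\Hom$-vanishing after shifts and of triangulated generation, and that all of these notions are manifestly preserved by any triangle equivalence together with its quasi-inverse. The one preliminary point to get straight is that a tilting complex is a priori an object of $\KK^b(\proj\Lambda)$ while $F$ is given on $\D^b(\mod\Lambda)$; but since $\Lambda$ and $\Lambda'$ are NCCRs they have finite global dimension, so every finitely generated module has a finite projective resolution and the canonical functor $\KK^b(\proj\Lambda)\to\D^b(\mod\Lambda)$ is an equivalence, and likewise for $\Lambda'$. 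Hence $F$ may be regarded as a triangle equivalence $\KK^b(\proj\Lambda)\xrightarrow{\sim}\KK^b(\proj\Lambda')$, and it suffices to work there.

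Next I would check that $T\mapsto F(T)$ carries $\alltilt\Lambda$ bijectively onto $\alltilt\Lambda'$. Since $F$ is fully faithful and commutes with the shift, $\Hom_{\KK^b(\Lambda)}(T,T[n])\cong\Hom_{\KK^b(\Lambda')}(F(T),F(T)[n])$ for every $n$, so $T$ is pretilting if and only if $F(T)$ is. Since $F$ is essentially surjective it takes the smallest thick subcategory containing $T$ onto the smallest thick subcategory containing $F(T)$, so $T$ generates $\KK^b(\proj\Lambda)$ exactly when $F(T)$ generates $\KK^b(\proj\Lambda')$; combined with the previous sentence, $T$ is tilting iff $F(T)$ is. Finally, an equivalence preserves indecomposability (an object is indecomposable precisely when its endomorphism ring has no nontrivial idempotents) and sends a direct sum decomposition into indecomposables to one of the same shape, so $F(T)$ is basic iff $T$ is. Applying the same reasoning to the quasi-inverse $F^{-1}$ shows that $T\mapsto F(T)$ is a bijection $\alltilt\Lambda\to\alltilt\Lambda'$.

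It remains to see that this bijection and its inverse are order-preserving. By definition $T\geq U$ in $\alltilt\Lambda$ means $\Hom_{\KK^b(\Lambda)}(T,U[>0])=0$, and by full faithfulness this is equivalent to $\Hom_{\KK^b(\Lambda')}(F(T),F(U)[>0])=0$, i.e. to $F(T)\geq F(U)$; running the argument through $F^{-1}$ gives the converse. Hence $F$ induces an isomorphism, and in particular an equivalence, of posets $\alltilt\Lambda\to\alltilt\Lambda'$. I do not expect any genuine obstacle here: the content is entirely the bookkeeping identification $\KK^b(\proj)\simeq\D^b(\mod)$ that makes $F$ act on tilting complexes in the first place, after which each required property is an immediate consequence of $F$ being a triangle equivalence.
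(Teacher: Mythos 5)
Your proof is correct and follows essentially the same route as the paper's: full faithfulness preserves the $\Hom$-vanishing conditions that define (pre)tilting and the partial order, and the isomorphism $\End_{\D(\Lambda)}(T)\cong\End_{\D(\Lambda')}(F(T))$ preserves basicity. The extra care you take at the start — identifying $\KK^b(\proj\Lambda)\simeq\D^b(\mod\Lambda)$ via finite global dimension so that $F$ genuinely acts on tilting complexes, and making explicit that essential surjectivity carries generation — is sound and fills in steps the paper leaves implicit, but it is the same argument in substance.
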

\begin{proof}
  Because $F$ is fully faithful, $\Hom_{\D(\Lambda)}(T,T'[i]) = 0$ holds for $T,T' \in \KK^b(\Lambda)$ and $i\in\Z$ if and only if $\Hom_{\D(\Lambda')}(F(T),F(T')[i]) = 0$.
  In particular, $T \in \KK^b(\proj\Lambda)$ is tilting if and only if $F(T)$ is tilting, and $T\geq T'$ if and only if $F(T) \geq F(T')$.
  Finally, $T$ is basic if and only if $F(T)$ is basic because the algebras $\End_{\D(\Lambda)}(T) \cong \End_{\D(\Lambda')}(F(T))$ contain the same idempotents, which shows the result.
\end{proof}

\begin{skippedproof}[Proof of Proposition~\ref{prop:trackstables}]
  Because $T$ and $T'$ are related by a wall-crossing in $H_\updelta$, it follows from Theorem~\ref{thm:tiltcham} that $T$ and $T'$ share a summand $T'' = T\cap T'$ with cone $\cone^\circ T'' \subset H_\updelta$.
  Relabelling if necessary, we may assume that $T\geq T'$, so that $T$ is the Bongartz completion of $T''$.
  Because $F$ is exact it is in particular additive, so the object $F(T'')$ is again the maximal summand shared by $F(T)$ and $F(T')$.
  Moreover, Lemma~\ref{lem:alltiltpres} shows that $F(T)\geq F(T')$, so that $F(T)$ is the Bongartz completion of $F(T'')$.
  Therefore, it follows from Theorem~\ref{thm:ortho} that the semistable modules for $\uptheta \in \cone^\circ T''$ and $\uptheta' \in \cone^\circ F(T'')$ are given by:
  \[
    \begin{aligned}
      \cS_\uptheta(\Lambda) &= \{E \in \D^b(\fdmod \Lambda) \mid \Hom_{\D(\Lambda)}(T,E[\neq0]) = \Hom_{\D(\Lambda)}(T'',E) = 0\},\\
      \cS_{\uptheta'}(\Lambda') &= \{E \in \D^b(\fdmod \Lambda') \mid \Hom_{\D(\Lambda')}(F(T),E[\neq0]) = \Hom_{\D(\Lambda')}(F(T''),E) = 0\}.
    \end{aligned}
  \]
  One easily checks that $\cone^\circ F(T'') = [F](\cone^\circ T'')$ so one can in particular choose $\uptheta' = [F]\uptheta$.
  Because $F$ is an equivalence it is moreover fully faithful, so for an object $E \cong F(F^{-1}(E))\in \D^b(\fdmod\Lambda')$
  \[
    \begin{aligned}
      E \in F(\cS_\uptheta(\Lambda)) &\quad\Longleftrightarrow\quad \Hom_{\D(\Lambda)}(T,F^{-1}(E)[\neq0]) &&=&& \Hom_{\D(\Lambda)}(T'',F^{-1}(E)) &&= 0\\
                                     &\quad\Longleftrightarrow\quad \Hom_{\D(\Lambda')}(F(T),E[\neq0]) &&=&& \Hom_{\D(\Lambda')}(F(T''),E) &&= 0\\
      &\quad\Longleftrightarrow\quad E \in \cS_{[F]\uptheta}(\Lambda'), &&&&&&
    \end{aligned}
  \]
  which shows that $\cS_{[F]\uptheta}(\Lambda')$ is the image of $\cS_\uptheta(\Lambda)$.
  It follows that $F$ restricts to an exact equivalence between the abelian categories $\cS_\uptheta(\Lambda)$ and $\cS_{[F]\uptheta}(\Lambda')$.\qedhere
\end{skippedproof}

Given a presentations $\Lambda = \jacc{Q,W}$, $\Lambda' = \jacc{Q',W'}$ of the NCCRs by a quiver with potential, we can compare the numerical BPS invariants
\[
  \Omega^\num_\Lambda(\updelta) \colonequals \upchi^\mot_\num(\Omega_{Q,W}(\updelta)),\quad
  \Omega^\num_{\Lambda'}(\updelta) \colonequals \upchi^\mot_\num(\Omega_{Q',W'}(\updelta)).
\]
The following result shows that a derived equivalence as in Proposition~\ref{prop:trackstables} induces a relation between the numerical BPS invariants if it is a \emph{standard equivalence}, i.e.\ if it lifts to a (DG-)enhancement of the derived category.
For simplicity, we only consider the case of \emph{indivisible} dimension vectors satisfying $\gcd(\updelta) = 1$, for which these invariants can be computed as a weighted Euler characteristic
\[
  \Omega^\num_\Lambda(\updelta) = \int_{\cN} \upnu_{\cM}\; \d \upchi \colonequals \sum_{n\in\Z} n\cdot \upchi(\upnu_\cM^{-1}(n) \cap \cN),
\]
where $\upnu_\cM$ denotes the Behrend function of the coarse moduli scheme $\cM = \cM_\updelta^\uptheta(Q,W)$, and $\cN = \cN^\uptheta_\updelta(Q,W)$ denotes the nilpotent locus.
The general case requires a more subtle argument using the ``stack functions'' and stacky Behrend function of Joyce--Song~\cite{JS12}, which is beyond the scope of this paper.

\begin{prop}\label{prop:numBPSsym}
  In the setting of Proposition~\ref{prop:trackstables}, take the dimension vector $\updelta$ defining the root hyperplane $H_\updelta$ to be indivisible and assume that $F\colon \D^b(\mod \Lambda) \to \D^b(\mod \Lambda')$ is a standard equivalence.
  Then there is an equality between the following BPS numbers of $\Lambda$ and $\Lambda'$:
  \[
    \Omega^\num_\Lambda(\updelta) = \Omega^\num_{\Lambda'}([F]\updelta).
  \]
\end{prop}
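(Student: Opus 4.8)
The plan is to reduce the asserted equality to a comparison of weighted Euler characteristics of moduli schemes, transporting the relevant semistable category across $F$ by Proposition~\ref{prop:trackstables} and then using the \emph{standardness} of $F$ to upgrade this categorical identification to a geometric one. Concretely, I would first pick a generic $\uptheta\in H_\updelta$ as furnished by Proposition~\ref{prop:trackstables}, so that $F$ restricts to an equivalence of abelian categories $F\colon \cS_\uptheta(\Lambda)\xrightarrow{\ \sim\ }\cS_{[F]\uptheta}(\Lambda')$. Since $\updelta$ is indivisible, Proposition~\ref{prop:stabfromroots} shows that $\cS_\uptheta(\Lambda)$ is a length category with a single simple object $M$, of class $\updelta$, so every object of $\cS_\uptheta(\Lambda)$ has dimension vector in $\N\updelta$. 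Setting $\updelta'\colonequals[F]\updelta$, the class of the $\Lambda'$-module $F(M)$, I would note that $F$ induces a lattice isomorphism on Grothendieck groups and that $F(M)$ is the unique $[F]\uptheta$-stable object of $\cS_{[F]\uptheta}(\Lambda')$; hence $\updelta'$ is again indivisible and, combining Lemma~\ref{lem:alltiltpres} with Theorem~\ref{thm:tiltcham}, $[F]\uptheta$ is generic on the root hyperplane $H_{\updelta'}$, with $\updelta'\in\N Q'_0$. In particular the weighted Euler-characteristic formula for numerical BPS invariants at indivisible classes applies on both sides, computing $\Omega^\num_\Lambda(\updelta)$ from $(\cN^\uptheta_\updelta(Q,W),\cM^\uptheta_\updelta(Q,W))$ and $\Omega^\num_{\Lambda'}([F]\updelta)$ from $(\cN^{[F]\uptheta}_{\updelta'}(Q',W'),\cM^{[F]\uptheta}_{\updelta'}(Q',W'))$.

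Next I would translate $F$ into geometry. By Lemma~\ref{lem:modulibij} the isomorphism classes of objects of $\cS_\uptheta(\Lambda)$ of dimension $n\updelta$ are matched with those of $\cS_{[F]\uptheta}(\Lambda')$ of dimension $n\updelta'$; taking $n=1$ gives a bijection $|\cN^\uptheta_\updelta(Q,W)(\C)|\cong|\cN^{[F]\uptheta}_{\updelta'}(Q',W')(\C)|$. Because $F$ is standard it lifts to an equivalence of DG enhancements (a bimodule kernel), so by functoriality of the moduli of objects this bijection is realised by an isomorphism of coarse moduli schemes $\Phi\colon \cN^\uptheta_\updelta(Q,W)\xrightarrow{\ \sim\ }\cN^{[F]\uptheta}_{\updelta'}(Q',W')$ compatible with the maps to $\fatnull$. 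I would then check that $\Phi$ respects Behrend functions: standardness forces $F$ to preserve the cyclic $3$-Calabi--Yau structure, hence to identify the local critical-chart (minimal-model superpotential) presentations of $\cM^\uptheta_\updelta(Q,W)$ and $\cM^{[F]\uptheta}_{\updelta'}(Q',W')$ near their nilpotent loci up to formal change of coordinates, so that $\Phi$ is an isomorphism of the underlying d-critical loci and $\upnu_{\cM^{[F]\uptheta}_{\updelta'}(Q',W')}\circ\Phi=\upnu_{\cM^\uptheta_\updelta(Q,W)}$. Substituting into the Euler-characteristic formula then gives
\[
  \Omega^\num_\Lambda(\updelta)=\int_{\cN^\uptheta_\updelta(Q,W)}\upnu_{\cM^\uptheta_\updelta(Q,W)}\,\d\upchi=\int_{\cN^{[F]\uptheta}_{\updelta'}(Q',W')}\upnu_{\cM^{[F]\uptheta}_{\updelta'}(Q',W')}\,\d\upchi=\Omega^\num_{\Lambda'}([F]\updelta).
\]

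The main obstacle is precisely the passage from the categorical/pointwise identification of Proposition~\ref{prop:trackstables} to the scheme isomorphism $\Phi$ together with its compatibility with the Behrend functions. This is exactly where an arbitrary triangulated equivalence is insufficient and the lift of $F$ to a DG bimodule kernel is essential: it is needed both to transport entire families of objects (not merely $\C$-points) and to preserve the symmetric obstruction theory, equivalently the d-critical structure, that underlies the Behrend function. I would therefore invoke the functoriality of derived moduli stacks under standard equivalences and the characterisation of the Behrend function via d-critical charts as the two external inputs; the remaining parts of the argument — genericity of the chosen stability parameter, indivisibility of $[F]\updelta$, and the matching of dimension vectors — are routine bookkeeping.
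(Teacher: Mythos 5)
Your overall strategy coincides with the paper's: reduce, via Proposition~\ref{prop:trackstables} and the indivisibility of $\updelta$ (together with Proposition~\ref{prop:stabfromroots}), to the observation that each nilpotent locus is a single point $[M]$ resp.\ $[F(M)]$, so each BPS number is just the value of the Behrend function at that point. The divergence is in how you compare the two Behrend function values, and here I think you are reaching for more than you need, and for something the paper deliberately avoids. You try to build a global isomorphism $\Phi$ of nilpotent moduli schemes and then argue that standardness of $F$ ``preserves the cyclic $3$-Calabi--Yau structure'' and hence the d-critical structure. But $\upnu$ is intrinsic to the scheme and its value at a point depends only on the formal (or \'etale-local) neighbourhood of that point; no symmetric obstruction theory, d-critical structure, or CY structure is required to define or compute it. The paper exploits exactly this: it cites \cite{Jia17} to present the formal neighbourhood of $[M]$ as the Maurer--Cartan locus of a minimal $L_\infty$-structure $\mathfrak{g}_M$ coming from the DG enhancement, and the DG lift of the standard equivalence gives an $L_\infty$-isomorphism $\mathfrak{g}_M\cong\mathfrak{g}_{F(M)}$, hence isomorphic formal neighbourhoods and equal Behrend values. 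No scheme-level isomorphism of the nilpotent loci is constructed, and none is needed because both are single points.

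The claim that standardness preserves the cyclic $3$-CY structure should also give you pause: \S\ref{ssec:rigid} of the paper is precisely about how this is \emph{not} automatic, and the whole rigidification apparatus exists because the motivic invariants genuinely require control of the CY structure while the numerical ones do not. By invoking CY preservation you are both introducing an unverified hypothesis and obscuring the real reason the numerical statement is easier. If you replace the d-critical/CY argument in your last paragraph with the observation that the Behrend function is determined by the formal neighbourhood, and that the DG lift of $F$ identifies the formal neighbourhoods of $[M]$ and $[F(M)]$ via $L_\infty$-isomorphism, your proof becomes the paper's and is correct; as written, the CY step is a gap, albeit one whose conclusion is reachable by a weaker and more honest route.
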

\begin{proof}
  By Proposition~\ref{prop:trackstables} there exists a generic $\uptheta\in H_\updelta$ such that $F$ restricts to an equivalence $\cS_\uptheta(\Lambda) \to \cS_{[F]\uptheta}(\Lambda')$.
  Because $\uptheta$ is generic, it moreover follows by Proposition~\ref{prop:stabfromroots} that there exists a unique stable module $M \in \cS_\uptheta(\Lambda)$ of dimension vector $\ddim M = \updelta$.
  Because $\updelta$ is indivisible, it then follows that the nilpotent locus $\cN = \cN^{\uptheta}_\updelta(Q,W)$ consists of the single point $[M]\in \cM = \cM^\uptheta_\updelta(Q,W)$.
  In particular, the numerical BPS invariants is simply the value at this point:
  \[
    \Omega^\num_\Lambda(\updelta) = \int_\cN \upnu_\cM \; \d\upchi = \upnu_\cM([M]).
  \]
  Likewise, the nilpotent locus $\cN' = \cN^{[F]\uptheta}_{[F]\updelta}(Q',W')$ in the moduli space for $\Lambda'$ is again a single point $[F(M)] \in cM' = \cM^{[F]\uptheta}_{[F]\updelta}(Q',W')$, and
  \[
    \Omega^\num_{\Lambda'}(\updelta) = \int_{\cN'} \upnu_{\cM'} \; \d\upchi = \upnu_{\cM'}([F(M)]).
  \]
  Hence it suffices to compare the value of the Behrend functions of $\cM$ and $\cM'$ at these two points.
  By~\cite{Jia17} the value of $\upnu_\cM([M])$ only depends on the formal neighbourhood of $[M]$, which can be presented as the Maurer-Cartan locus
  \[
    \mathfrak{U}_M = \{ x\in \mathfrak{g}_M^1 \mid \textstyle\sum_{i=1}^\infty\tfrac{(-1)^{i(i+1)/2}}{i!} \upmu_i(x,\ldots,x) = 0\},
  \]
  of a minimal $L_\infty$-algebra structure $\mathfrak{g}_M = (\Ext^1(M,M)[1]\oplus \Ext^2(M,M),\upmu)$ obtained from an enhancement of the derived category $\D^b(\mod \Lambda)$.
  Likewise, the value of $\upnu_{\cM'}$ at $F(M)$ depends only on the Maurer-Cartan locus $\mathfrak{U}_{F(M)}$ of a natural $L_\infty$-algebra structure $\mathfrak{g}_{F(M)'}$ defined via an enhancement of $\D^b(\mod \Lambda')$.
  The assumption that $F$ is induced by a tilting complex guarantees that $F$ lifts to a quasi-equivalence on the enhancements, and hence induces an $L_\infty$-isomorphism $\mathfrak{g}_M \cong \mathfrak{g}_{F(M)}$.
  As a result there is an isomorphism $\mathfrak{U}_M \cong \mathfrak{U}_{F(M)}$ between the formal neighbourhoods, and it follows that
  \[
    \Omega^\num_\Lambda(\updelta) = \upnu_\cM([M]) = \upnu_{\cM'}([F(M)]) = \Omega^\num_\Lambda([F]\updelta). \qedhere
  \]
\end{proof}

In the next two sections we apply the above propositions to two common types of derived equivalences: line bundle twists acting on the derived category of a standard NCCR, and NCCR mutations.

\subsection{Auto-equivalences induced by line bundle twists}

In this subsection we assume $\Lambda = \Lambda_Y$ is the standard NCCR of a crepant resolution $Y\to \Spec R$ with Dynkin type $(\Updelta,J)$, and write
\[
  \Uppsi\colon \D(Y) \xrightarrow{\ \RHom_Y(\cN,-)\ } \D(\Lambda),
\]
for the derived equivalence induced by Van den Bergh's tilting bundle $\cN$, as in \S\ref{sec:NCsetup}.
For any line bundle $\cL\in \Pic{Y}$ we consider the associated derived equivalence
\[
  -(\cL) \colon \D^b(\mod \Lambda)\xrightarrow{\ \Uppsi^{-1}\ } \D^b(\coh Y) \xrightarrow{\ -\otimes_Y \cL\ } \D^b(\coh Y) \xrightarrow{\ \Uppsi\ } \D^b(\mod \Lambda).
\]
The action of this functor on Grothendieck groups can be understood as follows.
\begin{lemma}\label{lem:Ktranslation}
  Let $\cL\in \Pic{Y}$ have intersection numbers $\cL \cdot \curve_i = m_i$. Then $\cL$ acts
  \begin{enumerate}
    \item on $\KK_0(\fdmod \Lambda) \cong \Z\cJ^c$ by fixing $\overline\imroot$ and mapping $\upalpha_i \mapsto \upalpha_i + m_i \upalpha_0$ for $i\in J^c$.

    \item on $\KK_0(\proj \Lambda)_\R \cong \R^{\cJ^c}$ preserves the levels $\Level^\pm \cong \R^{J^c}$ and acts on the coordinates $\vec{x} \in \R^{J^c}$ by the shift $\vec{x} = (x_i)_{i\in J^c} \mapsto (x_i \mp  m_i)_{i\in J^c}$.
  \end{enumerate}
\end{lemma}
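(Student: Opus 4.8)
The plan is to reduce both statements to the action of the twist $-\otimes_Y\cL$ on $\D^b(\coh Y)$ and then transport it through Van den Bergh's equivalence $\Uppsi = \RHom_Y(\cN,-)$ of Theorem~\ref{thm:VdBtilt} together with the identifications set up in \S\ref{sec:NCsetup}. Since $-(\cL) = \Uppsi\circ(-\otimes_Y\cL)\circ\Uppsi^{-1}$ and $-\otimes_Y\cL$ preserves $\coh_\cs Y$, its action on $\KK_0(\fdmod\Lambda)\cong\KK_0(\coh_\cs Y)$ is exactly the action of the twist on $\KK_0(\coh_\cs Y)$, and likewise for the projective side.

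For part (1) I would evaluate this action on the distinguished classes recalled in \S\ref{sec:NCsetup}. The class of a point satisfies $\O_p\otimes_Y\cL\cong\O_p$, so $\overline\imroot = [\O_p]$ is fixed. For $i\in J^c$ we have $\C e_i = \Uppsi(\O_{\curve_i}(-1))$ and $\O_{\curve_i}(-1)\otimes_Y\cL\cong\O_{\curve_i}(m_i-1)$; the short exact sequence $0\to\O_{\curve_i}(-1)\to\O_{\curve_i}(m_i-1)\to T_i\to 0$ with $T_i$ a torsion sheaf of length $|m_i|$ supported on $\curve_i$, hence $[T_i]=m_i[\O_p]$, yields $\upalpha_i = [\C e_i]\mapsto \upalpha_i + m_i\overline\imroot$. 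Because $\{\overline\imroot\}\cup\{\upalpha_i : i\in J^c\}$ is a $\Z$-basis of $\Z\cJ^c$ — the change of basis from $\{\upalpha_i : i\in\cJ^c\}$ is unit triangular, as $\overline\imroot = \upalpha_0 + \sum_{i\in J^c}\rt^\maxx_i\upalpha_i$ — this determines the map and gives the asserted formula. As a functorial cross-check, one can instead note that under the homological Chern character $\mathrm{ch}_*(\cF\otimes\cL) = \mathrm{ch}^*(\cL)\cap\mathrm{ch}_*(\cF)$, so (using $c_1(Y)=0$, i.e.\ $\omega_Y\cong\O_Y$) the twist fixes the $\hh_2$-component and shifts the $\hh_0$-component by $\cL\cdot\upbeta$, which is the same conclusion.

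Part (2) I would then deduce purely by duality. Any exact equivalence preserves the Euler pairing $\KK_0(\proj\Lambda)\times\KK_0(\fdmod\Lambda)\to\Z$, and this pairing is perfect for the dual bases $\{[P_i]\}$, $\{[\C e_i]\}$; hence the action of $-(\cL)$ on $\KK_0(\proj\Lambda)_\R\cong\R^{\cJ^c}$ is the contragredient of the action computed in (1). Since that action fixes $\overline\imroot$, the contragredient fixes the functional $\uptheta\mapsto\uptheta(\overline\imroot)$ and therefore preserves each level $\Level^\pm = \{\uptheta(\overline\imroot)=\pm1\}$. Writing a point of $\Level^\pm$ as $I_\pm(\uptheta)$ with coordinates $x_i = \uptheta(\upalpha_i)$ as in \eqref{eq:emblevel}, and using that the inverse of the map in (1) sends $\upalpha_i\mapsto\upalpha_i - m_i\overline\imroot$, one gets $x_i\mapsto x_i - m_i\uptheta(\overline\imroot) = x_i\mp m_i$ on $\Level^\pm$, which is the claimed shift.

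The only points that need care are keeping the identifications of \S\ref{sec:NCsetup} straight — in particular that the point class equals $\overline\imroot$ and not a simple-root basis vector, which is where the ranks of the tilting summands $\cN_i$ get absorbed — and, in part (2), correctly tracking that the induced map on stability parameters is \emph{contragredient} rather than covariant, which is precisely what produces the level-dependent sign $\mp$. I do not expect a genuine obstacle: $-(\cL)$ is manifestly an autoequivalence of $\D^b(\mod\Lambda)$ because $-\otimes_Y\cL$ is one of $\D^b(\coh Y)$, so it restricts to $\fdmod\Lambda$ and to an automorphism of $\KK_0(\proj\Lambda)$, and the lemma amounts to transporting an elementary Riemann--Roch computation.
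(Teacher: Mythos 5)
Your argument is correct and is essentially the paper's: part (1) transports $K_0(-\otimes\cL)$ through $\Uppsi$ and reads it off in $(\upchi,\upbeta)$-coordinates (you do it via the explicit sheaves $\O_p$, $\O_{\curve_i}(-1)$ and a short exact sequence, the paper via the abstract $(\upchi,\upbeta)\mapsto(\upchi+\cL\cdot\upbeta,\upbeta)$ formula — the same computation), and part (2) uses preservation of the Euler pairing to pass to the contragredient action and hence the $\mp m_i$ shift, exactly as the paper does by direct evaluation of $\langle[P_i],[S_j(\cL^{-1})]\rangle$. One remark: your calculation lands on $\upalpha_i\mapsto\upalpha_i+m_i\overline\imroot$, which agrees with the paper's own proof (and is what part (2) actually needs) but not with the literal wording of the lemma, which writes $\upalpha_i+m_i\upalpha_0$; that appears to be a slip in the statement, and in any case your torsion-sheaf step should be phrased so that $[T_i]=m_i[\O_p]$ holds with the correct orientation of the exact sequence when $m_i<0$.
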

\begin{proof}
  (1) By construction, there is a commutative diagram relating the action on Grothendieck groups:
  \[
    \begin{tikzpicture}
      \node (A2) at (0,0) {$\KK_0(\coh_\cs Y)$};
      \node (A1) at (-3*\gr,0) {$\KK_0(\coh_\cs Y)$};
      \node (B1) at (-3*\gr,-\gr) {$\KK_0(\fdmod \Lambda)$};
      \node (B2) at (0,-\gr) {$\KK_0(\fdmod \Lambda)$};
      \draw[->] (A1) to[edge label=$\scriptstyle\KK_0(\Uppsi)$] (B1);
      \draw[->] (A2) to[edge label=$\scriptstyle\KK_0(\Uppsi)$] (B2);
      \draw[->] (A1) to[edge label=${\scriptstyle\KK_0(-\otimes\cL)}$] (A2);
      \draw[->] (B1) to[edge label=${\scriptstyle\KK_0(-(\cL))}$] (B2);
    \end{tikzpicture}
  \]
  and we observe that the isomorphism $\KK_0(\coh_\cs Y)\cong \hh_0(Y,\Z) \oplus \hh_2(Y,\Z)$ identifies $\KK_0(-\otimes\cL)$ with the action
  \[
    (\upchi,\upbeta) \mapsto (\upchi + \cL \cdot \upbeta, \upbeta) = (\upchi + \textstyle\sum_i m_i\upbeta_i, \upbeta).
  \]
  Because $\overline\imroot$ is the image of $(\upchi,\upbeta) = (1,0)$ and $\upalpha_i$ is the image of $(\upchi,\upbeta) = (0,[\curve_i])$ the result then follows.

  (2) Because $-(\cL)$ is an exact functor, the actions on Grothendieck groups preserves the Euler pairing:
  \[
    \begin{aligned}
      \<[P(\cL)],[M(\cL)]\> &= \sum_{n\in\Z} (-1)^n \dim_\C \Hom_{\D(\Lambda)}(P(\cL),M(\cL)[n])
      \\ &= \sum_{n\in\Z} (-1)^n \dim_\C \Hom_{\D(\Lambda)}(P,M[n]) = \<[P],[M]\>.
    \end{aligned}
  \]
  By (1) the action on $\KK_0(\fdmod\Lambda)$ fixes $\overline\imroot$, so it follows that the action on $\KK_0(\proj \Lambda)_\R$ preserves the level sets $\Level^\pm$. Given a coordinate $\vec{x}$ on $\Level^\pm$, the corresponding class
  \[
    (\pm 1 - \textstyle\sum_{i\in J^c} x_i \overline\imroot)[P_0] + \textstyle\sum_{i\in J^c} x_i [P_i].
  \]
  is mapped to a class with coordinates $\vec{x}'$, of which the entry $x_j'$ for $j\in J^c$ is
  \[
    \begin{aligned}
      x_j' &= (\pm1 - \textstyle \sum_{i\in J^c}x_i \overline{\imroot_i}) \<[P_0],[S_j(\cL^{-1})]\> + \textstyle\sum_{i\in J^c} x_i\<[P_i],[S_j(\cL^{-1})]\>\\
           &= (\pm1 - \textstyle \sum_{i\in J^c}x_i \overline{\imroot_i}) \<[P_0],[S_j] - m_j\overline\imroot\> + \textstyle\sum_{i\in J^c} x_i\<[P_i],[S_j] - m_j\overline\imroot\>\\
           &= (\mp m_j + m_j \textstyle \sum_{i\in J^c}x_i \overline{\imroot_i}) + (x_j - m_j \textstyle\sum_{i\in J^c} x_i)
      \\&= x_j \mp m_j.
    \end{aligned}
  \]
  which is the desired shift.
\end{proof}

Let $\Pic^+\!{Y} \subset \Pic{Y}$ denote the submonoid of globally generated line bundles, then the following lemma shows that this submonoid generates a set of 2-term tilting complexes.

\begin{lemma}\label{lem:linebundletilt}
  For every $\cL\in \Pic^+{Y}$ there are 2-term tilting complexes
  \[
    \Lambda(\cL) \in \tilt^+\kern-2pt\Lambda,\quad \Lambda(\cL^{-1})[1] \in \tilt^-\kern-2pt\Lambda,
  \]
\end{lemma}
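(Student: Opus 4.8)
The plan is to reduce the statement to a single $\Ext$-vanishing on $Y$ and then read off the positions of the two complexes in $\tilt\Lambda$ from $\KK$-theory. Since $\cN$ is a tilting bundle, $\Uppsi(\cN) = \RHom_Y(\cN,\cN) = \End_Y(\cN) \cong \Lambda$, hence $\Uppsi^{-1}(\Lambda) \cong \cN$ and
\[
  \Lambda(\cL) \cong \Uppsi(\cN\otimes\cL),\qquad \Lambda(\cL^{-1}) \cong \Uppsi(\cN\otimes\cL^{-1}).
\]
As tensoring a tilting bundle by a line bundle is again a tilting bundle with endomorphism ring $\End_Y(\cN\otimes\cL)\cong\Lambda$, which is basic, both $\Lambda(\cL)$ and $\Lambda(\cL^{-1})[1]$ already lie in $\alltilt\Lambda$ (alternatively invoke Lemma~\ref{lem:alltiltpres}); what remains is to show they are $2$-term and to determine which of $\Cham^\pm(\Updelta_\aff,\cJ)$ their cones meet.

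The key point I would establish first is that $\Ext^{\geq 1}_Y(\cN,\cN\otimes\cL) = 0$ for every $\cL\in\Pic^+Y$. Since $f\colon Y\to\Spec R$ is proper over an affine base, $f_*\cL$ is a finitely generated $R$-module; picking $r$ generators and using that $\cL$ is globally generated (so $f^*f_*\cL\to\cL$ is surjective) yields a short exact sequence $0\to\cK\to\O_Y^{\oplus r}\to\cL\to 0$ in which $\cK$ is locally free because $Y$ is smooth. Tensoring by $\cN$ and applying $\Hom_Y(\cN,-)$ gives, for $i\geq 1$, the exact piece
\[
  \Ext^i_Y(\cN,\cN)^{\oplus r}\longrightarrow\Ext^i_Y(\cN,\cN\otimes\cL)\longrightarrow\Ext^{i+1}_Y(\cN,\cN\otimes\cK);
\]
the left term vanishes because $\cN$ is tilting, and the right term vanishes because $\cN\otimes\cK$ is coherent and $f$ has fibres of dimension $\leq 1$, so $H^{\geq 2}(Y,-)$ annihilates every coherent sheaf. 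The same fibre-dimension argument gives $\Ext^{\geq 2}_Y(\cN,\cN\otimes\cM) = 0$ for an arbitrary line bundle $\cM$.

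Granting this, the $2$-term claim follows by checking the defining inequalities in $\alltilt\Lambda$. Since $\Uppsi$ is an equivalence with $\Uppsi(\cN)\cong\Lambda$, one has $\Hom_{\D(\Lambda)}(\Lambda,\Lambda(\cL)[i]) = \Ext^i_Y(\cN,\cN\otimes\cL)$ and, after dualising locally free sheaves, $\Hom_{\D(\Lambda)}(\Lambda(\cL),\Lambda[i]) = \Ext^i_Y(\cN\otimes\cL,\cN) \cong \Ext^i_Y(\cN,\cN\otimes\cL^{-1})$. Hence $\Lambda\geq\Lambda(\cL)$ amounts to $\Ext^{\geq 1}_Y(\cN,\cN\otimes\cL)=0$ and $\Lambda(\cL)\geq\Lambda[1]$ amounts to $\Ext^{\geq 2}_Y(\cN,\cN\otimes\cL^{-1})=0$, so the vanishings above give $\Lambda\geq\Lambda(\cL)\geq\Lambda[1]$ and $\Lambda(\cL)\in\tilt\Lambda$. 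Likewise $\Lambda\geq\Lambda(\cL^{-1})[1]$ amounts to $\Ext^{\geq 2}_Y(\cN,\cN\otimes\cL^{-1})=0$ and $\Lambda(\cL^{-1})[1]\geq\Lambda[1]$ amounts to $\Ext^{\geq 1}_Y(\cN\otimes\cL^{-1},\cN)=\Ext^{\geq 1}_Y(\cN,\cN\otimes\cL)=0$, so $\Lambda(\cL^{-1})[1]\in\tilt\Lambda$ as well.

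It then remains to decide the sign. By Theorem~\ref{thm:tiltcham} the cones $\cone^\circ\Lambda(\cL)$ and $\cone^\circ\Lambda(\cL^{-1})[1]$ are chambers in $\Cham^+(\Updelta_\aff,\cJ)\sqcup\Cham^-(\Updelta_\aff,\cJ)$, and since $\overline\imroot$ is $\cW_{\Updelta_\aff}$-fixed with strictly positive coefficients and $H_{\overline\imroot}\subset\cH_{\Updelta_\aff,\cJ}$, every chamber in $\Cham^+(\Updelta_\aff,\cJ)$ lies in $\{\uptheta(\overline\imroot)>0\}$ and every one in $\Cham^-(\Updelta_\aff,\cJ)$ in $\{\uptheta(\overline\imroot)<0\}$. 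By Lemma~\ref{lem:Ktranslation} the twist preserves the levels $\Level^\pm$, hence the pairing with $\overline\imroot$, so $[\Lambda(\cL)]$ lies at the same level $\langle[\Lambda],\overline\imroot\rangle = \sum_{i\in\cJ^c}\imroot_i > 0$ as $[\Lambda]$, forcing $\cone^\circ\Lambda(\cL)\in\Cham^+(\Updelta_\aff,\cJ)$, whereas $[\Lambda(\cL^{-1})[1]] = -[\Lambda(\cL^{-1})]$ lies at level $-\langle[\Lambda],\overline\imroot\rangle < 0$, forcing $\cone^\circ\Lambda(\cL^{-1})[1]\in\Cham^-(\Updelta_\aff,\cJ)$. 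This yields $\Lambda(\cL)\in\tilt^+\kern-2pt\Lambda$ and $\Lambda(\cL^{-1})[1]\in\tilt^-\kern-2pt\Lambda$. The one genuinely geometric input is the vanishing of the second paragraph, and the only delicate step there is the reduction from ``globally generated'' to a presentation $\O_Y^{\oplus r}\twoheadrightarrow\cL$ with locally free kernel, which uses properness of $f$, smoothness of $Y$ and surjectivity of $f^*f_*\cL\to\cL$; everything else is formal manipulation with the tilting equivalence and the combinatorics of \S\ref{sec:dynkin}.
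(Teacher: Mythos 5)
Your proof is correct and follows essentially the same route as the paper: establish the relevant $\Ext$-vanishing on $Y$ via the surjection $\O_Y^{\oplus r}\twoheadrightarrow\cL$ and the fibre-dimension bound, then determine the sign of the chamber from the pairing against $\overline\imroot$. The one small divergence is at the end: where the paper deduces $\Lambda(\cL^{-1})[1]\in\tilt^-\kern-2pt\Lambda$ by applying the exact auto-equivalence $-(\cL^{-1})$ to the established chain $\Lambda\geq\Lambda(\cL)\geq\Lambda[1]\geq\Lambda(\cL)[1]$ (invoking Lemma~\ref{lem:alltiltpres} to transport the partial order), you verify the two defining $\Ext$-vanishings for $\Lambda(\cL^{-1})[1]$ directly; both work, and yours is arguably more symmetric since a single lemma ($\Ext^{\geq 1}(\cN,\cN\otimes\cL)=0$ for $\cL$ globally generated and $\Ext^{\geq 2}(\cN,\cN\otimes\cM)=0$ for any $\cM$) covers all four inequalities at once. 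One cosmetic point: the kernel of a surjection of vector bundles is automatically locally free by a Nakayama/splitting argument over the local rings, so smoothness of $Y$ is not actually the operative hypothesis there; in any case the paper's version of the argument never uses local freeness of the kernel, since the fibre-dimension vanishing applies to any coherent sheaf.
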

\begin{proof}
  Let $\cL\in \Pic^+{Y}$ be a globally generated line bundle.
  Then it follows directly from Lemma~\ref{lem:alltiltpres} that $\Lambda(\cL)$ is tilting, so it suffices to show that this complex is 2-term.
  Fix a surjection $\O_Y^n \to \cL$ defined by a generating set of global sections, and consider the long exact sequence in cohomology
  \[
    \ldots \to \Ext^i_Y(\cN,\cN\otimes_Y \O_Y^n) \to \Ext^i(\cN,\cN\otimes_Y \cL) \to \Ext^{i+1}_Y(\cN,\cN\otimes_Y \cI) \to \ldots,
  \]
  where $\cI \subset \O_Y^n$ denote the kernel of the surjection.
  Now $\Ext^i(\cN,\cN\otimes_Y \O_Y^n) \cong \Ext^i(\cN,\cN)^n = 0$ because $\cN$ is tilting, so there are isomorphisms $\Ext^i_Y(\cN,\cN\otimes\cL) \cong \Ext^{i+1}(\cN,\cN\otimes \cI)$ for all $i\geq 1$.
  Because the contraction $\uppi\colon Y\to \Spec R$ has fibres of dimension $\leq 1$, one then sees that
  \[
    \Ext^i_Y(\cN,\cN\otimes\cL) \cong \Ext^{1+i}_Y(\cN,\cN\otimes_Y \cI) \cong \hh^{1+i}\bR\uppi_*(\cF\otimes_Y \cI\otimes_Y \cN^\vee) = 0\quad \forall i>0.
  \]
  Hence $\Lambda(\cL) \cong \Hom_Y(\cN,\cN\otimes\cL)$ is isomorphic to a module, and therefore satisfies $\Lambda \geq \Lambda(\cL)$.
  Likewise, the dimension of the fibres implies $\Lambda(\cL) \geq \Lambda[1]$, because for all $i>0$
  \[
    \begin{aligned}
      \Hom_{\D(\Lambda)}(\Lambda(\cL),\Lambda[1+i])
      &\cong \Hom_{\D(\Lambda)}(\Uppsi(\cN\otimes \cL),\Uppsi(\cN[1+i])) \\
      &\cong \Hom_{\D(Y)}(\cN\otimes \cL,\cN[1+i])  \\
      &\cong \hh^{1+i}\bR\uppi_*(\cN\otimes \cN^\vee \otimes \cL^{-1}) = 0.
    \end{aligned}
  \]
  Hence $\Lambda(\cL)\in \tilt\Lambda$ and defines a cone $\cone \Lambda(\cL) = [\cL](\cone \Lambda)$ inside $\R^{\cJ^c}$.
  Because $[\cL]$ preserves the level by Lemma~\ref{lem:Ktranslation}, it follows that this cone intersects $\Level^+$, so that $\Lambda(\cL) \in \tilt^+\Lambda$.
  To show that $\Lambda(\cL^{-1})[1]$ is 2-term tilting, we simply apply $-(\cL^{-1})$: it follows from Lemma~\ref{lem:alltiltpres} that
  \[
    \Lambda \geq \Lambda(\cL) \geq \Lambda[1] \geq \Lambda(\cL)[1] \quad\Longleftrightarrow\quad
    \Lambda(\cL^{-1}) \geq \Lambda \geq \Lambda(\cL^{-1})[1] \geq \Lambda[1],
  \]
  and hence $\Lambda(\cL) \in \tilt^+\Lambda$ immediately implies $\Lambda(\cL^{-1})[1] \in \tilt^-\Lambda$.
\end{proof}

In particular, the lemma yields tilting complexes $T(n) \colonequals T(\O_Y(n))$ for every positive twist of $\O_Y(1) = \bigotimes_{i\in J^c} \cL_i$.
Using the description of the action on the levels, and the equivalence between the tilting and geometric partial orders we now obtain the following.

\begin{theorem}\label{thm:stabilitytwist}
  Let $\Lambda = \Lambda_Y$ be the standard NCCR of a crepant resolution $Y\to \Spec R$ with Dynkin type $(\Delta,J)$, and let $\upbeta \in \hh_2(Y,\Z) \cong \Z J^c$ and $\upchi\in\Z$. Then:
  \begin{enumerate}
    \item if $\upbeta \in \RR^+(\Delta,J)$ and $\upchi\geq 0$, then there exists a generic $\uptheta \in H_{\upbeta + \upchi \overline\imroot}$ such that for all $\cL\in \Pic^+{Y}$ the functor $-(\cL)$ induces an equivalence
      \[
      \cS_\uptheta(\Lambda) \xrightarrow{\ \sim\ } \cS_{[\cL]\uptheta}(\Lambda),
      \]
      In particular, if $\upbeta$ and $\upchi$ are coprime, there is a symmetry
      \[
      \Omega_Y^\num(\upchi,\upbeta) = \Omega_Y^\num(\upchi + \cL\cdot \upbeta,\upbeta)
      \]
    \item if $\upbeta \in \RR^-(\Delta,J)$ and $\upchi> 0$, then there exists a generic $\uptheta \in H_{\upbeta + \upchi\overline\imroot}$ such that for all $\cL\in \Pic^+{Y}$ the functor $-(\cL^{-1})$ induces an equivalence
      \[
      \cS_\uptheta(\Lambda) \xrightarrow{\ \sim\ } \cS_{[\cL^{-1}]\uptheta}(\Lambda),
      \]
      In particular, if $\upbeta$ and $\upchi$ are coprime, there is a symmetry
      \[
      \Omega_Y^\num(\upchi,\upbeta) = \Omega_Y^\num(\upchi + \cL^{-1}\cdot \upbeta,\upbeta)
      \]
    \item if $\overline \updelta \in \RR^+(\Delta,J)$ and $\upchi>0$, then there exists a generic $\uptheta \in H_{\upbeta + \upchi\overline\imroot}$ such that for all $n\gg 0$ the functor $-(-n)[1]$ induces an equivalence
      \[
      \cS_\uptheta(\Lambda) \xrightarrow{\ \sim\ } \cS_{-[\O_Y(-n)]\uptheta}(\Lambda).
      \]
      In particular, if $\upbeta$ and $\upchi$ are coprime, there is a symmetry
      \[
      \Omega_Y^\num(\upchi,\upbeta) = \Omega_Y^\num(n-\upchi,-\upbeta).
      \]
  \end{enumerate}
\end{theorem}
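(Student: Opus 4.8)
The plan is to derive all three statements from Proposition~\ref{prop:trackstables} and Proposition~\ref{prop:numBPSsym}: for each of the line-bundle auto-equivalences $F$ appearing in (1)--(3) I want to exhibit a pair of adjacent $2$-term tilting complexes $T,T'\in\tilt\Lambda$ whose common wall $\cone^\circ T''$ (with $T''=T\cap T'$, $|T''|=|\cJ^c|-1$) lies in the hyperplane $H_\updelta$, where $\updelta=\upbeta+\upchi\overline\imroot$, and which moreover satisfies $F(T),F(T')\in\tilt\Lambda$; then Proposition~\ref{prop:trackstables} supplies a generic $\uptheta\in\cone^\circ T''$ with $F\colon\cS_\uptheta(\Lambda)\xrightarrow{\sim}\cS_{[F]\uptheta}(\Lambda)$, and Proposition~\ref{prop:numBPSsym} promotes this to the numerical identity once $(\upchi,\upbeta)$ is coprime (note $\updelta$ is then indivisible and each $F$ is visibly a standard equivalence, being conjugate by $\Uppsi$ to a tensor/shift on $\D^b(\coh Y)$). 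To begin, recall that the standard NCCR has $\cJ=J$ by Lemma~\ref{prop:NCDynkin}, so $\upbeta\in\RR^\pm(\Updelta,J)\subset\RR(\Updelta,J)$ gives $\updelta\in\RR^\re(\Updelta_\aff,\cJ)$ by Lemma~\ref{lem:realRR}; since $\upbeta\neq0$, the vector $\updelta$ is not proportional to $\overline\imroot$, so $H_\updelta\subset\cH_{\Updelta_\aff,\cJ}$ is a wall distinct from $H_{\overline\imroot}$, and Theorem~\ref{thm:tiltcham} shows that a generic point of $H_\updelta$ lies on the unique wall $\cone^\circ T''$ between such a pair $T,T'$.

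For (1) I would take $\uptheta$ generic in the full-dimensional open piece $H_\updelta\cap\{\uptheta(\overline\imroot)>0\}$ of $H_\updelta$; as the sign of $\uptheta(\overline\imroot)$ is locally constant on the tilting chambers, both $T$ and $T'$ then lie in $\tilt^+\Lambda$. The structural input is that $\tilt^+\Lambda$ sits entirely above $\tilt^-\Lambda$ in the tilting order --- indeed the two are not joined by any wall-crossing, as $H_{\overline\imroot}$ is a separate piece of $\cH_{\Updelta_\aff,\cJ}$ in Theorem~\ref{thm:tiltcham} --- so by Lemma~\ref{lem:linebundletilt} one has $\Lambda\ge T,T'\ge\Lambda(\cL^{-1})[1]$ for every $\cL\in\Pic^+Y$; a short computation with the order chain $\Lambda(\cL^{-1})\ge\Lambda\ge\Lambda(\cL^{-1})[1]\ge\Lambda[1]$ (valid since $\Lambda(\cL),\Lambda(\cL^{-1})[1]\in\tilt\Lambda$ and $-(\cL^{-1})$ and $[1]$ preserve the order by Lemma~\ref{lem:alltiltpres}) shows this is exactly the condition $T(\cL),T'(\cL)\in\tilt\Lambda$. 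Since neither $(T,T')$ nor $\uptheta$ depends on $\cL$, Proposition~\ref{prop:trackstables} applied to $F=-(\cL)$ gives the \emph{same} $\uptheta$ for all $\cL\in\Pic^+Y$, and Proposition~\ref{prop:numBPSsym} together with Lemma~\ref{lem:Ktranslation}(1), which says $[\cL]$ sends $\upbeta+\upchi\overline\imroot$ to $\upbeta+(\upchi+\cL\cdot\upbeta)\overline\imroot$, yields $\Omega_Y^\num(\upchi,\upbeta)=\Omega_Y^\num(\upchi+\cL\cdot\upbeta,\upbeta)$. Statement (2) is the mirror image: choose $\uptheta$ generic in $H_\updelta\cap\{\uptheta(\overline\imroot)<0\}$, so $T,T'\in\tilt^-\Lambda$, which lies below all of $\tilt^+\Lambda\ni\Lambda(\cL)$; thus $\Lambda(\cL)\ge T,T'\ge\Lambda[1]$, which by the analogous order manipulation is precisely $T(\cL^{-1}),T'(\cL^{-1})\in\tilt\Lambda$, and one applies Propositions~\ref{prop:trackstables} and~\ref{prop:numBPSsym} to $F=-(\cL^{-1})$.

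Statement (3) needs more care, since $F=-(\O_Y(-n))[1]$ carries $\Lambda$ to $\Lambda(\O_Y(-n))[1]\in\tilt^-\Lambda$ but carries $\Lambda[1]$ strictly below $\Lambda[1]$; chasing the order one finds $T,F(T)\in\tilt\Lambda$ holds exactly when $\Lambda\ge T\ge\Lambda(\O_Y(n))$. Rather than an arbitrary wall of $H_\updelta$, I would fix $n_0\gg0$ large enough that $H_\updelta$ separates the fundamental chamber $\cone^\circ\Lambda=C_\cJ$ from $\cone^\circ\Lambda(\O_Y(n_0))$ --- this holds for $n_0$ large because $\upchi>0$ while $\O_Y(n_0)\cdot\upbeta=n_0\lvert\upbeta\rvert\to\infty$ places the corresponding alcoves on opposite sides of $H_{\upbeta,-\upchi}$ on the level $\Level^+$ --- and take $(T,T')$ to be the adjacent pair at which a minimal wall-crossing sequence $C_\cJ\dashrightarrow\cone^\circ\Lambda(\O_Y(n_0))$ crosses $H_\updelta$; by Proposition~\ref{prop:orderscoincide} and the description of the geometric order, every chamber along such a sequence satisfies $\Lambda\ge\,\cdot\,\ge\Lambda(\O_Y(n_0))$. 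Since $\Lambda(\O_Y(n))\le\Lambda(\O_Y(n_0))$ for $n\ge n_0$ (twisting by the globally generated $\O_Y(1)$ only lowers), the same pair serves for all $n\ge n_0$, so Proposition~\ref{prop:trackstables} again yields one $\uptheta$ valid for all $n\gg0$; the symmetry follows as before, using that $-(\O_Y(-n))[1]$ acts on $\KK_0(\fdmod\Lambda)$ by $\upbeta+\upchi\overline\imroot\mapsto-\upbeta+(n\lvert\upbeta\rvert-\upchi)\overline\imroot$.

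The main obstacle throughout is not the production of \emph{a} wall inside $H_\updelta$, which is immediate from Theorem~\ref{thm:tiltcham}, but the arrangement that the wall can be chosen so that the given functor returns \emph{both} bounding tilting complexes to $2$-term complexes, uniformly over $\cL$ (resp.\ over $n$). For (1) and (2) this is essentially free once one exploits the $H_{\overline\imroot}$-dichotomy of $\tilt\Lambda$; the genuinely delicate case is (3), where the wall must be squeezed between $\Lambda$ and a deep positive twist, and it is precisely there that the hypotheses $\upbeta\in\RR^+(\Updelta,J)$ and $\upchi>0$ enter, guaranteeing that $H_\updelta$ indeed separates those two chambers.
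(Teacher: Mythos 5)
Your overall framing — reduce everything to Propositions~\ref{prop:trackstables} and~\ref{prop:numBPSsym} by finding a pair of adjacent $2$-term tilting complexes $T,T'$ sharing a wall in $H_\updelta$ that $F$ sends back into $\tilt\Lambda$ — is exactly the paper's, and your argument for part~(3) essentially reproduces the paper's proof (squeeze a wall-crossing pair between $\Lambda$ and a deep twist $\Lambda(\O_Y(n_0))$ using Proposition~\ref{prop:orderscoincide}, then apply $-(-n)[1]$). However, your proofs of parts~(1) and~(2) rest on an unestablished structural claim: that $\tilt^+\Lambda$ pointwise dominates $\tilt^-\Lambda$ in the tilting order, so that $T,T'\geq\Lambda(\cL^{-1})[1]$ for \emph{every} $T,T'\in\tilt^+\Lambda$ and $\cL\in\Pic^+Y$.

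The justification you offer --- that the two halves ``are not joined by any wall-crossing'' because $H_{\overline\imroot}$ is a separate stratum in Theorem~\ref{thm:tiltcham} --- does not establish this. In a finite graded poset, Hasse-disconnectedness between two pieces would if anything suggest \emph{in}comparability; and here $\tilt\Lambda$ is typically infinite (already for the conifold $\tilt^\pm\Lambda$ is an infinite poset), so the Hasse graph says essentially nothing about comparability across the $H_{\overline\imroot}$ divide. Nothing in the paper proves or even states that $T\geq V$ for all $T\in\tilt^+\Lambda$, $V\in\tilt^-\Lambda$; what is available is only the top/bottom structure $\Lambda\geq U\geq\Lambda[1]$ and the order-preserving identification of $\tilt^\pm\Lambda$ with $\Cham^\pm(\Updelta_\aff,\cJ)$ \emph{separately}. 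In fact, the point of the paper's proof of~(1) is precisely to avoid needing such a statement: it fixes a large $n$, chooses $T,T'$ on a minimal wall-crossing path $C_\cJ\dashrightarrow\cone^\circ\Lambda(n)$ crossing $H_\updelta$, deduces $\Lambda\geq T\geq T'\geq\Lambda(n)$ from Proposition~\ref{prop:orderscoincide}, then twists the whole chain by $\cL$ and sandwiches $T(\cL),T'(\cL)$ between $\Lambda(\cL)$ and $\Lambda(\O_Y(n)\otimes\cL)$, both of which Lemma~\ref{lem:linebundletilt} places in $\tilt^+\Lambda$. You already run exactly this argument for~(3); you should run it for (1) and~(2) as well. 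As stated, the pointwise-domination claim is a genuine gap --- if you want to use it you would need a separate proof, which the paper does not supply and which does not obviously follow from the cited results.
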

\begin{proof}
  (1) Let $H = H_{\upbeta + \upchi\overline\imroot}$ be a root hyperplane with $\upbeta\in \RR^+(\Delta,J)$ and $\upchi\geq 0$.
  Then $H$ intersects $\Level^+$ in the affine hyperplane $H_{\upbeta,-\upchi} \subset \cH^\aff_{\Updelta,J}$, which separates $\R^{J^c}$ into half spaces
  \[
    H^+ \colonequals \{\vec x \in \R^{J^c} \mid \textstyle\sum_{i\in J^c} x_i \upbeta_i > -\upchi\},\quad H^-\colonequals \{\vec x \in \R^{J^c} \mid \textstyle\sum_{i\in J^c} x_i \upbeta_i < -\upchi\},
  \]
  The alcove in the complement of $\cH^\aff_{\Updelta,J}$ corresponding to $\Lambda$ is given by the intersection
  \[
    (\cone^\circ\Lambda)\cap \Level^+ = \{\vec{x} \in \R^{J^c} \mid x_i > 0,\ \textstyle\sum_{i\in J^c} x_i\overline{\imroot_i} < 1\},
  \]
  and therefore lies in the halfspace $H^+$, because $\upchi\geq 0$ and $\upbeta_i \geq 0$.
  Fixing some $n>0$, Lemma~\ref{lem:linebundletilt} shows that $\Lambda(n)$ is a 2-term tilting complex in $\tilt^+\Lambda$, and it follows by Lemma~\ref{lem:Ktranslation} that its alcove is given by the translation
  \[
    (\cone^\circ\Lambda(n))\cap \Level^+ = \{(x_i - n)_{i\in J^c} \mid x_i > 0,\ \textstyle\sum_{i\in J^c} x_i\overline{\imroot_i} < 1\},
  \]
  which lies in $H^-$ for sufficiently large $n\gg 0$.
  Hence, we may pick some minimal wall-crossing sequence from the alcove of $\Lambda$ to the alcove of $\Lambda(n)$ in $\R^{J^c}$ which crosses $H$ in a wall-crossing mutation between $T, T' \in \tilt^+\Lambda$.
  By~\ref{prop:orderscoincide} the tilting order decreases monotonically along the wall-crossing path, so that $\Lambda \geq T \geq T' \geq \Lambda(n)$.
  Now if $\cL\in\Pic^+{Y}$, then $\Lambda(\cL)$ and $\Lambda(n)(\cL) = \Lambda(\O_Y(n)\otimes \cL)$ are again in $\tilt^+\Lambda$ by Lemma~\ref{lem:linebundletilt}.
  Hence, the tilting complexes $T(\cL), T'(\cL)$ satisfy
  \[
    \Lambda \geq \Lambda(\cL) \geq T(\cL) \geq T'(\cL) \geq \Lambda(n)(\cL) \geq \Lambda[1],
  \]
  which shows that $T(\cL), T'(\cL) \in \tilt^+\Lambda$. Hence the result follows from Proposition~\ref{prop:trackstables}.

  (2) This is analogous to (1). A hyperplane $H = H_{\upbeta+\upchi\overline\imroot}$ with $\upbeta \in \RR^-(\Delta, J)$ and $\upchi> 0$ separates $\Level^-$ into the halfspaces
  \[
    H^+ = \{\vec x \mid \textstyle\sum_{i=1}^n x_i \upbeta_i < \upchi\}, \quad H^- = \{\vec x \mid \textstyle\sum_{i=1}^n x_i \upbeta_i > \upchi\},
  \]
  and one sees that $H^+$ contains the alcove of $\Lambda[1]$, while the alcove of $\Lambda(-n)[1] \in \tilt\Lambda$ lies in $H^-$ for sufficiently large $n\gg 0$.
  As before, there exists a minimal wall-crossing sequence between the alcoves which passes through $H$ in a wall-crossing mutation of some $T,T'\in\tilt\Lambda$.
  Because the tilting order increases along a path in $\Level^-$, it follows that $\Lambda(-n)[1] \geq T \geq T' \geq \Lambda[1]$, so applying Lemma~\ref{lem:alltiltpres} and Lemma~\ref{lem:linebundletilt} to the line bundle twist by $\cL^{-1}$ of some $\cL\in \Pic^+Y$ yields
  \[
    \Lambda \geq \Lambda(-n)(\cL^{-1})[1] \geq T(\cL^{-1}) \geq T'(\cL^{-1}) \geq \Lambda(\cL^{-1})[1] \geq \Lambda[1].
  \]
  Hence, $T(\cL^{-1})$ and $T'(\cL^{-1})$ are tilting and the result follows from Proposition~\ref{prop:trackstables}.

  (3) As in (1) we can find some sufficiently large $n\gg 0$ for which there exists $T,T'\in\tilt\Lambda$ related by a wall-crossing at $H_\updelta$ satisfying $\Lambda \geq T \geq T' \geq \Lambda(\O_Y(n))$.
  By Lemma~\ref{lem:alltiltpres} the functor $F \colon E \mapsto E(-n)[1]$ yields tilting complexes $F(T),F(T')$ satisfying
  \[
    \Lambda(-n)[1] = F(\Lambda) \geq F(T) \geq F(T') \geq F(\Lambda(n)) = \Lambda[1].
  \]
  Because $\O_Y(n)$ is globally generated, Lemma~\ref{lem:linebundletilt} implies that $\Lambda(-n)[1]$ is a 2-term tilting complex, so $F(T),F(T')\in\tilt\Lambda$ and the result follows from Proposition~\ref{prop:trackstables}.
\end{proof}

\subsection{Mutation equivalences}
We now return to the case of a general NCCR $\Lambda = \End_R(M)$ as in Setup~\ref{set:quivpot}, and consider the theory of mutation defined by Iyama--Wemyss~\cite{IW14}.

Following~\cite[Definition 1.21]{IW14}, we define for each $i\in \cJ^c$ a mutation $\upmu_i^-(M) = M/M_i \oplus K^*$ at the summand $M_i$ via an \emph{exchange sequence}
\[
  0 \to K \to Z^* \xrightarrow{\ f\ } M^*,
\]
where $(-)^* = \Hom_R(-,R)$ denotes the dual, $Z \in \add M/M_i$, and $Z^*\xrightarrow{f} M^*$ is a minimal right $\add M^*/M_i^*$-approximation; by definition this means that $f$ induces a surjective map
\[
  \Hom_R(M^*/M_i^*,Z^*) \xrightarrow{\ f\circ -\ } \Hom_R(M^*/M_i^*,M^*)
\]
and satisfies a certain universal property among such morphisms.
By~\cite[Theorem 1.23]{IW14} the endomorphism algebra is again an NCCR, which we denote by
\[
  \upmu_i\Lambda \colonequals \End_R(\upmu_i^-(M)).
\]
One can also consider wall-crossing mutation of $\Lambda \in \tilt\Lambda$ at one of the hyperplanes $H_{\overline \upalpha_i}$ bounding the cone $C_\cJ$, which yields a unique 2-term tilting complex
\[
   T = \textstyle\bigoplus_{j\neq i} P_j \oplus T_i \in \tilt \Lambda.
\]
A major result of~\cite{IW14} is that $T$ defines a derived equivalence between $\Lambda$ and $\upmu_i\Lambda$.

\begin{theorem}[{\cite[Theorem 6.8]{IW14}}]
  With notation as above, the tilting mutation $T$ of $\Uplambda$ has endomorphism algebra $\End_\Lambda(T) \cong \upmu_i\Lambda$.
  In particular, there is a nontrivial derived equivalence
  \[
    \D^b(\mod\Lambda) \xrightarrow{\ \RHom_\Lambda(T,-)\ } \D^b(\mod \End_\Lambda(T)) \xrightarrow{\ \sim\ } \D^b(\mod \upmu_i\Lambda).
  \]
\end{theorem}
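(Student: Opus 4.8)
This is \cite[Theorem 6.8]{IW14}; here is the plan one would follow.

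The strategy is to first establish the ring isomorphism $\End_\Lambda(T)\cong\upmu_i\Lambda$ and then read off the equivalence. The latter is immediate: $\Lambda$ has finite global dimension and $T\in\tilt\Lambda\subset\KK^b(\proj\Lambda)$ is a tilting complex, so by Rickard's derived Morita theory $\RHom_\Lambda(T,-)\colon\D^b(\mod\Lambda)\xrightarrow{\ \sim\ }\D^b(\mod\End_\Lambda(T))$ is an exact equivalence, and composing with restriction along $\End_\Lambda(T)\cong\upmu_i\Lambda$ yields the displayed functor. It is nontrivial because $T\not\cong\Lambda$ in $\KK^b(\proj\Lambda)$ — its cone is a chamber distinct from $C_\cJ=\cone^\circ\Lambda$ by Theorem~\ref{thm:tiltcham} — so $T\notin\add\Lambda$ and the functor does not restrict to an equivalence of module categories.

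To obtain the ring isomorphism I would proceed as follows. First, describe $T$ explicitly as a silting mutation \cite{AI12}: writing $\Lambda=P_i\oplus\bigl(\bigoplus_{j\neq i}P_j\bigr)$, the wall-crossing at $H_{\overline\upalpha_i}$ produces $T=\bigl(\bigoplus_{j\neq i}P_j\bigr)\oplus T_i$ with $T_i=\cone\bigl(P_i\xrightarrow{\ a\ }N^0\bigr)$ in cohomological degrees $-1,0$, where $a$ is a minimal left $\add\bigl(\bigoplus_{j\neq i}P_j\bigr)$-approximation. Second, transport to the module side along the reflexive equivalence $\Hom_R(M,-)\colon\add M\xrightarrow{\ \sim\ }\proj\Lambda$, $M_j\mapsto P_j$: the map $a$ comes from a minimal left $\add(M/M_i)$-approximation $g\colon M_i\to U$, and applying the duality $(-)^*=\Hom_R(-,R)$ converts $g$ into $g^*\colon U^*\to M_i^*$, which is a minimal right $\add(M^*/M_i^*)$-approximation; comparing with the defining exchange sequence $0\to K\to Z^*\xrightarrow{\ f\ }M^*$ of $\upmu_i^-(M)$, one identifies $K$ with $\ker g^*$ up to split summands, hence recovers $K^*$ from $T_i$. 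Third, compute the blocks $\Hom_{\D(\Lambda)}(P_j,T_i)$, $\Hom_{\D(\Lambda)}(T_i,P_j)$ and $\End_\Lambda(T_i)$ of $\End_\Lambda(T)$ and check, compatibly with composition, that they agree with $\Hom_R(M_j,K^*)$, $\Hom_R(K^*,M_j)$ and $\End_R(K^*)$; this assembles $\End_\Lambda(T)$ into $\End_R\bigl(M/M_i\oplus K^*\bigr)=\upmu_i\Lambda$.

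The hard part is the third step. It needs, on the tilting side, the off-degree vanishing $\Hom_{\D(\Lambda)}(P_j,T_i[n])=\Hom_{\D(\Lambda)}(T_i,P_j[n])=\Hom_{\D(\Lambda)}(T_i,T_i[n])=0$ for $n\neq0$, which follows from $T$ being pretilting and the $P_j$ projective; and, on the module side, a depth/duality argument — exploiting that $R$ is a Gorenstein domain, that $M$, $M^*$ and $K^*$ are reflexive maximal Cohen--Macaulay, and that $Y\to\Spec R$ (equivalently, the slice of $\Lambda$ along $R\to R/(t)$) has one-dimensional fibres — to see that the two-term complex $T_i$ genuinely computes the reflexive $R$-module $K^*$ under $\Hom_R(M,-)$ rather than some truncation of it. Carrying this through in detail is precisely the content of \cite[\S6]{IW14}, and I would follow their argument.
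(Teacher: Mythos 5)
The paper does not prove this theorem: it is imported as stated from \cite[Theorem 6.8]{IW14}, and the surrounding text only fixes notation so that the cited result can be applied to deduce Theorem~\ref{thm:stabmut}. There is therefore no in-paper proof to compare against, and what you have written is a reconstruction of the Iyama--Wemyss argument. As such it is a sound outline: the derived equivalence is formal once the ring isomorphism is known (Rickard), nontriviality follows from $T\neq\Lambda$ in $\tilt\Lambda$, and describing $T$ as the left silting mutation of $\Lambda$ at $P_i$, transporting the minimal left $\add(\Lambda/P_i)$-approximation across $\Hom_R(M,-)\colon\add M\xrightarrow{\ \sim\ }\proj\Lambda$, dualising to match the exchange sequence, and then assembling $\End_\Lambda(T)$ block by block is precisely the shape of \cite[\S6]{IW14}. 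You also correctly isolate the substantive step, namely verifying that $T_i$ is quasi-isomorphic to $\Hom_R(M,K^*)$ rather than merely a two-term truncation of it; in IW14 this rests on depth and reflexivity estimates for modifying modules over the Gorenstein base, which is the module-theoretic counterpart of the one-dimensional-fibre condition you cite. Since you openly defer those technical details to \cite[\S6]{IW14}, this is an appropriate account of a cited result.
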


Applying Proposition~\ref{prop:trackstables} to this theorem yields a tracking result for the semistable modules under mutation.
If $\upmu_i \Lambda$ admits again a description by a quiver with potential, then there are well-defined numerical BPS invariants and Proposition~\ref{prop:numBPSsym} yields a symmetry.

\begin{theorem}\label{thm:stabmut}
  Let $\Lambda$ be an NCCR of Dynkin type $(\Updelta_\aff,\cJ)$ as in s in Setup~\ref{set:quivpot}.
  Then for every $i\in \cJ^c$ and positive $\updelta \in \RR(\Updelta_\aff,\cJ)$ such that $\updelta \not\in\Z\overline\upalpha_i \oplus \Z \overline\imroot$, there exists a generic $\uptheta \in H_{\updelta}$ such that the functor $F = \RHom_\Lambda(T,-)$ of $T = \bigoplus_{j\neq i} P_j \oplus T_i$ restricts to an equivalence
  \[
    \cS_\uptheta(\Lambda) \xrightarrow{\ \sim\ } \cS_{\omega_i\cdot \uptheta}(\upmu_i\Lambda).
  \]
  In particular, if $\upmu_i\Lambda$ is again presented by a quiver with potential and $\updelta$ is indivisible, then the numerical BPS invariants of $\Lambda$ and $\upmu_i\Lambda$ satisfy
  \[
    \Omega_\Lambda^\num(\updelta) = \Omega_{\upmu_i\Lambda}^\num(\omega_i\cdot\updelta).
  \]
\end{theorem}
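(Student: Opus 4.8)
The plan is to combine the wall-crossing analysis of \S\ref{sec:dynkin} with Propositions~\ref{prop:trackstables} and~\ref{prop:numBPSsym}. The first task is to produce the 2-term tilting complex $T = \bigoplus_{j\neq i} P_j \oplus T_i$ together with a wall-crossing mutation $T \rightsquigarrow T'$ whose wall lies inside the specific root hyperplane $H_\updelta$. Here is where Lemma~\ref{lem:poswallcrossing} does the work: since $\updelta\in\RR(\Updelta_\aff,\cJ)$ is positive and by hypothesis not colinear to $\overline\upalpha_i$ (and, because $\updelta\notin\Z\overline\upalpha_i\oplus\Z\overline\imroot$, not colinear to $\overline\imroot$ either), Lemma~\ref{lem:poswallcrossing} provides a minimal wall-crossing sequence starting $C_\cJ \xrightarrow{\overline\upalpha_i} \omega_{\cJ,i}C_\fJ \to \cdots \to w^+C_{\fJ^+}\xrightarrow{\overline\updelta} w^-C_{\fJ^-}$. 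Translating through Proposition~\ref{prop:mutpres} and the bijection $\tilt\Lambda \xrightarrow{\sim}\Cham^+(\Updelta_\aff,\cJ)\sqcup\Cham^-(\Updelta_\aff,\cJ)$ of Theorem~\ref{thm:tiltcham}, the first arrow of this path is exactly the wall-crossing mutation of $\Lambda$ at $H_{\overline\upalpha_i}$, whose cone is $w C_\fJ$ for $w=\omega_{\cJ,i}$; by the theorem of~\cite[Theorem 6.8]{IW14} quoted above, the corresponding tilting complex $T$ has $\End_\Lambda(T)\cong\upmu_i\Lambda$. The last arrow of the path is a wall-crossing of two 2-term tilting complexes $T_0,T_0'\in\tilt^+\Lambda$ across a wall contained in $H_{\overline\updelta}=H_\updelta$.

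Next I would set up the derived equivalence. Let $F\colon\D^b(\mod\Lambda)\to\D^b(\mod\upmu_i\Lambda)$ be the composite $\RHom_\Lambda(T,-)$ followed by the identification $\End_\Lambda(T)\cong\upmu_i\Lambda$; by Lemma~\ref{lem:tiltactfrfr} this is a standard tilting equivalence, $\upmu_i\Lambda$ has Dynkin type $(\Updelta_\aff,\fJ)$, and the induced action $[F]$ on Grothendieck groups is the inverse of the isomorphism of Lemma~\ref{lem:mutaction} attached to the path $\cJ\xrightarrow{w}\fJ$ — that is, precisely $\omega_i\cdot$. To apply Proposition~\ref{prop:trackstables} I must check that $F(T_0),F(T_0')$ are again 2-term tilting complexes for $\upmu_i\Lambda$. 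This follows from Lemma~\ref{lem:alltiltpres}: $F$ is an equivalence of posets $\alltilt\Lambda\to\alltilt(\upmu_i\Lambda)$, and along the chosen minimal path the tilting order is monotone by Proposition~\ref{prop:orderscoincide}, so $T = T_0$ or $T$ sits above $T_0,T_0'$ in the order, hence $T\geq T_0\geq T_0'$, whence $F(T)=\upmu_i\Lambda \geq F(T_0)\geq F(T_0')\geq \upmu_i\Lambda[1]$; being sandwiched between $\upmu_i\Lambda$ and $\upmu_i\Lambda[1]$, both $F(T_0)$ and $F(T_0')$ lie in $\tilt(\upmu_i\Lambda)$. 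Proposition~\ref{prop:trackstables} then yields a generic $\uptheta\in H_\updelta$ with an equivalence $F\colon\cS_\uptheta(\Lambda)\xrightarrow{\sim}\cS_{[F]\uptheta}(\upmu_i\Lambda) = \cS_{\omega_i\cdot\uptheta}(\upmu_i\Lambda)$, which is the first assertion.

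For the numerical statement, I invoke Proposition~\ref{prop:numBPSsym}: the dimension vector $\updelta$ defining $H_\updelta$ is assumed indivisible, $F$ is a standard equivalence, so $\Omega^\num_\Lambda(\updelta) = \Omega^\num_{\upmu_i\Lambda}([F]\updelta) = \Omega^\num_{\upmu_i\Lambda}(\omega_i\cdot\updelta)$; one should also note that $\omega_i\cdot\updelta$ is a genuine dimension vector, i.e. lies in $\N\fJ^c$ and is nonzero, which is exactly the content of the ``restricts to a map on restricted roots'' part of Lemma~\ref{lem:mutaction} applied to $\updelta=\uppi_\cJ(\rt)\in\RR(\Updelta_\aff,\cJ)$. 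The only genuine subtlety — and the main obstacle — is bookkeeping around the endpoint of the Lemma~\ref{lem:poswallcrossing} path: one must ensure that the minimal path can be arranged so that the \emph{first} mutation is the mutation at $i$ (so that $\End_\Lambda$ of the resulting tilting complex is literally $\upmu_i\Lambda$ and not some iterate), and that the hyperplane $H_\updelta$ actually appears as a wall between two full-dimensional chambers along the path rather than merely being touched in higher codimension; both are guaranteed by~\cite[Lemma 5.29]{IWMemoir} and the explicit form of the path in Lemma~\ref{lem:poswallcrossing}, but the translation between ``wall-crossing of alcoves/chambers'' and ``wall-crossing mutation of 2-term tilting complexes'' via Theorem~\ref{thm:tiltcham} must be spelled out carefully, in particular that a codimension-one wall in $H_\updelta$ corresponds to a shared pretilting summand $T''=T_0\cap T_0'$ with $|T''|=|\cJ^c|-1$ and $\cone^\circ T''\subset H_\updelta$, which is exactly the hypothesis of Proposition~\ref{prop:trackstables}.
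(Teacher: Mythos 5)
Your proposal follows essentially the same route as the paper's proof: invoke Lemma~\ref{lem:poswallcrossing} to produce a minimal wall-crossing path from $C_\cJ$ whose first step crosses $H_{\overline\upalpha_i}$ and whose last step crosses $H_\updelta$, translate the chambers into 2-term tilting complexes $\Lambda \geq T \geq U^+ \geq U^-$ via Theorem~\ref{thm:tiltcham} and Proposition~\ref{prop:orderscoincide}, check via Lemma~\ref{lem:alltiltpres} that $F(U^\pm)$ remain 2-term over $\upmu_i\Lambda$, and conclude with Propositions~\ref{prop:trackstables} and~\ref{prop:numBPSsym}. The only point you gloss over — as does the paper — is the justification that $F(U^-) \geq \upmu_i\Lambda[1]$, which holds because $U^-$ is 2-term so $U^- \geq \Lambda[1] \geq T[1]$, but this does not affect correctness.
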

\begin{proof}
  Under the assumption $\updelta\not\in\Z\overline\upalpha_i\oplus\Z\overline\imroot$, it follows from Lemma~\ref{lem:poswallcrossing} that there exists a minimal wall-crossing sequence of the form
  \[
    C_\cJ \xrightarrow{\ \overline\upalpha_i\ } \omega_i C_\fJ \to \ldots \to w^+ C_{\fJ^+} \xrightarrow{\ \updelta\ } w^- \omega_j C_{\fJ^-}
  \]
  with chambers ordered $C_\cJ \geq \omega_i C_\fJ \geq w^+ C_{\fJ^+} \geq w^- C_{\fJ^-}$.
  By construction, the chambers $C_\cJ$ and $\omega_i C_\fJ$ correspond to the 2-term tilting complexes $\Lambda$ and $T = \bigoplus_{j\neq i} P_j \oplus T_i$ respectively, and by Theorem~\ref{thm:tiltcham} the chambers $w^\pm C_{\fJ^\pm}$ are the cones of some $U^\pm\in\tilt\Lambda$.
  Hence, Proposition~\ref{prop:orderscoincide} implies that these 2-term tilting complexes satisfy
  \[
    T \geq U^+ \geq U^- \geq T[1].
  \]
  The functor $F=\RHom(T,-)$ maps $T$ to $F(T) = \RHom(T,T)\cong\upmu_i(\Lambda)$, so Lemma~\ref{lem:alltiltpres} implies that
  \[
    \upmu_i\Lambda \cong F(T) \geq F(U^+) \geq F(U^-) \geq F(T)[1] \cong \upmu_i\Lambda[1],
  \]
whence we find $F(U^\pm) \in\tilt \upmu_i\Lambda$.
Because $U^+$ and $U^-$ are related by a wall-crossing in $H_{\updelta}$, the statements then follows directly from Proposition~\ref{prop:trackstables} and Proposition~\ref{prop:numBPSsym}.
\end{proof}

It can happen that $\upmu_i\Lambda$ and $\Lambda$ are the same: Iyama--Wemyss show in~\cite[Theorem 6.22,Theorem 6.23]{IW14} that $\upmu_i^-(M) = M$ if and only if the \emph{deformation algebra}
\[
  \Lambda_i \colonequals \frac{\Lambda}{\Lambda(1-e_i)\Lambda} \cong \frac{\End_R(M)}{(f\colon M \to M \text{ factors through } M/M_i)}
\]
is infinite dimensional.
As a result, $\upmu_i\Lambda = \End_R(\upmu_i(M)) = \End_R(M) = \Lambda$ and the tilting complex $T$ still induces a nontrivial \emph{auto}-equivalence
\[
  \D^b(\mod \Lambda) \xrightarrow{\sim} \D^b(\mod \upmu_i\Lambda) \simeq \D^b(\mod \Lambda).
\]
If the mutation $\fJ = \cJ-i+\iota(i)$ of the subset $\cJ$ is not equal to $\cJ$, then this also endows $\Lambda$ with a new Dynkin type for $\Lambda$, characterised by an isomorphism $\KK_0(\fdmod\Lambda)\cong \Z\fJ^c$.
However, the lattices $\Z\cJ^c$ and $\Z \fJ^c$ are canonically identified by the map sending $i \in \cJ^c$ to $\iota(i)\in \fJ^c$, which fits into a commutative diagram
\begin{equation}\label{eq:Fiact}
  \begin{tikzcd}
    \KK_0(\fdmod \Lambda) \ar[rr,"{\scriptstyle[\RHom(T,-)]}"]\ar[d] && \KK_0(\fdmod \Lambda) \ar[d]\ar[dl]\\
    \Z\cJ^c \ar[r,"\omega_i\cdot"] & \Z\fJ^c \ar[r,"\iota(i) \mapsto i"] &\Z\cJ^c
  \end{tikzcd}
\end{equation}
Using this identification, we then find the following special case of Theorem~\ref{thm:stabmut} for nodes with finite dimensional deformation algebra.

\begin{cor}\label{cor:autoeq}
  Let $\Lambda$ be an NCCR as in Setup~\ref{set:quivpot}, and suppose there exists $i\in \cJ^c$ such that $\dim_\C\Lambda_i = \infty$.
  Then under the identification in~\eqref{eq:Fiact} there is a symmetry
  \[
    \Omega_\Lambda^\num(\updelta) = \Omega_\Lambda^\num(\omega_i\cdot \updelta),
  \]
  for every indivisible dimension vector $\updelta$ which is not colinear to $\overline\upalpha_i$ or $\overline\imroot$.
\end{cor}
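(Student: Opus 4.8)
The plan is to deduce Corollary~\ref{cor:autoeq} from Theorem~\ref{thm:stabmut} by specialising to the case where the NCCR mutation $\upmu_i\Lambda$ gives back $\Lambda$ itself. The first step is the criterion of Iyama--Wemyss~\cite[Theorem 6.22, Theorem 6.23]{IW14}: the hypothesis $\dim_\C\Lambda_i = \infty$ is precisely equivalent to $\upmu_i^-(M)\cong M$, so $\upmu_i\Lambda = \End_R(\upmu_i^-(M))\cong\End_R(M) = \Lambda$. Hence $\upmu_i\Lambda$ is tautologically presented by the quiver with potential $(Q,W)$ of Setup~\ref{set:quivpot}, now carrying the mutated Dynkin type $(\Updelta_\aff,\fJ)$, $\fJ = \cJ - i + \iota(i)$, as recorded by Lemma~\ref{lem:tiltactfrfr} applied to $T = \bigoplus_{j\neq i}P_j\oplus T_i$; the induced functor $\RHom_\Lambda(T,-)$ is then a nontrivial \emph{auto}-equivalence of $\D^b(\mod\Lambda)$ which is standard, being given by a tilting complex. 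Thus Theorem~\ref{thm:stabmut} applies with $\Lambda' = \upmu_i\Lambda = \Lambda$.

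Second, for an indivisible $\updelta\in\RR(\Updelta_\aff,\cJ)$ not colinear to $\overline\upalpha_i$ or $\overline\imroot$, Theorem~\ref{thm:stabmut} gives $\Omega_\Lambda^\num(\updelta) = \Omega_{\upmu_i\Lambda}^\num(\omega_i\cdot\updelta)$, where the right-hand argument is the element of $\RR(\Updelta_\aff,\fJ)\subset\Z\fJ^c$ produced by the action $[\RHom_\Lambda(T,-)]$ on $\KK_0(\fdmod\Lambda)$ (Lemmas~\ref{lem:mutaction} and~\ref{lem:tiltactfrfr}). I would then transport this equality along the canonical lattice identification $\Z\fJ^c\xrightarrow{\ \sim\ }\Z\cJ^c$, $\iota(i)\mapsto i$; by the commutative diagram~\eqref{eq:Fiact} the transported vector is exactly the ``$\omega_i\cdot\updelta$'' of the corollary, and (as in Theorem~\ref{thm:stabmut}, whose equivalence sends the unique $\uptheta$-stable module of dimension $\updelta$ to an honest $\upmu_i\Lambda$-module) it is again a genuine indivisible dimension vector. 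This yields $\Omega_\Lambda^\num(\updelta) = \Omega_\Lambda^\num(\omega_i\cdot\updelta)$, the asserted symmetry.

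If one also wishes to range over indivisible dimension vectors $\updelta$ outside $\RR(\Updelta_\aff,\cJ)$, the statement is a formal consequence of the vanishing Theorem~\ref{thm:BPSvanishing}: since $\updelta$ is indivisible, $\tfrac1d\updelta = \updelta\notin\RR(\Updelta_\aff,\cJ)$ forces $\Omega_\Lambda(\updelta) = 0$, hence $\Omega_\Lambda^\num(\updelta) = 0$; and by Lemma~\ref{lem:mutaction} the same lattice isomorphism identifies $\RR(\Updelta_\aff,\fJ)$ with $\RR(\Updelta_\aff,\cJ)$, so $\omega_i\cdot\updelta\notin\RR(\Updelta_\aff,\fJ)$ and Theorem~\ref{thm:BPSvanishing} applied to $\Lambda$ with its $(\Updelta_\aff,\fJ)$-presentation gives $\Omega_\Lambda(\omega_i\cdot\updelta) = 0$ as well.

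The only real work is the bookkeeping in the second step: one must verify that the vector called ``$\omega_i\cdot\updelta$'' in Theorem~\ref{thm:stabmut} --- a priori an element of the different lattice $\Z\fJ^c$ --- is carried by the identification $\iota(i)\mapsto i$ to the ``$\omega_i\cdot\updelta$'' of Corollary~\ref{cor:autoeq} in $\Z\cJ^c$, and that this identification is the one compatible with $[\RHom_\Lambda(T,-)]$. Both points are exactly the commutativity of diagram~\eqref{eq:Fiact}, so no new argument is required; everything else is an immediate citation of Theorem~\ref{thm:stabmut} and Theorem~\ref{thm:BPSvanishing}.
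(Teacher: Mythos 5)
Your proof is correct and follows the same route the paper takes: $\dim_\C\Lambda_i=\infty$ forces $\upmu_i^{-}(M)\cong M$ via~\cite[Theorems 6.22, 6.23]{IW14}, so the tilting mutation becomes an auto-equivalence of $\D^b(\mod\Lambda)$, and Theorem~\ref{thm:stabmut} specialises directly once the lattices $\Z\cJ^c$ and $\Z\fJ^c$ are identified via~\eqref{eq:Fiact}. The paper states the corollary without writing out a proof, so in that sense you have simply made the implicit argument explicit.

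One thing you do that the paper does not: you notice that the corollary, as worded, ranges over \emph{all} indivisible $\updelta$ not colinear to $\overline\upalpha_i$ or $\overline\imroot$, while Theorem~\ref{thm:stabmut} only covers $\updelta\in\RR(\Updelta_\aff,\cJ)$. Your appeal to Theorem~\ref{thm:BPSvanishing} together with Lemma~\ref{lem:mutaction} (to see that $\updelta\notin\RR(\Updelta_\aff,\cJ)$ forces $\omega_i\cdot\updelta\notin\RR(\Updelta_\aff,\fJ)$, hence both invariants are zero) is exactly the right way to close that gap, and it is a useful detail to record.

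One small caution: as written, Theorem~\ref{thm:stabmut} imposes the stronger hypothesis $\updelta\notin\Z\overline\upalpha_i\oplus\Z\overline\imroot$, whereas both your proof and the corollary only assume $\updelta$ is not colinear to $\overline\upalpha_i$ or $\overline\imroot$. You invoke the theorem under the weaker hypothesis without comment. This mismatch originates in the paper itself --- the proof of Lemma~\ref{lem:poswallcrossing} only ever uses the two non-colinearity conditions, and the introduction's Theorem~\ref{introthm:E} also uses the weaker form --- so your argument goes through, but it would be cleaner to either invoke the lemma directly or to note that the theorem's stated hypothesis can be relaxed.
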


Finally, we remark on the case of a standard NCCR $\Lambda = \Lambda_Y$
It was shown by Wemyss~\cite{HomMMP} that the deformation algebra $\Lambda_i$ for a node $i\in\Updelta$ is \emph{finite} dimensional if and only if the corresponding curve $\curve_i \subset Y$ induces a \emph{flop}
\[
  \begin{tikzcd}
    Y \ar[dashed,rr] \ar[dr] && Y^+\ar[ld] \\
    &\Spec R &
  \end{tikzcd}
\]
where $Y^+ \to \Spec R$ is a second crepant resolution with standard NCCR $\Lambda_{Y^+} \cong \upmu_i\Lambda_Y$.
Writing $(\Updelta,J^+)$ for the Dynkin type of the flop, Theorem~\ref{thm:stabmut} therefore yields a relation between the Gopakumar--Vafa invariants
\[
  n_\upbeta = \Omega^\num_{\Lambda_Y}(\upbeta + \upchi\cdot \uppi_J(\imroot)),
  \quad
  n_\upbeta^+ = \Omega^\num_{\Lambda_Y^+}(\upbeta + \upchi\cdot\uppi_{J^+}(\imroot)).
\]
and if the flop does not exists it yields a symmetry on the GV invariants $n_\upbeta$ of $Y$.

\begin{cor}\label{cor:numBPSmut}
  Let $Y\to \Spec R$ be crepant resolutions, let $\curve_i \subset Y$ be an exceptional curve.
  Then for every effective curve class $\upbeta \in \hh_2(Y,\Z) \cong \Z J^c$ which is not colinear to $[\curve_i]$ there is a relation
  \[
    n_\upbeta =
    \begin{cases}
      n_{\omega_i\cdot \upbeta}^+ & \text{if } \curve_i \text{ flops }\\
      n_{\omega_i\cdot \upbeta} & \text{if } \curve_i \text{ does not flop}
    \end{cases}
  \]
  where $n_{\omega_i\cdot \upbeta}^+$ denotes the GV invariants of the flop $Y^+$ of $Y$ in $\curve_i$ if this exists, where we identify $\omega_i\cdot\upbeta \in \Z (J')^c$ with an element of the lattice $\Z J^c$ as in~\eqref{eq:Fiact} in the non-flopping case.
\end{cor}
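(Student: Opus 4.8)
The plan is to obtain this by combining Toda's identification $n_\upbeta = \Omega^\num_{\Lambda_Y}(\upbeta + \uppi_J(\imroot))$ with the two BPS-number symmetries already in hand: Theorem~\ref{thm:stabmut} in the case where the mutation $\upmu_i\Lambda_Y$ is a genuinely different NCCR, and Corollary~\ref{cor:autoeq} in the case where it is not. The dichotomy of the statement is exactly this dichotomy: by Iyama--Wemyss~\cite[Theorem 6.22, Theorem 6.23]{IW14} one has $\upmu_i^-(M)\neq M$, hence $\upmu_i\Lambda_Y\not\cong\Lambda_Y$, precisely when $\dim_\C(\Lambda_Y)_i<\infty$, and by Wemyss~\cite{HomMMP} this holds precisely when $\curve_i$ flops, in which case $\upmu_i\Lambda_Y$ is the standard NCCR $\Lambda_{Y^+}$ of the flop $Y^+\to\Spec R$, of Dynkin type $(\Updelta_\aff,J^+)$ with $0\notin J^+$ by Lemma~\ref{prop:NCDynkin}. (I assume, as in Setup~\ref{set:quivpot}, that all NCCRs in play are presented by quivers with potential, so that the invariants $\Omega^\num$ are defined.)

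Set $\updelta\colonequals\upbeta+\uppi_J(\imroot)\in\Z\cJ^c$. First I would verify the hypotheses needed to feed $\updelta$ into these results. The coefficient of $\upalpha_0$ in $\uppi_J(\imroot)$ is $1$, while that of $\uppi_J(\upalpha_j)=\upalpha_j$ is $0$ for every $j\in J^c$; hence $\updelta$ has $\upalpha_0$-coefficient $1$, so $\gcd(\updelta)=1$ (so $\updelta$ is indivisible) and $\updelta$ is never colinear to $\uppi_J(\upalpha_i)$. It is colinear to $\uppi_J(\imroot)$ only when $\upbeta=0$, which is excluded since $\upbeta$ is not colinear to $[\curve_i]$; and $\updelta\in\N\upalpha_0\oplus\N J^c=\N\cJ^c$ because $\upbeta$ is effective and the highest root $\rt^\maxx$ has strictly positive coefficients.

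In the non-flopping case, Corollary~\ref{cor:autoeq} applies to $\updelta$ verbatim and gives $\Omega^\num_{\Lambda_Y}(\updelta)=\Omega^\num_{\Lambda_Y}(\omega_i\cdot\updelta)$ under the identification of~\eqref{eq:Fiact}; since $\omega_i\imroot=\imroot$ and the relabelling of~\eqref{eq:Fiact} fixes the extended vertex, $\omega_i\cdot\updelta$ has the form $(\omega_i\cdot\upbeta)+\uppi_J(\imroot)$ with $\omega_i\cdot\upbeta\in\Z J^c\cong\hh_2(Y,\Z)$, and Toda's identification turns the right-hand side into $n_{\omega_i\cdot\upbeta}$. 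In the flopping case I would instead invoke Theorem~\ref{thm:stabmut}: if $\upbeta\notin\RR^+(\Updelta,J)$ then $n_\upbeta=0$ by Corollary~\ref{cor:GVinvariants}, and since the mutation isomorphism of Lemma~\ref{lem:mutaction} matches $\RR^\re(\Updelta_\aff,J^+)$ with $\RR^\re(\Updelta_\aff,J)$ compatibly with the $\imroot$-grading (Lemma~\ref{lem:realRR}), also $\omega_i\cdot\upbeta\notin\RR^+(\Updelta,J^+)$, so $n^+_{\omega_i\cdot\upbeta}=0$; while if $\upbeta\in\RR^+(\Updelta,J)$ then $\updelta\in\RR^\re(\Updelta_\aff,J)$ by Lemma~\ref{lem:realRR}, Theorem~\ref{thm:stabmut} yields $\Omega^\num_{\Lambda_Y}(\updelta)=\Omega^\num_{\Lambda_{Y^+}}(\omega_i\cdot\updelta)$, and decomposing $\omega_i\cdot\updelta=(\omega_i\cdot\upbeta)+\uppi_{J^+}(\imroot)$ and applying Toda on $Y^+$ gives $n^+_{\omega_i\cdot\upbeta}$.

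The main obstacle is isolating the correct meaning of $\omega_i\cdot\upbeta$ and showing that the relation on full dimension vectors genuinely descends to the stated relation on curve classes: concretely, that the mutation action of $\omega_i$ on K-theory --- equivalently, the action of the derived equivalence $\RHom_{\Lambda_Y}(T,-)$, which in the flopping case one should identify with the flop functor --- preserves the splitting $\Z\cJ^c=\Z\upalpha_0\oplus\Z J^c$ into Euler-characteristic and curve-class parts, equivalently fixes the class of a point, so that $\updelta=\upbeta+\uppi_J(\imroot)\mapsto\omega_i\cdot\updelta$ does decompose as $(\omega_i\cdot\upbeta)+\uppi_{J^+}(\imroot)$ (resp.\ $(\omega_i\cdot\upbeta)+\uppi_J(\imroot)$). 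One must also make the identification of the algebraic criterion $\dim_\C(\Lambda_Y)_i<\infty$ with the geometric one and of $\upmu_i\Lambda_Y$ with $\Lambda_{Y^+}$ precise, and check that effectiveness is accounted for (non-restricted-root classes contributing $0$ on both sides by Corollary~\ref{cor:GVinvariants}). Everything else is a direct concatenation of Toda's theorem with Theorem~\ref{thm:stabmut}, respectively Corollary~\ref{cor:autoeq}.
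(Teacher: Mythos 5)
Your route is the same as the paper's: express $n_\upbeta=\Omega^\num_{\Lambda_Y}(\updelta)$ with $\updelta=\upbeta+\uppi_\cJ(\imroot)$, check indivisibility and non-colinearity, split on $\dim_\C(\Lambda_Y)_i$ via Wemyss's criterion, apply Theorem~\ref{thm:stabmut} or Corollary~\ref{cor:autoeq}, and decompose $\omega_i\cdot\updelta$ to read off a curve class. Your case split on $\upbeta\in\RR^+(\Updelta,J)$ or not is actually more careful than the paper's one-line assertion that $\updelta$ is automatically a positive restricted root (which is not literally true for arbitrary effective $\upbeta$; the vanishing argument you supply is what makes the statement hold regardless).

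However, the justification you give for the decomposition step is incomplete, and this is the one genuinely substantive point you flag but do not resolve. Knowing only that $\omega_i\imroot=\imroot$ (which holds for \emph{every} element of $\cW_{\Updelta_\aff}$) does not show that $\uppi_\fJ(\omega_i\upbeta')$ lands in $\Z J^c$, i.e., that the $\upalpha_0$-component of $\omega_i\cdot\updelta$ stays equal to $1$. The missing observation — made explicitly in the paper — is that since $i\in\Updelta$ and $\cJ=J\subset\Updelta$, the element $\omega_i=\omega_{\cJ,i}$ lies in the parabolic $\cW_{\cJ+i}\subset\cW_\Updelta$, hence preserves the finite root lattice $\Z\Updelta$ (in addition to fixing $\imroot$). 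That is what forces $\omega_i\cdot\updelta=\omega_i\cdot\upbeta+\uppi_\fJ(\imroot)$ with $\omega_i\cdot\upbeta$ a genuine curve class. Finally, the paper also cites~\cite[Corollary 5.3]{NW21} to show that $\omega_i\cdot\upbeta$ is effective exactly when $\upbeta$ is not colinear to $[\curve_i]$, so that $n_{\omega_i\cdot\upbeta}$ (resp.\ $n^+_{\omega_i\cdot\upbeta}$) is defined; your parenthetical about non-restricted-root classes does not address effectiveness, and you would need this reference (or an equivalent argument) to finish.
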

\begin{proof}
  For any effective curve class $\upbeta$ the pair $(1,\upbeta)$ corresponds to an indivisible and positive restricted root $\updelta = \upbeta + \uppi_\cJ(\imroot)$.
  It follows by~\cite[Theorem 1.2]{HomMMP} that $\curve_i$ flops if and only if the deformation algebra $\Lambda_i$ of $\Lambda = \Lambda_Y$ is finite dimensional.
  Hence, applying either Theorem~\ref{thm:stabmut} or Corollary~\ref{cor:autoeq} yields
  \[
    n_\upbeta = \Omega^\num_\Lambda(\updelta) =
    \begin{cases}
      \Omega^\num_{\upmu_i\Lambda}(\omega_i \cdot \updelta) & \text{if $\curve_i$ flops}\\
      \Omega^\num_\Lambda(\omega_i\cdot \updelta) & \text{if $\curve_i$ does not flop}.
    \end{cases}
  \]
  Because $i\in\Updelta$, the element $\omega_i$ lies in the finite Weyl group $\cW_\Updelta \subset \cW_{\Updelta_\aff}$ which preserves the finite roots and maps $\imroot$ to itself.
  Hence, $\omega_i\cdot \updelta = \omega_i\cdot \upbeta + \uppi_\fJ(\imroot)$ for some curve class $\omega_i\cdot\upbeta$, which defines either of the GV invariants $n_{\omega_i\cdot\upbeta}$ or $n^+_{\omega_i\cdot\upbeta}$ if it is again effective.
  It was shown in~\cite[Corollary 5.3]{NW21} that $\omega_i\cdot \upbeta$ is effective if and only if it is not colinear to $[\curve_i]$, which finishes the proof.
\end{proof}

\subsection{Motivic invariants via rigidification}\label{ssec:rigid}

We wish to derive similar symmetries in the motivic setting, which turns out to be much more subtle issue: the virtual motives $[\fN_\updelta^{\upzeta\semis}]_\vir$ of the motives
have to be computed using the vanishing cycle $\upphi_{\tr(W)}$, which depend on the additional data of a \emph{Calabi--Yau structure}.
This additional structure can be expressed as a class in the Hochschild homology of $\Lambda$, and it is not immediate that this is preserved by our derived equivalences.
To circumvent this issue, we work with an extension of the NCCR over an affine neighbourhood; as in the following setup.

\begin{setup}\label{set:quivpotglob}
  We assume $\Lambda = \End_R(M)$ is an NCCR over $R$ of Dynkin type $(\Updelta_\aff,\cJ)$ described by a quiver with potential $(Q,W)$ as in Setup~\ref{set:quivpot}, and we additionally fix:
  \begin{itemize}
    \item a Gorenstein normal domain $S$ over $\C$,
    \item a point $\m\in\Spec S$ with a local isomorphism $\widehat S_{\m} \cong R$,
    \item an NCCR $\Uplambda = \End_S(\cM)$ of $S$ such that $\cM \otimes_S R \cong M$,
  \end{itemize}
  These conditions induce an $R$-algebra isomorphism $\Uplambda \otimes_S R \cong \Lambda$, and we identify $\D^b(\mod\Lambda)$ with its image in $\D^b(\mod\Uplambda)$ along the induced restriction functor $\D^b(\mod \Lambda) \hookrightarrow \D^b(\mod\Uplambda)$.
\end{setup}

Given an extension $\Uplambda$ of $\Lambda$ as above, every $S$-linear standard auto-equivalences $F\colon \D^b(\mod \Uplambda) \to \D^b(\mod \Uplambda)$ restricts to an $R$-linear standard auto-equivalence on $\D^b(\mod \Lambda)$.
Moreover, $F$ induces an $S$-linear map on the Hochschild homology
\[
  \HH_\bullet(F) \colon \HH_\bullet(\Uplambda) \to \HH_\bullet(\Uplambda).
\]
We showed in~\cite{vGar20} that the Calabi--Yau structure of $\Lambda$ by the restriction of $F$ to $\D^b(\mod \Lambda)$ if the action $\HH_3(F)$ is given by a scalar multiplication, which implies the following.

\begin{prop}\label{prop:mainsymmetry}
  With the assumptions of Setup~\ref{set:quivpotglob}, let $F\colon \D^b(\mod \Uplambda) \xrightarrow{\sim} \D^b(\mod \Uplambda)$ be a standard auto-equivalence satisfying $\HH_3(F) \in \C^\times$, and suppose there exists $\updelta\in\RR^\re(\Updelta,\cJ)$ and generic $\uptheta \in H_\updelta$ such that $F$ restricts to an exact equivalence
  \[
    \cS_\uptheta(\Lambda) \xrightarrow{\ \sim\ } \cS_{[F]\uptheta}(\Lambda).
  \]
  Then for every $k\in\N$ there is an equality of motivic BPS invariants
  \[
    \Omega(k\updelta) = \Omega(k [F]\updelta).
  \]
\end{prop}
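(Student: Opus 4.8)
The plan is to promote the categorical equivalence $\cS_\uptheta(\Lambda)\xrightarrow{\sim}\cS_{[F]\uptheta}(\Lambda)$ to an equality of Donaldson--Thomas partition functions, and then read off the asserted BPS identity from the uniqueness of the plethystic presentation. Compared with the numerical statement of Proposition~\ref{prop:numBPSsym}, the only genuinely new ingredient is that the motivic vanishing cycle $\upphi_{\tr(W)}$ is preserved, which is precisely where the auxiliary extension $\Uplambda$ and the hypothesis $\HH_3(F)\in\C^\times$ are needed.

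First I would record what is already available. By the discussion preceding the statement, $F$ restricts to an $R$-linear standard auto-equivalence $F_R$ of $\D^b(\mod\Lambda)$ with the same induced action $[F]$ on K-theory. Since $F_R$ is exact it preserves the Euler pairing, so $[F]$ carries $\{\updelta\mid\uptheta(\updelta)=0\}$ to $\{\updelta\mid[F]\uptheta(\updelta)=0\}$, sends $H_\updelta$ to $H_{[F]\updelta}$, and makes $[F]\uptheta$ a generic point of $H_{[F]\updelta}$; moreover $F_R$ preserves nilpotent finite-dimensional modules, i.e.\ $\jacc{Q,W}$-modules, and by hypothesis restricts to an exact equivalence of the abelian categories $\cS_\uptheta(\Lambda)$ and $\cS_{[F]\uptheta}(\Lambda)$.

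The main step, and the hard part, is to upgrade this to an identity of virtual motives. As a standard equivalence $F_R$ lifts to the DG-enhancements and is given by a two-sided tilting complex, hence acts on flat families of modules; combined with the previous paragraph it therefore induces, for every $d\in\N Q_0$ with $\uptheta(d)=0$, an isomorphism of moduli stacks $\fN_d^{\uptheta\semis}(Q,W)\xrightarrow{\sim}\fN_{[F]d}^{[F]\uptheta\semis}(Q,W)$, refining the bijection of $\C$-points from Lemma~\ref{lem:modulibij}. Because $\HH_3(F)\in\C^\times$, the result of~\cite{vGar20} shows that $F_R$ preserves the $3$-Calabi--Yau structure of $\Lambda$ up to a scalar, so this isomorphism is compatible with the induced $d$-critical structures, and hence with the motivic vanishing cycles; thus
\[
  [\fN_d^{\uptheta\semis}(Q,W)]_\vir=[\fN_{[F]d}^{[F]\uptheta\semis}(Q,W)]_\vir
\]
in $\Mot^\muhat$ for all such $d$. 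Writing $\sigma$ for the ring automorphism of the motivic power series ring that acts as $[F]$ on the exponents of the variables $t^i$ and trivially on $\Mot^\muhat$, summing over $d$ with $\uptheta(d)=0$ gives $\sigma\bigl(\Upphi^\uptheta(t)\bigr)=\Upphi^{[F]\uptheta}(t)$.

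Finally I would extract the invariants. The map $\sigma$ only relabels the monomials and fixes the pre-$\lambda$-ring structure on $\Mot^\muhat$, so it commutes with the plethystic exponential $\Sym$ and with division by $\L^\half-\L^\mhalf$. Applying $\sigma$ to the BPS Ansatz $\Upphi^\uptheta(t)=\Sym\bigl(\sum_{\uptheta(d)=0}\tfrac{\Omega(d)}{\L^\half-\L^\mhalf}\,t^d\bigr)$ and comparing with $\Upphi^{[F]\uptheta}(t)=\Sym\bigl(\sum_{[F]\uptheta(d')=0}\tfrac{\Omega(d')}{\L^\half-\L^\mhalf}\,t^{d'}\bigr)$, the uniqueness of the plethystic presentation forces $\Omega([F]d)=\Omega(d)$ for every $d\in\N Q_0$ with $\uptheta(d)=0$. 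Taking $d=k\updelta$, which satisfies $\uptheta(k\updelta)=0$ because $\uptheta\in H_\updelta$ and for which $[F](k\updelta)=k[F]\updelta$, yields $\Omega(k\updelta)=\Omega(k[F]\updelta)$ for all $k\in\N$, as claimed.
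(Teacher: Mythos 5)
The overall shape of your argument (categorical equivalence of semistable hearts $\Rightarrow$ equality of partition functions $\Rightarrow$ equality of $\Omega$'s) matches the paper's, but the middle step is exactly where the proof lives, and your treatment of it skips the key reduction the paper relies on. You pass directly from the equivalence $\cS_\uptheta(\Lambda)\xrightarrow{\sim}\cS_{[F]\uptheta}(\Lambda)$ to a claimed stack isomorphism $\fN_d^{\uptheta\semis}\xrightarrow{\sim}\fN_{[F]d}^{[F]\uptheta\semis}$ compatible with the motivic vanishing cycle, for \emph{every} $d$ with $\uptheta(d)=0$, and cite $\HH_3(F)\in\C^\times$ as giving this compatibility. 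But an equivalence of abelian hearts does not, out of the box, produce an isomorphism of quiver moduli stacks (these are built from representation varieties $\Rep_d(Q,W)$ presenting the quiver structure, and a derived auto-equivalence need not descend to such a presentation), and the vanishing-cycle compatibility is exactly the subtlety the paper warns about in the passage following \eqref{eq:stabequiv}: \emph{``a result like this does not follow immediately due to additional technical subtleties of the motivic setting. In particular, the virtual motives depend on an additional `Calabi--Yau enhancement' which is hard to control.''} Your sentence ``so this isomorphism is compatible with the induced $d$-critical structures, and hence with the motivic vanishing cycles'' asserts, without proof, a statement much stronger than what the cited source actually provides.

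What the paper does instead is first reduce, via Proposition~\ref{prop:stabfromroots}, to the special structure of the heart at a \emph{generic} $\uptheta\in H_\updelta$ for $\updelta\in\RR^\re(\Updelta_\aff,\cJ)$: there is a unique $\uptheta$-stable module $M$, and $\cS_\uptheta(\Lambda)=\langle M\rangle$ is its extension closure. This means the only relevant moduli are the stacks $\fP_{M,n}$ of $n$-fold self-extensions of $M$, which are intrinsic to the abelian category (not to the quiver presentation); the slope-$0$ partition function reduces to $\Upphi^\uptheta(t)=\sum_n[\fP_{M,n}]_\vir t^{n[M]}$ by~\cite[\S7.1]{DavisonThesis}. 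The hypothesis $\HH_3(F)\in\C^\times$ then enters through a \emph{specific} result, \cite[Theorem~6.1, Corollary~6.2]{vGar20}, which establishes $[\fP_{M,n}]_\vir=[\fP_{F(M),n}]_\vir$ precisely in this self-extension setting; it is not the general claim that standard equivalences preserving the CY class up to scalar preserve all vanishing-cycle virtual motives of arbitrary quiver moduli. Without the reduction to a single stable object and its extension stacks, the citation does not apply, and you would need to independently prove the much stronger comparison you are asserting. The final bookkeeping (applying the relabelling automorphism $\sigma$ to the power series ring, commuting it past $\Sym$, and reading off $\Omega(k\updelta)=\Omega(k[F]\updelta)$) is fine, but the engine driving it is not in place.
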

\begin{proof}
  For $\updelta\in \RR^\re(\Delta_\aff,\cJ)$ and $\uptheta\in H_\updelta$ generic, it follows from Proposition~\ref{prop:stabfromroots} that the subcategory $\cS_\uptheta(\Lambda)$ is the extension closure of a unique $\uptheta$-stable module $M\in\fdmod \Lambda$:
  \[
    \cS_\uptheta(\Lambda) = \<M\> \colonequals \{E\in\fdmod\Lambda\mid E \text{ is a repeated self extension of } M\}.
  \]
  Because $F$ restricts to an exact equivalence $F\colon \cS_\uptheta(\Lambda) \xrightarrow{\sim} \cS_{[F]\uptheta}(\Lambda)$ by assumption, it follows that $\cS_{[F]\uptheta}(Q,W) = \<N\>$ for some $N \cong F(M)$.
  Letting $\fP_{M,n},\fP_{N,n} \subset \fN(Q,W)$ denote the substacks of $n$-fold self-extensions of $M$ and $N$,
  it then follows from~\cite[\S7.1]{DavisonThesis} that the partition functions associated to the stability conditions $\uptheta$ and $[F]\uptheta$ can be computed as:
  \[
    \Upphi^{\uptheta}(t) = \sum_{n\geq 0} [\fP_{M,n}]_\vir \cdot t^{n\cdot [M]},\quad
    \Upphi^{[F]\uptheta}(t) = \sum_{n\geq 0} [\fP_{N,n}]_\vir \cdot t^{n\cdot[F(M)]}.
  \]
  With the assumption $\HH_3(F) \in \C^\times$ it then follows from~\cite[Theorem 6.1, Corollary 6.2]{vGar20} that
  $[\fP_{M,n}]_\vir = [\fP_{N,n}]_\vir$ for all $n\in \N$, so the result follows by comparing the BPS Ansatzes of the partition functions $\Upphi^\uptheta(t)$ and $\Upphi^{[F]\uptheta}(t)$.
\end{proof}

The condition $\HH_3(F) \in \C^\times$ can be guaranteed by choosing an algebra $\Uplambda$ for which the Hochschild homology does not admit any automorphisms other than scalar multiplications.

\begin{defn}
  A \emph{rigidification} of an NCCR $\Lambda$ is an NCCR $\Uplambda/S$ as in Setup~\ref{set:quivpotglob} for which the Hochschild homology satisfies $\Aut_S(\HH_3(\Uplambda)) = \C^\times$.
\end{defn}

The explicit computation of the Hochschild homology can be cumbersome, but the following geometric criterion shows that rigidifications are plentiful.

\begin{lemma}\label{lem:baserigid}
  Let $\Uplambda$ be an NCCR of $S$ as in Setup~\ref{set:quivpotglob}, and suppose $\Uplambda$ is the endomorphism algebra of a tilting bundle on a crepant resolution $X\to \Spec S$.
  Then $\HH_3(\Uplambda)$ is an invertible $S$-module, so in particular $\Uplambda$ is a rigidification if and only if $S^\times = \C^\times$.
\end{lemma}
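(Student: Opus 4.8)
The plan is to compute the Hochschild homology $\HH_\bullet(\Uplambda)$ by transporting it along the derived equivalence $\D^b(\coh X) \simeq \D^b(\mod\Uplambda)$ furnished by the tilting bundle, and then to identify $\HH_3$ with a concrete sheaf-cohomology group on the crepant resolution $X$. Since $X \to \Spec S$ is a crepant resolution of a $3$-dimensional Gorenstein singularity, $X$ is a smooth $3$-fold with trivial canonical bundle $\omega_X \cong \O_X$. The first step is to recall that Hochschild homology is a derived-Morita invariant: a tilting bundle $\cT$ on $X$ with $\End_X(\cT) \cong \Uplambda$ induces an isomorphism $\HH_\bullet(\Uplambda) \cong \HH_\bullet(X)$ of graded $S$-modules (the $S$-linearity coming from the fact that everything is $S$-linear over $\Spec S$, and $X \to \Spec S$ is projective so the relevant $\Hom$-spaces are finite $S$-modules). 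Then the Hochschild--Kostant--Rosenberg isomorphism for the smooth variety $X$ gives
\[
  \HH_n(X) \cong \bigoplus_{q - p = n} \hh^p(X, \Omega^q_X).
\]

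The second step is to pin down $\HH_3(X)$ using $\dim X = 3$ and $\omega_X \cong \O_X$. The only contribution to $\HH_3$ with $q - p = 3$ and $0 \le p \le q \le 3$ is the term $p = 0$, $q = 3$, so $\HH_3(X) \cong \hh^0(X, \Omega^3_X) = \hh^0(X, \omega_X) \cong \hh^0(X, \O_X)$. Since $X \to \Spec S$ is a crepant (hence proper, birational) resolution of the normal variety $\Spec S$, we have $\bR f_* \O_X \cong \O_S$ and in particular $\hh^0(X, \O_X) \cong S$. Thus $\HH_3(\Uplambda) \cong S$ as an $S$-module, which is invertible (indeed free of rank one). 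This proves the first assertion.

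For the final clause, observe that any $S$-linear automorphism of the rank-one free module $S$ is multiplication by a unit of $S$, so $\Aut_S(\HH_3(\Uplambda)) \cong S^\times$ under the identification $\HH_3(\Uplambda) \cong S$. Hence $\Aut_S(\HH_3(\Uplambda)) = \C^\times$ if and only if $S^\times = \C^\times$, which is exactly the condition for $\Uplambda$ to be a rigidification in the sense of the preceding definition.

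The main obstacle I anticipate is being careful about the $S$-linear structure throughout: one must check that the derived-Morita invariance of Hochschild homology and the HKR decomposition are compatible with the $S$-module structures (coming from $f\colon X \to \Spec S$ on the geometric side and from $\Uplambda$ being an $S$-algebra on the algebraic side), so that the identification $\HH_3(\Uplambda) \cong S$ is genuinely an isomorphism of $S$-modules and not merely of $\C$-vector spaces or abelian groups. This is where one would cite the relative version of HKR and the fact that the tilting equivalence is $S$-linear; once that bookkeeping is in place, the rest is the dimension count above together with the rationality of the singularity $\Spec S$.
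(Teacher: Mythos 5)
The paper's own proof is just a citation to \cite[Proposition 6.22]{vGar20}, so there is no in-text argument to compare against; what you have supplied is presumably the content of that reference. Your route — derived-Morita invariance of Hochschild homology across the tilting equivalence $\D^b(\coh X) \simeq \D^b(\mod\Uplambda)$, the Hochschild--Kostant--Rosenberg decomposition $\HH_n(X) \cong \bigoplus_{q-p=n}\hh^p(X,\Omega^q_X)$ on the smooth threefold $X$, and the degree count isolating $\hh^0(X,\Omega^3_X)$ as the sole contribution in degree $3$ — is the right one, and the final identification of $\Aut_S$ of an invertible module with $S^\times$ is correct. One imprecision worth flagging: crepancy gives $\omega_X \cong f^*\omega_S$ with $\omega_S$ invertible (Gorenstein), not $\omega_X \cong \O_X$; since $f^*$ is injective on Picard groups for proper birational $f$, the latter fails whenever $\omega_S$ is non-free, which Setup~\ref{set:quivpotglob} does not rule out. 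The fix is the projection formula together with $f_*\O_X = \O_S$ (normality suffices for this; rationality, which does hold here since a Gorenstein normal domain admitting a crepant resolution has canonical hence rational singularities, only controls the higher direct images), giving $\HH_3(\Uplambda) \cong \hh^0(X,\omega_X) \cong \omega_S$. This is invertible but not necessarily free — exactly the conclusion stated in the lemma — whereas your argument claims the stronger $\HH_3(\Uplambda) \cong S$. Since $\End_S(L) \cong S$ for any invertible $L$, the rigidification criterion $\Aut_S(\HH_3(\Uplambda)) \cong S^\times$ is unaffected. You rightly flag the $S$-linearity of the Morita and HKR isomorphisms as the remaining point to check carefully; that is indeed the substance one would have to extract from the cited reference.
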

\begin{proof}
  See~\cite[Proposition 6.22]{vGar20}.
\end{proof}

Using a rigidification, Proposition~\ref{prop:mainsymmetry} can now be used to deduce symmetries between BPS invariants from line-bundle twists and mutations. This will be the focus of the remainder of this section.

\subsection{Rigidifying line bundle twists}

We consider rigidifications of a standard NCCR by extending a crepant resolution of $\Spec R$ over a larger affine base.
This requires the following setup

\begin{setup}[Geometric rigidification]\label{set:geomrig}
  Let $Y\to \Spec R$ be a crepant resolution of a cDV singularity, and assume there exists a threedimensional normal domain $S$ with rational Gorenstein singularities satisfying $S^\times = \C^\times$, which admits a crepant resolution $X\to \Spec S$ with 1-dimensional fibres fitting into a pull-back square
  \[
    \begin{tikzcd}
      Y \ar[r]\ar[d] & X \ar[d] \\
      \Spec R \ar[r] & \Spec S
    \end{tikzcd}
  \]
  over a map $\Spec R \to \Spec S$ induced by the completion $\widehat S_\m \cong R$ at some $\m\in \Spec S$.
  Additionally, we assume that the generators $\cL_i \in \Pic Y$ extend to $\cL_{X,i}\in \Pic X$ such that $\cL_X \colonequals \bigotimes_{i\in J^c} \cL_{X,i}$ is an ample line bundle with $\hh^1(X,\cL_X^{-1})$ generated by lifts of a minimal set of generators for $\hh^1(Y,\O_Y(-1))$.
\end{setup}

With these assumptions, the global version of Van den Bergh's~\cite{VdBer04b} construction yields a tilting bundle on $X$ for which the endomorphism algebra is a rigidification of the standard NCCR of $Y$.

\begin{lemma}
  Let $X\to \Spec S$ be a geometric rigidification of $Y\to \Spec R$ as in Setup~\ref{set:geomrig}, then there exists a tilting bundle $\cQ$ on $X$ such that $\Uplambda = \End_X(\cQ)$ is a rigidification of $\Lambda_Y$.
\end{lemma}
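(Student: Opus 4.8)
The plan is to produce $\cQ$ directly from the global form of Van den Bergh's construction~\cite{VdBer04b} and then to verify, one at a time, the two features that make $\Uplambda \colonequals \End_X(\cQ)$ a rigidification of $\Lambda_Y$: that it restricts to $\Lambda_Y$ after completing at $\m$, and that $\Aut_S(\HH_3(\Uplambda)) = \C^\times$. First I would observe that the hypotheses of Setup~\ref{set:geomrig} are exactly those needed to run the global version of Theorem~\ref{thm:VdBtilt} on $X\to\Spec S$ ($S$ a normal Gorenstein domain over $\C$, $X\to\Spec S$ crepant with fibres of dimension $\leq 1$, and $\cL_X=\bigotimes_{i\in J^c}\cL_{X,i}$ ample with the prescribed generators of $\hh^1(X,\cL_X^{-1})$). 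This yields a tilting bundle $\cQ = \O_X\oplus\cQ_1\oplus\cdots\oplus\cQ_n$ on $X$, whose summands fit into short exact sequences $0\to\cL_{X,i}^\vee\to\cQ_i\to\O_X^{\ell_i-1}\to 0$ associated to a minimal generating set of $\hh^1(X,\cL_{X,i}^{-1})$, together with the statement that $\Uplambda = \End_X(\cQ)\cong\End_S(f_*\cQ)$ is an NCCR of $S$. Since $X\to\Spec S$ is a crepant resolution and $S^\times = \C^\times$, Lemma~\ref{lem:baserigid} applies verbatim and gives $\Aut_S(\HH_3(\Uplambda)) = \C^\times$, which is the rigidity condition.

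It then remains to match $\Uplambda$ with $\Lambda_Y$, i.e.\ to show $\cQ\otimes_S R\cong\cN$ as bundles on $Y = Y\times_{\Spec S}X$, so that applying $f_*$ and then $\End$ (using flat base change, as $R=\widehat S_\m$ is flat over $S$, and $f_*\cQ$ is finitely presented over $S$) yields $\Uplambda\otimes_S R\cong\End_Y(\cN) = \Lambda_Y$ and places us in Setup~\ref{set:quivpotglob}. Here I would follow the proof of Lemma~\ref{prop:NCDynkin} nearly word for word: because $\cL_{X,i}^\vee$ and $\cQ_i$ are locally free, the defining sequence pulls back along the flat map $Y\to X$ of the pull-back square to $0\to\cL_i^\vee\to\cQ_i|_Y\to\O_Y^{\ell_i-1}\to 0$ (using $\cL_{X,i}|_Y\cong\cL_i$), and cohomology and base change identify $\hh^1(Y,\cL_i^{-1})$ with $\hh^1(X,\cL_{X,i}^{-1})\otimes_S R$; the compatibility hypothesis in Setup~\ref{set:geomrig} on $\hh^1(X,\cL_X^{-1})$, together with Nakayama's lemma over the local ring $R$, then ensures that the generators chosen upstairs restrict to a minimal generating set downstairs, so $\cQ_i|_Y$ is precisely the extension defining $\cN_i$ in Theorem~\ref{thm:VdBtilt}. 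Hence $\cQ|_Y\cong\cN$ and $\Uplambda\otimes_S R\cong\Lambda_Y$.

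I expect the base-change step to be the real work. It is not automatic that a minimal generating set of the $S$-module $\hh^1(X,\cL_{X,i}^{-1})$ restricts to a minimal one of $\hh^1(Y,\cL_i^{-1})$, and making this precise is exactly what the hypothesis relating $\hh^1(X,\cL_X^{-1})$ to $\hh^1(Y,\O_Y(-1))$ in Setup~\ref{set:geomrig} is designed to supply; one has to descend the statement from the combined bundle $\cL_X$ (keeping in mind $\O_Y(1)=\bigotimes_{i\in J^c}\cL_i$) to the individual summands $\cL_{X,i}$, using the flatness of $Y\to X$ over the pull-back square together with cohomology and base change. By contrast, the claims that $\Uplambda$ is a genuine NCCR of $S$ (finite global dimension, maximal Cohen--Macaulay over $S$, $\cQ$ tilting) and that it satisfies the rigidity condition are formal once $\cQ$ is in hand, being direct appeals to~\cite{VdBer04b} and Lemma~\ref{lem:baserigid}.
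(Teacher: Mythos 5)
Your approach is genuinely different from the paper's, and as you yourself flag, the base-change step is where it breaks down. You plan to build $\cQ$ summand-by-summand via (a global form of) Theorem~\ref{thm:VdBtilt}, defining each $\cQ_i$ by an extension involving $\cL_{X,i}^\vee$ and a minimal generating set of the $S$-module $\hh^1(X,\cL_{X,i}^{-1})$, and then to match $\cQ_i|_Y$ with $\cN_i$ by the Nakayama argument of Lemma~\ref{prop:NCDynkin}. But Setup~\ref{set:geomrig} does not supply what you need for this: the only assumption involving $\hh^1$ is that the single $S$-module $\hh^1(X,\cL_X^{-1})$, for the \emph{tensor product} $\cL_X = \bigotimes_{i\in J^c}\cL_{X,i}$, is generated by lifts of a minimal generating set of $\hh^1(Y,\O_Y(-1))$. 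Nothing at all is assumed about the individual modules $\hh^1(X,\cL_{X,i}^{-1})$, and there is no hypothesis that the individual $\cL_{X,i}$ are globally generated on $X$ (only that $\cL_X$ is ample). Both are required if you want to run the summand-by-summand construction on $X$ and then restrict it compatibly. Your closing remark that ``one has to descend the statement from the combined bundle $\cL_X$ \ldots to the individual summands $\cL_{X,i}$'' names the gap correctly, but the descent is exactly what is missing, and I see no reason it should hold under the stated hypotheses.

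The paper's proof avoids the issue by never constructing $\cQ$ summand-by-summand. It uses the dual of~\cite[Proposition 3.2.5]{VdBer04b} to build a tilting bundle $\cQ$ in a single step, via one short exact sequence $0\to\cL_X^\vee\to\cQ\to\O_X^{r-1}\to 0$ attached to a generating set of $\hh^1(X,\cL_X^\vee)$; the hypothesis of Setup~\ref{set:geomrig} is invoked only to restrict \emph{this one sequence} to $Y$. Rather than matching extension data with that of $\cN$, the paper then appeals to the uniqueness statement~\cite[Proposition 3.5.3]{VdBer04b}: there is a unique tilting bundle on $Y$ with rank $r$ and $\operatorname{c}_1 = \O_Y(-1)$, so $\cQ|_Y \cong \cN$ automatically. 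No information about the individual $\cL_{X,i}$ is ever needed. A smaller issue with your plan: Theorem~\ref{thm:VdBtilt}, as stated in the paper, requires $S$ complete local, which the $S$ of Setup~\ref{set:geomrig} is not; the paper's direct citation of~\cite[Proposition 3.2.5]{VdBer04b} sidesteps that as well.
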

\begin{proof}
  Because $\cL_X$ is ample by assumption, it follows by the dual of~\cite[Proposition 3.2.5]{VdBer04b} there is a tilting bundle $\cQ \in \coh X$ with rank $r$ and determinant $\operatorname{c}_1(\cQ) = \cL_X^{-1}$, defined by a short exact sequence
  \[
    0 \to \cL_X^{\vee} \to \cQ \to \O_X^{r-1} \to 0
  \]
  corresponding to a choice of $r-1$ generators of $\hh^1(X,\cL_X^{\vee})$ over $S$.
  Because these generators lift a minimal set of generators of $\hh^1(X,\O_Y(-1))$ it follows again by~\cite[Proposition 3.2.5]{VdBer04b} that
  \[
    0 \to \cL_X^{\vee}|_Y \cong \O_Y(-1) \to \cQ|_Y \to \O_Y^{r-1} \to 0
  \]
  is a short exact sequence defining a tilting bundle $\cQ|_Y$ on $Y$ with rank $r$ and determinant $\operatorname{c}_1(\cQ|_Y) = \O_Y(-1) = \bigotimes_{i\in J^c} \cL_{X,i}$.
  But by~\cite[Proposition 3.5.3]{VdBer04b} the unique tilting bundle with this property is $\cN$, so $\cQ|_Y \cong \cN$.
  Writing $\uppi\colon X \to \Spec S$ for the contraction map, it follows by~\cite[Lemma 3.2.10]{VdBer04b} that $\cM \colonequals \uppi_*\cQ$ is a reflexive module giving an NCCR $\Uplambda = \End_S(\cM) \cong \End_X(\cQ) $.
  Moreover, the reflexive module satisfies
  \[
    \cM \otimes_S R = (\uppi_*\cQ)|_{\Spec R} \cong (\uppi|_Y)_* \cQ|_Y \cong (\uppi|_Y)_* \cN = M,
  \]
  and therefore satisfies Setup~\ref{set:quivpotglob}.
  It then follows from Lemma~\ref{lem:baserigid} that $\Uplambda$ is a rigidification of $\Lambda$.
\end{proof}
Using a geometric rigidification as above, we now find the following modularity property for the noncommutative BPS invariants of $Y$.
\begin{theorem}\label{thm:BPStwist}
  Let $Y\to \Spec R$ be a crepant resolution of type $(\Updelta,J)$ for which the standard NCCR is presented by a quiver with potential, and which admits a geometric rigidification as in Setup~\ref{set:geomrig}.
  Then for every $(\upchi,\upbeta) \in \hh_0(Y,\Z)\oplus\hh_2(Y,\Z)$ mapping into $\N Q_0$ there is an equality
  \[
    \Omega_Y(\upchi,\upbeta) = \Omega_Y(\upchi+d,\upbeta),
  \]
  where $d = \gcd(|\upbeta|) = \gcd(|\upbeta| \mid i \in J^c)$.
\end{theorem}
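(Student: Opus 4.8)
The plan is to upgrade the numerical tracking statement Theorem~\ref{thm:stabilitytwist}(1) to the motivic level via Proposition~\ref{prop:mainsymmetry}, using the geometric rigidification of Setup~\ref{set:geomrig} to supply the rigidity $\HH_3(F)\in\C^\times$ that the motivic comparison demands. First I would fix, by the lemma above, a tilting bundle $\cQ$ on $X$ with $\Uplambda = \End_X(\cQ)$ a rigidification of $\Lambda = \Lambda_Y$; moreover $\cQ|_Y\cong\cN$, as shown in its proof. For each $j\in J^c$ the line bundle $\cL_{X,j}\in\Pic X$ extending $\cL_j$ then yields an $S$-linear standard auto-equivalence
\[
  F_j\colon \D^b(\mod\Uplambda)\xrightarrow{\ \sim\ }\D^b(\coh X)\xrightarrow{\ -\otimes_X\cL_{X,j}\ }\D^b(\coh X)\xrightarrow{\ \sim\ }\D^b(\mod\Uplambda),
\]
obtained by conjugating the twist by the tilting equivalence $\RHom_X(\cQ,-)$. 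Its restriction along $\D^b(\mod\Lambda)\hookrightarrow\D^b(\mod\Uplambda)$ is identified with the twist $-(\cL_j)$ of the previous subsection, because $\cL_{X,j}$ restricts to $\cL_j$ and $\cQ$ restricts to $\cN$. Since $\Uplambda$ is a rigidification, $\Aut_S(\HH_3(\Uplambda)) = \C^\times$, so the $S$-linear automorphism $\HH_3(F_j)$ is a nonzero scalar.

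Next I would extract the elementary relations. Fix $(\upchi,\upbeta)$ mapping into $\N Q_0$; the case $\upbeta = 0$ is vacuous since then $d = 0$, so assume $\upbeta\neq 0$, whence $\upchi\geq 0$. Write $d = \gcd(|\upbeta|)$ and let $m = \gcd(\upchi,\upbeta)$ be the multiplicity, so $m\mid d$ and $\upchi = m\upchi_0$ with $\upchi_0\geq 0$. If $\tfrac1m\upbeta\notin\RR(\Updelta,J)$, then Theorem~\ref{thm:geomvanish} forces $\Omega_Y(\upchi,\upbeta) = 0$, and since $\gcd(\upchi+d,\upbeta) = m$ also $\Omega_Y(\upchi+d,\upbeta) = 0$, so the identity holds trivially. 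Otherwise $\tfrac1m\upbeta\in\RR(\Updelta,J)$; assume $\tfrac1m\upbeta\in\RR^+(\Updelta,J)$ (the case $\tfrac1m\upbeta\in\RR^-$ is symmetric, using part (2) of Theorem~\ref{thm:stabilitytwist} and the twists $-(\cL_j^{-1})$), so in particular $\upbeta$ is effective with all $\upbeta_j\geq 0$ and $d>0$. Then $\updelta\colonequals\tfrac1m\upbeta + \upchi_0\overline\imroot$ is a primitive element of $\RR^\re(\Updelta_\aff,J)$ by Lemma~\ref{lem:realRR}, and Theorem~\ref{thm:stabilitytwist}(1), applied to $(\upchi_0,\tfrac1m\upbeta)$, produces a generic $\uptheta\in H_\updelta$ for which every twist $-(\cL_j)$ restricts to an equivalence $\cS_\uptheta(\Lambda)\xrightarrow{\ \sim\ }\cS_{[\cL_j]\uptheta}(\Lambda)$. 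Applying Proposition~\ref{prop:mainsymmetry} to $F_j$ with this $\updelta$ and $k = m$ gives $\Omega(m\updelta) = \Omega(m[\cL_j]\updelta)$; since $[\cL_j]$ acts on $\KK_0(\fdmod\Lambda)\cong\hh_0(Y,\Z)\oplus\hh_2(Y,\Z)$ by $(\upchi',\upbeta')\mapsto(\upchi'+\cL_j\cdot\upbeta',\upbeta')$ (Lemma~\ref{lem:Ktranslation}(1)) and $\cL_j\cdot\upbeta = \upbeta_j$, this says exactly
\[
  \Omega_Y(\upchi,\upbeta) = \Omega_Y(\upchi+\upbeta_j,\upbeta)\qquad(j\in J^c),
\]
and the same reasoning applies verbatim to any admissible $\upchi'$ in place of $\upchi$.

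Finally I would combine these relations. Since $\gcd(\upchi'+s,\upbeta) = m$ for every $s$ in the numerical semigroup $\Sigma\subseteq\N$ generated by $\{\upbeta_j\mid j\in J^c\}$ (every such $s$ is divisible by $d$), the hypotheses of the previous paragraph persist along any chain of such steps, so iterating yields $\Omega_Y(\upchi',\upbeta) = \Omega_Y(\upchi'+s,\upbeta)$ for all admissible $\upchi'$ and all $s\in\Sigma$. Because $\gcd(\upbeta_j) = d$, the semigroup $\Sigma$ contains $n_0 d$ and $(n_0+1)d$ for some $n_0$, and then, shifting $\upchi+d$ by $n_0d\in\Sigma$ and $\upchi$ by $(n_0+1)d\in\Sigma$,
\[
  \Omega_Y(\upchi+d,\upbeta) = \Omega_Y(\upchi+(n_0+1)d,\upbeta) = \Omega_Y(\upchi,\upbeta),
\]
which is the assertion. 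The main obstacle is conceptual rather than computational: Theorem~\ref{thm:stabilitytwist} only concerns $\Lambda$, and a twist $-(\cL_j)$ on $\D^b(\mod\Lambda)$ by itself does not control the Calabi--Yau enhancement entering the virtual motives. The substantive point is therefore to realise that twist as the restriction of an $S$-linear standard auto-equivalence of the rigidified algebra $\Uplambda$ and to verify $\HH_3(F_j)\in\C^\times$; the sharpening from the individual steps $\upbeta_j$ to the gcd step $d$ is then the numerical-semigroup bookkeeping above.
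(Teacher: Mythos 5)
Your proposal is correct and follows the same basic strategy as the paper: dispose of the edge cases via Theorem~\ref{thm:geomvanish}; for $\tfrac1m\upbeta\in\RR^\pm(\Updelta,J)$ invoke Theorem~\ref{thm:stabilitytwist} to find a generic $\uptheta$ on the relevant hyperplane for which a line-bundle twist restricts to an equivalence of stable subcategories; then lift this to a motivic equality via Proposition~\ref{prop:mainsymmetry}, using the geometric rigidification to guarantee $\HH_3(F)\in\C^\times$. You are also right that the substantive point is realising each twist $-(\cL_j)$ as the restriction of an $S$-linear standard auto-equivalence of $\Uplambda$; the paper leaves this implicit, so making it explicit is a welcome clarification.

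The only genuine departure is the final arithmetic. The paper chooses a single B\'ezout expression $d=\sum_{i\in J^c}m_i|\upbeta_i|$, splits it into positive and negative parts, realises those by globally generated bundles $\cL_+$ and $\cL_-$ on $X$, and cancels in one move. You instead iterate the elementary relations $\Omega_Y(\upchi',\upbeta)=\Omega_Y(\upchi'+\upbeta_j,\upbeta)$ and then close the gap by a Chicken--McNugget/Frobenius argument in the numerical semigroup $\Sigma=\langle\upbeta_j\rangle$: since the $\upbeta_j$ have $\gcd$ equal to $d$, both $n_0 d$ and $(n_0+1)d$ lie in $\Sigma$ for $n_0\gg0$, and comparing $\Omega_Y(\upchi,\upbeta)$ and $\Omega_Y(\upchi+d,\upbeta)$ to the common value $\Omega_Y(\upchi+(n_0+1)d,\upbeta)$ gives the claim. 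Both routes are correct; the paper's is slightly shorter, yours avoids picking B\'ezout coefficients and isolates the elementary building blocks $\upchi\mapsto\upchi+\upbeta_j$ more transparently. One small bookkeeping point you handle correctly that deserves emphasis: the iterates stay admissible because $s\in\Sigma$ is a nonnegative multiple of $d$ and $m\mid d$, so $\gcd(\upchi+s,\upbeta)=m$ and $(\upchi+s)/m\in\N$ throughout the chain.
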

\begin{proof}  
  We first consider some edge cases. If $\upbeta = 0$ then $d = \gcd(0,\ldots,0) = 0$ and the statement is vacuously true. If $\upbeta \neq 0$ and $\tfrac1d \upbeta \not\in\RR(\Updelta,J)$, then it follows from Theorem~\ref{thm:geomvanish} that
  \[
    \Omega_Y(\upchi,\upbeta) = 0 = \Omega_Y(\upchi+d,\upbeta),
  \]
  where we note that $\Omega_Y(\upchi+d,\upbeta) = \Omega(\upbeta + (\upchi+d)\overline\imroot)$ is well-defined because $\upbeta + (\upchi+d)\overline\imroot \in \N Q_0$ whenever $\upbeta + \upchi \overline\imroot\in \N Q_0$.
  Hence we are left with the two cases $\tfrac1d \upbeta \in \RR^+(\Delta,J)$ and $\tfrac1d\upbeta \in \RR^-(\Updelta,J)$.
  In either case, B\'ezout's identity yields integers $m_1,\ldots,m_n \in \Z$ such that $d = \sum_{i\in J^c} m_i|\upbeta_i|$ and we fix two globally generated line bundles on $X$:
  \[
    \cL_+ \colonequals \bigotimes_{\substack{i\in J^c\\ m_i > 0}} (\cL_{X,i})^{\otimes m_i},\quad
    \cL_- \colonequals \bigotimes_{\substack{i\in J^c\\ m_i < 0}} (\cL_{X,i})^{\otimes -m_i}.
  \]
  If $\upbeta \in \RR^+(\Delta,J)$ then $\upbeta_i \geq 0$, and hence $\upchi \geq 0$ because $\upbeta + \upchi\overline\imroot \in \N Q_0$ by assumption.
  Hence, Theorem~\ref{thm:stabilitytwist}(1) implies that for some generic $\uptheta \in H_{\upbeta + \upchi\overline\imroot}$ for which $\cL_+$ induces an equivalence
  \[
    \cS_\uptheta(\Lambda) \xrightarrow{-(\cL_+)} \cS_{[\cL_+]\uptheta}(\Lambda).
  \]
  The parameter $[\cL_+]\uptheta$ lies generically on the hyperplane orthogonal to $\upbeta + (\upchi + \sum_{m_i>0} \upbeta_i) \imroot$, so
  \[
    \Omega_Y(\upchi,\upbeta) = \Omega_Y(\upchi + \textstyle\sum_{m_i>0} m_i\upbeta_i,\upbeta).
  \]
  by Proposition~\ref{prop:mainsymmetry}.
  Likewise, applying Theorem~\ref{thm:stabilitytwist} to $\upbeta + (\upchi + d)\overline\imroot \in \N Q_0$ and the line bundle $\cL_-$, it follows by Proposition~\ref{prop:mainsymmetry} that
  \[
    \Omega_Y(\upchi + d,\upbeta) = \Omega_Y(\upchi + d + \textstyle\sum_{m_i<0} -m_i\upbeta_i,\upbeta) = \Omega_Y(\upchi + \textstyle\sum_{m_i>0} m_i\upbeta_i,\upbeta) = \Omega_Y(\upchi,\upbeta).
  \]
  The case $\upbeta \in \RR^-(\Delta,J)$ can be shown analogously using the dual bundles $\cL_+^{-1}$ and $\cL_-^{-1}$.
\end{proof}

A second modularity can be deduced by combining line-bundle twists with a cohomological shift.

\begin{theorem}\label{thm:BPSdual}
    Let $Y\to \Spec R$ be a crepant resolution of type $(\Updelta,J)$ for which the standard NCCR is presented by a quiver with potential, and which admits a geometric rigidification as in Setup~\ref{set:geomrig}.
    Then for every $(\upchi,\upbeta) \in \hh_0(Y,\Z)\oplus\hh_2(Y,\Z)$ mapping into $\N Q_0$ there are equalities
    \[
      \Omega_Y(\upchi,\upbeta) = \Omega_Y(nd - \upchi, -\upbeta),
    \]
    for all $n\in \N$ such that $(nd-\upchi,-\upbeta)$ maps into $\N Q_0$, where $d = \gcd(|\upbeta|)$ is the multiplicity of $\upbeta$.
\end{theorem}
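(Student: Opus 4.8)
The plan is to run the argument in close parallel with the proof of Theorem~\ref{thm:BPStwist}, but using the \emph{shifted} twist $F_n\colon E\mapsto E(\O_Y(-n))[1]$ of Theorem~\ref{thm:stabilitytwist}(3) in place of the ordinary line-bundle twists, and feeding it through the rigidification machinery of Proposition~\ref{prop:mainsymmetry}. First I would dispose of the degenerate cases exactly as there: if $\upbeta = 0$ then $d = \gcd(|\upbeta|) = 0$, so the two membership hypotheses force $\upchi = 0$ and the identity is trivial; if $\upbeta\neq 0$ but $\tfrac1d\upbeta\notin\RR(\Updelta,J)$, then by Proposition~\ref{prop:rrmult} also $\tfrac1{d'}\upbeta\notin\RR(\Updelta,J)$ for the coarser multiplicity $d' = \gcd(\upchi,\upbeta)$ appearing in Theorem~\ref{thm:geomvanish}, and since $\gcd(\upchi,\upbeta) = \gcd(nd-\upchi,\upbeta)$ the same holds for the flipped pair, so Theorem~\ref{thm:geomvanish} gives $\Omega_Y(\upchi,\upbeta) = 0 = \Omega_Y(nd-\upchi,-\upbeta)$.

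In the remaining case $\tfrac1{d'}\upbeta\in\RR(\Updelta,J)$ with $d' = \gcd(\upchi,\upbeta)$, I would first normalise: using the sign symmetry of restricted roots and (if necessary) swapping the roles of $(\upchi,\upbeta)$ and $(nd-\upchi,-\upbeta)$, I may assume $\tfrac1{d'}\upbeta\in\RR^+(\Updelta,J)$, and bumping $\upchi$ up by $d$ via Theorem~\ref{thm:BPStwist} I may also assume $\upchi>0$. Then $\updelta_0 := \tfrac1{d'}\upbeta + \tfrac\upchi{d'}\overline\imroot$ is an \emph{indivisible} real restricted root by Lemma~\ref{lem:realRR}, with $d'\updelta_0 = \upbeta+\upchi\overline\imroot$. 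Next I extend $\O_Y(-n) = \bigotimes_{i\in J^c}\cL_i^{-n}$ to $\cL_X^{\otimes(-n)}$ on the rigidifying resolution $X\to\Spec S$ of Setup~\ref{set:geomrig}, let $G = \RHom_X(\cQ,-)\colon\D^b(\coh X)\xrightarrow{\sim}\D^b(\mod\Uplambda)$, and set $\widetilde F_n := [1]\circ G\circ(-\otimes_X\cL_X^{-n})\circ G^{-1}$, a composition of standard $S$-linear equivalences of $\D^b(\mod\Uplambda)$. Because $\cL_X|_Y = \O_Y(1)$ and $\cQ|_Y\cong\cN$, its restriction along $\D^b(\mod\Lambda_Y)\hookrightarrow\D^b(\mod\Uplambda)$ is exactly the functor $F_n$ of Theorem~\ref{thm:stabilitytwist}(3); and since $\widetilde F_n$ is $S$-linear and standard, $\HH_3(\widetilde F_n)$ is an $S$-linear automorphism of $\HH_3(\Uplambda)$, hence lies in $\C^\times$ by the rigidification hypothesis. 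For $n\gg 0$, Theorem~\ref{thm:stabilitytwist}(3) supplies a generic $\uptheta\in H_{\updelta_0}$ at which $F_n$ restricts to an equivalence $\cS_\uptheta(\Lambda_Y)\xrightarrow{\ \sim\ }\cS_{[\widetilde F_n]\uptheta}(\Lambda_Y)$, so Proposition~\ref{prop:mainsymmetry} applied to $\updelta_0$ with $k = d'$ yields
\[
  \Omega_Y(\upchi,\upbeta) = \Omega(d'\updelta_0) = \Omega\big(d'\,[\widetilde F_n]\updelta_0\big) = \Omega\big([\widetilde F_n](\upbeta+\upchi\overline\imroot)\big) = \Omega_Y\big(n\,\textstyle\sum_{i\in J^c}\upbeta_i - \upchi,\ -\upbeta\big),
\]
where the last step computes the action of $[\widetilde F_n]$ on Grothendieck groups from Lemma~\ref{lem:Ktranslation}(1) together with $[1] = -\id$. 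Finally, since all $\upbeta_i\geq 0$ and $d = \gcd(|\upbeta|)$ divides $\sum_i\upbeta_i$, the first coordinate $n\sum_i\upbeta_i-\upchi$ is congruent to $-\upchi$, hence to $nd-\upchi$, modulo $d$; both these pairs, and all intermediate pairs $(md-\upchi,-\upbeta)$ obtained by adding multiples of $d\overline\imroot$, lie in $\N Q_0$, so repeated application of Theorem~\ref{thm:BPStwist} slides the first coordinate from $n\sum_i\upbeta_i-\upchi$ to $nd-\upchi$. This gives $\Omega_Y(\upchi,\upbeta) = \Omega_Y(nd-\upchi,-\upbeta)$ for every admissible $n$, and the $\RR^-$ case follows by applying the $\RR^+$ statement to the flipped pair $(nd-\upchi,-\upbeta)$ and sliding back.

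The hard part is not any new geometry but the arithmetic bookkeeping: one must carefully reconcile the competing notions of multiplicity ($\gcd(|\upbeta|)$ versus $\gcd(\upchi,\upbeta)$ versus $\gcd(\upbeta+\upchi\overline\imroot)$), track precisely which positivity conditions are needed for Theorem~\ref{thm:stabilitytwist}(3) and for the sliding via Theorem~\ref{thm:BPStwist} to be legitimate, and check that $\widetilde F_n$ really does restrict to the $R$-linear functor $F_n$ on $\D^b(\mod\Lambda_Y)$ — the whole purpose of passing to the rigidification $\Uplambda$ over $\Spec S$ being precisely to make $\HH_3(\widetilde F_n)\in\C^\times$ automatic, so that Proposition~\ref{prop:mainsymmetry} applies without any further control of Calabi--Yau structures.
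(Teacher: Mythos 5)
Your proof is correct and follows essentially the same route as the paper: dispose of the edge cases via Theorems~\ref{thm:geomvanish} and~\ref{thm:BPStwist}, then apply Theorem~\ref{thm:stabilitytwist}(3) to get the equivalence of semistable categories, feed it into Proposition~\ref{prop:mainsymmetry} via the rigidification, and slide the Euler characteristic into position with further applications of Theorem~\ref{thm:BPStwist}. The only place you go beyond the paper is in spelling out the interaction between the two competing multiplicities $d=\gcd(|\upbeta|)$ and $d'=\gcd(\upchi,\upbeta)$ via Proposition~\ref{prop:rrmult} in the vanishing edge case, and in constructing the extended auto-equivalence $\widetilde F_n$ of $\D^b(\mod\Uplambda)$ explicitly; both are implicit in the paper and your additional care is well placed.
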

\begin{proof}
  We consider first some edge cases. If $(\upchi,\upbeta)= (0,0)$ then $d= 0$ so that trivially
  \[
    \Omega_Y(\upchi,\upbeta) = \Omega_Y(0,0) = \Omega_Y(nd - \upchi,\upbeta).
  \]
  If $(\upchi,\upbeta) = (\upchi,0)$ mapping to $\upchi\overline\imroot \in \N Q_0$, then $\upchi > 0$ and $d = \gcd(0)$, so $(n\cdot d - \upchi,0) = (-\upchi,0)$ maps to a negative element in $\Z Q_0$ for all $n$, and the statement does not apply.
  Lastly, if $\tfrac1d \upbeta \not\in\RR(\Updelta,J)$ then also $-\tfrac1d\upbeta\not\in\RR(\Updelta,J)$, so Theorem~\ref{thm:geomvanish} implies that for all suitable $n\in\N$
  \[
    \Omega_Y(\upchi,\upbeta) = 0 = \Omega_Y(nd - \upchi,-\upbeta).
  \]
  Hence it suffices to consider the cases $\tfrac1d\upbeta\in \RR^+(\Delta,J)$ and $\upbeta \in \RR^-(\Delta,J)$.

  If $\upbeta \in \RR^+(\Delta,J)$ then we may apply Theorem~\ref{thm:BPStwist} to assume $\upchi > 0$, so that Theorem~\ref{thm:stabilitytwist}(3) implies that there exists an $N\gg 0$ and generic $\uptheta \in H_{\upbeta+\upchi\overline\imroot}$ for which there is an equivalence
  \[
    \cS_\uptheta(\Lambda) \xrightarrow{\ -(-N)[1]\ } \cS_{-[\O_Y(-N)]\uptheta}(\Lambda),
  \]
  where we note that $-[\O_Y(-N)]\uptheta$ is orthogonal to $\upbeta + (\upchi + N \sum_i \upbeta_i)\overline\imroot$.
  Because the functor $\O_Y(1)$ is the restriction of the ample line bundle $\bigotimes_{i\in J^c} \cL_i^X$, it then follows from Proposition~\ref{prop:mainsymmetry} that
  \[
    \Omega_Y(\upchi,\upbeta) = \Omega_Y(N \textstyle\sum_i \upbeta_i - \upchi, -\upbeta) = \Omega_Y(N m d - \upchi, -\upbeta),
  \]
  for $m = \sum_i\upbeta/d \in \Z$. Now for any other integer $n$ such that $(nd-\upchi,-\upbeta)$ maps into $\N Q_0$, applying Theorem~\ref{thm:BPStwist} multiple times yields an equality
  \[
    \Omega_Y(nd-\upchi,-\upbeta) = \Omega_Y(Nmd-\upchi,-\upbeta) = \Omega_Y(\upchi,\upbeta).
  \]
  Finally, consider the case $\upbeta \in \RR^-(\Updelta,J)$. Then $-\upbeta \in \RR^+(\Delta,J)$ so that any $(Nd - \upchi,-\upbeta)$ mapping into $\N Q_0$ falls into the previous case, and therefore
  \[
    \Omega_Y(Nd-\upchi,-\upbeta) = \Omega_Y(Nd-(Nd-\upchi),-(-\upbeta)) =  \Omega_Y(\upchi,\upbeta).\qedhere
  \]
\end{proof}

\subsection{Rigidifying mutations}

We now want to extend the mutation auto-equivalences of $\Lambda = \End_R(M)$ to a rigidification $\Uplambda = \End_S(\cM)$ as in Setup~\ref{set:quivpotglob}.
If $\cM$ contains a summand $\cM_i\subset \cM$ such that $\cM_i \otimes_S R \cong M_i$, then the mutation of Iyama--Wemyss~\cite{IW14} is still defined: it is given by $\upmu_i^-(\cM) = \cM/\cM_i \oplus \cK^*$ for a \emph{choice} of exchange sequence
\begin{equation}\label{eq:exchange}
  0 \to \cK \to \cZ^* \xrightarrow{\ f\ } \cM^*,
\end{equation}
where $\cZ^*\xrightarrow{f} \cM^*$ is again a right $\add (\cM/\cM_i)^*$-approximation and duals are with respect to $S$.
The construction in the proof of~\cite[Theorem 6.8]{IW14} now yields a tilting mutation of $\Uplambda$ at the corresponding projective $\cP_i \colonequals \Hom_S(\cM,\cM_i)$, which is given by $\cT = \Uplambda/\cP_i \oplus \cT_i$ for $\cT_i$ a 2-term resolution of
\[
  \coker\left(\Hom_S(\cM,\cM) \xrightarrow{\ f^*\ } \Hom_S(\cM,\cZ)\right).
\]
It is shown in~\cite[Theorem 6.8]{IW14} that $\cT$ defines a derived equivalence between $\Uplambda$ and a new NCCR $\End_S(\upmu_i^-(\cM))$ which is well-defined up to Morita equivalence. We have the following.
\begin{lemma}\label{lem:mutrest}
  Suppose $\upmu_i^-(\cM) = \cM$ for some choice of exchange sequence.
  Then the the auto-equivalence $\RHom_\Uplambda(\cT,-)$ restricts to the functor $\RHom_\Lambda(T,-)$ on $\D^b(\mod \Lambda)$.
\end{lemma}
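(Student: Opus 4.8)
The plan is to reduce everything to a base-change statement for tilting complexes. Recall from Setup~\ref{set:quivpotglob} that the restriction functor $\D^b(\mod\Lambda)\hookrightarrow\D^b(\mod\Uplambda)$ is restriction of scalars along the algebra map $\Uplambda\to\Uplambda\otimes_S R\cong\Lambda$, which is base change along the flat ring map $S\to R=\widehat S_{\m}$. Its left adjoint on module categories is $-\otimes_\Uplambda\Lambda\cong-\otimes_S R$, and since $\cT$ is a bounded complex of finitely generated projective $\Uplambda$-modules this tensor product already computes the derived one and lands in $\KK^b(\proj\Lambda)$. Once we know $\cT\otimes_\Uplambda\Lambda\cong T$ in $\KK^b(\proj\Lambda)$, the lemma follows formally: for $N\in\D^b(\mod\Lambda)$ viewed inside $\D^b(\mod\Uplambda)$ the degreewise tensor--hom adjunction $\Hom_\Uplambda(\cP,N)\cong\Hom_\Lambda(\cP\otimes_\Uplambda\Lambda,N)$ for $\cP\in\proj\Uplambda$ assembles into a natural isomorphism
\[
  \RHom_\Uplambda(\cT,N)\ \cong\ \RHom_\Lambda(\cT\otimes_\Uplambda\Lambda,\,N)\ \cong\ \RHom_\Lambda(T,N),
\]
compatible with shifts and triangles, and compatible with the module structures over $\End_\Uplambda(\cT)$ and $\End_\Lambda(T)\cong\End_\Uplambda(\cT)\otimes_S R$; this is exactly the assertion that $\RHom_\Uplambda(\cT,-)$ restricts to $\RHom_\Lambda(T,-)$.

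To identify $\cT\otimes_\Uplambda\Lambda$, I would apply the exact functor $-\otimes_S R$ to the whole construction of $\cT$. Because $R$ is flat over $S$ and every module occurring is finitely generated, $-\otimes_S R$ commutes with $\Hom_S(-,-)$, hence with $S$-duals, so the exchange sequence $0\to\cK\to\cZ^*\xrightarrow{\,f\,}\cM^*$ restricts to an exact sequence
\[
  0\to\cK\otimes_S R\to(\cZ\otimes_S R)^*\xrightarrow{\ f\otimes R\ }M^*,
\]
and the defining surjectivity of $\Hom_S(\cM^*/\cM_i^*,-)$ on $f$ is preserved, so $f\otimes R$ is again a (minimal) right $\add(M^*/M_i^*)$-approximation. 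Thus $Z\colonequals\cZ\otimes_S R$, $K\colonequals\cK\otimes_S R$ and $f\otimes R$ constitute an exchange sequence for $\Lambda$ at $i$, and the hypothesis $\upmu_i^-(\cM)=\cM$ gives $\upmu_i^-(M)=\upmu_i^-(\cM)\otimes_S R\cong M$, so this mutation is indeed an autoequivalence of $\D^b(\mod\Lambda)$. Since the primitive idempotents of $\Uplambda$ restrict to those of $\Lambda$ (matching $\cM_j\otimes_S R\cong M_j$, hence $\cP_j\otimes_\Uplambda\Lambda\cong P_j$), the functor $-\otimes_\Uplambda\Lambda$ sends $\Uplambda/\cP_i$ to $\bigoplus_{j\neq i}P_j$ and, being exact and carrying projectives and their $\add$-approximations over $\Uplambda$ to the corresponding data over $\Lambda$, sends the $2$-term complex $\cT_i$ to the $2$-term resolution of $\coker\!\big(\Hom_R(M,M)\xrightarrow{f^*}\Hom_R(M,Z)\big)$. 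By the uniqueness of the tilting mutation at the vertex $i$ used in~\cite{IW14}, this is precisely $T_i$, whence $\cT\otimes_\Uplambda\Lambda\cong\bigoplus_{j\neq i}P_j\oplus T_i=T$.

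I expect the only genuine point of care to be the bookkeeping in the last step: one must know that the exchange sequence chosen over $S$ can be taken so that its restriction is the \emph{minimal} exchange sequence defining $T$ over the complete local ring $R$ — i.e.\ that $f\otimes R$ remains right minimal — or else absorb any extra contractible two-term summands $[P\xrightarrow{\ \id\ }P]$ into the identification in $\KK^b(\proj\Lambda)$, using that the tilting mutation at $i$ is unique up to isomorphism of basic complexes. Everything else (flat base change for $\Hom$ and duals of finitely generated modules, exactness of $-\otimes_S R$, and the tensor--hom adjunction) is routine.
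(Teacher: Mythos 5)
Your proposal is correct and follows essentially the same route as the paper: base-change the exchange sequence along the flat map $S\to R$, deduce that $\cT\otimes_S R$ is a 2-term tilting complex satisfying the characterising properties of the wall-crossing mutation $T$, and conclude by the uniqueness of that mutation. You are more explicit about why $\cT\otimes_\Uplambda\Lambda\cong T$ in $\KK^b(\proj\Lambda)$ implies the restriction statement (via the tensor--hom adjunction for the flat extension), whereas the paper compresses this into its final sentence, and your closing caveat about right-minimality possibly failing under base-change — and being harmless in $\KK^b$ — is exactly the issue the paper quietly absorbs by observing that $\End_\Lambda(\cT\otimes_S R)\cong\Lambda$ forces $\cT\otimes_S R$ to be basic.
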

\begin{proof}
  Because $R = \widehat S_\m$ is a flat over $S$, the base-change of~\eqref{eq:exchange} yields an exact sequence
  \[
    0 \to \underbrace{\cK \otimes_S R}_{= K} \to \underbrace{\cZ^* \otimes_S R}_{ = Z^*} \xrightarrow{\ f\otimes R\ } \underbrace{\cM^* \otimes_S R}_{= M^*},
  \]
  such that the induced map $\Hom_R(M^*/M_i^*,Z) \to \Hom_R(M/M_i,M^*)$ is surjective.
  It follows that this is an exchange sequence defining the mutation $\upmu_i^-(M) = M/M_i \oplus K^* \cong \upmu_i^-(\cM) \otimes_S R \cong M$.
  Moreover, flatness implies that $T_i = \cT_i \otimes_S R$ is a 2-term projective resolution of
  \[
    \coker\left(\Hom_S(\cM,\cM) \xrightarrow{\ f^*\ } \Hom_S(\cM,\cZ)\right) \otimes_S R \cong
    \coker\left(\Hom_R(M,M) \xrightarrow{\ f^* \otimes_S R\ } \Hom_R(M,Z)\right).
  \]
  Therefore~\cite[Theorem 6.8]{IW14} implies that $\cT \otimes_S R$ is a tilting complex with endomorphism algebra $\End_\Lambda(\cT\otimes_S R) \cong \End_R(\upmu_i^-(M)) = \Lambda$ such that $\add(\cT \otimes_S R) \neq \add \Lambda$.
  Hence, $\cT \otimes_S R$ is a basic 2-term tilting complex of the form $\Lambda/P_i \oplus \cT_i \otimes_S R$ with $\cT_i\otimes_S R \not\cong P_i$, and therefore has to be the wall-crossing mutation $T$ of $\Lambda$ in $H_{\overline\upalpha_i}$.
  Hence, the functor $\RHom_\Uplambda(\cT,-)$ restricts to $\RHom_\Lambda(T,-)$ on $\D^b(\mod \Lambda) \subset \D^b(\mod \Uplambda)$, which implies the result.
\end{proof}

The condition $\upmu_i^-(\cM) = \cM$ is much harder to control in the non-local setting, but can be guaranteed by insisting that the NCCR is graded: in what follows we assume that $S$ admits a grading $S = \C \oplus \bigoplus_{n>0} S_n$ such that $\m = \bigoplus_{n>0} S_0$, and $\Uplambda$ has the structure of a graded $S$-algebra
\[
  \Uplambda = \bigoplus_{n=0}^\infty \Uplambda^n = \prod_{i\in Q_0} \C e_i \oplus \bigoplus_{n>0}^\infty \Uplambda^n.
\]
We let $\mod^\Z\Uplambda$ denote the category of finitely generated graded $\Uplambda$-modules, let $-(n)\colon \mod^\Z \Uplambda \to \mod^\Z\Uplambda$ the shift functor, and let $U\colon \mod^\Z\Uplambda \to \mod \Uplambda$ denote the forgetful functor.
In this graded setting, the projectives $\proj^\Z\Uplambda\subset\mod^\Z\Uplambda$ again form a Krull-Schmidt category.

\begin{lemma}
  There is an equivalence $\proj^\Z\!\Uplambda^0 \to \proj^\Z\!\Uplambda$ induced by the functor $V\mapsto V\otimes_{\Uplambda^0} \Uplambda$. In particular, $\proj^\Z \Uplambda$ is a Krull-Schmidt category with indecomposables $\cP_i(n) = e_i\Uplambda(n)$.
\end{lemma}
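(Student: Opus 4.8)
The plan is to deduce both assertions from the single structural fact that every finitely generated graded projective $\Uplambda$-module is a finite direct sum of shifts $\cP_i(n) = e_i\Uplambda(n)$, $i\in Q_0$, $n\in\Z$. First I would record that $\Uplambda^0 = \prod_{i\in Q_0}\C e_i$ is semisimple, so that $\proj^\Z\Uplambda^0 = \mod^\Z\Uplambda^0$ is the additive hull of the graded simple modules $\C e_i(n)$, and that the functor $F = (-)\otimes_{\Uplambda^0}\Uplambda$ sends $\C e_i(n)$ to $e_i\Uplambda(n) = \cP_i(n)\in\proj^\Z\Uplambda$; thus $F$ is well defined with values in $\proj^\Z\Uplambda$, and it is faithful because $P\mapsto P\otimes_\Uplambda\Uplambda^0 = P/P\Uplambda^{>0}$ retracts it on objects, where $\Uplambda^{>0} = \bigoplus_{n>0}\Uplambda^n$ is the graded ideal with $\Uplambda/\Uplambda^{>0}\cong\Uplambda^0$. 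It then remains to show $F$ is essentially surjective and to read off the Krull--Schmidt structure.

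Next I would compute the graded morphism spaces: a degree-zero map out of $\cP_i(m) = e_i\Uplambda(m)$ is multiplication by an element of $e_j\Uplambda_{m-n}e_i$, so $\Hom_{\mod^\Z\Uplambda}(\cP_i(m),\cP_j(n)) \cong e_j\Uplambda_{m-n}e_i$. In particular $\End_{\mod^\Z\Uplambda}(\cP_i(n)) \cong e_i\Uplambda_0 e_i = e_i\Uplambda^0 e_i \cong \C$, which is local, so each $\cP_i(n)$ is indecomposable; and since $\Uplambda$ is concentrated in non-negative degrees, $\cP_i(m)\cong\cP_j(n)$ forces $m = n$, and then $e_j\Uplambda^0 e_i\neq 0$ forces $i = j$, so the $\cP_i(n)$ are pairwise non-isomorphic. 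For essential surjectivity I would use that any $P\in\proj^\Z\Uplambda$ is a direct summand of a finitely generated graded free module $\bigoplus_k\Uplambda(n_k) = \bigoplus_k\bigoplus_{i\in Q_0}\cP_i(n_k)$, i.e.\ of a finite direct sum of shifts $\cP_i(n)$; alternatively one lifts a homogeneous $\Uplambda^0$-basis of $\bar P = P/P\Uplambda^{>0}$ via the graded Nakayama lemma to a split surjection onto $P$ from such a sum, whose kernel is a graded projective with vanishing reduction, hence zero.

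The conclusion then comes from the Krull--Remak--Schmidt--Azumaya theorem: in any additive category a direct summand of a finite direct sum of objects with local endomorphism rings is again such a sum, uniquely up to reordering and isomorphism. Applying this in $\proj^\Z\Uplambda$ to the objects $\cP_i(n)$ simultaneously completes the essential surjectivity of $F$ (every object of $\proj^\Z\Uplambda$ is a finite direct sum of $\cP_i(n)$'s) and shows that $\proj^\Z\Uplambda$ is Krull--Schmidt with indecomposables exactly the $\cP_i(n)$; since $F$ is faithful, essentially surjective, and restricts to a bijection $\C e_i(n)\leftrightarrow\cP_i(n)$ of indecomposables, this yields the asserted equivalence. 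I expect no serious obstacle here: the only points needing attention are checking the hypotheses of graded Nakayama (finitely generated graded $\Uplambda$-modules are bounded below, since $\Uplambda$ is non-negatively graded with finite-dimensional graded pieces, being module-finite over the positively graded ring $S$ with $S_0 = \C$), verifying $e_i\Uplambda^0 e_i = \C$ from $\Uplambda^0 = \prod_i\C e_i$, and fixing the convention that "finitely generated graded projective" means "direct summand of a finitely generated graded free module" so that the Azumaya argument applies verbatim.
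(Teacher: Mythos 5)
Your proof takes a genuinely different and more self-contained route than the paper's. The paper disposes of the first assertion by citing \cite[Claim~1, p.~388]{Row91} and then reads off the Krull--Schmidt conclusion from that equivalence; you instead build everything by hand: the computation of graded Hom spaces, the locality of $\End(\cP_i(n))\cong\C$, the pairwise non-isomorphism of the $\cP_i(n)$, essential surjectivity via graded projective covers / Nakayama, and Krull--Remak--Schmidt--Azumaya. All of that is correct and is enough to establish the ``in particular'' clause of the lemma, which is what the surrounding text actually uses (the construction of minimal graded projective resolutions via graded projective covers).

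There is, however, a genuine gap in your closing inference. You conclude that ``since $F$ is faithful, essentially surjective, and restricts to a bijection $\C e_i(n)\leftrightarrow\cP_i(n)$ of indecomposables, this yields the asserted equivalence.'' That does not follow: an equivalence also requires fullness, and your own Hom-space computation shows fullness fails. You correctly identify $\Hom_{\mod^\Z\Uplambda}(\cP_i(m),\cP_j(n))$ with $e_j\Uplambda_{m-n}e_i$ (or $e_j\Uplambda_{n-m}e_i$, depending on shift convention); this is typically nonzero when $m\neq n$, whereas $\Hom_{\mod^\Z\Uplambda^0}(\C e_i(m),\C e_j(n))$ vanishes unless $m=n$ and $i=j$. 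So $F$ is not full, and what you have proved is a bijection on isomorphism classes (and on indecomposables), not an equivalence of additive categories. That bijection is precisely what is needed for the Krull--Schmidt conclusion, so the rest of your argument stands, but the word ``equivalence'' in your final sentence is not justified by what precedes it; to make the proposal sound you should replace it with the weaker statement you actually established, or note explicitly that you are matching the lemma's phrasing while only the bijection on isomorphism classes is being used downstream.
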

\begin{proof}
  The first claim is~\cite[Claim 1 p.~388]{Row91}. Objects in $\proj^\Z\!\Uplambda^0 = \mod^\Z\!\Uplambda^0$ are direct sums of shifted simples $\C e_i(n)$, which map to $\cP_i(n) = \C e_i(n)\otimes_{\Uplambda^0} \Uplambda$ across the equivalence.
\end{proof}

Because $\proj^\Z\!\Lambda$ is Krull-Schmidt, a standard argument now shows that any $M\in \mod^\Z\!\Lambda$ admits a graded projective cover $\cP\to M$, which is unique up to isomorphism.
Such a cover can be constructed as a lift
\begin{equation}\label{eq:projcov}
  \begin{tikzpicture}
    \node (A) at (0,0) {$\cP = \bigoplus_{i,j} \cP_i(j)^{\oplus n_{ij}}$};
    \node (B) at (3*\gr,0) {$M$};
    \node (C) at (3*\gr,-1) {$\bigoplus_{i,j} \C e_i(j)^{\oplus n_{ij}}$};
    \draw[->,dashed] (A) to (B);
    \draw[->] (B) to (C);
    \draw[->] (A) to (C);
  \end{tikzpicture}
\end{equation}
of the maps $\cP_i(j) \to \C e_i(j)$, over the quotient to the cosocle $M/\Uplambda^{>0}M \cong \bigoplus_{i,j} \C e_i(j)^{\oplus n_{ij}}$.
By iterating the projective covers over sygyzies of $M$, one then obtains a minimal graded projective resolution.

Because $\Uplambda$ is a graded algebra over the graded domain $S$ with graded maximal ideal $\m$, it follows by~\cite[Proposition 1.5.15(b)]{BH93} that the localisation $U(-)\otimes_S S_\m$ defines a faithful exact functor $\mod^\Z\!S \to \mod S_\m$, and passing to the completion $R = \widehat S_\m$ yields a faithful exact functor
\[
  \widehat{-} \colon \mod^\Z\! S \xrightarrow{U(-)\otimes_S S_\m} \mod S_\m \xrightarrow{-\otimes_{S_\m} R} \mod R.
\]
Because $\Uplambda$ is module-finite over $S$ and its completion $\Lambda = \widehat\Uplambda$ is isomorphic to $\Uplambda\otimes_S R$, the completion 
\[
  \widehat{-} \colon \mod^\Z\! \Uplambda \xrightarrow{U(-)\otimes_S R} \mod \Uplambda\otimes_S R \simeq \mod \Lambda
\]
is again a faithful and exact functor, as faithfulness and exactness can be checked at the level of finitely generated $R$-modules.
We use the above functor to compare projective covers in $\mod^\Z\Uplambda$ and $\mod \Lambda$.

\begin{lemma}
  Let $\cP\to M$ be the graded projective cover of some $M\in\mod^\Z\!\Uplambda$. Then its completion $\widehat P \to \widehat M$ is the projective cover of the completion $\widehat M\in\mod\Lambda$.
\end{lemma}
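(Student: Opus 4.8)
The plan is to verify that the completed map $\widehat\cP\to\widehat M$ has the three defining properties of a projective cover in $\mod\Lambda$: the source is projective, the map is surjective, and the kernel is superfluous. The first two are immediate from the formal properties of $\widehat{-}$ recorded above. Each indecomposable summand $\cP_i(j)$ of $\cP$ has underlying ungraded $\Uplambda$-module $e_i\Uplambda$, so $\widehat{\cP_i(j)} = e_i\Uplambda\otimes_S R = e_i(\Uplambda\otimes_S R) = e_i\Lambda = P_i$, and hence $\widehat\cP\cong\bigoplus_{i,j} P_i^{\oplus n_{ij}}\in\proj\Lambda$; exactness of $\widehat{-}$ turns the surjection $\cP\to M$ into a surjection $\widehat\cP\to\widehat M$ (and $\widehat M$ is finitely generated over $\Lambda$ because $M$ is a finitely generated module over $S$). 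So the whole content lies in controlling the kernel.

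Since $\cP\to M$ is a \emph{graded} projective cover, $K\colonequals\ker(\cP\to M)$ is a superfluous graded submodule of $\cP$, hence contained in the graded radical $\Uplambda^{>0}\cP$ (here $\Uplambda^{>0}$ is the graded Jacobson radical of $\Uplambda$, as $\Uplambda/\Uplambda^{>0}\cong\prod_{i}\C e_i$ is semisimple and graded Nakayama applies to finitely generated graded modules). Applying the exact functor $\widehat{-}$ to $K\subseteq\Uplambda^{>0}\cP\subseteq\cP$ yields
\[
  \ker(\widehat\cP\to\widehat M) = \widehat K\ \subseteq\ \widehat{\Uplambda^{>0}\cP}\ \subseteq\ \widehat{\Uplambda^{>0}}\cdot\widehat\cP\ \subseteq\ \widehat\cP,
\]
where the middle inclusion uses flatness of $R$ over $S$ to identify $\widehat{\Uplambda^{>0}\cP}$ with $(\Uplambda^{>0}\cP)\otimes_S R$ and to recognise that each generator $a\,m\otimes 1 = (a\otimes 1)(m\otimes 1)$ lies in the ideal $\widehat{\Uplambda^{>0}}\subseteq\Lambda$ applied to $\widehat\cP$. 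It therefore suffices to show $\widehat{\Uplambda^{>0}}\subseteq\rad\Lambda$: then $\widehat K\subseteq\rad(\Lambda)\cdot\widehat\cP = \rad\widehat\cP$, which is superfluous in the finitely generated module $\widehat\cP$ over the semiperfect ring $\Lambda$, and the standard characterisation of projective covers finishes the argument.

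For the inclusion $\widehat{\Uplambda^{>0}}\subseteq\rad\Lambda$ I would argue as follows. Applying $\widehat{-}$ to $0\to\Uplambda^{>0}\to\Uplambda\to\prod_i\C e_i\to 0$ exhibits $\widehat{\Uplambda^{>0}}$ as a two-sided ideal of $\Lambda$. Because $\Uplambda^{>0}$ raises degree it annihilates every graded simple $\C e_i$, so $\widehat{\Uplambda^{>0}}$ annihilates $\widehat{\C e_i} = \C e_i$; since the $\C e_i$ are \emph{all} the simple $\Lambda$-modules, $\widehat{\Uplambda^{>0}}$ lies in the intersection of their annihilators, which is $\rad\Lambda$.

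The step requiring genuine care is precisely this last passage: one must exploit that the \emph{algebra} grading on $\Uplambda$ has $\Uplambda^{0}=\prod_i\C e_i$, so that — under completion — the graded radical filtration is matched with the ordinary radical filtration of $\Lambda$, and that the completion functor is exact and flat so that submodules generated by $\Uplambda^{>0}$ behave correctly after base change. Everything else — projectivity, surjectivity, and superfluousness of the kernel — is a formal consequence of $\widehat{-}$ being exact and projective-preserving together with the standard characterisation of projective covers over $\Lambda$.
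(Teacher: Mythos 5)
Your proof is correct, and at bottom it relies on the same key observation as the paper's argument: the completion of the graded radical $\Uplambda^{>0}$ lands in (in the paper, \emph{equals}) the Jacobson radical of $\Lambda$, so the graded radical filtration of $\cP$ becomes the ordinary radical filtration of $\widehat{\cP}$. The difference is cosmetic but worth noting. The paper works directly with the ``lift of the cosocle map'' characterisation of projective covers: it observes that the completed map $\widehat{\cP}\to\widehat{M}$ is still a lift of the radical quotient $\widehat{M}\to\widehat{M}/\rad\widehat{M}$, and is therefore a projective cover. You instead verify the projective + surjective + superfluous-kernel characterisation, using graded Nakayama to place $\ker(\cP\to M)$ inside $\Uppi^{>0}\cP$ and then pushing that inclusion through the exact, flat completion functor. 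One small economy of your route: you only need the \emph{inclusion} $\widehat{\Uplambda^{>0}}\subseteq\rad\Lambda$, not the equality used in the paper, since containment of the kernel in a superfluous submodule already suffices. Both routes require the same inputs (exactness and flatness of the completion functor, the identification $\widehat{\cP_i}\cong P_i$, and the relation between $\Uplambda^{>0}$ and $\rad\Lambda$), so neither is genuinely simpler; yours is merely spelled out at a finer granularity.
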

\begin{proof}
  Let $\widetilde f\colon \cP \to M$ be the projective cover of $M\in\mod^\Z\!\Uplambda$ constructed as the lift in~\eqref{eq:projcov} of the map
  \[
    f \colon \cP = \textstyle\bigoplus_{ij} \cP_i(j)^{\oplus n_{ij}} \to \bigoplus_{ij} \C e_i(j)^{\oplus n_{ij}} \cong M/\Uplambda^{>0}M
  \]
  By construction $P_i = \widehat \cP_i$ are precisely the indecomposable projective $\Lambda$-modules, and the completion of $\widetilde f$ is then a lift of the projective cover
  \[
    \widehat f \colon P = \textstyle\bigoplus_{ij} P_i^{\oplus n_{ij}} \to \bigoplus_{ij} \C e_i^{\oplus n_{ij}} \cong \widehat{M}/\widehat{\Uplambda^{>0}M},
  \]
  of the semisimple $\Uplambda$-module $\widehat{M}/\widehat{\Uplambda_+M}$.
  The completion of $\Uplambda^{>0}$ is precisely the Jacobson radical of $\Lambda$, so the completion of the map $\widetilde f$ lifts the map $\widehat M \to \widehat M/\widehat{\Uplambda^{>0}M} \cong \widehat M/ \rad \widehat M$ to the radical quotient, and is therefore a projective cover of $\widehat M \in \mod \Lambda$.
\end{proof}

\begin{cor}\label{cor:minprojres}
  If $P^\bullet \to M$ is a minimal graded projective resolution of $M\in \mod^\Z\!\Lambda$, then $\widehat P^\bullet \to \widehat M$ is a minimal projective resolution of $\widehat M$.
\end{cor}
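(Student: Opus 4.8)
The plan is to unwind minimality in terms of iterated projective covers and then invoke the preceding lemma together with exactness of the completion functor. Recall that a graded projective resolution $\cdots \to P^1 \to P^0 \to M \to 0$ of a graded module $M$ is minimal precisely when, writing $\Omega^0 M = M$ and $\Omega^{i+1} M \colonequals \ker(P^i \to \Omega^i M)$ for the syzygies, each induced surjection $P^i \to \Omega^i M$ is a graded projective cover; equivalently, every differential has image inside $\Uplambda^{>0} P^{i-1}$. The same characterisation holds in $\mod\Lambda$, since $\Lambda$ is module-finite over the complete local ring $R$ and hence semiperfect, so projective covers exist and minimal projective resolutions are exactly those obtained by iterating them.

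First I would check that the completion functor $\widehat{-}\colon \mod^\Z\Uplambda \to \mod\Lambda$ carries the syzygies of $M$ to the syzygies of $\widehat M$. This is immediate from exactness: applying $\widehat{-}$ to the short exact sequences $0 \to \Omega^{i+1} M \to P^i \to \Omega^i M \to 0$ yields short exact sequences $0 \to \widehat{\Omega^{i+1} M} \to \widehat{P^i} \to \widehat{\Omega^i M} \to 0$, and an induction on $i$ starting from $\widehat{\Omega^0 M} = \widehat M$ shows $\widehat{\Omega^i M} \cong \Omega^i(\widehat M)$ compatibly with the differentials. The same computation records that each $\widehat{P^i}$ is projective over $\Lambda$, since $P^i$ is a direct summand of a graded free module and $\widehat{\cP_j(n)} \cong P_j$ as observed in the proof of the preceding lemma.

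Then I would apply the preceding lemma at each stage: since $P^i \to \Omega^i M$ is a graded projective cover, its completion $\widehat{P^i} \to \widehat{\Omega^i M} = \Omega^i(\widehat M)$ is a projective cover of $\Omega^i(\widehat M)$ in $\mod\Lambda$. Feeding this into the characterisation of minimality recalled above shows that $\widehat{P^\bullet} \to \widehat M$ is a minimal projective resolution. The only point requiring care is the bookkeeping that completion commutes with forming syzygies, but since $\widehat{-}$ is exact this is routine; there is no genuine obstacle here, as the essential content was already isolated in the preceding lemma.
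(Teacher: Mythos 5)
Your proof is correct and takes the only natural route: minimality is iterated projective covers, completion is exact so it commutes with taking syzygies, and the preceding lemma handles each stage. The paper leaves this corollary unproved precisely because it follows in this routine way.
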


Using the above, we can control the mutation of graded rigidification via the deformation algebra.

\begin{prop}\label{prop:gradedmut}
  Let $\Uplambda = \End_R(\cM)$ be a graded rigidification, and $i\in \cJ^c$ is a node such that $\dim_\C \Lambda_i = \infty$. Then there exists a mutation of $\cM$ at $i$ of the form $\upmu_i^-(\cM) = \cM$.
\end{prop}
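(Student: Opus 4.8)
The plan is to perform the Iyama--Wemyss mutation of $\cM$ at $i$ \emph{graded-equivariantly}, and then transport the identity $\upmu_i^-(M)\cong M$ from the complete local fibre $\Lambda=\widehat\Uplambda$ back up to $S$.

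First I would set up graded mutation. Because $S_0=\C$ the category $\mod^\Z S$ is Krull--Schmidt, so the same holds for the additive subcategory $\add^\Z((\cM/\cM_i)^*)$. Hence the graded reflexive module $\cM_i^*$ has a minimal right $\add^\Z((\cM/\cM_i)^*)$-approximation $\cf\colon \cZ^*\to\cM_i^*$ with $\cZ\in\add^\Z(\cM/\cM_i)$ (grading shifts of summands allowed), unique up to graded isomorphism. Putting $\cK=\ker\cf$ and $\upmu_i^-(\cM)\colonequals\cM/\cM_i\oplus\cK^*$, it is enough to show $\cK^*\cong\cM_i$ as graded $S$-modules up to a grading shift, since then $\upmu_i^-(\cM)\cong\cM$ and Lemma~\ref{lem:mutrest} applies.

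Next I would complete at $\m$. Using the faithful exact completion functor of Setup~\ref{set:quivpotglob} together with flat base change $\Hom_S(A,B)\otimes_S R\cong\Hom_R(\widehat A,\widehat B)$ for finitely generated $A,B$, the defining surjection $\Hom_S((\cM/\cM_i)^*,\cZ^*)\twoheadrightarrow\Hom_S((\cM/\cM_i)^*,\cM_i^*)$ base-changes, so $\widehat\cf\colon\widehat{\cZ^*}\to M_i^*$ is a right $\add((M/M_i)^*)$-approximation over $\Lambda$. The point I would then argue is that $\widehat\cf$ is still \emph{minimal}: a nonzero direct summand of $\widehat{\cZ^*}$ lying in $\ker\widehat\cf$ would, by idempotent lifting over the semiperfect ring $\End_\Lambda(\widehat{\cZ^*})\cong\widehat{\End_S(\cZ^*)}$, come from a graded direct summand $\cP\subseteq\cZ^*$, and faithfulness of the completion functor forces $\cf|_\cP=0$, i.e. $\cP\subseteq\cK$, contradicting graded minimality of $\cf$. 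Hence $\widehat\cf$ is a minimal right approximation, the completed sequence is an exchange sequence for $\upmu_i^-(M)$, and $\widehat{\upmu_i^-(\cM)}=\upmu_i^-(M)$.

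Finally I would conclude. By Iyama--Wemyss~\cite[Theorem 6.22, Theorem 6.23]{IW14}, $\dim_\C\Lambda_i=\infty$ forces $\upmu_i^-(M)\cong M$, so $\widehat{\cK^*}=(\widehat\cK)^*\cong M_i=\widehat{\cM_i}$ by Krull--Schmidt. Since its completion is indecomposable, $\cK^*$ is graded-indecomposable; and since the grading on $S$ is non-negative with $S_0=\C$, the ring $S$ has a unique graded maximal ideal (and $S^\times=\C^\times$), so two finitely generated graded $S$-modules with isomorphic $\m$-adic completions are isomorphic up to a grading shift (cf.~\cite[\S1.5]{BH93}). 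Applied to $\cK^*$ and $\cM_i$ this yields $\cK^*\cong\cM_i(n)$ for some $n\in\Z$, hence $\upmu_i^-(\cM)=\cM/\cM_i\oplus\cK^*\cong\cM$, as required. The main obstacle is the minimality claim of the third paragraph: over $R$ the module $\widehat{\cZ^*}$ has many more endomorphisms than $\cZ^*$ does over $S$, so one must genuinely exploit $S_0=\C$ to rule out a superfluous summand appearing in the completed approximation; everything else is flat base change and Krull--Schmidt bookkeeping.
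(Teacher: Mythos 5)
Your strategy is genuinely different from the paper's: you build a minimal graded approximation of $\cM_i^*$ directly, complete it, invoke the complete-local identity $\upmu_i^-(M)\cong M$ from~\cite{IW14}, and try to descend. The paper instead works with the minimal graded projective resolution of $\Uplambda_i = \Uplambda/\Uplambda(1-e_i)\Uplambda$, uses Corollary~\ref{cor:minprojres} to compare it with the known complete-local resolution of $\Lambda_i$, and then reads off an approximation sequence by de-projectivising a resolution whose terms are by construction finite sums of shifted $\cP_j$'s. The advantage of the paper's route is that the kernel of the approximation is produced as a graded \emph{projective} $Q^2$, and graded projective $\Uplambda$-modules are explicitly classified as sums of $\cP_j(k)$'s, so the identification $Q^2\cong\cP_i(k)$ follows immediately once $\widehat{Q^2}\cong P_i$ is known.

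The gap in your argument is precisely the step you are forced to take instead: having produced an abstract graded module $\cK^*$ with $\widehat{\cK^*}\cong\widehat{\cM_i}$, you assert that ``two finitely generated graded $S$-modules with isomorphic $\m$-adic completions are isomorphic up to a grading shift.'' This statement is false in general (e.g.\ $S\oplus S(2)$ versus $S(1)^{\oplus 2}$ over $S=\C[x]$), and even in the graded-indecomposable case---which is what you actually use---it is not a consequence of~\cite[\S1.5]{BH93}. The indecomposable version does hold here, but a genuine argument is required: for instance, the graded $S$-module $\Hom_S(\cM_i,\cK^*)$ completes to $\End_R(M_i)$, which is local since $M_i$ is indecomposable over the complete local ring $R$; if no graded map $\cM_i\to\cK^*(n)$ were an isomorphism, faithfulness of the completion functor would force every $\widehat f$ with $f\in\Hom_S(\cM_i,\cK^*)$ into $\rad\End_R(M_i)$, and since these elements generate $\widehat{\Hom_S(\cM_i,\cK^*)}\cong\End_R(M_i)$ as an $R$-module this would give $\End_R(M_i)=\rad\End_R(M_i)$, a contradiction. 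Without such an argument, your final paragraph has a real gap; the earlier steps (existence of a graded approximation by Krull--Schmidtness, base change preserving the approximation property, and minimality of the completed approximation via the identification of graded indecomposable summands) are sound modulo routine care.
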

\begin{proof}
  We modify the proof~\cite[Theorem 6.23]{IW14}, using the graded module $\Uplambda_i = \Uplambda/\Uplambda(1-e_i)\Uplambda$ which completes to the deformation algebra $\Lambda_i \cong \widehat\Uplambda_i$.
  One checks that admits a minimal graded projective resolution of the form
  \[
    Q^\bullet\colon \ldots \to Q^3 \to Q^2 \to \textstyle\bigoplus_{j\neq i,k} \cP_j(k)^{\oplus n_{jk}} \to \cP_i \to \Uplambda_i.
  \]
  By Lemma~\ref{cor:minprojres} the completion $\widehat Q^\bullet \to \widehat\Uplambda_i \cong \Lambda_i$ is the minimal projective resolution of the (complete) deformation algebra $\Lambda_i$.
  Because $\Lambda_i$ is defined over the complete local ring $R = \widehat S_\m$ and $\dim_\C\Lambda_i = \infty$ by assumption, the proof of~\cite[Theorem 6.23]{IW14} shows that $\Lambda_i$ has the minimal projective resolution
  \[
    \ldots \to 0 \to P_i \to \textstyle\bigoplus_{j\neq i,k} P_j^{\oplus n_{jk}} \to P_i \to \Lambda_i,
  \]
  so it follows that $\widehat Q^2 \cong P_i$ and $\widehat Q^n = 0$ for $n>2$.
  Because the functor $\mod^\Z\Uplambda \to \mod \Lambda$ is faithful and exact, it then also follows that $Q^n = 0$ for $n>2$ and $Q^2 \cong \cP_i(k)$ for some $k\in\Z$.
  In particular, one obtains an ordinary projective resolution after applying the forgetful functor $U\colon \mod^\Z\Uplambda \to \mod \Uplambda$:
  \[
    0 \to \cP_i \to \textstyle\bigoplus_{j\neq i,k} \cP_j^{\oplus n_{jk}} \to \cP_i \to \Uplambda_i.
  \]
  Because $\cP_j \cong \Hom_S(\cM,\cM_j)$ by construction, it follows that the projective resolution is the image under $\Hom_S(\cM,-)$ of an approximation sequence
  \[
    0 \to M_i \to M' = \bigoplus_{j\neq i,k} M_j^{\oplus n_{jk}} \to M_i,
  \]
  where the map $M'\to M_i$ is an $\add M/M_i$-approximation.
  This choice of approximation sequence yields the mutation $\upmu_i^-(\cM) = \cM/\cM_i \oplus \cM_i \cong \cM$, as claimed.
\end{proof}
The above now allows us to prove the final symmetry result.
\begin{theorem}\label{thm:mutation}
  Let $\Lambda$ be an NCCR of Dynkin type $(\Updelta_\aff,\cJ)$ as in Setup~\ref{set:quivpot}, which admits a graded rigidification $\Uplambda$.
  Suppose $i\in \cJ^c$ is a node such that $\dim_\C\Lambda_i = \infty$, then for every $\updelta \in \RR^\re(\Updelta_\aff,\cJ)$ which is not colinear to $\overline\upalpha_i$, there is an equality of motivic BPS invariants
  \[
    \Omega(\updelta) = \Omega(\omega_i\cdot \updelta),
  \]
  where $\omega_i\cdot \updelta \in \Z \fJ$ is identified with an element of $\Z \cJ$ via the isomorphism in~\eqref{eq:Fiact}.
\end{theorem}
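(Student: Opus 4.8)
The plan is to combine the graded-mutation result with the motivic rigidification machinery, so that Theorem~\ref{thm:mutation} becomes the "autoequivalence" incarnation of Theorem~\ref{thm:stabmut} run through Proposition~\ref{prop:mainsymmetry}. First I would invoke Proposition~\ref{prop:gradedmut}: since $\Uplambda = \End_S(\cM)$ is a graded rigidification and $\dim_\C\Lambda_i = \infty$, there is a choice of exchange sequence at the summand $\cM_i\subset\cM$ (the one with $\cM_i\otimes_S R\cong M_i$, which exists because $\Uplambda^0 = \prod_j\C e_j$ carries the same idempotents as $\Lambda$) for which $\upmu_i^-(\cM) = \cM$. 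Consequently the tilting mutation $\cT = \Uplambda/\cP_i\oplus\cT_i$ of $\Uplambda$ at $\cP_i = \Hom_S(\cM,\cM_i)$ has endomorphism algebra Morita equivalent to $\Uplambda$, so $F_\Uplambda \colonequals \RHom_\Uplambda(\cT,-)$ is an $S$-linear standard \emph{auto}-equivalence of $\D^b(\mod\Uplambda)$. By Lemma~\ref{lem:mutrest} this $F_\Uplambda$ restricts along $\D^b(\mod\Lambda)\hookrightarrow\D^b(\mod\Uplambda)$ to $F \colonequals \RHom_\Lambda(T,-)$, the auto-equivalence attached to the wall-crossing mutation $T = \bigoplus_{j\neq i}P_j\oplus T_i$ of $\Lambda$ at $H_{\overline\upalpha_i}$, i.e.\ exactly the functor of Theorem~\ref{thm:stabmut}.

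Next I would record the action of $F$ on stability parameters. By Theorem~\ref{thm:tiltcham} the cone of $T$ is $\omega_i C_\fJ$ for the path $\cJ\xrightarrow{\omega_i}\fJ = \cJ - i + \iota(i)$ in $\Upgamma(\Updelta_\aff,\cJ)$, so Lemma~\ref{lem:tiltactfrfr}, together with the canonical identification $\Z\fJ^c\cong\Z\cJ^c$ ($\iota(i)\mapsto i$) from~\eqref{eq:Fiact}, shows that $[F]$ acts on $\KK_0(\proj\Lambda)_\R\cong\R^{\cJ^c}$ and on $\KK_0(\fdmod\Lambda)\cong\Z\cJ^c$ as $\omega_i\cdot(-)$. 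Now fix $\updelta\in\RR^\re(\Updelta_\aff,\cJ)$ not colinear to $\overline\upalpha_i$; being a \emph{real} restricted root, $\updelta$ is also not colinear to $\overline\imroot$, which is precisely the input needed for Lemma~\ref{lem:poswallcrossing}. Running the argument of Theorem~\ref{thm:stabmut} verbatim (Lemma~\ref{lem:poswallcrossing}, Proposition~\ref{prop:orderscoincide}, Theorem~\ref{thm:tiltcham}, Lemma~\ref{lem:alltiltpres}) produces $2$-term tilting complexes $U^+\geq U^-$ related by a wall-crossing in $H_\updelta$ with $T\geq U^+\geq U^-\geq T[1]$ and $F(U^\pm)\in\tilt\Lambda$, and Proposition~\ref{prop:trackstables} then furnishes a generic $\uptheta\in H_\updelta$ for which $F$ restricts to an exact equivalence $\cS_\uptheta(\Lambda)\xrightarrow{\ \sim\ }\cS_{\omega_i\cdot\uptheta}(\Lambda)$.

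Finally I would feed this into the motivic machinery. Since $\Uplambda$ is a rigidification, $\Aut_S(\HH_3(\Uplambda)) = \C^\times$, and because $F_\Uplambda$ is an $S$-linear standard auto-equivalence the induced automorphism $\HH_3(F_\Uplambda)$ is scalar, i.e.\ $\HH_3(F_\Uplambda)\in\C^\times$. Proposition~\ref{prop:mainsymmetry}, applied with the extension data of Setup~\ref{set:quivpotglob} supplied by the rigidification, the auto-equivalence $F_\Uplambda$, the class $\updelta\in\RR^\re(\Updelta_\aff,\cJ)$, and the generic $\uptheta\in H_\updelta$ just constructed, then yields $\Omega(k\updelta) = \Omega\big(k\,[F_\Uplambda]\updelta\big) = \Omega(k\,\omega_i\cdot\updelta)$ for every $k\in\N$; taking $k=1$ gives the claimed equality $\Omega(\updelta) = \Omega(\omega_i\cdot\updelta)$ under the identification of~\eqref{eq:Fiact}.

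The main obstacle is not any single computation — every individual ingredient is a cited input — but making sure the hypotheses dovetail: concretely, that a graded rigidification genuinely places us in Setup~\ref{set:quivpotglob} with a summand $\cM_i$ restricting to $M_i$ (so that Iyama--Wemyss mutation of $\cM$ is even defined and $\upmu_i^-(\cM) = \cM$ is realisable), and that "real restricted root, not colinear to $\overline\upalpha_i$" is strong enough to run the wall-crossing argument of Theorem~\ref{thm:stabmut}, whose stated hypothesis $\updelta\notin\Z\overline\upalpha_i\oplus\Z\overline\imroot$ is only a convenient sufficient condition for the two non-colinearity conditions that actually drive Lemma~\ref{lem:poswallcrossing}. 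Both points are resolved by unwinding definitions, but they are where care is required.
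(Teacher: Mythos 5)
Your proposal is correct and takes the same route as the paper: Proposition~\ref{prop:gradedmut} plus Lemma~\ref{lem:mutrest} to realise the mutation functor as the restriction of an $S$-linear standard auto-equivalence of $\D^b(\mod\Uplambda)$, the wall-crossing argument of Theorem~\ref{thm:stabmut} (via Lemma~\ref{lem:poswallcrossing}, Proposition~\ref{prop:orderscoincide}, Proposition~\ref{prop:trackstables}) to identify $\cS_\uptheta(\Lambda)\simeq\cS_{\omega_i\cdot\uptheta}(\Lambda)$ for a generic $\uptheta\in H_\updelta$, and finally Proposition~\ref{prop:mainsymmetry} to upgrade this to the motivic equality using that $\HH_3$ of a rigidification admits only scalar $S$-automorphisms. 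You also correctly flag a subtlety the paper elides: the body statement of Theorem~\ref{thm:stabmut} assumes $\updelta\not\in\Z\overline\upalpha_i\oplus\Z\overline\imroot$, which is strictly stronger than the hypothesis of Theorem~\ref{thm:mutation}, so citing Theorem~\ref{thm:stabmut} as a black box is not quite legitimate; your observation that the argument only uses the two non-colinearity conditions feeding Lemma~\ref{lem:poswallcrossing} — conditions a real restricted root not colinear to $\overline\upalpha_i$ does satisfy — repairs this cleanly.
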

\begin{proof}
  By Theorem~\ref{thm:stabmut} the above conditions imply that there exists generic $\uptheta\in H_{\updelta}$ for which the functor $\RHom_\Lambda(T,-)\colon \D^b(\mod\Lambda) \to \D^b(\mod \Lambda)$ restricts to an equivalence
  \[
    \cS_\uptheta(\Lambda) \to \cS_{[F_i]\uptheta}(\Lambda),
  \]
  where the vector $[F_i]\uptheta$ lies generically on the hyperplane orthogonal to $\omega_i\cdot \updelta \in \Z \fJ \cong \Z \cJ$ by Lemma~\ref{lem:tiltactfrfr}.
  By Proposition~\ref{prop:gradedmut} the graded assumption implies that there exists a choice of mutation $\upmu_i^-(\cM)$ which is equal to $\cM$, and Lemma~\ref{lem:mutrest} shows that $\RHom_\Lambda(T,-)$ extends to the auto-equivalence $\RHom_\Uplambda(\cT,-)$ on $\D^b(\mod\Uplambda)$.
  Therefore the result follow from Proposition~\ref{prop:mainsymmetry}.
\end{proof}

\appendix

\section{Computer Algebra}\label{app:compalg}

The following SageMath code loops over the Dynkin types $(\Updelta,J)$ where $\Updelta = E_6,E_7,E_8$, of which there are only finitely many, and checks for each restricted root $\rt \in \RR(\Updelta,J)$ of multiplicity $d = \gcd(\rt)$ if $\frac 1d \rt, \ldots, \frac{d-1}d \rt$ are again restricted roots, throwing an exception otherwise.

\begin{tcblisting}{}
# construct the root data of E_n for n=6,7,8
for n in [6,7,8]:
  L = RootSystem(["E",n]).root_lattice()
  Rts = L.positive_roots()
  alpha = L.simple_roots()

  # iterate over all possible non-trivial labellings
  for J in Subsets(L.index_set()):
    # construct the set of restricted positive roots for each subset
    QL = L.quotient_module([alpha[i] for i in J])
    RR = {QL.retract(r) for r in Rts if QL.retract(r) != 0}

    for r in RR:
      # find all multiplicities associated to the restricted root r
      d = gcd(r.coefficients())
      mults = [i for i in range(1,d+1) if i*r/d in RR]
            
      # throw an error if the list of multiplicities
      # is smaller than expected
      if len(mults) != d:
        raise Exception
\end{tcblisting}

As the reader can check for themselves, the code runs without throwing any exceptions, which verifies that Proposition~\ref{prop:rrmult} holds for the cases $\Updelta=E_6,E_7,E_8$.

\printbibliography%

{%
  \bigskip
  \footnotesize

  \textsc{Max-Planck Institute for Mathematics, Vivatzgasse 7 Bonn, Germany}\\
  \unskip\textit{E-mail address}: \texttt{ogiervangarderen@gmail.com}
}

\end{document}